\newtheorem{Theorem}{Theorem} 
\newtheorem{Lemma}[Theorem]{Lemma}
\newtheorem{Proposition}[Theorem]{Proposition}
\newtheorem{Corollary}[Theorem]{Corollary}
\theoremstyle{definition}
\newtheorem{Definition}[Theorem]{Definition}
\newtheorem{Remark}[Theorem]{Remark}
\newtheorem{Example}[Theorem]{Example}
\newenvironment{example}  
  {\pushQED{\qed}\examplex}
  {\popQED\endexamplex}
\newcommand{\NN}{\mathbb{N}}
\newcommand{\ZZ}{\mathbb{Z}}
\newcommand{\kD}{\mathcal{D}}
\newcommand{\A}{{\mathcal{A}}}
\newcommand{\kS}{{\mathcal{S}}}
\newcommand{\I}{{\mathcal{I}}}
\newcommand{\kC}{{\mathcal{C}}}
\newcommand{\kH}{{\mathcal{H}}}
\newcommand{\kG}{{\mathcal{G}}}
\newcommand{\kL}{{\mathcal{L}}}
\newcommand{\kF}{{\mathcal{F}}}
\newcommand{\kN}{{\mathcal{N}}}
\newcommand{\FF}{\mathbb{F}}
\newcommand{\Fq}{\mathbb{F}_q}
 \newcommand{\kO}{\mathcal{O}}
\newcommand{\Fqm}{{\mathbb{F}_{q^m}}}
\DeclareMathOperator{\rk}{rank}
\DeclareMathOperator{\cl}{cl}
\DeclareMathOperator{\codim}{codim}
\DeclareMathOperator{\diag}{diag}
\DeclareMathOperator{\opp}{opp}
\DeclareMathOperator{\upp}{upp}
\DeclareMathOperator{\low}{low}
\newcommand\ol[1]{{\setul{-0.9em}{}\ul{#1}}}
 \def\bcB{\color{blue}}
 \def\ecr{\color{black}}
\definecolor{bcP}{rgb}{1,0,1}
\begin{document}  

\title{Constructions of new $q-$cryptomorphisms}


\author[Eimear Byrne]{Eimear Byrne}
\address{School of Mathematics and Statistics, University College Dublin, Belfield, Ireland}
\curraddr{}
\email{ebyrne@ucd.ie}
\thanks{}

\author[Michela Ceria]{Michela Ceria}
\address{
Dept. of Mechanics, Mathematics \& Management, Politecnico di Bari, Italy
}
\curraddr{  Via Orabona 4 - 70125 Bari - Italy}
\email{michela.ceria@gmail.com}
\thanks{}

\author[Relinde Jurrius]{Relinde Jurrius}
\address{Faculty of Military Science, Netherlands Defence Academy, The Netherlands}
\curraddr{}
\email{rpmj.jurrius@mindef.nl}
\thanks{}

\subjclass[2020]{05B35, 05A30}

\keywords{$q$-analogue, $q$-matroid, cryptomorphism.}

\maketitle

 \begin{abstract}
 In the theory of classical matroids, there are several known equivalent axiomatic systems that define a matroid, which are described as matroid {\em cryptomorphisms}.
 A $q$-matroid is a $q$-analogue of a matroid where subspaces play the role of the subsets in the classical theory. 
 In this article we establish cryptomorphisms of $q$-matroids. In doing so we highlight the difference between classical theory and its $q$-analogue. 
 We introduce a comprehensive set of $q$-matroid axiom systems and show cryptomorphisms between them and existing axiom systems of a $q$-matroid. These axioms are described as the rank, closure, basis, independence, dependence, circuit, hyperplane, flat, open space, spanning space, non-spanning space, and bi-colouring axioms.
 \smallskip 
 
\end{abstract}

\section{Introduction} 
The concept of a $q$-matroid goes back to Crapo \cite{C64}, although it has only recently been taken up again as a research topic, having been rediscovered in \cite{JP18}. As the term suggests, it arises as a {\em $q$-analogue} of matroid theory, wherein subspaces play the role of the subsets in the classical theory. The definition of a $q$-matroid with respect to a rank function can be read in \cite{JP18}: a $q$-matroid consists of a vector space $E$ together with an integer-valued, {\em bounded, monotonic increasing, semi-modular} rank function on the lattice of subspaces of $E$ (see Definition \ref{rankfunction}). 
There have been a few other recent papers on this topic, especially in relation to rank metric codes \cite{gorla2019rank,GJ20,S19}.

In the theory of classical matroids, there are several known equivalent axiomatic systems that define a matroid, which are described as matroid {\em cryptomorphisms}. A full exposition of these is given in \cite{white}. This array of cryptomorphisms offers both insight to the structure of a matroid and versatility in applications: one description of a matroid may make it more amenable to a given application than another.

In this article we seek to establish a wide portfolio of cryptomorphisms of $q$-matroids. In doing so we highlight the difference between classical theory and its $q$-analogue.
Some cryptomorphisms have already been shown in \cite{JP18}. In \cite{WINEpaper1}, it was shown that the axioms defining a collection of {\em flats} defines equivalently a $q$-matroid and conversely that a $q$-matroid with a given rank function determines a collection of flats. As an application, it was shown that a $q$-Steiner system yields a $q$-matroid (in fact a $q$-{\em perfect matroid design}) determined by a collection of flats and this was used to construct new {\em subspace designs}. 

\begin{figure}[ht]
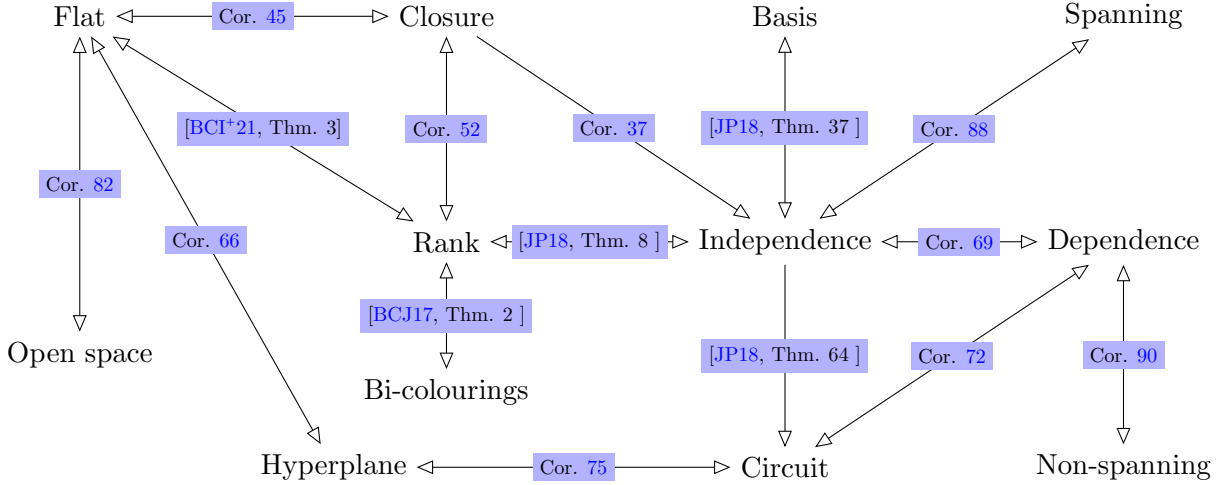

		\tikz[every node/.style={}, xscale=.75]
		{
			\node (closure) at (-2,3)  {Closure};
			\node (flats) at (-8.5,3)    {Flat};
			\node (opens) at (-8.5,-1.5)    {Open space};
			\node (rank) at (-2,0)    {Rank};
			\node (independents) at (4,0)   {Independence};
			\node (spanning) at (10,3)   {Spanning};
			\node (bases) at (4,3)   {Basis};
			\node (circuits) at (4,-3)  {Circuit};
			\node (bicolor) at (-2,-2)    {Bi-colourings};
			\node (hyperplanes) at (-4,-3)    {Hyperplane};
			\node (dependents) at (10,0)  {Dependence};
			\node (nonspanning) at (10,-3)  {Non-spanning};
			 
			\draw [open triangle 45-](independents) -- node[auto, rectangle, fill=blue!30, scale=0.7,anchor=center] {Cor. \ref{IndChius}}  (closure);
			\draw [open triangle 45-open triangle 45](dependents) -- node[auto, rectangle, fill=blue!30, scale=0.7,anchor=center] {Cor. \ref{DC}}  (circuits);
			\draw [open triangle 45-open triangle 45](closure)  -- node[auto, rectangle, fill=blue!30, scale=0.7,anchor=center] {Cor. \ref{cor:cl-flats-rank}}   (flats);		
			\draw [open triangle 45-open triangle 45](flats) -- node[auto, rectangle, fill=blue!30,scale=0.7,anchor=center] {Cor. \ref{OpenAxioms}}  (opens);
        	\draw [open triangle 45-open triangle 45](independents) -- node[auto, rectangle, fill=blue!30,scale=0.7,anchor=center] {Cor. \ref{Spanningq-Matr}}  (spanning);
		    \draw [open triangle 45-open triangle 45](dependents) -- node[auto, rectangle, fill=blue!30,scale=0.7,anchor=center] {Cor. \ref{NOSpanningq-Matr}}  (nonspanning);
			\draw [open triangle 45-open triangle 45](dependents) -- node[auto, rectangle, fill=blue!30, scale=0.7,anchor=center] {Cor. \ref{cor:ind-dep}}  (independents);
			\draw [open triangle 45-open triangle 45](hyperplanes) -- node[auto, rectangle, fill=blue!30, scale=0.7,anchor=center] {Cor. \ref{corr:flat-hyper}}  (flats);
			\draw [open triangle 45-open triangle 45](hyperplanes) -- node[auto, rectangle, fill=blue!30, scale=0.7,anchor=center] {Cor. \ref{HypCircq-Matr}}  (circuits);
			\draw [open triangle 45-open triangle 45](rank) -- node[auto, rectangle, fill=blue!30, scale=0.7, anchor=center] {\cite[Thm. 3]{WINEpaper1}}  (flats);
			\draw [open triangle 45-open triangle 45](rank) -- node[auto, rectangle, fill=blue!30, scale=0.7,anchor=center] {\cite[Thm. 8 ]{JP18}}  (independents);
			\draw [open triangle 45-open triangle 45](bases) -- node[auto, rectangle, fill=blue!30, scale=0.7,anchor=center] {\cite[Thm. 37 ]{JP18}}  (independents);
			\draw [open triangle 45-open triangle 45](rank) -- node[auto, rectangle, fill=blue!30, scale=0.7,anchor=center] { 
			Cor. \ref{cor:clrk}}  (closure);
			\draw [ -open triangle 45](independents) -- node[auto, rectangle, fill=blue!30, scale=0.7,anchor=center] {\cite[Thm. 64 ]{JP18}}  (circuits);
			\draw [ open triangle 45-open triangle 45](rank) -- node[auto, rectangle, fill=blue!30, scale=0.7,anchor=center] {\cite[Thm. 2 ]{BCJ17}}  (bicolor);
		}
		\caption{Cryptomorphisms}
		\label{diagram1} 
	\end{figure}

In Figure \ref{diagram1}, twelve different equivalent axiom systems of $q$-matroids are shown, which are $q$-analogues of classical axiomatic systems. As we show in this paper, these systems all equivalently define a $q$-matroid and hence form a family of $q$-cryptomorphisms.
These axioms are labelled as the {\em rank, closure, basis, independence, dependence, circuit, hyperplane, flat, open space, spanning space, non-spanning space}, and {\em bi-colouring axioms}.

Cryptomorphisms between the rank, independence and bases axioms were already proven in \cite{JP18}. For independence and bases, it turns out there is an extra axiom needed in addition to the classical case: simply taking a straightforward $q$-analogue of the classical axioms is sometimes insufficient to find axioms for a $q$-matroid. A cryptomorphism between the rank and flat axioms was shown in \cite{WINEpaper1} and that $q$-matroidal bi-colourings and the rank axioms equivalently define a $q$-matroid was shown in \cite{BCJ17}. That the rank axioms imply the closure axioms was shown in \cite{JP18}, while at that time it was not clear that those closure axioms were sufficient to define a $q$-matroid. We answer this question affirmatively by showing that the closure and independence axioms are cryptomorphic.

It was shown that the circuit axioms proposed in \cite{JP18} can be deduced from the independence axioms. Here we establish the converse by proving that the circuit and dependence axioms are cryptomorphic and that the dependence and independence axioms are cryptomorphic. However, we need a different axiom for the circuits than the one proposed in \cite{JP18}. This is again an illustration that taking straightforward $q$-analogues of classical axioms is sometimes insufficient. We see this problem also arising in the case of the open space axioms.
We furthermore show that the flat and hyperplane axioms are equivalent, from which we easily obtain cryptomorphisms with the open space axioms and the circuit axioms by dualization (and also via equivalence with the rank axioms).

In \cite{white}, various families are defined with respect to a given family of subsets, such as its {\em upper cone}, {\em lower cone}, {\em dual, opposite, max} and {\em min} families (see Definition \ref{def:families}). In Figure \ref{diagram2}, we illustrate the relations between the different axiom systems in the context of these notions. These follow exactly as for subsets, although for the dual of a family, we take the orthogonal complement with respect to an inner product. Another difference to note is that for the left side of the diagram --- bases, independence and spanning --- four axioms are needed, contrary to the three axioms in the classical case. This difference between the classical case and the $q$-analogue does not appear for the other axiom systems in the diagram.

	\begin{figure}[ht]
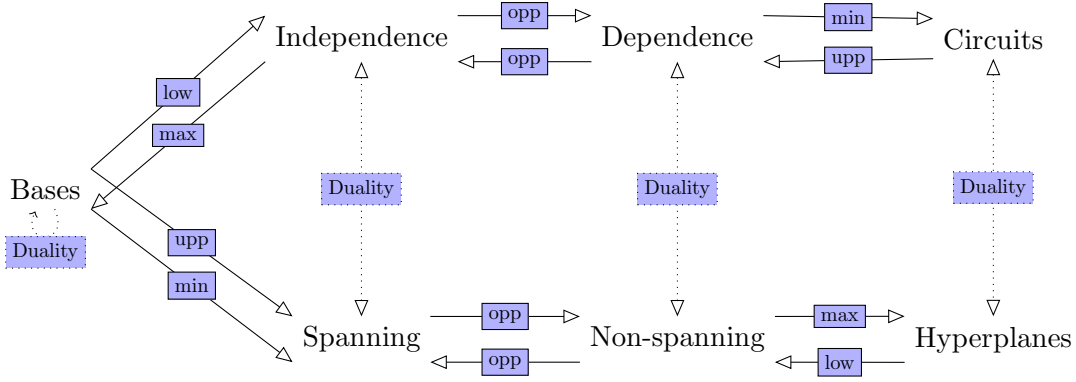

		\tikz[every node/.style={}, xscale=2.1]
		{
			\node (bases) at (0,0)  { Bases };
		 	\node (independents) at (2,2)  { Independence };
			 \node (spanning) at (2,-2)  { Spanning };
		 	\node (dependents) at (4,2)  { Dependence };
			 \node (nonspanning) at (4,-2)  { Non-spanning };
			 \node (circuits) at (6,2)  { Circuits };
			 \node (hyperplanes) at (6,-2)  { Hyperplanes };	
			 
			 \draw[dotted, -open triangle 45] (bases) [loop left];
			\draw [ -open triangle 45](bases.north east) -- node[auto, rectangle, draw,fill=blue!30, scale=0.7,anchor=center] { low }  (independents.north west);
			\draw [ -open triangle 45](independents.south west) -- node[auto, rectangle, draw,fill=blue!30, scale=0.7,anchor=center] { max }  (bases.south east);
			\draw [ -open triangle 45](independents.north east) -- node[auto, rectangle, draw,fill=blue!30, scale=0.7,anchor=center] { opp }  (dependents.north west);	
					\draw [ -open triangle 45](dependents.south west) -- node[auto, rectangle, draw,fill=blue!30, scale=0.7,anchor=center] { opp } (independents.south east);	
				\draw [ -open triangle 45](dependents.north east) -- node[auto, rectangle, draw,fill=blue!30, scale=0.7,anchor=center] { min }  (circuits.north west);	
					\draw [ -open triangle 45](circuits.south west) -- node[auto, rectangle, draw,fill=blue!30, scale=0.7,anchor=center] { upp }  (dependents.south east);	
			\draw [ -open triangle 45](bases.north east) -- node[auto, rectangle, draw,fill=blue!30, scale=0.7,anchor=center] { upp }  (spanning.north west);
			\draw [ -open triangle 45](bases.south east) -- node[auto, rectangle, draw,fill=blue!30, scale=0.7,anchor=center] { min }  (spanning.south west);
				\draw [ -open triangle 45](spanning.north east) -- node[auto, rectangle, draw,fill=blue!30, scale=0.7,anchor=center] { opp } (nonspanning.north west);	
					\draw [ -open triangle 45](nonspanning.south west) -- node[auto, rectangle, draw,fill=blue!30, scale=0.7,anchor=center] { opp }  (spanning.south east);		
			\draw [ -open triangle 45](nonspanning.north east) -- node[auto, rectangle, draw,fill=blue!30, scale=0.7,anchor=center] { max }  (hyperplanes.north west);	
					\draw [ -open triangle 45](hyperplanes.south west) -- node[auto, rectangle, draw,fill=blue!30, scale=0.7,anchor=center] { low }  (nonspanning.south east);	
\draw [dotted, open triangle 45-open triangle 45](independents.south) -- node[auto, rectangle, draw,fill=blue!30, scale=0.7,anchor=center] { Duality }  (spanning.north);
\draw [dotted, open triangle 45-open triangle 45](dependents.south) -- node[auto, rectangle, draw,fill=blue!30, scale=0.7,anchor=center] { Duality }  (nonspanning.north);
\draw [dotted, open triangle 45-open triangle 45](circuits.south) -- node[auto, rectangle, draw,fill=blue!30, scale=0.7,anchor=center] { Duality }  (hyperplanes.north);

\path 
(bases) edge [dotted,loop below] node[auto, rectangle, draw,fill=blue!30, scale=0.7] { Duality } (bases)

		}
		\caption{Cryptomorphisms with Duality}
		\label{diagram2} 
	\end{figure}

This paper is organised as follows. In Section \ref{sec:prelim} we outline all the different axiomatic systems that we will consider in this work. 
In Section \ref{sec:representable} we present an infinite family of representable $q$-matroids derived from an $\Fqm$-linear code and explicitly describe its bases, independent spaces, flats, circuits etc.
In Section \ref{sec:equivaxiomsyst} we describe some variations on the independence, hyperplane and rank axiom systems. 
The remaining sections go through the various pairwise cryptomorphisms in turn.
In Section \ref{sec:ind-cl} we show that the independence and closure axioms are cryptomorphic. 
In Sections \ref{sec:flat-cl} and \ref{sec:rkcl} we prove cryptomorphisms between the closure function axioms and the independence and rank function axioms, respectively.  
In Sections \ref{FlatHypCryptomorph} and \ref{OpensSect} we establish the equivalence of the flat axioms and the hyperplane and open space axioms
respectively. 
In Sections \ref{sec:ind-dep} and \ref{sec:dep-circ}, the dependence axioms are shown to be cryptomorphic to the independence and the circuit axioms, respectively. 
In Section \ref{sec-HC} we note the cryptomorphism between the hyperplanes axioms and circuit axioms and discuss cocircuits of a matroid.
Finally, in Section \ref{ToSpanOrNotToSpan}, we deduce the spanning and non-spanning space axioms from the other axiom systems.

 \section{Preliminaries}\label{sec:prelim}

 	Throughout this paper, $n$ denotes a fixed positive integer and we denote by $E$ a fixed $n$-dimensional vector space over an arbitrary field
 $\mathbb{F}.$ We denote by $\mathcal{L}(E)$ the lattice of subspaces of $E$. For any $A,B\in\mathcal{L}(E)$ with $A\subseteq B$ we denote by $[A,B]$ the interval between $A$ and $B$, that is, the lattice of all subspaces $X$ with $A\subseteq X\subseteq B$. For $A\subseteq E$ we use the notation $\mathcal{L}(A)$ to denote the interval $[\{0\},A]$.

 For any subspace $X \in \kL(E)$ we denote by $X^\perp$ the orthogonal complement of $X$ in $E$ with respect to the standard dot product:
 \[ X^\perp :=\{ y \in E : x \cdot y = 0 \; \forall x \in X \},\]
 where $x \cdot y := \sum_{j=1}^n x_i y_i$ for any $x=(x_1,\ldots,x_n),y=(y_1,\ldots,y_n) \in E$.

\begin{Definition}\label{def:families}
   Let $\A \subseteq \kL(E)$. We define the following families of subspaces of $E$.
   \begin{align*}
       \upp(\A)&:=\{ X \in \kL(E) : \exists A \in \A, A \subseteq X \},\\
       \low(\A)&:=\{ X \in \kL(E) : \exists A \in \A, X \subseteq A \},\\
       \max(\A)&:=\{ X \in \A : X \nsubseteq A \text{ for any } A \in \A, A \neq X \},\\
       \min(\A)&:=\{ X \in \A :  A \nsubseteq X \text{ for any } A \in \A, A \neq X \},\\
       \opp(\A)&:=\{ X \in \kL(E) : X \notin \A \},\\
       \A^\perp&:=\{ X^\perp : X \in \A \}.  
   \end{align*}
\end{Definition}

\begin{Definition}\label{def:maxinX}
	Let $\A$ be a collection of subspaces of $E$. For any subspace $X \in \kL(E)$, we define the collection of {\bf maximal subspaces
	of $X$ in $\A$} to be the collection of subspaces
	\[
	\max(X,\A):=\{ A \in \A : A \subseteq X \text{ and } B \subset X, B \in \A \implies \dim(B) \leq \dim(A) \}.
	\]
	In other words, $\max(X,\A)$ is the set of subspaces of $X$ in $\A$ that have maximal dimension over all such choices of subspaces.
\end{Definition}

The following defines a $q$-matroid in terms of a rank function (see \cite{JP18}).

 \begin{Definition}\label{rankfunction}
	A $q$-matroid $M$ is a pair $(E,r)$ where
	$r$ is an integer-valued 
	function defined on the subspaces of $E$ with the following properties:
	\begin{itemize}
		\item[(R1)] For every subspace $A\in \kL(E)$, $0\leq r(A) \leq \dim A$. 
		\item[(R2)] For all subspaces $A\subseteq B \in \kL(E)$, $r(A)\leq r(B)$. 
		\item[(R3)] For all $A,B$, $r(A+ B)+r(A\cap B)\leq r(A)+r(B)$.  
	\end{itemize}
	The function $r$ is called the {\bf rank function} of the $q$-matroid. 
\end{Definition}

 	\begin{Definition}\label{independentspaces}
		Let $(E,r)$ be a $q$-matroid. 
		A subspace $A$ of $E$ is called an {\bf independent} space of $(E,r)$ if $$r(A)=\dim A.$$
		We write $\I_r$ to denote the set of independent spaces of the $q$-matroid $(E,r)$:
		\[\I_r :=\{ I \in \kL(E) : \dim(I) = r(I) \}. \]
		
		A subspace that is not an independent space of $(E,r)$ is called a 
		{\bf dependent space} of the $q$-matroid $(E,r)$. 
		We call $C \in \kL(E)$ a {\bf circuit} if it is itself a dependent space and every proper subspace of $C$ is independent.
		A {\bf spanning space} of the $q$-matroid $(E,r)$ is a subspace $S$  such that $r(S)=r(E)$. 
		A {\bf non-spanning space} of the $q$-matroid $(E,r)$ is a space that is not a spanning space.
		We write $\kS_r$ to denote the set of spanning spaces of $(E,r)$ and we write $\kN_r$ to denote its set of non-spanning spaces. 
		A subspace is called an {\bf open space} of $(E,r)$ if it is a (vector space) sum of circuits.
	    We write $\kO_r$ to denote the set of open spaces of $(E,r)$.
	\end{Definition}

    \begin{Definition}\label{def-closure}
        Let $(E,r)$ be a $q$-matroid.
        For each $A \in \kL(E)$, define $C_r(A):=\{x \in \kL(E): r(A+x)=r(A), \dim(x)=1 \}.$
    	The {\bf closure function} of a $q$-matroid $(E,r)$ is the function
    	defined by
    	\[\cl_r:\mathcal{L}(E) \to\mathcal{L}(E): A \mapsto \cl_r(A)=\sum_{x \in C_r(A)} x .
    	\]
    \end{Definition}
   
    \begin{Definition}\label{def:flat}
	A subspace $A$ of a $q$-matroid $(E,r)$ is called a {\bf flat} if for all $1$-dimensional 
	subspaces $x \in \kL(E)$ such that $x\nsubseteq A$ we have $$r(A+x)>r(A).$$
	We write ${\mathcal F}_r$ to denote the set of flats of the $q$-matroid $(E,r)$, that is
	\[ \kF_r:= \{ A \in \kL(E) : r(A+x)>r(A) \:\:\forall x \in \kL( E), x \nsubseteq A, \dim(x)=1  \}. \]
	A subspace $H$ is called a $\textbf{hyperplane}$ if it is a maximal proper flat, i.e., if $H \neq E$ and the only flat that properly contains $H$ is $E$. We write $\mathcal{H}_r$ to denote the set of hyperplanes of the $q$-matroid $(E,r)$, that is
	\[ \kH_r=\{A\in\kL(E): r(A)=r(M)-1 \text{ and } r(A+x)>r(A) \:\:\forall x \in \kL( E), x \nsubseteq A, \dim(x)=1  \}. \] 
\end{Definition}

    We now present several axiom systems. Some of these, such as the {\em independence axioms, flat axioms, circuit axioms} and
    {\em closure axioms} have been presented before, while others (namely the axioms of {\em open spaces} and {\em dependent spaces}) have not. In later sections we will establish that these are all cryptomorphisms
    of a $q$-matroid.

	\begin{Definition}\label{independence-axioms}
	Let $\I \subseteq \mathcal{L}(E)$. We define the following {\bf independence axioms}.
	\begin{itemize}
		\item[(I1)] $\I\neq\emptyset$.
		\item[(I2)] For all $I,J \in \kL(E)$, if $J\in\I$ and $I\subseteq J$, then $I\in\I$.
		\item[(I3)] For all $I,J\in\I$ satisfying $\dim I<\dim J$, there exists a $1$-dimensional subspace $x\subseteq J$, $x\not\subseteq I$ such that $I+x\in\I$.
		\item[(I4)] For all $A,B \in \kL(E)$ and $I,J \in \kL(E)$ such that 
		$I \in \max(\I \cap \kL(A))$ and $J \in \max(\I \cap \kL(B))$,
		there exists $K\in \max(\I \cap \kL(A+B))$ such that $K \subseteq I+J$.
	\end{itemize}
	If $\I$ satisfies the independence axioms (I1)-(I4) we say that $(E,\I)$ is a collection of {\bf independent spaces}.
\end{Definition}

\begin{Definition}\label{indep-bases}
Let $\mathcal{B} \subseteq \kL(E)$.
We define the following {\bf basis axioms}.
\begin{itemize}
\item[(B1)] $\mathcal{B}\neq\emptyset$
\item[(B2)] For all $B_1,B_2\in\mathcal{B}$, if $B_1\subseteq B_2$ then $B_1=B_2$.
\item[(B3)] For all $B_1,B_2\in\mathcal{B}$ and for every subspace $A$ of codimension 1 in $B_1$ satisfying $B_1\cap B_2\subseteq A$, there is a $1$-dimensional subspace $y$ of $B_2$ such that $A+y\in\mathcal{B}$.
\item[(B4)] 
For all $A,B\in \kL(E)$, if $I$ and $J$ are maximal intersections of some members of $\mathcal{B}$ with $A$ and $B$, respectively, there exists a maximal intersection of a basis and $A+B$ that is contained in $I+J$.
\end{itemize}
If $\mathcal{B}$ satisfies the bases axioms (B1)-(B4) we say that $(E,\mathcal{B})$ is a collection of {\bf bases}.
\end{Definition}

\begin{Definition}
    Let $\A \subseteq \kL(E)$. Let $A,B \in \A$. We say that $B$ {\bf covers} $A$ in $\A$
    if $A \subseteq B$ and for any $C \in \A$ such that
    $A \subseteq C \subseteq B$ then either $A=C$ or $B=C$.
\end{Definition}

\begin{Definition}\label{flat}
Let $\mathcal{F} \subseteq {\mathcal L}(E)$. 
We define the following {\bf flat axioms}.
\begin{itemize}
  	\item[(F1)] $E\in\mathcal{F}$.
  	\item[(F2)] If $F_1\in\mathcal{F}$ and $F_2\in\mathcal{F}$, then $F_1\cap F_2\in\mathcal{F}$.
  	\item[(F3)] For all $F\in\mathcal{F}$ and $x\in \kL(E)$ a $1$-dimensional subspace not contained in $F$, there is a unique cover of $F$ in $\kF$ that contains $x$.
\end{itemize}
If $\kF$ satisfies the flat axioms (F1)-(F3) we say that $(E,\kF)$ is a collection of {\bf flats}.
\end{Definition}

\begin{Definition}\label{OpenSpaces}
Let $\kO \subseteq \kL(E)$. 
We define the following {\bf open space axioms}.
\begin{itemize}
 \item[(O1)] $\{0\} \in \kO$.
 \item[(O2)] For all $O_1,O_2 \in \kO$ it holds that $O_1 +O_2 \in \kO$.
 \item[(O3)] For each $O \in \kO$ and each $X \in \kL(E)$ such that $O \nsubseteq X$ and $\codim_E(X)=1$, there exists a unique $O' \subseteq X \cap O$ such that $O$ is a cover of $O'$ in $\kO$.
\end{itemize}
If $\kO$ satisfies the open space axioms (O1)-(O3) we say that $(E,\kO)$ is a collection of {\bf open spaces}.

\end{Definition}
 
 \begin{Definition}\label{hyperplane-axioms}
Let $\mathcal{H} \subseteq \mathcal{L}(E)$. 
We define the following {\bf hyperplane axioms}.
\begin{itemize}
  	\item[(H1)] $E\notin\mathcal{H}$.
  	\item[(H2)] For all $H_1,H_2\in\mathcal{H}$, if $H_1\subseteq H_2$ then $H_1=H_2$.
  	\item[(H3)] For all distinct $H_1,H_2\in\mathcal{H}$, for each 1-dimensional space $x\in \kL(E)$ there exists $H_3\in\mathcal{H}$ such that $(H_1\cap H_2)+x\subseteq H_3$.
\end{itemize}
If $\kH$ satisfies the axioms (H1)-(H3) then we say that $(E,\kH)$ is a collection of {\bf hyperplanes}.
\end{Definition}

\begin{Definition}\label{dependence-axioms}
	Let $\mathcal{D}\subseteq\mathcal{L}(E)$. We define the following {\bf dependence axioms}.
	\begin{itemize}
		\item[(D1)] $\{0\}\notin \mathcal{D}$.
		\item[(D2)] For all $D_1,D_2 \in \kL(E)$, if $D_1\in\mathcal{D}$ and $D_1\subseteq D_2$ then $D_2 \in \mathcal{D}$.
		\item[(D3)]  For all $D_1,D_2 \in \mathcal{D}$ satisfying $D_1 \cap D_2 \notin \mathcal{D}$,
		if $D$ is a space of codimension one in $D_1+D_2$ then $D \in \kD$.
		\end{itemize}
	If $\mathcal{D}$ satisfies the dependence axioms (D1)-(D3) we say that
	$(E,\mathcal{D})$ is a collection of {\bf dependent spaces}.
\end{Definition}

\begin{Definition}\label{circuit-axioms}
Let $\mathcal{C}\subseteq\mathcal{L}(E)$. We
define the following {\bf circuit axioms}.
\begin{itemize}
\item[(C1)] $\{0\}\notin\mathcal{C}$.
\item[(C2)] For all $C_1,C_2\in\mathcal{C}$, if $C_1\subseteq C_2$ then $C_1=C_2$.
\item[(C3)]  For distinct $C_1,C_2 \in \kC$ and any $X\in \kL(E)$ of codimension $1$ there is a circuit $C_3 \subseteq \kC$ such that $C_3 \subseteq (C_1+C_2)\cap X$.
\end{itemize}
If $\kC$ satisfies the circuit axioms (C1)-(C3), we say that $(E,\mathcal{C})$ is a collection of {\bf circuits}.
\end{Definition}
Note that the axiom (C3) listed here is different from the axiom (C3) as defined in \cite[Theorem 64]{JP18}. We will explain this in Section \ref{sec-HC}.

\begin{Definition}\label{closure-axioms}
Let $\cl:\mathcal{L}(E)\to \mathcal{L}(E)$ be a map. We define the following {\bf closure axioms}.
\begin{itemize}
\item[(Cl1)] For every $A \in \kL(E)$ it holds that $A\subseteq\cl(A)$.
\item[(Cl2)] For all $A,B \in \kL(E)$, if $A\subseteq B$ then $\cl(A)\subseteq\cl(B)$.
\item[(Cl3)] For every $A \in \kL(E)$ it holds that $\cl(A)=\cl(\cl(A))$.
\item[(Cl4)] For all $x,y,A \in \kL(E)$ such that $\dim(x)=\dim(y)=1$, if $y\subseteq\cl(A+x)$ and $y\not\subseteq\cl(A)$, then $x\subseteq\cl(A+y)$.
\end{itemize}
If $\cl:\mathcal{L}(E)\to \mathcal{L}(E)$ satisfies the closure axioms (Cl1)-(Cl4) then we call it a {\bf closure function}.
We write $(E,\cl)$ to denote a vector space $E$ together with a function $\cl$ satisfying the closure axioms.
\end{Definition} 	

\begin{Definition}\label{spanning-axioms}
	Let $\kS \subseteq \mathcal{L}(E)$. We define the following {\bf spanning space axioms}.
	\begin{itemize}
		\item[(S1)] $E \in \kS$.
		\item[(S2)] For all $I,J \in \kL(E)$, if $J\in\kS$ and $J \subseteq I$ then $I\in\kS$.
		\item[(S3)] For all $I,J\in\kS$ such that $\dim J<\dim I$, there exists some $X \in \kL(E)$ of codimension $1$ in $E$ satisfying $J \subseteq X$, $I \nsubseteq X$, and $I\cap X\in\kS$.
		\item[(S4)] For all $A,B\in\kL(E)$ and $I,J\in \kL(E)$ such that $I\in\min(\kS\cap[A,E])$ and $J\in\min(\kS\cap[B,E])$, there exists $K\in\min(\kS\cap [A\cap B,E])$ such that $I\cap J\subseteq K$.		
	\end{itemize}
	If $\kS$ satisfies the independence axioms (S1)-(S4) we say that $(E,\kS)$ is a collection of {\bf spanning spaces}.
\end{Definition}

\begin{Definition}\label{nonspan-axioms}
	Let $\mathcal{N}\subseteq\mathcal{L}(E)$. We define the following {\bf non-spanning space axioms}.
	\begin{itemize}
		\item[(N1)] $E \notin  \mathcal{N} $.
		\item[(N2)] For all $N_1,N_2 \in \kL(E)$, if $N_1\in\mathcal{N}$ and $N_2\subseteq N_1$ then $N_2 \in \mathcal{N}$.
		\item[(N3)]  For all $N_1,N_2 \in \mathcal{N}$ satisfying $N_1 +N _2 \notin \mathcal{N}$, if $N$ is a space such that $N_1\cap N_2$ has codimension one in $N$ then $N\in\kN$.
		\end{itemize}
	If $\mathcal{N}$ satisfies the dependence axioms (N1)-(N3) we say that
	$(E,\mathcal{N})$ is a collection of {\bf non-spanning spaces}.
\end{Definition}

We conclude this section with the notion of a {\em dual matroid}, which we will use in Sections \ref{sec-HC}, \ref{OpensSect} and \ref{ToSpanOrNotToSpan}.

\begin{Definition}
Let $M=(E,r)$ be a $q$-matroid. Then $M^*=(E,r^*)$ is also a $q$-matroid, called the \textbf{dual $q$-matroid}, with rank function
\[ r^*(A)=\dim(A)-r(E)+r(A^\perp). \]
\end{Definition}

We recall the following theorem from \cite[Theorem 45]{JP18}.

\begin{Theorem}\label{thm-dualbases}
The subspace $B\in \kL(E)$ is a basis of the $q$-matroid $M$ if and only if $B^\perp$ is a basis of the dual $q$-matroid $M^*$.
\end{Theorem}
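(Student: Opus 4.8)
The plan is to prove the equivalence by showing each direction via the rank function of the dual. Recall the dual rank is $r^*(A) = \dim(A) - r(E) + r(A^\perp)$, and that a subspace $B$ is a basis of $M$ precisely when $\dim(B) = r(B) = r(E)$; equivalently, $B$ is an independent spanning space. So I will translate "$B$ is a basis of $M^*$" into the conditions $r^*(B^\perp) = \dim(B^\perp)$ and $r^*(B^\perp) = r^*(E)$, and check these are equivalent to $B$ being a basis of $M$. First I would compute $r^*(E) = \dim(E) - r(E) + r(\{0\}) = n - r(E)$, using (R1) to get $r(\{0\}) = 0$. Next, substituting $A = B^\perp$ into the dual rank formula and using $(B^\perp)^\perp = B$ gives $r^*(B^\perp) = \dim(B^\perp) - r(E) + r(B) = (n - \dim B) - r(E) + r(B)$.

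From here the two basis conditions for $B^\perp$ in $M^*$ unwind cleanly. The independence condition $r^*(B^\perp) = \dim(B^\perp) = n - \dim B$ becomes $r(B) = r(E)$, i.e. $B$ is spanning in $M$. The spanning condition $r^*(B^\perp) = r^*(E) = n - r(E)$ becomes $(n - \dim B) - r(E) + r(B) = n - r(E)$, i.e. $r(B) = \dim B$, i.e. $B$ is independent in $M$. Thus $B^\perp$ is a basis of $M^*$ if and only if $B$ is both independent and spanning in $M$, which is exactly the definition of $B$ being a basis of $M$. The converse direction is symmetric: applying the same argument to $M^*$ in place of $M$ and using $(M^*)^* = M$ together with $(B^\perp)^\perp = B$ gives the reverse implication, so no separate work is needed.

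The only genuinely delicate point is the bookkeeping around perps and dimensions: one must be careful that the orthogonal complement is taken inside the fixed space $E$, that $\dim(B^\perp) = n - \dim(B)$, and that $(B^\perp)^\perp = B$ — all of which hold because $E$ carries the standard nondegenerate dot product. I would also double-check that "$B$ is a basis of $M$" is being used in the sense of a maximal independent space; by \cite[Theorem 37]{JP18} (or directly from (R1)–(R3)), the maximal independent spaces are exactly the independent spanning spaces, so the characterization "$\dim B = r(B) = r(E)$" is legitimate. With that identification in hand, the proof is a short computation and there is no real obstacle; the statement is essentially the observation that duality of rank functions exchanges the independence and spanning conditions.
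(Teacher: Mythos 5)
Your proof is correct. Note, however, that the paper itself does not prove this theorem; it simply recalls it as \cite[Theorem 45]{JP18}, so there is no in-paper argument to compare against. Your computation --- unwinding $r^*(B^\perp)=\dim(B^\perp)-r(E)+r(B)$ and $r^*(E)=n-r(E)$ to show that the independence condition on $B^\perp$ in $M^*$ is exactly the spanning condition on $B$ in $M$, and vice versa --- is the standard and correct argument, and relies only on nondegeneracy of the dot product (giving $\dim(B^\perp)=n-\dim B$ and $(B^\perp)^\perp=B$) plus the characterization of bases as spaces with $\dim B=r(B)=r(E)$. One small redundancy: your chain of equivalences already yields the full biconditional, so the closing paragraph invoking $(M^*)^*=M$ for ``the converse direction'' is unnecessary (though harmless).
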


\section{An Infinite Family of Representable $q$-Matroids}\label{sec:representable}

We present a construction of an infinite family of $q$-matroids. For a specific choice of parameter sets, we will identify its independent and dependent spaces, spanning and non-spanning spaces, its circuits, hyperplanes, open spaces, bases, flats and characterize the rank and closure of each subspace.

We first recall a standard construction of a representable $q$-matroid over a finite field (see \cite{JP18}). Let $E = {\Fq^n}$ and let $k,m$ be positive integers.
Let $h:\FF_{q^m}^n \longrightarrow \FF_{q^m}^k$ be an $\FF_{q^m}$-epimorphism. 
We define the function
$$r : \kL(E) \longrightarrow \mathbb{N}_0 : A \mapsto r(A) :=\dim_{\FF_{q^m}}(h(A)).$$
Then $(E,r)$ is a $q$-matroid with rank function $r$; the rank of a subspace $A$ is the dimension of its image under the epimorphism $h$.
We have $r(E) = k$. The epimorphism $h$ can be equivalently represented by a matrix $G$ with respect to some choice of basis for $\FF_{q^m}^n$ and 
$\FF_{q^m}^k$, while for each subspace $A \in \kL(E)$, we have that $r(A)$ is the $\FF_{q^m}$-rank of the matrix product $GY$ for any matrix $Y$ whose columns form a basis of $A$. We will denote this $q$-matroid by $M[G]$.

As a preparation for our construction, we describe the following setting.
Let $m=ps$ for coprime integers $p$ and $s$ and let $\alpha$ be a primitive element of $\FF_{q^m}$. 
Define $e:=\frac{q^m-1}{q^s-1}$, so that $\alpha^e$ has order $q^s-1$ in $\FF_{q^m}^\times $ and in particular is a generator of the subfield $\FF_{q^s}$.
Consider the $\Fq$-spaces 
$G_i=\langle \alpha^i,\alpha^{i+e},\ldots,\alpha^{i+(s-1)e} \rangle \subseteq \FF_{q^m},$
defined for $1\leq i \leq e$.
There exist $f_j \in \Fq$ such that $\displaystyle \sum_{j=0}^{s-1} f_j \alpha^{i+je}=0$, if and only if $\alpha^e$ is a root of a polynomial of degree at most $s-1$.
This is clearly impossible, since $\alpha^e$ is a primitive element of $\FF_{q^s}$, and so its minimal polynomial over $\Fq$ has degree $s$.
It follows that $G_i$ has $\Fq$-dimension equal to $s$. 
Moreover, the spaces $G_i$ have trivial intersection. Indeed, for $1\leq i,j \leq e$, there exist $f_k, g_k \in \Fq$ satisfying
$\displaystyle \sum_{k=0}^{s-1}f_k\alpha^{i+ek} =\sum_{k=0}^{s-1}g_k\alpha^{j+ek}$ if and only if $\displaystyle \alpha^{i-j} = \frac{g(\alpha^e)}{f(\alpha^e)}$ for some polynomials
$f(x),g(x) \in \FF_q[x]$.
This holds only if $\alpha^{i-j} \in \FF_{q^s}$, which holds if and only if $(i-j)(q^s-1) \equiv 0 \mod q^m-1$, in which case we must have $i=j$.
Therefore, the collection of spaces $\kG:=\{G_i: 1\leq i \leq e\}$ form a spread in $\FF_{q^m}$.
In fact $\kG$ is a {\em Desarguesian spread} and this construction is well-known \cite{segre}. 
We will use $\kG$ to characterise the ranks of spaces associated with an infinite family of representable $q$-matroids. Before we 
characterise this family, we will consider a particular example.

	\begin{example}\label{M2s}
		Let $s\in \NN$ be an odd integer and let $m=2s$. Let $\alpha \in \FF_{q^m}$ a primitive element.
		Take as a basis for $\FF_{q^m}$ over $\FF_q$ the elements $1,\alpha, \alpha^2,\ldots,\alpha^{2s-1}$ and consider the 
		matrix
		\[
		G=\left[\begin{matrix}1 &\alpha & \alpha^2 &\ldots &\alpha^{2s-1}\\ 
			1 &\alpha^{q^s} & (\alpha^{q^s})^2 &\ldots &(\alpha^{q^s})^{2s-1}
		\end{matrix} \right].
	    \]
		As outlined above, $G$ determines a $q$-matroid $(\Fq^n,r)$, which clearly supports only the possible ranks $0,1,2$, as $G$ itself has rank $2$, so in particular, $r(\FF_{q^m}^n)=2$. As $G$ has no all-zero columns, every $1$-dimensional space of $\FF_{q^m}$ over $\FF_q$ has rank $1$.
		Let $e = \frac{q^m-1}{q^s-1}=q^s+1$. The collection of $s$-dimensional subspaces
		$G_i=\langle \alpha^i, \alpha^{e+i},\ldots,\alpha^{(s-1)e+i}  \rangle_{\FF_q},$
		for $1 \leq i \leq e$ form a spread of $\FF_{q^m}$ as a vector space over $\Fq$.
		As will be shown in Theorem \ref{th:matex}, $r(G_i)=1$ for each $i$, while every other $s$-dimensional space
		has rank 2.
		Let us specify our example in a very small case. For $m=6,q=2$ we get a $q$-matroid $M_6$ with ground space $\FF_{2^6}$ over $\FF_2$.
		The spread $\kG$ is a collection of $e=2^3+1=9$ spaces of $\FF_2$-dimension $3$ and rank $1$, which we denote by $G_1,\ldots,G_9$. We list these as the following binary vector spaces.
		\begin{align*}
		    G_1 & =\langle (0,1,0,0,0,0),(0,0,0,0,1,1),(0,1,1,1,1,0) \rangle_{\FF_2}, \\
		    G_2 & =\langle (0,0,1,0,0,0),(1,1,0,0,0,1),(0,0,1,1,1,1) \rangle_{\FF_2}, \\
		    G_3 & =\langle (0,0,0,1,0,0),(1,0,1,0,0,0),(1,1,0,1,1,1) \rangle_{\FF_2}, \\
		    G_4 & =\langle (0,0,0,0,1,0),(0,1,0,1,0,0),(1,0,1,0,1,1) \rangle_{\FF_2}, \\
		    G_5 & =\langle (0,0,0,0,0,1),(0,0,1,0,1,0),(1,0,0,1,0,1) \rangle_{\FF_2}, \\
		    G_6 & =\langle (1,1,0,0,0,0),(0,0,0,1,0,1),(1,0,0,0,1,0) \rangle_{\FF_2}, \\
		    G_7 & =\langle (0,1,1,0,0,0),(1,1,0,0,1,0),(0,1,0,0,0,1) \rangle_{\FF_2}, \\
		    G_8 & =\langle (0,0,1,1,0,0),(0,1,1,0,0,1),(1,1,1,0,0,0) \rangle_{\FF_2}, \\
		    G_9 & =\langle (0,0,0,1,1,0),(1,1,1,1,0,0),(0,1,1,1,0,0) \rangle_{\FF_2}. \\
		\end{align*}
		
		Each space $G_i$ contains $7$ distinct $2$-dimensional spaces and no space is contained in two spread elements, so in total
		we have $63$ $2$-dimensional spaces contained some $G_i$, which we denote by $D_1,\ldots,D_{63}$.
		Clearly $r(D_i) =  1$ for each $i\in \{1,\ldots,63\}$. 
	
		In Table \ref{Tabellona}, we tabulate how the subspaces of each dimension in $\FF_2^6$ are distributed, according to the different cryptomorphic definitions of a $q$-matroid. 
		The closure function, independent spaces, circuits etc are all defined with respect to the given rank function.
		
			\begin{table}
			\small
		\resizebox{\textwidth}{!}{	\begin{tabular}{|c|c|c|c|c|c|c|c|}
				\hline
				\diagbox{\thead{crypt}}{\thead{dim}} &\thead{0} & \thead{1} & \thead{2} & \thead{3} & \thead{4} & \thead{5} & \thead{6}  \\\hline
			
			{\bf Rank} & 0 & 1& \makecell{2 except\\ $r(D_1)=\cdots =r(D_{63})=1$} & \makecell{2 except\\ $r(G_1)=\cdots=r(G_9)=1$} &2 &2 &2\\ \hline
			{\bf \makecell{Closure \\ of a Space}} &0 
			& \makecell{ $\cl(x)=G_i$ \\ for $x \subseteq G_i$} & 
			$\cl(T)=\left\{ \begin{array}{ll} G_i & \text{ if } T \subseteq G_i\\ E & \text{ else} \end{array}\right.$ 
			 & $\cl(T)=\left\{ \begin{array}{ll} G_i & \text{ if } T = G_i\\
			 E & \text{ else }\end{array}\right.$& $E$ &$E$ &$E$
			\\ \hline
	{\bf \makecell{Independent \\Spaces}} & yes & all & \makecell{all except \\$D_1,\ldots,D_{63}$} & none & none & none & no 
			\\ \hline
		 {\bf Bases} & no & none & \makecell{all except \\$D_1,\ldots,D_{63}$} & none & none & none & no 
			\\ \hline
			 {\bf \makecell{Spanning \\ Spaces}} & no & none & \makecell{all except \\$D_1,\ldots,D_{63}$} & \makecell{all except\\ $G_1,\ldots,G_9$} & all & all & yes
			\\ \hline
		{\bf Circuits} & no & none & $D_1,\ldots,D_{63}$ & \makecell{ $T$ such that $D_i \nsubseteq T$ }& none & none & no 
			\\ \hline
			{\bf \makecell{Dependent \\Spaces}} & no & none & $D_1,\ldots,D_{63}$ &  all & all & all & yes 
			\\ \hline
			 {\bf \makecell{Non-spanning \\ Spaces}} & yes & all & $D_1,...,D_{63}$ & $G_1,...,G_9$ & none & none & no
			\\ \hline
	  {\bf Flats} & yes & none & none &  $G_1,\ldots,G_9$ & none & none & yes 
			\\ \hline		
	  {\bf Open Spaces} & yes & none & $D_1,\ldots,D_{63}$ &  \makecell{$G_1,\ldots,G_9$ and \\ $T$ such that $D_i \nsubseteq T$} & all & all & yes 
			\\ \hline	
{\bf Hyperplanes} & no & none & none &  $G_1,\ldots,G_9$ & none & none & no 
			\\ \hline
			\end{tabular}
		}	\caption{Defining Spaces of the $q$-Matroid.}\label{Tabellona}
		\end{table}
		
        As can be seen in Table \ref{Tabellona},
        every space of dimension at most $1$ has rank equal to its dimension, and so is independent.
		The zero space is also a flat, being equal to its closure, and is also a non-spanning space.
		 The closure of a one dimensional space is exactly one space from among the $G_1,\ldots,G_9$, namely the specific spread element $G_i$ that contains it. 
		 
		 As regards the spaces of dimension $2$, they all have rank 2 and are independent, bases and spanning spaces, except for the 63 subspaces of the spread,  $D_1,\ldots,D_{63}$, which are circuits and so are dependent, non-spanning, and open spaces. 
		 
		 Every $3$-dimensional space is dependent, having dimension exceeding its rank. In particular, as noted before, each $G_i$ has rank 1, while the remaining $3$-spaces have rank 2. Among the 1395 spaces of dimension 3, 1332 are circuits except those 63 spaces that contain some $D_i$ as a subspace. All spaces apart from $G_1,\ldots,G_9$ are  spanning spaces. The spaces $G_1,\ldots,G_9$ are flats, non-spanning and are also the only hyperplanes of $M_6$. 
		 Any open space of dimension 3, begin a sum of circuits, is either a circuit of dimension $3$ or has the form $D_i+D_j$, which must therefore be a spread element since any two $D_i,D_j$ are either contained in the same spread element, or have trivial intersection.
		 
		The $4$- and $5$-dimensional spaces are all dependent of rank 2 and there are no circuits nor flats among them. They are all spanning spaces. 
		The $4$-dimensional open spaces are the sum of open spaces of dimension $2$ and $3$. Each one contain some $D_i$ since every $4$-space intersects some spread element in dimension at least $2$, so all $4$-dimensional sets are open.
		The five dimensional spaces are also all open, because they are sums of open spaces of dimension $2$ and $3$. 
		Finally, the whole ground space is a dependent space of rank 2 and is not a circuit, but is a flat, a spanning space and an open space. \\
		
		We will now illustrate the multiple axiom systems for this example. Some axioms are straightforward to check directly for all possibilities, but we do not go through all the details. In other cases we pick some of the more illuminating examples.
	
	    \smallskip
		\noindent{\bf Rank:} (R1) and (R2) clearly hold. Let us see an example for (R3), using $G_1$ and $G_2$. We know that $G_1+G_2=E $ and $G_1 \cap G_2 =\{0\}$. Therefore,
			$r(G_1+G_2)+r(G_1 \cap G_2)=2+0=2 \leq r(G_1)+r(G_2)=1+1=2. $

    \smallskip
	\noindent{\bf Closure:} That axioms (Cl1)-(Cl3) hold is immediate, as we can see from the table shown in Table \ref{Tabellona}. 
	We'll show that (Cl4) holds. Let $A,x,y$ be subspaces of $\FF_2^6$ such that
	$\dim(x)=\dim(y)=1$. Suppose that $y \subseteq \cl(x+A)$ and that $y \nsubseteq \cl(A)$.
	As observed in Table \ref{Tabellona}, for any subspace $T$ we have $\cl(T) = E$ unless $T$ is contained in a spread element $G$, in which case we have $\cl(T)=G$. 
	Therefore, since $y \nsubseteq \cl(A)$, $y$ and $A$ are not both contained in the same spread element and hence $\cl(y+A)=E$.
	It follows that $x \subseteq \cl(y+A)$.
	
	\smallskip
	\noindent{\bf Independence:} It is clear from Table \ref{Tabellona} that (I1) and (I2) hold. 
	We'll show that (I3) holds. All the independent spaces determined by the rank function of $M_6$ have dimension at most $2$, so we need only consider some 1-dimensional subspace $I$ and and a two-dimensional space $J$, different from $D_1,\ldots,D_{63}$. Since $J \neq D_i$ for any $i$, it is not contained in any spread element. In particular, $J \nsubseteq G$ where $G$ is the unique spread element containing $I$.
	Therefore, there exists a 1-dimensional space $x \subseteq J$, $x \nsubseteq G$ and 
	$x+I$ is a $2$-dimensional space not contained in $G$, which is therefore independent.
	
	Consider now (I4). 
	Let $A,B$ be subspaces of $\FF_2^6$ and let $I,J$ be maximal independent subspaces of $A$ and $B$, respectively. Then $r(A)=\dim(I)$ and $r(B)=\dim(J)$. 
	Any independent subspace of $A+B$ has dimension at most 2. 
	If $\dim(I)=\dim(J)=1$ then $r(A)=r(B)=1$, so from Table \ref{Tabellona}, 
	$A$ and $B$ are each contained in some spread element.

	We have $\dim(I+J) =2$ and further, $I+J$ is independent if and only if $I+J \neq D_{\ell}$ for any $\ell$.
	If $A$ and $B$ are contained in distinct spread elements then 
	$I+J\neq D_{\ell}$ for any $\ell$ and so $I+J$ gives the required maximal independent subspace of $A+B$.
	If $A,B \subseteq G$ for a spread element $G$ then $r(A+B)=1$ and both $I$ and $J$ are a maximal 
	independent subspaces of $A+B$.
	If $\dim(I) =2$, then $I$ is a maximal independent subspace of $A+B$. This proves that (I4) holds for the independent spaces of the $q$-matroid $M_6$.

   \smallskip
   \noindent{\bf Bases:} That (B1) and (B2) hold is easy to see. 
   Let $B_1\neq B_2$ be a pair of distinct bases of the $q$-matroid $M_6$. 
   Then the $B_i$ are 2-dimensional spaces different from $D_1,\ldots,D_{63}$.
   Let $I=B_1\cap B_2$. 
   If $\dim(I)=1$ then $I$ is the only space of codimension $1$ in $B_1$ that contains $I$,
   so set $A=I$. 
   Otherwise, let $A$ be any $1$-dimensional space in $B_1$.
   In order to find a basis and see that (B3) holds, it is enough to add any $1$-dimensional space not contained in the same spread element as $A$.
   
   We illustrate an instance of (B4). Let 
   $A=\langle (1,0,0,1,0,0)\rangle$ and let \\
   $B=\langle (1,0,0,1,0,0), (1,0,0,0,0,1), (1,0,0,0,0,0) \rangle$. The maximal intersection of $A$ with a basis is $I=A$, while the maximal intersection of $B$ with a basis is $J=\langle (1,0,0,1,0,0), (1,0,0,0,0,1) \rangle$ ($J$ is a basis since it is a space of dimension $2$ not contained in a spread element). 
   Then $I+J = J$, which gives the required maximal intersection of a basis with $A+B = I+B = B$.
			
	\smallskip
	\noindent{\bf Circuits:} The axioms (C1) and (C2) are trivially satisfied. We'll show an example of 
	the axiom (C3). Let $C_1=\langle (0,0,0,0,0,1),$ $ (0,0,1,0,1,0) \rangle$ and $C_2= \langle (0,0,0,0,0,1), (1,0,0,1,0,0) \rangle$. Let $H=\langle (0,0,0,0,0,1) \rangle^\perp$. 
	Then $(C_1+C_2)\cap H$ contains the circuit, $C_3=\langle (0,0,1,0,1,0), (1,0,0,1,0,0) \rangle$, as required.
			
	\smallskip
	\noindent{\bf Dependence:}	(D1) and (D2) hold trivially. To illustrate (D3), take for example, two dependent spaces $D_i,D_j$, $1 \leq i,j \leq 63$. Being circuits, their intersection is independent. If $D_i,D_j$ come from the same spread element $G_l$, then their sum is $G_l$ itself and any codimension one space in such a sum is dependent. If they come from two different spread elements $G_l,G_m$, their intersection is $\{0\}$, which is independent. Their sum has dimension $4$ and hence any subspace of codimension 1 in $D_i+D_j$ is dependent.
		
	\smallskip
	\noindent{\bf Flats:} From Table \ref{Tabellona}, $E$ is a flat (so (F1) holds), $\{0\}$ is a flat and $G_1,\ldots,G_9$ are flats. This makes (F2) direct: the intersection of a flat $F$ with $E$ is $F$ itself, the intersection with $\{0\}$ or between two spread elements is $\{0\}$. As regards (F3), if $F=\{0\}$ and we take a $1$-dimensional space $x$, the unique cover of $F+x=x$ is the unique spread element containing $x$. If we choose $F$ from among the spread elements $G_1,\ldots,G_9$ and pick any 1-dimensional space $x \nsubseteq F$ the cover of $x+F$ is $E$.
	
	\smallskip
	\noindent{\bf Hyperplanes:} Since the only hyperplanes of the $q$-matroid $M_6$ are the spread elements $G_1,\ldots,G_9$, which are pairwise disjoint, axioms (H1) and (H2) hold vacuously. 
	For any 1-dimensional space $x$ and any $i,j$ we have $(G_i \cap G_j)+x = x$, which is contained in some spread element $G_\ell$.

	\smallskip
	\noindent{\bf Open Spaces:} It is easy to see that (O1) and (O2) hold. As regards (O3), consider an open space $O$ of dimension $3$. Let $X\subseteq E$ of codimension $1$. If $O=G_i$ then $O\cap X=D_j$, which is an open space covered by $O$. Otherwise, $O$ does not contain any $D_i$, hence $O\cap X$ also does not. In that case we have that $\{0\}\subseteq O\cap X$ and $O$ covers $\{0\}$.
		
	\smallskip
	\noindent{\bf Spanning Spaces:}
	(S1) and (S2) are easy to verify by looking at Table \ref{Tabellona}.
	Let us look at (S3). We verify it in the case $J=\langle (0,1,0,0,0,0),(0,0,1,0,0,0) \rangle$ and
	$I=J+\langle (0,0,0,1,0,0)\rangle$. It is enough to take, as an example, $X=J+ \langle (0,0,0,0,1,0),(0,0,0,0,0,1),(0,1,1,1,0,0) \rangle$. Of course $J \subseteq X$ and $I\nsubseteq X$. Since $X\cap I =J$ we actually have a spanning space, as required by (S3). 
A very easy example for (S4) is given by taking $A=\langle (0,1,0,0,0,0)\rangle$, $B=\langle (0,0,1,0,0,0) \rangle$, which are both contained in $J=\langle (0,1,0,0,0,0),(0,0,1,0,0,0) \rangle $, their minimal containing spanning space. Their intersection is $\{0\}$ and the required minimal space is $J$ itself.

	\smallskip
	\noindent{\bf Non-spanning Spaces:} (N1), (N2) are easily read from the table. For (N3), we have that the only way for two non-spanning spaces $N_1,N_2$ to have $N_1+N_2\notin\kN$ is if $N_1$ and $N_2$ are in different spread elements. So $N_1\cap N_2=\{0\}$ and $N$ is a $1$-dimensional space, which is a non-spanning space.
\end{example}
	
	We now continue with the characterization of our infinite family of representable $q$-matroids. First we require a well-known lemma 
	\cite[Lemma 3.51]{lidl_niederreiter_1996}.
	
	\begin{Lemma}\label{lem:lidl}
	Let $\alpha_1,...,\alpha_\ell \in \FF_{q^m}$. 
    We have:
    $$ \left|\begin{array}{cccc} 
 	\alpha_1              & \alpha_2   & \cdots   & \alpha_\ell \\ 
 	\alpha_1^q            & \alpha_2^q & \cdots   & \alpha_\ell^{q} \\
 	\vdots             & \cdots  & \vdots   & \vdots      \\
 	\alpha_1^{q^{(\ell-1)}} & \alpha_2^{q^{(\ell-1)}} & \cdots &  \alpha_\ell^{q^{(\ell-1)}}
 \end{array}\right|  
 = \alpha_1 \prod_{j=1}^{\ell-1} \prod_{c_1,\ldots,c_j \in \FF_q} \left( \alpha_{j+1}- \sum_{k=1}^j c_k \alpha_k\right). $$
 In particular, this determinant is nonzero if and only if $\alpha_1,\ldots,\alpha_\ell$
 are linearly independent over $\FF_q$.
	\end{Lemma}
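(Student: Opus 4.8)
The plan is to prove this by induction on $\ell$. The identity is a statement about the Moore determinant (the Frobenius-Vandermonde), so I would first set $M_\ell = M_\ell(\alpha_1,\ldots,\alpha_\ell)$ for the displayed $\ell \times \ell$ determinant and verify the base case $\ell = 1$ trivially (the determinant is $\alpha_1$, and the product over $j$ from $1$ to $0$ is empty). For the inductive step, the key trick is to view $M_\ell$, with $\alpha_1,\ldots,\alpha_{\ell-1}$ fixed and $\alpha_\ell$ replaced by a variable $x$, as a polynomial
\[
P(x) = \sum_{i=0}^{\ell-1} (-1)^{\ell-1-i}\, M_{\ell-1}^{(i)}\, x^{q^i},
\]
where $M_{\ell-1}^{(i)}$ is the appropriate $(\ell-1)\times(\ell-1)$ minor; expanding the last column shows $P$ is an $\Fq$-linearized polynomial in $x$ of $q$-degree $\ell-1$, whose leading coefficient (the cofactor of $x^{q^{\ell-1}}$) is exactly $M_{\ell-1}(\alpha_1,\ldots,\alpha_{\ell-1})$.

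Next I would identify the roots of $P$. Since $P$ is $\Fq$-linearized, its root set is an $\Fq$-subspace, and clearly $P(\alpha_k) = 0$ for $1 \le k \le \ell-1$ because substituting $x = \alpha_k$ produces a determinant with two equal columns. If $\alpha_1,\ldots,\alpha_{\ell-1}$ are linearly dependent over $\Fq$ then $M_{\ell-1} = 0$ by the inductive hypothesis and every term of the claimed product formula for $M_\ell$ already contains the factor $M_{\ell-1}$... more carefully: if they are dependent, then the columns of $M_\ell$ are dependent too, so $M_\ell = 0$, matching the right-hand side which contains $M_{\ell-1}$ as the $j \le \ell-2$ part of the product; so we may assume independence. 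Then $\alpha_1,\ldots,\alpha_{\ell-1}$ span an $\ell-1$ dimensional $\Fq$-space $V$ with $|V| = q^{\ell-1}$ elements, all of which are roots of $P$; since $\deg_x P = q^{\ell-1}$, these are all the roots and $P(x) = M_{\ell-1}\prod_{v \in V}(x - v)$. Rewriting the product over $V$ as $\prod_{c_1,\ldots,c_{\ell-1} \in \Fq}\bigl(x - \sum_{k=1}^{\ell-1} c_k\alpha_k\bigr)$ and setting $x = \alpha_\ell$ gives
\[
M_\ell = M_{\ell-1}(\alpha_1,\ldots,\alpha_{\ell-1}) \prod_{c_1,\ldots,c_{\ell-1}\in\Fq}\Bigl(\alpha_\ell - \sum_{k=1}^{\ell-1} c_k\alpha_k\Bigr),
\]
and substituting the inductive formula for $M_{\ell-1}$ yields precisely the claimed expression, with the new factor being the $j = \ell-1$ term of the product.

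Finally, the "in particular" claim follows: each factor $\alpha_{j+1} - \sum_{k=1}^j c_k\alpha_k$ is nonzero for all choices of $c_k$ if and only if $\alpha_{j+1} \notin \langle \alpha_1,\ldots,\alpha_j\rangle_{\Fq}$, and the conjunction of these conditions over all $j$ is exactly $\Fq$-linear independence of $\alpha_1,\ldots,\alpha_\ell$. The main obstacle is the bookkeeping in the inductive step: correctly identifying that $P$ is linearized with leading coefficient $M_{\ell-1}$ (this uses that each entry of the last column is the $q^i$-th power of $x$, so column operations respect the Frobenius structure) and matching the reindexed product $\prod_{v\in V}$ against the double product in the statement. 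Since the paper cites this as \cite[Lemma 3.51]{lidl_niederreiter_1996}, I would most likely just quote it rather than reproduce this argument in full.
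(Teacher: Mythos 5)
The paper does not prove this lemma at all; it simply cites it as \cite[Lemma 3.51]{lidl_niederreiter_1996} and uses it as a black box, which your last sentence anticipates. Your argument is nonetheless a correct and complete proof of the identity, and it is essentially the standard one (and, as far as I recall, the one Lidl and Niederreiter themselves give): view the Moore determinant as an $\FF_q$-linearized polynomial $P(x)$ in the last variable by cofactor expansion along the last column, observe that its kernel is an $\FF_q$-subspace containing $\alpha_1,\ldots,\alpha_{\ell-1}$ and that its leading coefficient is the $(\ell-1)\times(\ell-1)$ Moore determinant, so that when $\alpha_1,\ldots,\alpha_{\ell-1}$ are independent one gets $P(x)=M_{\ell-1}\prod_{v\in V}(x-v)$, and then evaluate at $x=\alpha_\ell$ and invoke the inductive hypothesis. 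Your handling of the degenerate case (both sides vanish when $\alpha_1,\ldots,\alpha_{\ell-1}$ are $\FF_q$-dependent, since the columns of $M_\ell$ become dependent via Frobenius linearity and the $j\le\ell-2$ part of the product is $M_{\ell-1}=0$) closes the only gap in the generic argument, and the sign bookkeeping $(-1)^{\ell-1-i}\equiv(-1)^{\ell+i+1}$ is right. The ``in particular'' deduction from the factored form is also correct.
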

    
    We now describe the $q$-matroid $M[G]=(E,r)$ with rank function defined by $r(A)=\rk(GY)$ for any matrix $Y$ with column space equal to $A$.
    
	\begin{Theorem}\label{th:matex}
		Let $p,s$ be a pair of coprime positive integers and let $m=ps$. Let $E = \FF_{q^m}$ and let
		\[
		G = 
		\left(
		\begin{array}{ccccc}
			1 & \alpha & \alpha^2 & \cdots & \alpha^{m-1} \\
			1 & \alpha^{q^s} & \alpha^{2q^s} & \cdots & \alpha^{(m-1)q^s} \\
			\vdots & \vdots & \vdots & \vdots & \vdots \\
			1 & \alpha^{q^{(p-1)s}} & \alpha^{2q^{(p-1)s}} & \cdots & \alpha^{(m-1)q^{(p-1)s}} \\	
		\end{array}
		\right).
		\]
		Let $e:=(q^m-1)/(q^s-1)$ and for each $i \in \{1,\ldots ,e\}$, let $G_i:=\langle \alpha^i, \alpha^{e+i},\ldots,\alpha^{(s-1)e+i}  \rangle_{\FF_q}$.
		Let $(E,r)$ be the $q$-matroid $M[G]$.
		Let $A$ be a subspace of $\FF_{q^m}$ over $\FF_q$, let $B$ be a basis of $A$ over $\FF_q$, and let 
		$\mathcal{S}:=\{j \in \{1,\ldots,e\} : B \cap G_j \neq \emptyset \}$.
		Let 
		$\mu:=\dim_{\FF_{q^s}}(\langle\alpha^\ell : \ell \in \mathcal{S} \rangle)$.
		Then $r(A) = \min(p,\mu)$.
	\end{Theorem}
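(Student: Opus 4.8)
The plan is to transform the matrix product $GY$ into a Moore-type matrix, read off its rank using Lemma~\ref{lem:lidl} applied over the subfield $\FF_{q^s}$, and then convert the resulting $\FF_{q^s}$-dimension into $\mu$ via the spread structure of $\kG$. Write $B=\{b_1,\dots,b_d\}$ and, for each $i$, $b_i=\sum_{j=0}^{m-1}(b_i)_j\alpha^j$ with $(b_i)_j\in\FF_q$; let $Y$ be the matrix whose $i$-th column lists the coordinates $(b_i)_0,\dots,(b_i)_{m-1}$, so that $r(A)=\rk_{\FF_{q^m}}(GY)$.

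First I would show that $GY$ is a Moore matrix. Since each $(b_i)_j\in\FF_q$ is fixed by the $q^s$-power Frobenius $\tau\colon x\mapsto x^{q^s}$, and $\tau$ is additive, a direct entrywise computation gives, for $0\le t\le p-1$ and $1\le i\le d$,
\[
(GY)_{t,i}=\sum_{j=0}^{m-1}\alpha^{jq^{ts}}(b_i)_j=\Big(\sum_{j=0}^{m-1}(b_i)_j\alpha^j\Big)^{q^{ts}}=b_i^{q^{ts}} .
\]
Hence $GY$ has $i$-th column $(b_i,b_i^{q^s},\dots,b_i^{q^{(p-1)s}})^{\mathsf T}$. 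Setting $Q:=q^s$ we have $\FF_{q^m}=\FF_{Q^p}$ and $\tau$ is the relative Frobenius of $\FF_{q^m}/\FF_{q^s}$, a generator of $\Gal(\FF_{q^m}/\FF_{q^s})$; so $GY$ is precisely a Moore matrix for the extension $\FF_{Q^p}/\FF_Q$.

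Next I would compute $\rk(GY)$. Let $\rho:=\dim_{\FF_{q^s}}\langle b_1,\dots,b_d\rangle_{\FF_{q^s}}$ and, after reindexing, assume $b_1,\dots,b_\rho$ are $\FF_{q^s}$-linearly independent and span the remaining $b_i$ over $\FF_{q^s}$. The principal $\rho\times\rho$ submatrix of $GY$ is $(b_i^{Q^{t}})_{0\le t\le\rho-1,\ 1\le i\le\rho}$, whose determinant is nonzero by Lemma~\ref{lem:lidl} applied with $\FF_{q^s}$ (equivalently $q^s$) in place of $\FF_q$; thus $\rk(GY)\ge\rho$. For the reverse, if $b_i=\sum_{k\le\rho}c_{ik}b_k$ with $c_{ik}\in\FF_{q^s}$, then applying $x\mapsto x^{Q^t}$ and using $c_{ik}^{Q^t}=c_{ik}$ shows the $i$-th column of $GY$ is the same $\FF_{q^m}$-combination of the first $\rho$ columns, so $\rk(GY)\le\rho$. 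Therefore $r(A)=\rho=\dim_{\FF_{q^s}}\langle b_1,\dots,b_d\rangle_{\FF_{q^s}}$, and in particular $r(A)\le[\FF_{q^m}:\FF_{q^s}]=p$.

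Finally I would identify $\langle b_1,\dots,b_d\rangle_{\FF_{q^s}}$ with the space in the statement. Since $\alpha^e$ is a primitive element of $\FF_{q^s}$, the $\FF_q$-span $\langle 1,\alpha^e,\dots,\alpha^{(s-1)e}\rangle$ is an $s$-dimensional subspace of $\FF_{q^s}$, hence equals $\FF_{q^s}$, so $G_j=\alpha^j\FF_{q^s}$ for every $j$. As $\kG$ is a spread, each nonzero $b_i$ lies in a unique $G_{j_i}$, i.e.\ $b_i=c_i\alpha^{j_i}$ with $c_i\in\FF_{q^s}^\times$; since scaling spanning vectors by units of $\FF_{q^s}$ leaves the $\FF_{q^s}$-span unchanged,
\[
\langle b_1,\dots,b_d\rangle_{\FF_{q^s}}=\langle\alpha^{j_1},\dots,\alpha^{j_d}\rangle_{\FF_{q^s}}=\langle\alpha^\ell:\ell\in\mathcal{S}\rangle_{\FF_{q^s}},
\]
where $\mathcal{S}=\{j_1,\dots,j_d\}=\{j:B\cap G_j\neq\emptyset\}$. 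Combining with the previous paragraph, $r(A)=\dim_{\FF_{q^s}}\langle\alpha^\ell:\ell\in\mathcal{S}\rangle=\mu$, and as $\mu\le p$ automatically this is $\min(p,\mu)$; the degenerate case $A=\{0\}$ (so $B=\emptyset$, $\mathcal{S}=\emptyset$, $\mu=0$) is immediate. The main obstacle is the bookkeeping observation of the second paragraph — recognising that the rows of $G$ being $q^s$-powers rather than $q$-powers is exactly what makes $GY$ a genuine Moore matrix over $\FF_{q^s}$, so that Lemma~\ref{lem:lidl} applies with $q^s$ playing the role of $q$; after that, the passage from the full-rank case to arbitrary rank and the spread computation are routine.
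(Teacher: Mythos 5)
Your proof is correct, and it reaches the same endpoint as the paper's — Lemma~\ref{lem:lidl} applied over $\FF_{q^s}$ together with the spread identification $G_j=\alpha^j\FF_{q^s}$ — but via a noticeably cleaner route. The paper first computes $GY$ for a single spread element $G_i$ to see it has rank one, then decomposes a general basis $B$ according to spread membership, passes through polynomial representatives $\alpha^{i_k}f^{k,h}(\alpha^e)$, and performs a column reduction to obtain a Moore-shaped matrix in the representatives $\alpha^{i_k}$ before invoking Lemma~\ref{lem:lidl}. Your key shortcut is the entrywise identity $(GY)_{t,i}=\sum_j(b_i)_j\alpha^{jq^{ts}}=\bigl(\sum_j(b_i)_j\alpha^j\bigr)^{q^{ts}}=b_i^{q^{ts}}$, which holds because the coordinates $(b_i)_j$ lie in $\FF_q$ and are therefore fixed by the $q^{ts}$-power Frobenius. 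This exhibits $GY$ \emph{directly} as the $p\times d$ Moore matrix of $b_1,\dots,b_d$ for the extension $\FF_{(q^s)^p}/\FF_{q^s}$, so the rank is $\dim_{\FF_{q^s}}\langle b_1,\dots,b_d\rangle_{\FF_{q^s}}$ with no column reduction required. The spread then converts this dimension to $\mu$ exactly as you say, and your remark that $\mu\le p$ automatically (since $[\FF_{q^m}:\FF_{q^s}]=p$) correctly explains why the paper's $\min(p,\mu)$ collapses to $\mu$. Your argument for the upper bound on the rank (that columns indexed by $\FF_{q^s}$-dependent $b_i$'s are $\FF_{q^s}$-linear, hence $\FF_{q^m}$-linear, combinations of the others) is also stated more explicitly than in the paper, which leaves that direction somewhat implicit after its column reduction. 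In short: same key lemma, same spread input, but your direct Moore-matrix identification makes the proof shorter and, in my view, more transparent.
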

	
	\begin{proof}
		
		For each element $\theta \in \FF_{q^m}$, we write $\Gamma(\theta)$ to denote the expression of $\theta$ as a vector of length $m$ in $\FF_q$ with respect to the 
		basis $\{1,\alpha,\ldots,\alpha^{m-1}\}$. We also define $\Gamma(S):=\{\Gamma(s): s \in S\}$ for any $S \subseteq \FF_{q^m}$.
		Let $f(x)=\sum_{k=0}^{m-1} f_i x^i \in \FF_q[x]$. Then $f(\alpha^t) = (1,\alpha^t,\ldots,\alpha^{t(m-1)}) \cdot (f_0,\ldots,f_{m-1})$ for any integer $t$.
		In particular, for the vector $f \in \FF_q^m$, $(Gf)_j = f(\alpha^{q^{(j-1)s}})$ for $1 \leq j \leq p$.
		Now consider the space $G_i = \langle \alpha^i,\alpha^{i+e},\ldots,\alpha^{i+(s-1)e} \rangle \subseteq \FF_{q^m}$. Let $Y$ be the $m \times s$ matrix whose
		$j$-th column is $\Gamma(\alpha^{i+(j-1)e})$. Then, using the fact that $(\alpha^e)^{q^s}=\alpha^e$, we have
		\begin{small}
		\begin{eqnarray*}
			GY &=& 
			\left(
			\begin{array}{cccc}
				\alpha^{i} & \alpha^{i+e} & \cdots & \alpha^{i+(s-1)e} \\
				\alpha^{iq^s} & \alpha^{(i+e)q^s} & \cdots & \alpha^{(i+(s-1)e)q^s} \\
				\vdots & \vdots & \cdots & \vdots \\
				\alpha^{iq^{(p-1)s}} & \alpha^{(i+e)q^{(p-1)s}} & \cdots & \alpha^{(i+(s-1)e)q^{(p-1)s}} 	
			\end{array}    
			\right)
			=
			\left(
			\begin{array}{cccc}
				\alpha^{i} & \alpha^{i+e} & \cdots & \alpha^{i+(s-1)e} \\
				\alpha^{iq^s} & \alpha^{iq^s+e} & \cdots & \alpha^{iq^s+(s-1)e} \\
				\vdots & \vdots & \cdots & \vdots \\
				\alpha^{iq^{(p-1)s}} & \alpha^{iq^{(p-1)s}+e} & \cdots & \alpha^{iq^{(p-1)s} +(s-1)e} 	
			\end{array}    
			\right)\\
			&=&
			\left(
			\begin{array}{cccc}
				\alpha^{i} & \alpha^{i} & \cdots & \alpha^{i} \\
				\alpha^{iq^s} & \alpha^{iq^s} & \cdots & \alpha^{iq^s} \\
				\vdots & \vdots & \cdots & \vdots \\
				\alpha^{iq^{(p-1)s}} & \alpha^{iq^{(p-1)s}} & \cdots & \alpha^{iq^{(p-1)s}} 	
			\end{array}    
			\right)\diag(1,\alpha^e,\alpha^{2e},\ldots,\alpha^{(s-1)e}),
		\end{eqnarray*} 
		\end{small}
		
		\noindent which clearly has rank 1 over $\FF_{q^m}$.
		Let $V$ be an $\Fq$-subspace of $\Fqm$ of dimension $\ell$ and let $B$ be a basis of $V$ over $\Fq$. Each element of $B$ is contained in exactly one spread element $G_i \in \kG$. 
		Write $B =  B_{i_1} \cup \cdots \cup B_{i_t}$, where $B_{i_k} \subset G_{i_k}$ and the $G_{i_k}$ are distinct. In particular, we have $\mathcal{S}=\{i_1,\ldots,i_t\}$.
		Each element of $B_{i_k}$ has the form $\displaystyle \sum_{p=0}^{s-1} f_p \alpha^{i_k + pe} = \alpha^{i_k}f(\alpha^e)$ for some $f(x) \in \FF_q[x]$.
		Let $\displaystyle B_{i_k} = \{ \alpha^{i_k} f^{k,1}(\alpha^e),\ldots,  \alpha^{i_k} f^{k,\ell_k}(\alpha^e) \}$ for some $f^{k,j}(x) \in \FF_q[x]$ where $B_{i_k}$ has order $\ell_k$.
		Let $\displaystyle Y_k = [\Gamma(\alpha^{i_k} f^{k,1}(\alpha^e)),\ldots,  \Gamma(\alpha^{i_k} f^{k,t_k}(\alpha^e))]$, for each $k$. 
		Then 
		\[
		(GY_k)_{j,h} = \alpha^{i_k q^{(j-1)s}} f^{k,h}(\alpha^{eq^{(j-1)s}}) = \alpha^{i_k q^{(j-1)s}} f^{k,h}(\alpha^e).
		\]
		Now let $Y$ be the $m\times \ell$ matrix in $\Fq$ defined by $Y=[Y_1 | \cdots | Y_t]$, so that $GY = [GY_1 | \cdots | GY_t]$.
		We have
		\begin{small}
		\begin{eqnarray*}
			GY&=&
			\left(
			\begin{array}{ccc|c|ccc}
				\alpha^{i_1} f^{1,1}(\alpha^e) & \cdots &  \alpha^{i_1} f^{1,t_1}(\alpha^e) & \cdots  &  \alpha^{i_t} f^{t,1}(\alpha^e) & \cdots &  \alpha^{i_t} f^{1,\ell_1}(\alpha^e)\\
				\alpha^{i_1 q^s} f^{1,1}(\alpha^e) & \cdots &  \alpha^{i_1 q^s} f^{1,t_1}(\alpha^e) & \cdots  &  \alpha^{i_k q^s} f^{t,1}(\alpha^e) & \cdots &  \alpha^{i_k q^s} f^{t,\ell_t}(\alpha^e)\\
				\vdots & \vdots & \vdots & \cdots & \vdots & \vdots & \vdots \\
				\alpha^{i_1 q^{(p-1)s}} f^{1,1}(\alpha^e) & \cdots &  \alpha^{i_1 q^{(p-1)s}} f^{1,t_1}(\alpha^e) & \cdots  &  \alpha^{i_k q^{(p-1)s}} f^{t,1}(\alpha^e) & \cdots &  \alpha^{i_t q^{(p-1)s}} f^{t,\ell_t}(\alpha^e)\\	  
			\end{array}	
			\right) 	
		\end{eqnarray*}
		\end{small}
		Since $f^{k,h}(\alpha^e)\neq 0$ for all $k,h$, $GY$ is column equivalent to the matrix:
		\begin{equation*}
			\left(
			\begin{array}{cccc|cccc|c|cccc}
				\alpha^{i_1} & 0 & \cdots &0 & \alpha^{i_2} & 0 & \cdots & 0 & \cdots & \alpha^{i_t} & 0 & \cdots & 0 \\
				\alpha^{i_1q^s} & 0 & \cdots &0 & \alpha^{i_2q^s} & 0 & \cdots & 0 & \cdots & \alpha^{i_tq^s} & 0 & \cdots & 0 \\
				\vdots & \vdots & \cdots & \vdots & \vdots& \vdots & \cdots & \vdots & \cdots & \vdots & \vdots & \cdots & \vdots \\
				\alpha^{i_1q^{s(p-1)}} & 0 & \cdots &0 & \alpha^{i_2q^{s(q-1)}} & 0 & \cdots & 0 & \cdots & \alpha^{i_tq^{s(q-1)}} & 0 & \cdots & 0 \\
			\end{array}	
			\right).
		\end{equation*}
		Now let $S = \{s_1,\ldots,s_\mu \} \subseteq \mathcal{S}$ such that 
		$\{ \alpha^{s_1},\ldots,\alpha^{s_\mu}\}$ is a basis of $\langle\alpha^\ell : \ell \in \mathcal{S} \rangle_{\FF_{q^s}}$.
		Then by Lemma \ref{lem:lidl}, 
		$$ \left|\begin{array}{cccc} 
 	\alpha^{s_1}              & \alpha^{s_2}   & \cdots   & \alpha^{s_\ell} \\ 
 	\alpha^{s_1q^s}            & \alpha^{s_2q^s} & \cdots   & \alpha^{s_\ell q^s} \\
 	\vdots             & \cdots  & \vdots   & \vdots      \\
 	\alpha^{s_1q^{s(\ell-1)}} & \alpha^{s_2q^{ s(\ell-1)}} & \cdots &  \alpha^{s_\ell q^{s(\ell-1)}}
 \end{array}\right|  
 \neq 0, $$
 where $\ell = \min(p,\mu)$. The result now follows.
	\end{proof}
 
\section{Equivalent Axiom Systems}\label{sec:equivaxiomsyst}

In a number of cases a particular axiom system may have more than one equivalent set of axioms. This is certainly the case for the rank axioms, the hyperplane axioms and the independence axioms. 
Identifying these equivalences can be convenient for various proofs. Alternative axiom systems for the independent spaces and the bases were already given in \cite[Propositions 16 and 40]{JP18}.

\subsection{Independent spaces}

We start with equivalent formulations of the independence axioms: we will show that (I4) can be replaced by either of the following 
alternative axioms.

\begin{itemize}
	\item[(I4')] Let $A \in \kL(E)$ and let $I \in \max(A,\I)$. Let $B \in \kL(E)$. 
	Then there exists $J \in \max(A+B,\I)$ such that in $J \subseteq I+B$.
	\item[(I4'')] Let $A \in \kL(E)$ and let $I \in \max(A,\I)$. 
	Let $x \in \kL(E)$ be a $1$-dimensional space. Then there exists $J \in \max(x+A,\I)$ such that in $J \subseteq x+I$. 
\end{itemize}

These statements are Propositions 14 and 13 of \cite{JP18}, respectively. The proofs in that paper assume the rank axioms, while here we will only use the independence axioms.

\begin{Theorem}\label{lem:I4}
    Let $\I$ be a collection of subspaces satisfying (I1)-(I3).
	Then the axioms systems (I1)-(I4), (I1)-(I4') and (I1)-(I4'') are pairwise equivalent.
\end{Theorem}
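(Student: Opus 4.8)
The plan is to prove the cycle of implications $(I4)\Rightarrow(I4'')\Rightarrow(I4')\Rightarrow(I4)$ under the standing hypothesis that $\I$ satisfies $(I1)$--$(I3)$; combined with the trivial observation that $(I4')$ implies $(I4'')$ (instantiate the subspace appearing in $(I4')$ as a $1$-dimensional one), this cycle yields the asserted pairwise equivalence. Before starting I would record two elementary facts used throughout. First, $\{0\}\in\I$: by $(I1)$ there is some $W\in\I$, and $\{0\}\subseteq W$ forces $\{0\}\in\I$ by $(I2)$. Second, for every $A\in\kL(E)$ the family $\max(\I\cap\kL(A))$ of inclusion-maximal independent subspaces of $A$ coincides with $\max(A,\I)$, the family of independent subspaces of $A$ of largest dimension: the inclusion ``$\supseteq$'' is immediate, and for ``$\subseteq$'' one notes that if an inclusion-maximal independent $X\subseteq A$ had strictly smaller dimension than some $Y\in\I\cap\kL(A)$, then $(I3)$ would supply a $1$-dimensional $x\subseteq Y\subseteq A$ with $X+x\in\I$, contradicting maximality of $X$. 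This identification lets me pass freely between the formulation of $(I4)$ and that of $(I4')$, $(I4'')$.

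For $(I4)\Rightarrow(I4'')$: given $A$, $I\in\max(A,\I)$ and a $1$-dimensional space $x$, apply $(I4)$ with $B:=x$ and with some $J\in\max(x,\I)$ (nonempty since $\{0\}\in\I$, and necessarily $J\subseteq x$). The resulting $K\in\max(A+x,\I)$ with $K\subseteq I+J\subseteq I+x$ is precisely what $(I4'')$ demands.

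For $(I4'')\Rightarrow(I4')$ I would induct on $d:=\dim(A+B)-\dim A$. If $d=0$ then $A+B=A$ and $I$ itself works. If $d\geq 1$ then $B\nsubseteq A$, so I pick a $1$-dimensional $x\subseteq B$ with $x\nsubseteq A$; by $(I4'')$ there is $I_1\in\max(A+x,\I)$ with $I_1\subseteq I+x$. I then apply the induction hypothesis to the pair $(A+x,B)$ with maximal independent subspace $I_1$: since $x\subseteq B$ we have $(A+x)+B=A+B$ and $\dim(A+B)-\dim(A+x)=d-1$, so there is $J\in\max(A+B,\I)$ with $J\subseteq I_1+B\subseteq(I+x)+B=I+B$, as required.

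For $(I4')\Rightarrow(I4)$, which I expect to be the main obstacle, the idea is to apply $(I4')$ twice. Given $A,B$ with $I\in\max(A,\I)$ and $J\in\max(B,\I)$, first apply $(I4')$ to $(A,I,B)$ to obtain $K_1\in\max(A+B,\I)$ with $K_1\subseteq I+B$. Writing $r(X)$ for $\max\{\,\dim L : L\in\I\cap\kL(X)\,\}$, this $K_1$ witnesses $r(I+B)\geq\dim K_1=r(A+B)$, and since $I+B\subseteq A+B$ we also have $r(I+B)\leq r(A+B)$; hence $r(I+B)=r(A+B)$, and consequently $\max(I+B,\I)\subseteq\max(A+B,\I)$ (an inclusion-maximal independent subspace of $I+B$ has dimension $r(I+B)=r(A+B)$ and lies in $\I\cap\kL(A+B)$, so it is maximal there too). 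Now apply $(I4')$ a second time, to $(B,J,I)$, obtaining $K\in\max(B+I,\I)=\max(I+B,\I)$ with $K\subseteq J+I=I+J$. By the inclusion just established, $K\in\max(A+B,\I)$, and $K\subseteq I+J$, which is exactly $(I4)$. The one point requiring care is the passage $\max(I+B,\I)\subseteq\max(A+B,\I)$: it is not merely a dimension count but uses that membership in $\I$ and containment in $A+B$ are inherited automatically once the dimension matches $r(A+B)$.
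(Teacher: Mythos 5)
Your proof is correct and follows essentially the same strategy as the paper: observe first that $\max(\kL(A)\cap\I)=\max(A,\I)$ by (I3), note the easy specialization implications, prove $(I4'')\Rightarrow(I4')$ by induction on the gap in dimension, and prove $(I4')\Rightarrow(I4)$ by two applications of $(I4')$ followed by a dimension comparison showing a maximal independent subspace of $I+B$ is already maximal in $A+B$. The cosmetic differences (inducting on $\dim(A+B)-\dim A$ rather than $\dim B$, and phrasing the final dimension argument via the auxiliary function $r$) do not change the substance of the argument.
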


\begin{proof}
    Note first that by (I3), if $A \in \kL(E)$ and $I,J \in \max(\kL(A) \cap \I)$,
    then $\dim(I) = \dim(J)$. Therefore, $\max(\kL(U) \cap \I) = \max(U,\I)$ for all $U \in \kL(E)$.
    It is thus clear that (I4) implies (I4'), which implies (I4'').
    Suppose that (I4'') holds. We will show that (I4') holds. Let $A ,B\in \kL(E)$ and let $I\in \max(A,\I)$.
    Suppose that (I4') holds for all subspaces of dimension less than $\dim(B)$.
    Let $C$ be a subspace of $B$ of codimension 1 in $B$ and write $B=x+C$.
    By hypothesis, there exists $J \in \max(A+C,\I)$ such that $J \subseteq I+C$.
    By (I4'') there exists $J' \in \max(A+C+x,\I) = \max(A+B,\I) $ such that $J' \subseteq J + x \subseteq I+C+x=I+B.$
    
	Now suppose that (I4') holds. Let $A,B \in \kL(E)$, let $I \in \max(A,\I)$ and let $J \in \max(B,\I)$. We claim there is member of $\max(A+B,\I)$ that is contained in $I+J$.
	Since $J\in \max(B,\I)$, by (I4') these exists $N \in \max(I+B,\I)$ such that $N \subseteq I+J$.  
	Again by (I4'), there exists $M \in \max(A+B,\I)$ such that $M \subseteq I+B$. But $M\in \max(I+B,\I)$, and hence $M$ and $N$ have the same dimension.
	It follows that $N$ is the required maximal subspace of $A+B$ that is contained in $I+J$ and so (I4') implies (I4). The result follows.
\end{proof} 

We will use (I4'') to establish a cryptomorphism between the independence axioms and the closure axioms in Section \ref{sec:ind-cl}.

The next lemma can be established by repeated applications of (I3). Its proof shows in particular that
if $I,J$ are subspaces of a collection $\I \subseteq \kL(E)$ 
that satisfies the first 3 independence axioms, then if $\dim(J)>\dim(I)$ there exists a subspace $U \subseteq J$ such that $I+U \in \I$, $U \cap I = \{0\}$. This yields an axiom that is equivalent to (I3).

\begin{Lemma}\label{lem:I3}
	Let $\I$ be a collection of spaces satisfying (I1)-(I3). Let $I \subseteq A \in \kL(E)$ and let $I \in \I$.
	Then there exists a subspace $M$ in $\I$ of maximal dimension in $A$, such that $I \subseteq M$.
\end{Lemma}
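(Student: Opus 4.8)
The plan is to extend $I$ to a maximal independent subspace of $A$ by a greedy/inductive argument built purely on the first three independence axioms. First I would set $d:=\max\{\dim J : J\in\I,\ J\subseteq A\}$, which is well-defined since $\I\cap\kL(A)$ is nonempty (it contains $\{0\}$ by (I1) and (I2), or indeed $I$ itself) and is bounded above by $\dim A$. If $\dim I = d$ we are done with $M=I$, so assume $\dim I < d$. Pick any $J\in\I$ with $J\subseteq A$ and $\dim J = d$; such a $J$ exists by the definition of $d$.

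Next I would repeatedly apply (I3) to the pair $(I,J)$. Since $\dim I<\dim J$, axiom (I3) yields a $1$-dimensional $x\subseteq J$ with $x\not\subseteq I$ and $I+x\in\I$. Note $I+x\subseteq J+I\subseteq A$ since $I\subseteq A$ and $J\subseteq A$, so $I+x$ is again an independent subspace of $A$ strictly containing $I$. Replacing $I$ by $I+x$ and iterating, each step strictly increases the dimension while staying inside $A$ and inside $\I$; the process terminates after finitely many steps at some $M\in\I$ with $I\subseteq M\subseteq A$ and $\dim M = d$ (it cannot stop earlier, because as long as $\dim M < d = \dim J$ axiom (I3) applies again). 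By the choice of $d$, this $M$ has maximal dimension among independent subspaces of $A$, which is exactly the claim.

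The only subtlety — and the point worth spelling out — is that the iteration is compared against a single fixed $J$ of the \emph{maximal} dimension $d$, rather than against an arbitrary larger independent space; this guarantees the loop runs until $M$ reaches dimension $d$, so the output is genuinely of maximal dimension in $A$ and not merely maximal with respect to inclusion. (In the classical setting these coincide; here (I3) still forces all maximal-by-inclusion independent subspaces of $A$ to have the same dimension, as noted in the proof of Theorem~\ref{lem:I4}, so the distinction is immaterial, but phrasing the induction against a fixed maximum-dimension $J$ makes the termination argument cleanest.) I do not expect any real obstacle here; the whole statement is a routine consequence of (I3), and the proof is short.
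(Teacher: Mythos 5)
Your proof is correct and follows essentially the same approach as the paper: fix a $J\in\I$ of maximal dimension contained in $A$ and iteratively augment $I$ via (I3) by one-dimensional subspaces of $J$ until the maximum dimension is reached. Your remark about comparing against a fixed maximum-dimensional $J$ (rather than an arbitrary larger independent space) is exactly the observation the paper makes implicitly by taking $J\in\max(A,\I)$ at the outset.
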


\begin{proof}
    By (I1), $\max(A,\I)$ is non-empty. 
	Let $J \in \max(A,\I)$. If $\dim(I)=\dim(J)$ then $I$ itself is the required maximal subspace of $A$ in $\I$, so suppose that
	$\dim(I)<\dim(J)$. Then by (I3), there exists $x \subseteq J, x \nsubseteq I$ such that $x+I \in \I$. If $\dim(x+I)=\dim(J)$, then $x+I$ is the required maximal subspace of $A$ in $\I$ that contains $I$. Otherwise, iterative applications of (I3) yields a maximal subspace $M=I+U \in \I$ of $A$,
	with $U \subseteq J$ and $I \cap U = \{0\}$.
\end{proof}

We mention another result that doesn't introduce new equivalent independence axioms, but will arise later in Sections \ref{sec:ind-cl} and \ref{sec:ind-dep}, when we establish cryptomorphisms between the independence and closure axioms and also between the independence and dependence axioms.

\begin{Lemma}\label{lem:M=m+I}
    Let $\I$ be a collection of subspaces satisfying (I1)-(I3).
    Let $A$ be a subspace of $E$ and let $x \in \kL(E),x \nsubseteq A$ be a $1$-dimensional space.
    Let $I \in \max(A,\I)$ and let $M \in \max(x+A,\I)$. Then $\dim(M) \leq \dim(I) +1$.
\end{Lemma}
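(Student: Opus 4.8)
The plan is to argue by contradiction: suppose $\dim(M) \geq \dim(I) + 2$, where $I \in \max(A,\I)$ and $M \in \max(x+A,\I)$ for a $1$-dimensional space $x \nsubseteq A$. Since $M \subseteq x+A$ is independent and $I$ is independent with $\dim(M) > \dim(I)$, I would first invoke Lemma \ref{lem:I3} applied inside $x+A$ to see that we may as well think of $M$ as containing a controlled enlargement of $I$; more usefully, I would apply the strengthened form of (I3) recorded in the paragraph before Lemma \ref{lem:I3} to the pair $I \subseteq M$: since $\dim(M) > \dim(I)$, there is a subspace $U \subseteq M$ with $U \cap I = \{0\}$ and $I + U \in \I$, and we can take $\dim(U) = \dim(M) - \dim(I) \geq 2$.

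The key point is a dimension count on where $U$ can live. We have $I + U \in \I$ and $I + U \subseteq M \subseteq x + A$. Now $\dim(I+U) \geq \dim(I) + 2$. On the other hand, $I + U \subseteq x + A$, and $I \subseteq A$, so $(I+U) \cap A$ is an independent subspace of $A$ containing $I$; by maximality of $I$ in $A$ (and the fact, noted in the proof of Theorem \ref{lem:I4}, that all maximal independent subspaces of $A$ have the same dimension), we get $(I+U)\cap A = I$. But then
\[
\dim(I+U) = \dim\big((I+U) + A\big) + \dim\big((I+U)\cap A\big) - \dim(A) \leq \dim(x+A) + \dim(I) - \dim(A) = \dim(I) + 1,
\]
using $(I+U) + A \subseteq x + A$ and $\dim(x+A) \leq \dim(A) + 1$. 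This contradicts $\dim(I+U) \geq \dim(I)+2$, and the lemma follows.

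The main obstacle — really the only subtle point — is making sure the "strengthened (I3)" step is legitimate, i.e. that from $I, I+U \in \I$ with $\dim(I+U) > \dim(I)$ we can extract $U' \subseteq U$ realizing intermediate dimensions while keeping $I + U'$ independent and $U' \cap I = \{0\}$; this is exactly the content of the remark preceding Lemma \ref{lem:I3}, obtained by repeated use of (I3), so I would cite that rather than redo it. An alternative, perhaps cleaner, route avoiding the strengthening altogether: directly take any independent $M' \subseteq x+A$ with $I \subsetneq M'$ (which exists as soon as $\dim M > \dim I$, by (I3) applied to the pair $I, M$), note $M' \cap A$ is independent in $A$ and contains $I$, hence equals $I$ by maximality, and then the same modular-law inequality gives $\dim(M') \leq \dim(I)+1$; iterating shows no independent subspace of $x+A$ strictly containing $I$ can have dimension exceeding $\dim(I)+1$, and in particular $\dim(M) \leq \dim(I)+1$. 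I would present this second version, as it uses only (I1)--(I3) in their given form.
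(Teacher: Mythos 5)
Your proposal is correct and takes a genuinely different route from the paper. The paper, after using Lemma~\ref{lem:I3} to reduce to the case $I\subseteq M$, argues at the level of one-dimensional spaces: it picks $m\subseteq M$, $m\nsubseteq I$, shows $m+A=x+A$, and then rules out the existence of a further $m'\subseteq M$, $m'\nsubseteq m+I$ by decomposing a generator of $m'$ as $\bar m+\bar a$ and doing a case analysis, each branch contradicting the maximality of $I$ in $A$. Your argument instead leans on the modular law for subspaces: once $I\subseteq M$ and you have observed that $M\cap A$ is an independent subspace of $A$ containing $I$ (hence equal to $I$ by maximality), the identity $\dim M=\dim(M+A)+\dim(M\cap A)-\dim A$ together with $M+A\subseteq x+A$ and $\dim(x+A)=\dim A+1$ gives $\dim M\le\dim I+1$ in one line, no contradiction needed. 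Your first version is overengineered: once $I\subseteq M$, the ``strengthened (I3)'' and the space $U$ are superfluous, since any vector-space complement of $I$ in $M$ would do, and in fact the dimension formula applies directly to $M$ itself; your second version is the right instinct, though the ``iterating'' remark is also unnecessary for the same reason. One small point worth making explicit in either version: you need $\dim M=\dim M''$ for some $M''\in\max(x+A,\I)$ with $I\subseteq M''$; this follows from Lemma~\ref{lem:I3} together with the observation in the proof of Theorem~\ref{lem:I4} that all members of $\max(x+A,\I)$ share the same dimension. With that spelled out, your proof is complete, and it is arguably cleaner than the paper's, exploiting the subspace lattice structure rather than mimicking the element-chasing style of the classical argument.
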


\begin{proof}
    Clearly, $\dim(I) \leq \dim(M)$. If $\dim(I)=\dim(M)$ then there's nothing to prove, so suppose that $\dim(I) < \dim (M)$. 
    By Lemma \ref{lem:I3}, we may assume that $I \subseteq M$.
    Let $m$ be a $1$-dimensional space such that $m \subseteq M, m \nsubseteq I$. 
    By (I2), $m+I \in \I$.
    By the maximality of $I$ in $A$, we must have $m \nsubseteq A$ 
	and so $A \subsetneq m+A \subseteq x+A$. Therefore, $m+A = x+A$.
	
	We claim that $m+I = M$. 
	Suppose otherwise and let $m'$ be a $1$-dimensional subspace of $M$ that is not contained in $m+I$. Again by (I2), $m+m'+I \in \I$ and by the maximality of $I$ in $A$ we have $m'+m \nsubseteq A$.
	Then $A \subsetneq m'+m+A \subseteq x+A$, so $m'+m+A = x+A=m+A$ and hence
	$m' \subseteq m+A$. 
	Therefore, $m' = \langle \bar{m} +\bar{a} \rangle$ for some $\bar{m}\in m$ and $\bar{a} \in A$.
	If $\bar{m}=0$ then we get the contradiction $m'\subseteq A$. If $\bar{a}=0$ then
	we arrive at the contradiction $m'=m$.
	Therefore, $m+m'+I = m+a+I$ for some $1$-dimensional subspace $a \subseteq A, a \nsubseteq I$, which by (I2) means that 
	$a+I \in \I$, with contradicts $I \in \max(A,\I)$. 
	It follows that $M=m+I$ and that $\dim(M)= \dim(I)+1$. 
\end{proof}

\subsection{Rank function}

The following theorem gives an alternative set of axioms for the rank function. This will be used in Section \ref{sec:rkcl} to show the cryptomorphism between the rank and closure functions. Throughout this section, let $r$ be an integer-valued function defined on the subspaces of $E$. We have the following axioms.
\begin{itemize}
\item[(R1')] $r(\{0\})=0$.
\item[(R2')] $r(A)\leq r(A+x)\leq r(A)+1$.
\item[(R3')] If $r(A)=r(A+x)=r(A+y)$ then $r(A+x+y)=r(A)$.
\end{itemize}
These axioms are sometimes called \emph{local} rank axioms, which explains why we will use a lot of mathematical induction to get to the \emph{global} versions. 

Before proving the equivalence between these axioms and the axioms of the rank function of a $q$-matroid, we state and prove some preliminary results. The first lemma is Proposition 6 from \cite{JP18}. However, the proof in that paper assumes $r$ satisfies (R1), (R2), (R3). Here, we want to use the lemma to prove these axioms, so we re-do the proof of the lemma using (R1'), (R2'), (R3') instead.
\begin{Lemma}
Let $r$ be an integer-valued function defined on the subspaces of $E$ satisfying (R1'), (R2'), (R3'). Let $A,B\in \kL(E)$. If $r(A+x)=r(A)$ for all 1-dimensional spaces $x\subseteq B$, then $r(A)=r(A+B)$.
\end{Lemma}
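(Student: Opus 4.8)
The plan is to prove this by induction on $\dim(B)$. The base case $\dim(B) = 0$ is trivial (then $A + B = A$), and the case $\dim(B) = 1$ is just the hypothesis itself: if $B = x$ is $1$-dimensional, then $r(A+x) = r(A)$ is exactly what is assumed. So the real work starts at $\dim(B) \geq 2$.

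Let me think about the inductive step. Suppose $\dim(B) = d \geq 2$ and the statement holds for all subspaces of smaller dimension. The obvious move is to write $B = B' + x$ where $B'$ has codimension $1$ in $B$ and $x$ is some $1$-dimensional space with $x \not\subseteq B'$. Every $1$-dimensional subspace of $B'$ is a $1$-dimensional subspace of $B$, so by the hypothesis applied to the pair $(A, B')$, the inductive hypothesis gives $r(A + B') = r(A)$. Also $r(A + x) = r(A)$ directly from the hypothesis. Now I want to conclude $r(A + B' + x) = r(A)$, i.e. $r(A + B) = r(A)$. If I could apply (R3') with "$A$" replaced by... no, that's not quite the shape of (R3'): (R3') says if $r(A) = r(A+x) = r(A+y)$ then $r(A+x+y) = r(A)$, but here I have $r(A) = r(A+B')$ with $B'$ not $1$-dimensional.

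So the fix is to strengthen slightly or iterate. The cleaner approach: do a double induction, or rather, prove the claim by picking a $1$-dimensional $x \subseteq B$ and a complementary codimension-$1$ space $B'$, then use the inductive hypothesis to get $r(A + B') = r(A)$, and now I want to add $x$. The point is I need to know $r((A+B') + y) = r(A + B')$ for... actually, the slickest route is: since $r(A + B') = r(A)$, it suffices by (R2') and monotonicity-type reasoning to show $r(A + B' + x) = r(A + B')$. By (R3') applied with base space $A + B'$, if I knew $r(A + B' + x) = r(A+B')$ I'd be done trivially — circular. Instead, here is the honest plan: induct on $\dim(B)$, and in the inductive step use (R3') repeatedly. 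Write $B = B' + x$ as above; we have $r(A+B') = r(A)$ by induction and $r(A + x) = r(A)$ by hypothesis. I now claim $r(A + B' + x) = r(A)$ by a second induction on $\dim(B')$: pick a hyperplane $B''$ of $B'$ and a line $z \subseteq B'$, $z \not\subseteq B''$; by the inner inductive hypothesis $r(A + B'' + x) = r(A)$ and (again by the inner hypothesis, or the outer one, with the roles arranged) $r(A + B'' + z) = r(A)$ since $r(A + z) = r(A)$ and $B'' + z = B'$ needs the outer hypothesis... This is getting tangled, so the genuinely clean formulation is: strong induction on $\dim(B)$ alone, and in the step, use that $r(A+B') = r(A)$ (induction on the codim-$1$ subspace $B'$) together with $r(A + x) = r(A)$, and then invoke (R3') in the form "$r(C) = r(C + x) = r(C + y) \Rightarrow r(C + x + y) = r(C)$" with $C$ ranging over intermediate spaces, building up $B'$ one dimension at a time so that at each stage only $1$-dimensional extensions are added to a space already known to have rank $r(A)$.

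**Main obstacle.** The delicate point is precisely the bookkeeping in the inductive step: (R3') only lets you combine two $1$-dimensional extensions of a fixed space, so to handle $B$ of dimension $\geq 2$ one must carefully choose a flag $\{0\} = B_0 \subset B_1 \subset \cdots \subset B_d = B$ of subspaces of $B$ and show by induction along the flag that $r(A + B_i) = r(A)$ for each $i$, using at step $i$ that $r(A + B_{i-1}) = r(A)$ (inductive hypothesis), that $r(A + x_i) = r(A)$ where $B_i = B_{i-1} + x_i$ (hypothesis on $B$), and — this is the subtle bit — some way to feed these into (R3') with base space $A$, which forces one to first establish $r(A + x_i) = r(A)$ for a line and $r(A + B_{i-1})=r(A)$, then note $A + B_i = (A + B_{i-1}) + x_i$ and apply (R3') at base $A$ only after rewriting $A + B_{i-1}$ as an iterated $1$-dimensional extension each of whose partial sums already has rank $r(A)$. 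I expect the write-up to need a short auxiliary claim: \emph{if $r(A) = r(A + x)$ and $r(A) = r(A + C)$ for a line $x$ and a subspace $C$, then $r(A) = r(A + x + C)$}, proved by induction on $\dim C$ using (R3'); the main lemma is then this auxiliary claim applied repeatedly, peeling lines off $B$.
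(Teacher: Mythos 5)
Your proposal is correct, and it takes a genuinely different route from the paper's. You induct on $\dim B$ and offload the hard step to an auxiliary claim — if $r(A)=r(A+x)$ and $r(A)=r(A+C)$ for a line $x$ and a subspace $C$, then $r(A)=r(A+x+C)$ — itself proved by a second induction on $\dim C$, applying (R3') at the intermediate base $A+C'$ once $r(A+C')=r(A)$ is forced by the squeeze $r(A)\le r(A+C')\le r(A+C)=r(A)$ from (R2'). I verified this auxiliary claim is true exactly as stated (it needs no hypothesis on the lines of $C$, thanks to that squeeze), so the main lemma indeed follows by peeling one line $x$ off $B$, getting $r(A+B')=r(A)$ from the outer induction, and then invoking the auxiliary claim with $C=B'$. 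The paper instead runs a single induction on $k=\dim B-\dim(A\cap B)$, peels off a \emph{codimension-2} subspace $B'$ of $B$ and two lines $x,y$ with $A+B=A+B'+x+y$, applies the induction hypothesis three times to $(A,B')$, $(A,B'+x)$, $(A,B'+y)$, and then applies (R3') once at base $A+B'$. The paper's version is shorter but quietly relies on choosing $B'$ to contain $A\cap B$ (so that the quantity $\dim B'-\dim(A\cap B')$ actually drops), a point it does not spell out; your version avoids that subtlety because $\dim B$ strictly decreases for any codimension-1 subspace. The main criticism of your write-up is that it is stated as a strategy rather than a proof: you identify the auxiliary claim but do not carry out its (short) inductive proof, which is where the one genuinely non-obvious move — applying (R3') at the shifted base $A+C'$ rather than at $A$ — lives. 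Spelling that out would make the argument complete.
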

\begin{proof}
We prove this by induction on $k=\dim B-\dim(A\cap B)$. For $k=0,1,2$ we have $1$-dimensional spaces $x,y\subseteq B$ such that $A+x+y=A+B$ (the sum does not need to be direct). Suppose $r(A+x)=r(A)$ for all 1-dimensional spaces $x\subseteq B$, so in particular, $r(A)=r(A+x)=r(A+y)$. By (R3'), this means $r(A)=r(A+x+y)=r(A+B)$. Now assume the lemma holds for all $A,B$ with $\dim B-\dim(A\cap B)<k$. Suppose $r(A+x)=r(A)$ for all 1-dimensional spaces $x\subseteq B$. Let $B'\subseteq B$ of codimension $2$ and let $x,y\subseteq B$ $1$-dimensional subspaces such that $A+B=A+B'+x+y$. Apply the induction hypothesis to $A$ and $B'$: this gives $r(A+B')=r(A)$. Apply the induction hypothesis also to $A$ and $B'+x$ and to $A$ and $B'+y$, this gives $r(A)=r(A+B'+x)=r(A+B'+y)$. Now we can use (R3') on $x$, $y$ and $A+B'$, giving
\[ r(A)=r(A+B')=r(A+B'+x)=r(A+B'+y)=r(A+B'+x+y)=r(A+B). \]
This proves the induction step, and thus the lemma.
\end{proof}
The next Lemma is the $q$-analogue of Lemma 2.47 of \cite{gordonmcnulty}.

\begin{Lemma}
Let $r$ be an integer-valued function defined on the subspaces of $E$ satisfying (R1'), (R2'), (R3'). Let $A,B\in \kL(E)$. If $A\subseteq B$ then for all $1$-dimensional subspaces $x\in \kL(E)$ we have that $r(A+x)-r(A)\geq r(B+x)-r(B)$.
\end{Lemma}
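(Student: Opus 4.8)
The plan is to exploit the fact that, by (R2'), each of the two quantities $r(A+x)-r(A)$ and $r(B+x)-r(B)$ lies in $\{0,1\}$. Hence the asserted inequality can fail only in the single configuration $r(A+x)=r(A)$ and $r(B+x)=r(B)+1$. So it suffices to prove the implication: if $A\subseteq B$ and $r(A+x)=r(A)$, then $r(B+x)=r(B)$.

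Next I would reduce to the case $\dim B=\dim A+1$. Choose a chain $A=B_0\subsetneq B_1\subsetneq\cdots\subsetneq B_t=B$ in which each step raises the dimension by one. If the one-step statement is known, then $B_i\subseteq B_{i+1}$ with $\dim B_{i+1}=\dim B_i+1$, so $r(B_{i+1}+x)=r(B_{i+1})$ whenever $r(B_i+x)=r(B_i)$; iterating from $i=0$ deduces $r(B+x)=r(B)$ from $r(A+x)=r(A)$. (Equivalently, one does induction on $\dim B-\dim A$, splitting off one dimension at a time and also invoking the one-step case for the last step.)

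For the one-step case, write $B=A+z$ with $\dim z=1$ (the case $z\subseteq A$ being trivial) and assume $r(A+x)=r(A)$. Distinguish two subcases according to whether $z$ raises the rank over $A$. If $r(A+z)=r(A)$, then $r(A)=r(A+x)=r(A+z)$, so (R3') applied to the spaces $A$, $x$, $z$ gives $r(A+x+z)=r(A)$, whence $r(B+x)=r(A+z+x)=r(A)=r(A+z)=r(B)$. If instead $r(A+z)=r(A)+1$, then on one hand $r(B+x)=r((A+x)+z)\le r(A+x)+1=r(A)+1=r(A+z)=r(B)$ by (R2'), and on the other hand $r(B)=r(A+z)\le r((A+z)+x)=r(B+x)$ again by (R2'); hence $r(B+x)=r(B)$.

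I do not expect a real obstacle here: the only points to notice are that the statement is vacuous except in one direction and that it collapses to a one-dimensional question, after which it is an immediate consequence of (R2') together with a single application of (R3'). In particular the preceding lemma is not needed for this argument.
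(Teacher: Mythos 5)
Your proof is correct and follows essentially the same route as the paper: both reduce to the one-step case $\dim B=\dim A+1$, note the left side may be assumed $0$, and split into the two subcases $r(A+z)=r(A)$ (handled by a single use of (R3')) and $r(A+z)=r(A)+1$ (handled by sandwiching via (R2')). Your observation that the preceding lemma is not needed is also consistent with the paper's argument.
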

\begin{proof}
We will prove this statement for $\dim B=\dim A+1$, and the general statement then follows by induction. Let $B=A+y$. From (R2') we know that both sides of the inequality are either $0$ or $1$. If the left hand side is $1$, the inequality is always true, so assume $r(A+x)=r(A)$. We will show that $r(A+y+x)-r(A+y)=0$. By (R2'), $r(A+y)$ is equal to either $r(A)$ or $r(A)+1$. If $r(A)=r(A+y)$, then by (R3') $r(A)=r(A+y+x)$ so in particular, $r(A+y+x)-r(A+y)=0$. If $r(A+y)=r(A)+1$, then we have $r(A)+1=r(A+x)+1=r(A+y)\leq r(A+y+x)$. On the other hand, again by (R2'), $r(A+y+x)$ is either equal to $r(A+x)$ or to $r(A+x)+1$. We conclude that $r(A+y+x)=r(A+x)+1$ and again $r(A+y+x)-r(A+y)=0$.
\end{proof}

We now prove the alternative rank axioms.

\begin{Theorem}\label{t-rank-alternative}
Let $r$ be an integer-valued function defined on the subspaces of $E$. Then $r$ is the rank function of a $q$-matroid $(E,r)$ if and only if $r$ satisfies the axioms (R1'), (R2'), (R3').
\end{Theorem}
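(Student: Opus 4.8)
The plan is to prove both implications of the equivalence between the global rank axioms (R1), (R2), (R3) and the local rank axioms (R1'), (R2'), (R3'). The direction $(R1),(R2),(R3)\Rightarrow (R1'),(R2'),(R3')$ is the routine one: (R1') is immediate from (R1) since $0\le r(\{0\})\le \dim\{0\}=0$; for (R2'), monotonicity (R2) gives $r(A)\le r(A+x)$, and submodularity (R3) applied to $A$ and $x$ gives $r(A+x)+r(A\cap x)\le r(A)+r(x)\le r(A)+1$, so together with $r(A\cap x)\ge 0$ we get $r(A+x)\le r(A)+1$; for (R3'), if $r(A)=r(A+x)=r(A+y)$, apply (R3) to $A+x$ and $A+y$: $r(A+x+y)+r((A+x)\cap(A+y))\le r(A+x)+r(A+y)=2r(A)$, and since $A\subseteq (A+x)\cap(A+y)$ monotonicity gives $r((A+x)\cap(A+y))\ge r(A)$, hence $r(A+x+y)\le r(A)$; the reverse inequality is monotonicity, so $r(A+x+y)=r(A)$.

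The substantive direction is $(R1'),(R2'),(R3')\Rightarrow (R1),(R2),(R3)$, and here I would lean on the two lemmas just proved in the excerpt. Axiom (R1) follows by a straightforward induction on $\dim A$: the base case is (R1'), and the inductive step uses (R2') to bound $r(A)=r(A'+x)\le r(A')+1\le \dim A'+1=\dim A$, while $r(A)\ge 0$ follows because $r$ is monotone (see below) and $r(\{0\})=0$. Axiom (R2) is an immediate consequence of the first lemma combined with (R2'): writing $B=A+B$ when $A\subseteq B$, either $r(A+x)=r(A)$ for all $1$-dimensional $x\subseteq B$, in which case the lemma gives $r(A)=r(B)$, or some such $x$ has $r(A+x)=r(A)+1$; in general one peels off one dimension of $B$ at a time, each step raising the rank by $0$ or $1$ by (R2'), so $r(A)\le r(B)$. (Alternatively, monotonicity follows directly by induction on $\dim B-\dim A$ using (R2').)

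The main obstacle is submodularity (R3), and this is where I would do real work. The natural approach is induction on $\dim B - \dim(A\cap B)$ (equivalently, on $\dim(A+B)-\dim A$), peeling off one dimension from $B$ at a time. The key inequality is precisely the content of the second lemma: for $A\subseteq B$ and any $1$-dimensional $x$, $r(A+x)-r(A)\ge r(B+x)-r(B)$, i.e. the ``rank increment'' is non-increasing as we enlarge the space. To deduce (R3), write $B = (A\cap B) + y_1 + \cdots + y_k$ where $k = \dim B - \dim(A\cap B)$ and the $y_i$ are $1$-dimensional. Setting $B_j = A + y_1 + \cdots + y_j$ and $C_j = (A\cap B) + y_1 + \cdots + y_j$, a telescoping argument shows $r(A+B) - r(A) = \sum_{j=1}^k \big(r(B_j) - r(B_{j-1})\big) = \sum_{j=1}^k \big(r(B_{j-1}+y_j) - r(B_{j-1})\big)$ and $r(B) - r(A\cap B) = \sum_{j=1}^k \big(r(C_{j-1}+y_j) - r(C_{j-1})\big)$, and since $C_{j-1} \subseteq B_{j-1}$, the second lemma gives that each term of the first sum is at most the corresponding term of the second sum, yielding $r(A+B) - r(A) \le r(B) - r(A\cap B)$, which rearranges to (R3). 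I should double-check that the second lemma's statement, proved there only for $\dim B = \dim A + 1$ and then ``by induction'', genuinely covers all $A\subseteq B$ — if not, I would supply that induction here — and I would also need to handle the degenerate cases where some $y_j$ is already contained in $B_{j-1}$ or $C_{j-1}$, where the corresponding increments are $0$ and the inequality is trivial.
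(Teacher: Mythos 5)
Your proposal is correct and follows essentially the same route as the paper: both rely on the two intermediate lemmas (the ``constant rank on a flat'' lemma and the monotone-increment lemma $r(A+x)-r(A)\ge r(B+x)-r(B)$ for $A\subseteq B$), and both prove (R3) by peeling off one dimension of $B$ outside $A\cap B$ at a time. The paper phrases the (R3) argument as an induction on $\dim B-\dim(A\cap B)$ while you unroll it as a telescoping sum, but these are the same argument; your easy direction is also the same, just self-contained rather than citing \cite{JP18}. (Minor note: the degenerate cases you flag at the end cannot actually occur, since $B_{j-1}\cap B = C_{j-1}$ by modularity of the subspace lattice, so $y_j\nsubseteq C_{j-1}$ forces $y_j\nsubseteq B_{j-1}$.)
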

\begin{proof}
To prove this theorem, we have to prove that (R1), (R2), (R3) $\Leftrightarrow$ (R1'), (R2'), (R3'). \\
First, assume $r$ satisfies (R1), (R2), (R3). (R1') follows directly from (R1) with $A=\{0\}$. From (R2) it follows that $r(A)\leq r(A+x)$. Lemma 3 from \cite{JP18} in combination with (R1') gives that $r(A+x)\leq r(A)+1$. Together this proves (R2'). (R3') is Proposition 7 from \cite{JP18}. \\
Now we consider the other implication. Assume $r$ satisfies (R1'), (R2'), (R3'). \\
For (R1'), let $A=x_1+x_2+\cdots+x_n$ with $n=\dim A$. Start with (R1') and apply (R2') $n$ times. We do something similar for (R2): let $B=A+x_1+\cdots+x_k$ with $k=\dim B-\dim A$. Now apply (R2') $k$ times. \\
We will prove (R3) by induction on $\dim B-\dim(A\cap B)=k$. Denote $A\cap B=C$. \\
First, let $k=0$, so $C\subseteq A$. Then we have $r(A+C)+r(A\cap C)=r(A)+r(B)$ so (R3) holds. Next, assume that (R3) holds for all $A$ and $B$ with $\dim B-\dim(A\cap B)<k$. Let $B'\subseteq B$ of codimension $1$ such that $A\cap B=A\cap B'$ and let $x$ be a $1$-dimensional subspace such that $B'+x=B$. Then, by the induction hypothesis and the lemma above, we have
\begin{eqnarray*}
r(A+B)+r(A\cap B) & = & r(A+B'+x)+r(A\cap(B'+x)) \\
 & = & r(A+B'+x)+r(A\cap B') \\
 & \leq & r(A+B'+x)-r(A+B')+r(A)+r(B') \\
 & \leq & r(B'+x)-r(B')+r(A)+r(B') \\
 & = & r(A)+r(B).
\end{eqnarray*}
This proves (R3), and thus completes our proof.
\end{proof}

\subsection{Hyperplanes}

We prove a stronger version of the hyperplane axiom (H3). We will use this axiom in Section \ref{FlatHypCryptomorph} when we prove the cryptomorphism between flats and hyperplanes.

Let $\mathcal{H}$ be a collection of subspaces of $E$.
\begin{itemize}
  	 \item[(H3')] For each $H_1,H_2 \in \mathcal{H}$, $H_1\neq H_2$, let $x,y\subseteq E$ be $1$-dimensional spaces with $x \nsubseteq H_1,H_2$
  and $y \subseteq H_1$, $y \nsubseteq H_2$. Then there is an hyperplane $H_3$ such that $(H_1 \cap H_2)+x \subseteq H_3$ and $y \nsubseteq H_3$.
\end{itemize}

\begin{Theorem}\label{t-H3'}
Let $\mathcal{H}$ be a family of subspaces of $E$. 

The family $\mathcal{H}$ satisfies the axioms (H1), (H2), (H3) if and only if it satisfies (H1), (H2), (H3').
\end{Theorem}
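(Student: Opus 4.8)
The plan is to prove the two implications separately. The direction (H1), (H2), (H3$'$) $\Rightarrow$ (H1), (H2), (H3) is immediate, and all the work lies in (H1), (H2), (H3) $\Rightarrow$ (H1), (H2), (H3$'$).

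For the easy direction, suppose $\mathcal{H}$ satisfies (H1), (H2), (H3$'$), fix distinct $H_1, H_2 \in \mathcal{H}$ and a $1$-dimensional $x \in \kL(E)$. If $x \subseteq H_1$ then $(H_1 \cap H_2) + x \subseteq H_1$ and $H_3 := H_1$ witnesses (H3); similarly take $H_3 := H_2$ if $x \subseteq H_2$. Otherwise $x \nsubseteq H_1$ and $x \nsubseteq H_2$; since $H_1 \neq H_2$, (H2) gives $H_1 \nsubseteq H_2$, so there is a $1$-dimensional $y \subseteq H_1$ with $y \nsubseteq H_2$, and (H3$'$) applied to $H_1, H_2, x, y$ produces the required $H_3 \supseteq (H_1 \cap H_2) + x$ (the extra conclusion $y \nsubseteq H_3$ is not even needed here).

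For the hard direction, assume (H1), (H2), (H3) and fix $H_1 \neq H_2$, $W := H_1 \cap H_2$, a $1$-dimensional $x$ with $x \nsubseteq H_1, H_2$, and a $1$-dimensional $y \subseteq H_1$ with $y \nsubseteq H_2$. Note $x \neq y$, so $U := x + y$ is $2$-dimensional and $H_1 \cap U = y$. I would first record two elementary ingredients: a lattice identity, that if $B \subseteq A$ and $x \nsubseteq A$ then $A \cap (B + x) = B$ (this governs how intersections with $H_2$ evolve below); and the geometric observation that drives everything, namely that a hyperplane containing a $1$-dimensional $w \subseteq U$ with $w \notin \{x,y\}$ together with one of $x$ or $y$ must contain all of $U$, since $\langle w,x\rangle = \langle w,y\rangle = U$. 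Now (H3) applied to $H_1, H_2, x$ gives a hyperplane $H_3 \supseteq W + x$; if $y \nsubseteq H_3$ we are done, so assume $y \subseteq H_3$, whence $U \subseteq H_3$. From here I would argue by contradiction, assuming in addition that every hyperplane containing $W + x$ also contains $y$: starting from $H_3$ and repeatedly applying (H3) to $H_2$ and the current hyperplane $H$ with the point $x$ (legitimate, as $x \subseteq H$, $x \nsubseteq H_2$, so $H \neq H_2$ and $x \nsubseteq H_2 \cap H$) produces hyperplanes containing $(H_2\cap H)+x \supseteq W+x$, hence (by the standing assumption) containing $y$; by the lattice identity the intersections $H_2 \cap H$ form a weakly increasing chain inside $H_2$, which stabilises, yielding a hyperplane $H^\star$ with $H_2 \cap H^\star = V_\infty \supseteq W$ and $V_\infty + x + y \subseteq H^\star$. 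The closing step is to extract a contradiction from this stable configuration by feeding $H^\star$ and $H_2$ back into (H3) with $y$ or with a suitable $1$-dimensional $w \subseteq U$, $w \notin \{x,y\}$, using the geometric observation to pin down which of $x,y$ the output can contain; the conclusion is either a hyperplane over $W+x$ missing $y$, or $x \subseteq H_1$ — both contradictions.

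I expect this last step to be the main obstacle: axiom (H3$'$) is in effect the Steinitz exchange property for the closure operator ``intersection of all containing hyperplanes'', so no weakening of (H3) can substitute, and the difficulty is purely in organising the iteration so that it provably terminates. I anticipate supplementing the chain argument with an induction on $\codim_E(H_1 \cap H_2)$: whenever an auxiliary intersection produced along the way strictly contains $W$, the inductive hypothesis already yields a hyperplane over $W + x$ missing $y$, contradicting the standing assumption, so only the ``no strict growth'' case and the base of the induction need the delicate geometry of $U = x+y$. Finally, a small separate case is needed over $\mathbb{F}_2$, where $U$ has only three $1$-dimensional subspaces and the ``third direction'' $w$ may be forced to lie inside $H_2$.
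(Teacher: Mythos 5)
Your proposal is not a complete proof: you explicitly flag the ``closing step'' of the hard direction as the main obstacle and leave it unresolved, and that is precisely where the real content of the theorem lies. The iteration you set up (repeatedly applying (H3) to $H_2$ and the current hyperplane $H$ with $x$, noting that $H_2\cap H$ is weakly increasing and stabilizes) is sound as far as it goes, but once you reach the stable configuration you have no control over where the next (H3)-output lands relative to $y$; there is no lever in (H3) alone that forces a contradiction from stability. You suspect that an induction on $\codim_E(H_1\cap H_2)$ is needed on top of the chain argument, and that is indeed the right diagnosis, but the chain-plus-contradiction scaffolding is the wrong place to hang it.

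The paper's proof is organized differently and is constructive rather than by contradiction. It inducts directly on $d=\codim(H_1\cap H_2)$ (base cases $d\le 1$ are vacuous by (H1), (H2)). In the inductive step, (H3) produces $H_3\supseteq(H_1\cap H_2)+x$; if $y\subseteq H_3$ then $\codim(H_2\cap H_3)<d$, and the key move is to pick a $1$-dimensional $z\subseteq H_1$ with $z\nsubseteq H_2$ and $z\nsubseteq H_3$ (possible because a vector space is never the union of two proper subspaces, so $H_1\nsubseteq H_2\cup H_3$). Applying the inductive hypothesis to $(H_2,H_3)$ with $z$ in the role of ``$x$'' and $x$ in the role of ``$y$'' yields $H_4\supseteq(H_2\cap H_3)+z$ with $x\nsubseteq H_4$. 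Since $z\subseteq H_1\cap H_4$ and $z\nsubseteq H_1\cap H_2$, also $\codim(H_1\cap H_4)<d$, so a second application of the inductive hypothesis to $(H_1,H_4)$ with the original $x,y$ produces $H_5\supseteq(H_1\cap H_4)+x\supseteq(H_1\cap H_2)+x$ with $y\nsubseteq H_5$, which is exactly (H3$'$). The auxiliary element $z$ and the two-step ``bounce'' through $(H_2,H_3)$ and then $(H_1,H_4)$ are the ideas your sketch is missing; without them, stabilization of $H_2\cap H$ gives you nothing to contradict. (One small plus on your side: your easy-direction paragraph correctly spells out the trivial cases $x\subseteq H_1$ and $x\subseteq H_2$, which the paper glosses over; and no special treatment of $\mathbb{F}_2$ is in fact needed, since the union-of-two-subspaces fact holds over every field.)
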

\begin{proof}
One direction is clear, since (H3') implies (H3). We will show the 
converse.
Suppose that axioms (H1), (H2), (H3) hold for ${\mathcal{ H}}$. We proceed by induction on the codimension of $H_1\cap H_2$. \\
It cannot be that $\codim(H_1 \cap H_2)=0$ since if so then $H_1=H_2=E$, which violates (H1). Similarly, $\codim(H_1 \cap H_2) \neq 1$, since then one of $H_1,H_2$ must be equal to $E$. The assertion holds void then in these two cases. Suppose now
$\codim(H_1\cap H_2)=d\geq 2$ and that for codimension $d-1$ the assertion holds true. Let us prove it for codimension $d$. \\
Let $H_1,H_2 \in \kH$, $H_1\neq H_2$ and let $x,y$ two spaces of dimension one such that $x \nsubseteq H_1,H_2$ and  $y \subseteq H_2$, $y \nsubseteq H_1$. Using (H3) we can say that there is a hyperplane $H_3 \supseteq (H_1\cap H_2)+x$. If $y \nsubseteq H_3$ we are done, so therefore we suppose $y \subseteq H_3$. This implies $\codim(H_2 \cap H_3)<\codim(H_1\cap H_2)$.
Moreover, since $H_1 \nsubseteq H_3$,
there is a one-dimensional subspace $z\subseteq H_1$ such that $z\nsubseteq H_2,H_3$. Therefore, we can apply the induction hypothesis, finding $H_4 \in \kH$ such that $x \nsubseteq H_4 \supseteq (H_2\cap H_3)+z.$ Since $x \nsubseteq H_1,H_4$, $y \subseteq H_4$, $y \nsubseteq H_1$ and $\codim(H_1\cap H_4)<\codim(H_1\cap H_2)$ we can use (H3') again by the induction hypothesis, and we get a new hyperplane $H_5\in \kH$ such that 
$y \nsubseteq H_5\supseteq (H_1 \cap H_4)+x \supseteq (H_1 \cap H_2)+x,$
from which the result follows.
\end{proof}
\begin{Remark}\label{unione}
In the case of classical matroids, the statement of (H3') is often formulated as follows:
\begin{itemize}
    \item[(H3')] For every $H_1,H_2 \in \kH$ such that $H_1 \neq H_2$ and for every $x \notin H_1 \cup H_2 $, $y \in H_2 \setminus H_1$ there exists $H_3 \in \kH$ such that $y \notin H_3 \supseteq (H_1 \cap H_2) \cup x$.
\end{itemize}
The condition $x \notin H_1 \cup H_2 $ in the classical case is equivalent to saying that $x \notin H_1$ and $x \notin H_2$. However, in the $q$-analogue, saying that $x \nsubseteq H_1+H_2$ is clearly not the same as saying $x \nsubseteq H_1$ and $x \nsubseteq H_2$. We point out that the latter is what we consider in the $q$-analogue.
\end{Remark}

\section{Independent Spaces and the Closure Function}\label{sec:ind-cl}

The goal of this section is to prove that a function satisfying the closure axioms of Definition \ref{closure-axioms} gives rise to a family of independent spaces satisfying the independence axioms of Definition \ref{independence-axioms}. We use this to prove a cryptomorphic description of a $q$-matroid in terms of its closure function, 
As might be expected, the generalisation of the cryptomorphism in the $q$-analogue is non-trivial in this case.

\begin{Lemma}\label{l-closure-incl}
Let $\cl$ be a closure function on $E$ and let $A,B\in \kL(E)$. If $A\subseteq\cl(B)$ then $\cl(A)\subseteq\cl(B)$. In particular, if $B\subseteq A\subseteq\cl(B)$, then $\cl(A)=\cl(B)$.
\end{Lemma}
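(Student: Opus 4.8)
The plan is to use only the four closure axioms (Cl1)--(Cl4), with the monotonicity axiom (Cl2) and the idempotency axiom (Cl3) doing the main work. I would first establish the containment $\cl(A)\subseteq\cl(B)$ under the hypothesis $A\subseteq\cl(B)$: applying (Cl2) to the inclusion $A\subseteq\cl(B)$ gives $\cl(A)\subseteq\cl(\cl(B))$, and then (Cl3) rewrites the right-hand side as $\cl(B)$, so $\cl(A)\subseteq\cl(B)$. That is really the whole argument for the first assertion, and I expect no obstacle there.

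For the ``in particular'' statement, suppose $B\subseteq A\subseteq\cl(B)$. The inclusion $A\subseteq\cl(B)$ and the part just proved give $\cl(A)\subseteq\cl(B)$. For the reverse inclusion, apply (Cl2) to $B\subseteq A$ to get $\cl(B)\subseteq\cl(A)$. Combining the two inclusions yields $\cl(A)=\cl(B)$. Again this is routine and presents no difficulty; (Cl1) and (Cl4) are not needed here, though one could note that (Cl1) guarantees $B\subseteq\cl(B)$ so the hypothesis of the second statement is consistent.

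Since there is genuinely no hard step, I would keep the write-up to a couple of lines, simply chaining (Cl2) and (Cl3) for the first claim and then adding the one extra application of (Cl2) to $B\subseteq A$ for the equality. The only thing to be slightly careful about is citing the axioms in the right order and not accidentally invoking (Cl3) on the wrong argument; everything else is a direct substitution.
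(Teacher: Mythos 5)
Your proof is correct and follows essentially the same route as the paper's: apply (Cl2) to $A\subseteq\cl(B)$, then (Cl3) to collapse $\cl(\cl(B))$, and for the equality apply (Cl2) once more to $B\subseteq A$. (The paper's proof actually contains a small typo, writing ``$A\subseteq B$'' where it means $B\subseteq A$; you have stated it correctly.)
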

\begin{proof}
By (Cl2), we have that $\cl(A)\subseteq\cl(\cl(B))$. By (Cl3), $\cl(\cl(B))=\cl(B)$. Combined with applying (Cl2) to $A\subseteq B$, we get equality $\cl(A)=\cl(B)$.
\end{proof}

\begin{Lemma}\label{l-closure-1dim}
Let $x\in \kL(E)$ be a $1$-dimensional space. If $x\subseteq\cl(A)$ then $\cl(A)=\cl(A+x)$.
\end{Lemma}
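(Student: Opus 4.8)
The plan is to deduce this immediately from Lemma \ref{l-closure-incl}. First I would observe that by (Cl1) we have $A \subseteq \cl(A)$, and by hypothesis $x \subseteq \cl(A)$; hence the sum satisfies $A + x \subseteq \cl(A)$. Combined with the trivial inclusion $A \subseteq A+x$, this gives the chain $A \subseteq A+x \subseteq \cl(A)$.

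Now I would apply the ``in particular'' clause of Lemma \ref{l-closure-incl} with the roles $B := A$ and $A := A+x$: since $B \subseteq (A+x) \subseteq \cl(B)$, that lemma yields $\cl(A+x) = \cl(A)$, which is exactly the claim.

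There is no real obstacle here; the statement is a one-line corollary of the preceding lemma, and the only thing to be careful about is checking that $A+x$ indeed lies inside $\cl(A)$, which uses (Cl1) together with the hypothesis $x \subseteq \cl(A)$ and the fact that $\cl(A)$ is a subspace (so it is closed under sums).
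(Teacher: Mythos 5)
Your proof is correct and follows essentially the same route as the paper: both establish $A \subseteq A+x \subseteq \cl(A)$ and then invoke the content of Lemma~\ref{l-closure-incl} (the paper spells out the two inclusions $\cl(A+x)\subseteq\cl(A)$ and $\cl(A)\subseteq\cl(A+x)$ directly, while you cite the ``in particular'' clause, which is the same thing).
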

\begin{proof}
We have $x\subseteq \cl(A)$ and also $A\subseteq\cl(A)$, hence $A+x\subseteq\cl(A)$. This implies $\cl(A+x)\subseteq\cl(A)$. But since $A\subseteq A+x$, we have also that $\cl(A)\subseteq\cl(A+x)$. Hence equality holds.
\end{proof}

We will apply Lemmas \ref{l-closure-incl} and \ref{l-closure-1dim} frequently and not necessarily with direct reference to them.

\begin{Definition}\label{def:closure-indep2}
	Let $\cl$ be a closure function on $E$.
	We say that $I \in \kL(E)$ is an independent space of $(E,\cl)$ if, for each subspace $A\subseteq I$ with $\codim_I(A)=1$, we have $\cl(A)\neq \cl(I)$.
	We write $ \mathcal{I}_{\cl}$ to denote the set of independent spaces of $(E,\cl)$.
\end{Definition}

\begin{Lemma}\label{lem:cl-indep2}
	Let $\cl$ be a closure function on $E$ and
	let $I, J\in \kL(E),I \subseteq J$ satisfy 
	$\dim(J)=\dim(I)+1$. If $I \in \I_{\cl}$ and $J \notin \I_{\cl}$  then $\cl(I)=\cl(J)$.
\end{Lemma}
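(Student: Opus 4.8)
The plan is to use the witness of dependence for $J$ together with the exchange axiom (Cl4) applied at the codimension-two subspace where that witness meets $I$. Since $J \notin \I_{\cl}$, there is a subspace $A \subseteq J$ with $\codim_J(A)=1$ and $\cl(A)=\cl(J)$. If $A = I$ we are immediately done, so I would assume $A \neq I$. Then, since $A$ and $I$ are distinct hyperplanes of $J$, we have $A+I=J$ and $W:=A\cap I$ has codimension $1$ in each of $A$ and $I$. Write $A = W+x$ and $I = W+y$ for $1$-dimensional spaces $x,y$, so that $J = W+x+y$.

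The next step is to verify the hypotheses of (Cl4) with the role of ``$A$'' played by $W$. First, because $I\in\I_{\cl}$ and $W$ has codimension $1$ in $I$, Definition \ref{def:closure-indep2} gives $\cl(W)\neq\cl(I)=\cl(W+y)$; hence $y\not\subseteq\cl(W)$, for otherwise Lemma \ref{l-closure-1dim} would force $\cl(W+y)=\cl(W)$. Second, $y\subseteq W+x+y = J\subseteq\cl(J)=\cl(A)=\cl(W+x)$, so $y\subseteq\cl(W+x)$. Applying (Cl4) then yields $x\subseteq\cl(W+y)$.

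From $x\subseteq\cl(W+y)$ and $W\subseteq\cl(W+y)$ we get $W+x\subseteq\cl(W+y)$, and then (Cl2) together with (Cl3) gives $\cl(J)=\cl(A)=\cl(W+x)\subseteq\cl(\cl(W+y))=\cl(W+y)=\cl(I)$. Since $I\subseteq J$ implies $\cl(I)\subseteq\cl(J)$ by (Cl2), we conclude $\cl(I)=\cl(J)$, as required.

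The only real content here is the application of (Cl4); the rest is bookkeeping about hyperplanes of $J$. I expect the mild subtlety to be the case split $A=I$ versus $A\neq I$ and the verification that $y\not\subseteq\cl(W)$, which is where the independence hypothesis on $I$ is genuinely used — without it the exchange step has no footing.
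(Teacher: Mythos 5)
Your proof is correct and follows essentially the same route as the paper's: the same decomposition $W=A\cap I$, $A=W+x$, $I=W+y$, the same verification that $y\not\subseteq\cl(W)$ using the independence of $I$, and the same application of (Cl4) to obtain $x\subseteq\cl(W+y)=\cl(I)$. The only cosmetic difference is that you conclude by showing $\cl(J)=\cl(W+x)\subseteq\cl(W+y)=\cl(I)$, whereas the paper writes $\cl(I)=\cl(I+x)=\cl(J)$; these are the same final step phrased two ways.
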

\begin{proof}
	Since $J \notin \I_{\cl}$, there is a subspace $A \subseteq J$ such that $\codim_J(A)=1$ and $\cl(A)=\cl(J).$ If $I=A$ we are done, so suppose therefore that $A \neq I$ and let 
	$U=A \cap I$. Then $\codim_J(U)=2$ and $\codim_I(U)=\codim_A(U)=1$, so there exist $1$-dimensional spaces $x,y\subseteq J$ such that $I=U+y$ and $A=U+x$. 
	Now, $y \subseteq \cl(J)=\cl(A)=\cl(U+x)$. On the other hand, since $I$ is independent, by definition we have $\cl(U)\subsetneq\cl(I)$. If $y \subseteq \cl(U)$ then $\cl(U)=\cl(U+y)=\cl(I)$, yielding a contradiction. So $y\not\subseteq\cl(U)$.
	By (Cl4) we have
	$$y \subseteq \cl(x+U) \text{ and } y \nsubseteq \cl(U) \implies x \subseteq \cl(U+y)=\cl(I), $$
	which implies that $\cl(I)=\cl(I+x)=\cl(J)$. The result follows.
\end{proof}

\begin{Lemma}\label{lem:xJ}
	Let $ I \in \kL(E)$.
	\begin{enumerate}
		\item $I \in \I_{\cl}$ if and only if every proper subspace $U$ of $I$ satisfies $\cl(U) \subsetneq \cl(I)$.\label{lem:x-1}
			\item Let $A \subseteq I$ such that $\codim_I(A)=1$. If there exists $x \subseteq I$, satisfying $I=x+A$ and $x \subseteq \cl(A)$ then $I \notin \I_{\cl}$.
	\end{enumerate}
\end{Lemma}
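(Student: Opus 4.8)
The plan is to reduce both parts directly to Definition \ref{def:closure-indep2}, using only monotonicity (Cl2) and Lemma \ref{l-closure-1dim}; no induction and no appeal to (Cl4) should be needed here.

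For part (2), I would argue as follows. Suppose $A \subseteq I$ with $\codim_I(A) = 1$ and that there is a $1$-dimensional space $x \subseteq I$ with $I = A + x$ and $x \subseteq \cl(A)$. Then Lemma \ref{l-closure-1dim} gives $\cl(I) = \cl(A+x) = \cl(A)$, so $A$ is a subspace of $I$ of codimension $1$ with $\cl(A) = \cl(I)$. By Definition \ref{def:closure-indep2}, this is exactly a witness that $I \notin \I_{\cl}$.

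For part (1), I would prove both implications. For the ``only if'' direction, assume $I \in \I_{\cl}$ and let $U \subsetneq I$ be a proper subspace, so $\dim U \le \dim I - 1$. Choose a subspace $A$ with $U \subseteq A \subseteq I$ and $\codim_I(A) = 1$ (one may take $A = U$ when $U$ already has codimension $1$ in $I$). By (Cl2), $\cl(U) \subseteq \cl(A) \subseteq \cl(I)$. Since $I$ is independent, $\cl(A) \ne \cl(I)$, and together with $\cl(A) \subseteq \cl(I)$ this forces $\cl(A) \subsetneq \cl(I)$; hence $\cl(U) \subsetneq \cl(I)$. For the ``if'' direction, assume every proper subspace of $I$ has closure strictly smaller than $\cl(I)$. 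Any subspace $A \subseteq I$ with $\codim_I(A) = 1$ is in particular a proper subspace of $I$, so $\cl(A) \subsetneq \cl(I)$ and thus $\cl(A) \ne \cl(I)$; by Definition \ref{def:closure-indep2}, $I \in \I_{\cl}$.

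The only points requiring care are the degenerate cases and the bookkeeping on inclusions. When $I = \{0\}$ both the collection of proper subspaces and the collection of codimension-$1$ subspaces of $I$ are empty, so the equivalence in (1) holds vacuously on each side. And one must note that ``$\cl(A) \ne \cl(I)$'' upgrades to ``$\cl(A) \subsetneq \cl(I)$'' precisely because (Cl2) already supplies the inclusion $\cl(A) \subseteq \cl(I)$. I do not anticipate any substantive obstacle here; the statement is essentially an unpacking of the definition of an independent space of $(E,\cl)$.
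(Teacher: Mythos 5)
Your proof is correct and follows essentially the same route as the paper: both parts are unpacked directly from Definition \ref{def:closure-indep2} using (Cl2) and the one-dimensional closure lemma, with the same choice of a codimension-one intermediate subspace in part (1).
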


\begin{proof}
	Let $U \subsetneq I$. 
	There exists a subspace $W$ of co-dimension one in $I$ such that $U \subseteq W \subseteq I$. 
	Then by (Cl2) we have $\cl(U) \subseteq \cl(W) \subseteq \cl(I)$. If $I \in \I_{\cl}$ then $\cl(U) \subseteq \cl(W) \subsetneq \cl(I)$. 
	Conversely, if every proper subspace of $I$ has closure strictly contained in $\cl(I)$, then in particular this is true of every subspace of co-dimension 1 in $I$, and so $I \in \I_{cl}$ by definition.
	This establishes 1.
	
	Let $x$ be a $1$-dimensional subspace of $I$ such that $x+A=I$.
	If $x \subseteq \cl(A)$ then $\cl(A) = \cl(x+A) = \cl(I)$, and hence $I \notin \I_{\cl}$.
\end{proof}

\begin{Theorem}\label{th:cltoind}
	Let $(E,{\rm cl})$ be a closure function.
	Then $(E,\I_{\cl} )$ satisfies the axioms (I1)-(I4).
\end{Theorem}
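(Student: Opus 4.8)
The goal is to verify the four independence axioms for $\I_{\cl}$. The plan is to handle (I1), (I2) quickly, then (I3), and finally spend the bulk of the effort on (I4), which by Theorem \ref{lem:I4} we may replace by (I4''): for every $A \in \kL(E)$, every $I \in \max(A,\I_{\cl})$ and every $1$-dimensional $x$, there exists $J \in \max(x+A,\I_{\cl})$ with $J \subseteq x+I$. Before starting, I would record the basic fact that for any $A$, a \emph{maximal} independent subspace $I$ of $A$ (in the sense of dimension) satisfies $\cl(I)=\cl(A)$: indeed if some $1$-dimensional $z\subseteq A$ had $z\nsubseteq\cl(I)$, then by Lemma \ref{lem:xJ}(1) applied in reverse together with Lemma \ref{lem:cl-indep2}, $I+z$ would be independent, contradicting maximality; hence every $1$-space of $A$ lies in $\cl(I)$, so $A\subseteq\cl(I)$ and $\cl(A)=\cl(I)$ by Lemma \ref{l-closure-incl}. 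This ``closure of a maximal independent subspace'' observation will be used repeatedly.

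For (I1): $\{0\}\in\I_{\cl}$ vacuously, so $\I_{\cl}\neq\emptyset$. For (I2): if $J\in\I_{\cl}$ and $I\subseteq J$, then every proper subspace $U\subsetneq I$ is a proper subspace of $J$, so $\cl(U)\subsetneq\cl(J)$ by Lemma \ref{lem:xJ}(1); but $\cl(U)\subseteq\cl(I)\subseteq\cl(J)$, and if $\cl(U)=\cl(I)$ we would need to rule this out — here I would instead argue directly: pick a codimension-$1$ subspace $A\subseteq I$; since $A\subsetneq J$ and $\cl(A)\subsetneq\cl(J)$ while we need $\cl(A)\neq\cl(I)$, note $\cl(I)\subseteq\cl(J)$ and use that any proper subspace of $I$ has closure strictly below $\cl(I)$ would follow if $I\in\I_{\cl}$ — to avoid circularity, the cleanest route is: $I\subseteq J$ with $J$ independent means by Lemma \ref{lem:xJ}(1) every proper subspace $W\subsetneq I\subseteq J$ has $\cl(W)\subsetneq\cl(J)$; now if $\cl(A)=\cl(I)$ for some codim-$1$ subspace $A$ of $I$, pick $x\subseteq I$, $x\nsubseteq A$, so $I=A+x$ and $x\subseteq\cl(A)$, whence $x\subseteq\cl(A)\subsetneq\cl(J)$ — this does not immediately contradict anything, so instead I use Lemma \ref{lem:cl-indep2} inductively downward: if $I\notin\I_{\cl}$ then some codim-$1$ subspace $A$ has $\cl(A)=\cl(I)$, and then $\cl(A)=\cl(I)\subseteq\cl(J)$; building up a chain $A=I_0\subset I_1\subset\cdots\subset I_k=J$ and applying Lemma \ref{lem:cl-indep2} at the first step where independence fails yields $\cl(A)=\cl(J)$ with $A$ of codimension $1$ in some independent space, contradicting that that space is independent. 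I will streamline this into a short induction on $\dim J-\dim I$.

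For (I3): given $I,J\in\I_{\cl}$ with $\dim I<\dim J$, suppose for contradiction that $I+x\notin\I_{\cl}$ for every $1$-space $x\subseteq J$ with $x\nsubseteq I$. By Lemma \ref{lem:cl-indep2}, $\cl(I+x)=\cl(I)$, i.e. $x\subseteq\cl(I)$, for every such $x$; trivially also $x\subseteq\cl(I)$ when $x\subseteq I$. Hence $J\subseteq\cl(I)$, so $\cl(J)\subseteq\cl(I)$, forcing $\dim J=\dim\cl(J)\le\dim\cl(I)$... no — rather, $J$ independent and $J\subseteq\cl(I)$ gives, via Lemma \ref{l-closure-incl}, $\cl(J)\subseteq\cl(I)$; then take a maximal independent subspace $M$ of $I$ which is $I$ itself, and extend: actually the contradiction is cleaner by embedding $I$ in a maximal independent subspace of $I+J$ containing $I$ (Lemma \ref{lem:I3} — but that assumes (I1)-(I3), so instead I argue: repeatedly apply Lemma \ref{lem:cl-indep2}). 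I would phrase it as: $\cl(J)\subseteq\cl(I)$ and $I\subseteq J$ give $\cl(I)=\cl(J)$; now extend $I$ to a maximal-dimension independent subspace of $J$ — which must be $J$ itself since $J\in\I_{\cl}$... but $I\subsetneq J$, contradiction, once one checks $I$ can be extended inside $J$, which is where Lemma \ref{lem:cl-indep2} is applied one $1$-space at a time: if $I$ is not maximal independent in $J$ we can add a $1$-space of $J$ outside $\cl(I)$, and since $J\subseteq\cl(I)$ no such space exists, so $I$ is maximal independent in $J$, whence $\cl(I)=\cl(J)$ and $\dim I=\dim J$ by independence of $J$ — contradiction.

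For (I4'') — the main obstacle. Let $A\in\kL(E)$, $I\in\max(A,\I_{\cl})$, $x$ a $1$-space. If $x\subseteq A$ then $x+A=A$ and $J=I$ works. So assume $x\nsubseteq A$. By Lemma \ref{lem:M=m+I} (applicable since we have already established (I1)-(I3)), any $M\in\max(x+A,\I_{\cl})$ has $\dim M\le\dim I+1$. If $\dim M=\dim I$, then $I$ itself is maximal independent in $x+A$ and $J=I\subseteq x+I$ works. The remaining case is $\dim M=\dim I+1$; here the proof of Lemma \ref{lem:M=m+I} tells us $M=m+I$ for some $1$-space $m$ with $m\nsubseteq A$, $m+A=x+A$. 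I want instead $J\subseteq x+I$, so I need to produce a maximal independent subspace of $x+A$ sitting inside $x+I$. The key point: since $\cl(I)=\cl(A)$ (by the ``closure of a maximal independent subspace'' observation), and since $x+A=m+A$, we have $x\subseteq m+A\subseteq\cl(m+A)=\cl(m+I)=\cl(M)$. So $x\subseteq\cl(M)=\cl(m+I)$, giving $\cl(I+x)\subseteq\cl(M)$; I then claim $I+x$ is independent: if not, Lemma \ref{lem:cl-indep2} gives $\cl(I+x)=\cl(I)$, i.e. $x\subseteq\cl(I)=\cl(A)$, but a $1$-space in $\cl(A)$ outside $A$ would let us extend $I$ inside... no, it would not contradict maximality of $I$ in $A$ directly since $x\nsubseteq A$ — rather, $x\subseteq\cl(A)$ means $\cl(A+x)=\cl(A)$, so $I$ is still maximal independent in $A+x$ with $\cl(I)=\cl(A+x)$, forcing $\dim M=\dim I$, contradicting $\dim M=\dim I+1$. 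Hence $I+x$ is independent, and since $\dim(I+x)=\dim I+1=\dim M=$ the maximal independent dimension in $x+A$, $J:=I+x\in\max(x+A,\I_{\cl})$ and clearly $J\subseteq x+I$. This completes (I4'') and hence, by Theorem \ref{lem:I4}, (I4). The delicate part throughout is keeping the dimension bookkeeping and the ``extend $I$ by a $1$-space outside $\cl(I)$'' mechanism (which is exactly the contrapositive of Lemma \ref{lem:cl-indep2}) straight; I expect the $\dim M=\dim I+1$ case of (I4'') to require the most care.
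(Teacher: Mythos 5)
Your (I1), (I2) (once the induction on $\dim J - \dim I$ is cleaned up), and the overall structure of (I4'') are sound and essentially parallel to the paper's arguments; the initial ``closure of a maximal independent subspace'' observation is correct and useful. However, your argument for (I3) has a genuine gap. After establishing $J \subseteq \cl(I)$, and hence $\cl(J) \subseteq \cl(I)$, you write ``$\cl(J)\subseteq\cl(I)$ and $I\subseteq J$ give $\cl(I)=\cl(J)$; now extend $I$ to a maximal-dimension independent subspace of $J$\ldots''. But $I \subseteq J$ is \emph{not} part of the hypothesis of (I3): $I$ and $J$ are arbitrary independent spaces with $\dim I < \dim J$, with no containment between them. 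Without $I \subseteq J$ you cannot conclude $\cl(I) \subseteq \cl(J)$, and you certainly cannot ``extend $I$ inside $J$,'' so the contradiction you try to reach ($\dim I = \dim J$) does not follow. There is no easy reduction to the contained case either: passing to $I \cap J$ only gives $x \nsubseteq I \cap J$ and $(I\cap J)+x$ independent, which does not yield $x \nsubseteq I$ nor $I+x$ independent.

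The paper closes this gap with a substantially more involved argument: among all failing pairs, choose $(I,J)$ so that $\dim(I\cap J)$ is maximal; show $I \nsubseteq \cl(U)$ for every proper subspace $U$ of $J$; pick a codimension-one $A \subsetneq J$ containing $I\cap J$ and a one-dimensional $b \subseteq I$, $b \nsubseteq \cl(A)$; show $J' := b+A$ is again independent with $\dim(J' \cap I) > \dim(J \cap I)$; conclude by maximality that (I3) holds for $(I,J')$, and then translate the resulting exchange space back into one lying in $J$ (using $b \subseteq I$), contradicting the choice of $(I,J)$. You will need some version of this extra machinery. Note also that your (I4'') closing step (``$I$ is still maximal independent in $A+x$, forcing $\dim M = \dim I$'') quietly invokes (I3): the statement that no independent subspace of $\cl(I)$ can exceed $\dim I$ requires the exchange property plus Lemma \ref{lem:xJ}(2). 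The paper instead closes (I4'') more directly by noting $M \subseteq x+A \subseteq \cl(I)$ forces $\cl(M) = \cl(I)$ with $I \subsetneq M$, immediately contradicting independence of $M$ via Lemma \ref{lem:xJ}(1); you may prefer that cleaner ending, but either way (I3) must be repaired first.
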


\begin{proof}
		Consider first (I1): the space $\{0\}$ does not have any subspaces of codimension one, so the property in the definition of independence holds vacuously. Hence $\{0\}\in\I_{\cl}$ and thus (I1) holds.

		We now show (I2). Let $I\in\I_{\cl}$ and $I'\subseteq I$. We will  show that $I'\in\I_{\cl}$. Let $A'\subseteq I'$ be a subspace of codimension one. Let $A\subseteq I$ be a subspace of codimension one satisfying $A'=A\cap I'$. There is a $1$-dimensional space $x\subseteq I'$, $x \not\subseteq A$ such that $I'=A'+x$ and $I=A+x$. We claim that $\cl(A')\neq\cl(I')$. 
		Suppose not. Then $\cl(I')=\cl(A')\subseteq\cl(A)$ by (Cl2). Since $x\subseteq I'$, it follows that $x\subseteq\cl(A)$. But then $\cl(A)=\cl(A+x)=\cl(I)$, which contradicts the fact that $I\in\I_{\cl}$. Therefore, $\cl(I')\neq\cl(A')$ and $I'\in\I_{\cl}$. 

		Now Let $I,J \in \I_{\cl}$ such that $\dim J > \dim I$. We will show that there exists a $1$-dimensional space $x \subseteq J$, $x\not\subseteq I$ such that $x+I \in \I_{\cl}$.
		This will establish (I3).

	Suppose that (I3) fails for the pair $I,J$. That is, suppose that for any $1$-dimensional $x \subseteq J$, $x\not\subseteq I$, we have $x+I \notin {\I}_{\cl}$.  
	Then from (Cl1) and Lemma \ref{lem:cl-indep2} (which requires (Cl4)), we have that $\cl(x+I) = \cl(I)$ and so in particular, $x \subseteq \cl(I)$ for every $x \subseteq J$, $x \nsubseteq I$. It follows that $J \subseteq \cl(I)$ and hence $\cl(J) \subseteq \cl(I)$, by (Cl2).
	Now suppose further that $\dim(I\cap J)$ is maximal over all such pairs that fail (I3). 
	
	We first note that $I \nsubseteq \cl(U)$ for any proper subspace $U$ of $J$, since otherwise by the independence of $J$ we would have
	$I \subseteq \cl(U) \subsetneq \cl(J)$, which yields the contradiction $\cl(J) \subseteq \cl(I) \subsetneq \cl(J)$. 
	Since $\dim(J) > \dim (I)$, there exists a subspace $A$ of codimension 1 in $J$ such that $I \cap J = I \cap A$.  Since $I \nsubseteq \cl(A),$ there exists some $b \subseteq I$, $b \nsubseteq \cl(A).$ We claim that $b+A \in \I_{\cl}$. As $A \subsetneq J$, by (I2) $A$ is independent. If $b+A$ is not independent, we may apply Lemma \ref{lem:cl-indep2} to deduce that $b \subseteq \cl(b+A) = \cl(A)$, which contradicts our choice of $b \subseteq I, b \nsubseteq \cl(A)$. Write $J'=b+A$. Then as we have just shown, $J'\in \I_{\cl}$ and $\dim(J')=\dim(J)>\dim(I)$. 
	Now $b \nsubseteq \cl(A)$, so in particular, $b \nsubseteq A$ and hence $b \nsubseteq A \cap I = J \cap I$.  
	Moreover, we have
	$ b + (J \cap I) =  b + (A \cap I)  \subseteq (b+A) \cap I = J' \cap I,$
	from which we deduce that $\dim(J' \cap I)>\dim(J \cap I)$.
	By the maximality of $\dim(J \cap I)$ in our hypothesis, it must now be the case that $J'$ and $I$ satisfy (I3). That is, there exists $x \subseteq J'$, $x \nsubset I$ such that $x+I$ is independent.
	Now $x \subseteq J'=b+A$, so $x = \langle \bar{b} + \bar{a} \rangle$ for some $\bar{b} \in b$, and $\bar{a} \in A$.
	Then $x+I = \langle \bar{b} + \bar{a} \rangle+ I = a + I$ for some $a \subseteq A \subseteq J$, since $b \subseteq I$.
	But this contradicts our assumption
	that (I3) fails for $I$ and $J$. We deduce that (I3) holds for $I$ and $J$ and hence holds true in general.
	
	We will establish that (I4'') holds. By Lemma \ref{lem:I4}, this will show that (Cl1)-(Cl4) are sufficient to prove that the axiom (I4) holds for $\I_{\cl}$.
	
	\begin{itemize}
		\item[(I4'')]
		Let $A \in \kL(E)$ and let $I\in \max(A,\I_{\cl})$. Let $x \in \kL(E)$ be a $1$-dimensional space. We will show that $x+A$ has a maximal independent subspace contained in $x+I$.
	\end{itemize}

	If $A=I$ then any subspace of $x+A$ is a subspace of $x+I$, so the result holds.
	Suppose then that $I \subsetneq A$. If $x \subseteq A$ then $\max(x+A,\I_{\cl}) = \max(A,\I_{\cl})$ and so $I$ is the required member of $\max(x+A,\I_{\cl})$ contained $x+I$. Therefore, for the remainder we assume that $x \not{\subseteq} A$.
	Again by the maximality of $I$ in $A$, 
	$a + I \notin \I_{\cl}$ for every $1$-dimensional space $a \subseteq A, a \nsubseteq I$.
	Therefore, by Lemma \ref{lem:cl-indep2}, $\cl(a+I)= \cl(I)$ for every $a \subseteq A$
	and so by (Cl2) and (Cl3), $A \subseteq \cl(I)$.
	
	Let $M \in \max(x+A,\I_{\cl})$. By Lemma \ref{lem:I3}, we may choose $M$ such that $I \subseteq M$.
	If $M=I$ then $I$ itself gives the required subspace of $x+I$
	in $\max(x+A,\I_{\cl})$, so assume that $I \subsetneq M$, i.e. that $\dim(M)>\dim(I)$.
	In particular, this means that $M \nsubseteq A$, by the maximality of $I$ in $A$. 
	
	By Lemma \ref{lem:M=m+I}, $\dim(M)= \dim(I)+1$.
	If $x+I \in \I_{cl}$ then $\dim(x+I)=\dim(M)$, so $x+I\in \max(x+A,\I)$ and (I4) holds.
	We now assume that $x+I \notin \I_{\cl}$.
	
	Since $x+I \notin \I_{\cl}$, by Lemma \ref{lem:cl-indep2}, we have $\cl(x+I) =\cl(I)$.
    Therefore $x \subseteq \cl(I)$ and as we showed above, $A \subseteq \cl(I)$ and so
    $x+A \subseteq \cl(I)$. 
	In particular, $M=m+I \subseteq x+A \subseteq \cl(I)$ and so $\cl(M)=\cl(I)$, which by Lemma
    \ref{lem:cl-indep2} contradicts the independence of $M$.  We deduce that
    $x+I \in \I_{\cl}$ and hence (I4) holds.
\end{proof} 

\begin{Corollary}\label{IndChius}
    Let $(E,\cl)$ be a closure function let $(E,\I)$ be a collection of independent spaces.
    \begin{enumerate}
        \item 

	$(E,\cl)$ determines a $q$-matroid 
	$(E,r)$ whose set of independent spaces is $\I_{\cl}$ and whose closure function satisfies
	$\cl_r = \cl$. 
	
	\item
	Define a function 
	$r_\I: \kL(E) \longrightarrow {\mathbb Z} : A \mapsto \max\{ \dim(I): I \in \I, I \subseteq A \}.$

	Then $(E,\I)$ determines a $q$-matroid $(E,r)$ whose closure function is $\cl_{\I}$ and whose set of independent spaces is $ \I$.
	\end{enumerate}
\end{Corollary}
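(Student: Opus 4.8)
The plan is to obtain part~(2) from existing results and then derive part~(1) from part~(2) together with Theorem~\ref{th:cltoind}; the only genuinely new ingredient is the identity $\cl_r=\cl$.

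For part~(2): by Theorem~\ref{lem:I4} the system (I1)--(I4) is equivalent to the independence axioms of \cite{JP18}, so \cite[Theorem~8]{JP18} gives that $r_\I$ is the rank function of a $q$-matroid whose independent spaces are exactly $\I$; we \emph{define} $\cl_\I$ to be the closure function of that $q$-matroid. (The two facts worth isolating: (I1) and (I2) force $\{0\}\in\I$, so $\max(A,\I)\neq\emptyset$ for every $A$; and by (I3) every member of $\max(A,\I)$ has dimension $r_\I(A)$.) For part~(1), Theorem~\ref{th:cltoind} says $\I_{\cl}$ satisfies (I1)--(I4), so part~(2) applied to $\I:=\I_{\cl}$ produces a $q$-matroid $(E,r)$ with $r=r_{\I_{\cl}}$ and, by construction, $\I_r=\I_{\cl}$. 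It remains to prove $\cl_r=\cl$. Since $\cl_r(A)$ is the sum of all $1$-dimensional $y$ with $r(A+y)=r(A)$ and $\cl(A)$ is the sum of all $1$-dimensional $y\subseteq\cl(A)$, it suffices to show, for each $A\in\kL(E)$ and each $1$-dimensional $y$, that $r(A+y)=r(A)\iff y\subseteq\cl(A)$.

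The case $y\subseteq A$ is immediate from (Cl1), so fix $I\in\max(A,\I_{\cl})$ and assume $y\nsubseteq A$. Then $\dim I=r(A)$ and $I\in\I_r$; moreover, maximality of $I$ in $A$ forces $I+a\notin\I_{\cl}$ for every $1$-dimensional $a\subseteq A$ with $a\nsubseteq I$, so Lemma~\ref{lem:cl-indep2} gives $a\subseteq\cl(I)$ for each such $a$, and together with (Cl1) this yields $A\subseteq\cl(I)$, hence $\cl(I)=\cl(A)$. The heart of the argument is then the chain of equivalences
\[ r(A+y)=r(A)\ \overset{(\ast)}{\Longleftrightarrow}\ I+y\notin\I_{\cl}\ \Longleftrightarrow\ \cl(I+y)=\cl(I)\ \Longleftrightarrow\ y\subseteq\cl(I)=\cl(A). \]
The last two equivalences are soft: the middle one is Lemma~\ref{lem:cl-indep2} in one direction and Lemma~\ref{lem:xJ}(1) in the other, applied to $I\subseteq I+y$ (of consecutive dimensions, since $y\nsubseteq I$), and the third is Lemma~\ref{l-closure-1dim} together with (Cl1). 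For $(\ast)$, the implication ``$\Rightarrow$'' is immediate: if $r(A+y)=r(A)$ then $\dim(I+y)=r(A)+1>r(A+y)\geq r(I+y)$, so $I+y$ is dependent. The implication ``$\Leftarrow$'' is the crux: if $I+y\notin\I_{\cl}=\I_r$ then $r(I+y)=\dim I=r(A)$ (by monotonicity and dependence), and $A\cap(I+y)=I$ by a dimension count using $y\nsubseteq A$, so submodularity (R3) applied to $A$ and $I+y$ gives
\[ r(A+y)+r(A)=r\big(A+(I+y)\big)+r\big(A\cap(I+y)\big)\leq r(A)+r(I+y)=2r(A), \]
whence $r(A+y)\leq r(A)$, and equality holds by monotonicity.

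The main obstacle is exactly the ``$\Leftarrow$'' half of $(\ast)$: that dependence of the single space $I+y$, for one maximal independent $I\subseteq A$, already forbids $y$ from raising the rank of $A$. Once the right submodularity inequality is written down this is quick, but the surrounding bookkeeping that transfers ``$I+y$ is dependent'' into ``$\cl(I+y)=\cl(I)$'' via Lemma~\ref{lem:cl-indep2} is where the $q$-analogue exchange axiom (Cl4) genuinely enters. With the iff-chain established, the two sums defining $\cl_r(A)$ and $\cl(A)$ range over the same $1$-dimensional spaces, so $\cl_r=\cl$; everything else follows from part~(2) and Theorem~\ref{th:cltoind}.
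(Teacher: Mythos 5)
Your proof is correct, and structurally it mirrors the paper's: both first establish $\cl(I)=\cl(A)$ for any $I\in\max(A,\I_{\cl})$ via Lemma~\ref{lem:cl-indep2}, and then show $\cl_r(A)=\cl(A)$ by identifying which $1$-dimensional $y$ belong to each. The one place where you diverge is the crux step, ``$I+y\notin\I_{\cl}$ implies $r(A+y)=r(A)$.'' The paper handles this on the independence-axiom side, invoking (I4) (via (I4'')) to conclude that $I$ remains a member of $\max(A+x,\I_{\cl})$; you instead work on the rank-axiom side and apply submodularity (R3) of the already-constructed $q$-matroid rank to $A$ and $I+y$, using $A\cap(I+y)=I$. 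Both are valid. Your route has the small advantage that once $(E,r)$ is known to be a $q$-matroid, (R3) is available for free and the dimension-count inequality is transparent; the paper's route is slightly more self-contained in that it keeps the argument entirely inside the machinery built for $\I_{\cl}$. Your reorganization of the two inclusions into a single iff-chain is also a clean variant that makes explicit the reverse implication ($\cl_r(A)\subseteq\cl(A)$), which the paper treats by essentially the same tokens (monotonicity to get $I+x$ dependent, then Lemma~\ref{lem:cl-indep2}).
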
	

\begin{proof}
	We have a closure function $(E,\cl)$,
	which from Theorem \ref{th:cltoind} yields a collection of independent spaces $(E,\I_{\cl})$.  
	From \cite[Theorem 8]{JP18}, $(E,\I_{\cl})$ yields a $q$-matroid $(E,r)$ 
	with rank function defined by
	$r(A):=\max\{\dim I : I\in \I_{\cl} , I \subseteq A \}$ for each $A \in \kL(E)$,
	and whose independent spaces coincide with $\I_{\cl}$.
	Recall that the closure function of 
	$(E,r)$ is defined by
	  $ \cl_{r}(A) :=  \sum_{x \in C_r(A)} x,$
	where $C_r(A)=\{ x \in \kL(E) : \dim(x) =1, r(A+x)=r(A) \}.$
	We claim that $\cl_{r}(A) = \cl(A)$ for each $A \in \kL(E)$.
	Let $A\in \kL(E)$ and let $I \in \max(A,\I_{\cl})$. 
	Then $r(A)=r(I)=\dim(I)$ by definition. Also, $I+a \notin \I_{\cl}$  
	for any $a \subseteq A, a \nsubseteq I$ and so from Lemma \ref{lem:cl-indep2} we have $a \subseteq \cl(a+I) = \cl(I)$. Since $a$ was chosen arbitrarily in $A$, by (Cl2) we get $\cl(A)=\cl(I)$.

	Let $x$ be a $1$-dimensional space such that $x \subseteq \cl(A)$, $x \nsubseteq A$. Clearly $x \subseteq \cl(I)=\cl(A)$, so $x+I \notin \I_{\cl}$ and hence by (I4), 
	$I \in \max(x+A,\I_{\cl})$.
	Then $r(A)=r(I)=\dim I=r(A+x)$ and so $ x \subseteq \cl_{r}(A)$. Therefore $\cl(A) \subseteq \cl_{r}(A)$.
	Now suppose that $x \subseteq \cl_{r}(A), x \not\subseteq A$. Then $r(I)=r(A)=r(A+x)\geq r(I+x) \geq r(I)$ and so $x+I \notin \I.$
	Again by Lemma \ref{lem:cl-indep2} we have 
	$x \subseteq \cl(x+A) = \cl(I) = \cl(A)$ and so $\cl(A)=\cl_{r}(A)$. This proves (1).
	
	By \cite[Theorem 8]{JP18}, the collection of independent spaces $(E,\I)$ determines a $q$-matroid $(E,r)$ whose independent spaces comprise $\I$.
	Define a map $\cl_{\I}:=\cl_{r}$. By \cite[Theorem 68]{JP18}, $\cl_{\I}$ is a closure function, which proves (2).
\end{proof}

\section{Flats and the Closure Function}\label{sec:flat-cl}

We now establish that the flat axioms and the closure axioms are cryptomorphic. 

\begin{Definition}\label{def:flat-cl}
    Let $(E,\kF)$ be a collection of subspaces containing $E$. For each subspace $A \in \kL(E)$, we define
    \[ \cl_\kF(A):=\bigcap \{F:F \in \kF , A \subseteq F \}.\]
\end{Definition}

\begin{Lemma}\label{lem:clisaflat}
    Let $(E,\kF)$ be a collection of subspaces satisfying (F1) and (F2)
    and let $A$ be a subspace of $E$.
    Then $\cl_\kF(A) \in \kF$.
\end{Lemma}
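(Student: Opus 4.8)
The plan is to reduce the (possibly infinite) intersection defining $\cl_\kF(A)$ to an intersection of finitely many flats and then apply (F2). First I would record that the collection $\kG := \{F \in \kF : A \subseteq F\}$ is nonempty, since $E \in \kG$ by (F1); thus $\cl_\kF(A) = \bigcap_{F \in \kG} F$ makes sense and contains $A$.

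Next I would show that this intersection is already achieved by finitely many members of $\kG$. Pick any $F_1 \in \kG$. If $F_1 = \cl_\kF(A)$ there is nothing more to do; otherwise some $F \in \kG$ satisfies $F_1 \nsubseteq F$, and then $F_1 \cap F \subsetneq F_1$, so $\dim(F_1 \cap F) < \dim(F_1)$. Iterating this, as long as $F_1 \cap \cdots \cap F_j$ strictly contains $\cl_\kF(A)$ we can find $F_{j+1} \in \kG$ with $\dim(F_1 \cap \cdots \cap F_{j+1}) < \dim(F_1 \cap \cdots \cap F_j)$. Since these dimensions are non-negative integers, the process terminates after at most $\dim(F_1) \leq n$ steps, producing subspaces $F_1, \ldots, F_k \in \kG$ with $F_1 \cap \cdots \cap F_k = \cl_\kF(A)$.

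Finally I would conclude by a short induction on $k$ using (F2): for $k = 1$ this is immediate, and for $k > 1$ the inductive hypothesis gives $F_1 \cap \cdots \cap F_{k-1} \in \kF$, so (F2) applied to this flat and $F_k \in \kF$ yields $F_1 \cap \cdots \cap F_k \in \kF$. Hence $\cl_\kF(A) \in \kF$. There is essentially no obstacle here; the only subtlety, and the reason for the finite-reduction step, is that when $\mathbb{F}$ is infinite the family $\kF$ may be infinite, so that a naive induction over all of $\kG$ is unavailable — but the finite length of the subspace lattice of $E$ makes the reduction automatic.
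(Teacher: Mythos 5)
Your argument is correct, and it takes a genuinely different route from the paper's. You reduce the (a priori arbitrary) intersection $\cl_\kF(A)$ to a finite intersection $F_1 \cap \cdots \cap F_k$ by a greedy descent: at each step you pick a new flat in $\kG$ that strictly cuts down the current intersection, and the finite dimension of $E$ forces termination; then a short induction on $k$ via (F2) finishes it. The paper instead characterizes $\cl_\kF(A)$ as the unique \emph{minimal} member of $\{F \in \kF : A \subseteq F\}$: it sets $\kF_A := \min(\{F \in \kF : A \subseteq F\})$, observes the intersection over all flats containing $A$ equals the intersection over $\kF_A$, uses (F2) once to show any two members of $\kF_A$ coincide, and uses finite dimensionality only to show $\kF_A$ is nonempty. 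Both arguments rely on the same two ingredients --- (F2) and the finite length of $\kL(E)$ --- but deploy them in opposite orders: you use finiteness to set up a finite intersection and (F2) repeatedly, while the paper uses (F2) once to establish uniqueness of a minimal flat and finiteness to guarantee its existence. Your version is more constructive; the paper's version extracts the slightly more informative fact that there is a unique minimal flat containing $A$, which is reused implicitly elsewhere in the flat--closure correspondence.
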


\begin{proof}
    By (F1), $E \in \kF$ and $A \in \kL(E)$, so $\cl_\kF(A)$ is well defined.    
    Observe that $ \cl_\kF(A)$ is a finite-dimensional subspace of $E$.
       Define $\kF_A:=\min(\{F \in \kF : A \subseteq F\})$.
    Clearly, 
    \[ \cl_\kF(A)  = \bigcap \{F: F \in \kF , A \subseteq F \} = \bigcap \{F: F \in \kF_A \}\]
    since every member of $\{F \in \kF : A \subseteq F \}$ contains a member of $\kF_A$.
    Let $F_1,F_2 \in \kF_A$. Then $A, \cl_\kF(A) \subseteq F_1,F_2$ and by (F2), $F_1 \cap F_2 \in \kF$.
    This implies $F_1=F_2$, so $|\kF_A| \leq 1$.
    If $\kF_A$ is empty, then for any subspace $F \in \kF$ that contains $A$ there exists a subspace $F' \in \kF$, $F' \subsetneq F$ and so we could construct the infinite chain of subspaces of $E$, which contradicts the fact that $E$ is finite dimensional.
    Therefore, $\cl_\kF(A) = F$ for the unique $F \in \kF$ satisfying $\kF_A = \{F\}$.
\end{proof}

Before stating and proving the cryptomorphism, we prove a result about closure that will also be used later on in the cryptomorphism between rank and closure.

\begin{Lemma}\label{l-closure-cover}
Let $\cl$ be a closure function on $E$ and let $A,B\in \kL(E)$. If $\cl(A)\subseteq\cl(B)\subseteq\cl(A+x)$ then $\cl(A)=\cl(B)$ or $\cl(B)=\cl(A+x)$.
\end{Lemma}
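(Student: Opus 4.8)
The plan is to split on whether the first conclusion holds: if $\cl(A)=\cl(B)$ we are done, so assume $\cl(A)\subsetneq\cl(B)$ and deduce $\cl(B)=\cl(A+x)$. Here I read $x$ as a $1$-dimensional space, so that axiom (Cl4) is available. Since the inclusion $\cl(A)\subseteq\cl(B)$ is strict, there is a $1$-dimensional space $y\subseteq\cl(B)$ with $y\nsubseteq\cl(A)$ (any subspace properly contained in another admits such a $1$-dimensional witness).

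The heart of the argument is a single application of (Cl4). From $y\subseteq\cl(B)\subseteq\cl(A+x)$ and $y\nsubseteq\cl(A)$, axiom (Cl4) gives $x\subseteq\cl(A+y)$. Now $A\subseteq\cl(A)\subseteq\cl(B)$ by (Cl1) and the hypothesis, and $y\subseteq\cl(B)$, so $A+y\subseteq\cl(B)$; Lemma \ref{l-closure-incl} then yields $\cl(A+y)\subseteq\cl(B)$, whence $x\subseteq\cl(B)$. Consequently $A+x\subseteq\cl(B)$, so by (Cl2) and (Cl3) we get $\cl(A+x)\subseteq\cl(\cl(B))=\cl(B)$, and combining this with the hypothesis $\cl(B)\subseteq\cl(A+x)$ gives $\cl(B)=\cl(A+x)$.

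I do not expect a genuine obstacle: the only subtle points are that $x$ must be $1$-dimensional for (Cl4) to apply, and that a strict inclusion of subspaces is witnessed by a $1$-dimensional subspace; everything else is routine bookkeeping with monotonicity and idempotency, most of it already encapsulated in Lemma \ref{l-closure-incl} and Lemma \ref{l-closure-1dim}.
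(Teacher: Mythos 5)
Your proof is correct and follows essentially the same route as the paper: the key step is the same application of (Cl4) with a $1$-dimensional witness $y$ of the strict inclusion $\cl(A)\subsetneq\cl(B)$, followed by Lemma \ref{l-closure-incl} to bring $x$ into $\cl(B)$. The only difference is organizational — you argue directly where the paper argues by contradiction (and the paper's separate preliminary case $x\subseteq A$ is, as you implicitly observe, vacuous once $\cl(A)\subsetneq\cl(B)$ is assumed).
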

\begin{proof}
First, note that if $x\subseteq A$, then $\cl(A)=\cl(A+x)$ by Lemma \ref{l-closure-1dim}. This implies $\cl(A)=\cl(B)=\cl(A+x)$ and proves the statement. \\
Assume $x\not\subseteq A$ and suppose, towards a contradiction, that $\cl(A)\subsetneq\cl(B)\subsetneq\cl(A+x)$. Assume also that $x\subseteq\cl(B)$. Since $A\subseteq\cl(A)\subseteq\cl(B)$, we have $A+x\subseteq\cl(B)$. Then Lemma \ref{l-closure-incl} gives that $\cl(A+x)\subseteq\cl(B)\subsetneq\cl(A+x)$, a contradiction. So it needs to be that $x\not\subseteq\cl(B)$. \\
Let $y\in \kL(E)$ be a $1$-dimensional space such that $y\subseteq\cl(B)$, $y\not\subseteq\cl(A)$. Then $y\subseteq\cl(A+x)$ and  axiom (Cl4) gives that $x\subseteq(A+y)$. On the other hand, since both $A\subseteq\cl(B)$ and $y\subseteq\cl(B)$, we have that $A+y\subseteq\cl(B)$ and Lemma \ref{l-closure-incl} gives $\cl(A+y)\subseteq\cl(B)$. This gives a contradiction with $x\subseteq\cl(B)$. In the end, we conclude that it can not happen that $\cl(A)\subsetneq\cl(B)\subsetneq\cl(A+x)$, hence the lemma holds.
\end{proof}

\begin{Theorem}\label{th:clflats}
  Let $(E,\cl)$ be a closure function and  
       let
       $\kF_{\cl}:=\{ F \in \kL(E) : \cl(F)=F \}.$
       Then $(E,\kF_{\cl})$ is a collection of flats.
\end{Theorem}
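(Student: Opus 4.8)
The plan is to check the three flat axioms (F1)--(F3) for $\kF_{\cl}$ one at a time, relying on the basic closure lemmas already in hand (Lemmas \ref{l-closure-incl}, \ref{l-closure-1dim}, and especially \ref{l-closure-cover}). For (F1), I would simply note that (Cl1) gives $E\subseteq\cl(E)\subseteq E$, so $\cl(E)=E$ and $E\in\kF_{\cl}$. For (F2), given $F_1,F_2\in\kF_{\cl}$, axiom (Cl1) yields $F_1\cap F_2\subseteq\cl(F_1\cap F_2)$, while applying monotonicity (Cl2) to $F_1\cap F_2\subseteq F_i$ gives $\cl(F_1\cap F_2)\subseteq\cl(F_i)=F_i$ for $i=1,2$, hence $\cl(F_1\cap F_2)\subseteq F_1\cap F_2$; so $F_1\cap F_2\in\kF_{\cl}$. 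Both of these are immediate unwindings of the axioms.

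The real content is (F3). Fix $F\in\kF_{\cl}$ and a $1$-dimensional $x$ with $x\nsubseteq F$, and propose $G:=\cl(F+x)$ as the cover. It lies in $\kF_{\cl}$ by (Cl1) and (Cl3), it contains $F$ and $x$, and since $x\subseteq G$ but $x\nsubseteq F$ we have $F\subsetneq G$. To see $G$ covers $F$ in $\kF_{\cl}$, I would take any $H\in\kF_{\cl}$ with $F\subseteq H\subseteq G$; using $\cl(F)=F$ and $\cl(H)=H$ this is exactly the hypothesis $\cl(F)\subseteq\cl(H)\subseteq\cl(F+x)$, so Lemma \ref{l-closure-cover} forces $\cl(H)=\cl(F)$ or $\cl(H)=\cl(F+x)$, i.e. $H=F$ or $H=G$. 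This is where axiom (Cl4) is really used, via Lemma \ref{l-closure-cover}.

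For uniqueness, suppose $G'$ is any cover of $F$ in $\kF_{\cl}$ with $x\subseteq G'$. Then $F+x\subseteq G'$, so (Cl2) and (Cl3) give $G=\cl(F+x)\subseteq\cl(G')=G'$; since $G\in\kF_{\cl}$ and $F\subsetneq G\subseteq G'$ with $G'$ a cover of $F$, we conclude $G=G'$. This completes (F3) and hence the theorem. The only genuine obstacle is the covering step in (F3): without (Cl4) there could in principle be a flat strictly between $F$ and $\cl(F+x)$, and it is precisely Lemma \ref{l-closure-cover} that rules this out; the rest is routine.
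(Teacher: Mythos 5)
Your proof is correct, and (F1), (F2) and the covering part of (F3) are handled exactly as in the paper (the covering step also relies on Lemma \ref{l-closure-cover}). Where you diverge is in the uniqueness half of (F3): the paper takes an arbitrary cover $F_1$ of $F$ containing $x$, picks any $y\subseteq F_1$, $y\nsubseteq F$, shows $F_1=\cl(y+F)$ using the covering property, and then invokes (Cl4) once more to show that $\cl(x+F)\cap\cl(y+F)=F$ would have to hold unless $\cl(x+F)=\cl(y+F)$, contradicting $x\nsubseteq F$. Your argument is cleaner: since any cover $G'$ containing $x$ also contains $F+x$, monotonicity and idempotence give $\cl(F+x)\subseteq G'$, and then the covering hypothesis on $G'$ plus $F\subsetneq\cl(F+x)\subseteq G'$ forces equality. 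This avoids a second appeal to (Cl4), isolating its use to the single application inside Lemma \ref{l-closure-cover}, and is a genuine simplification of the paper's uniqueness argument.
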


\begin{proof} 
	We will show that ${\mathcal F}_{\cl}$ satisfies (F1), (F2), and (F3).
	The condition (F1) holds trivially.
	Now let $F_1,F_2 \in {\mathcal F}_{\cl}$. Clearly $F_1 \cap F_2 \subseteq \cl(F_1 \cap F_2)$. Now $F_1 \cap F_2 \subseteq F_1,F_2 $, so by 
	(Cl2) and (Cl3) we have $\cl(F_1 \cap F_2) \subseteq \cl(F_1)=F_1,\cl(F_2)=F_2$. It follows that $F_1 \cap F_2 = \cl(F_1 \cap F_2)$ and hence (F2) holds.
	
	Now let $F\in {\mathcal F}_{\cl}$ and let $x\in \kL(E), x \nsubseteq F$ have dimension $1$. By Lemma \ref{l-closure-cover}, the flat $\cl(F+x)$ is a cover of $F$.

	We claim that $\cl(x+F)$ is unique. 
	Let $F_1$ be a cover of $F$ that contains $x$. Let $y \subseteq F_1$, $y \nsubseteq F$.
	Then $y+F \subseteq F_1$, so by (Cl2) it follows that $\cl(y+F) \subseteq F_1$, and since
	$F \neq \cl(y+F)$ it follows that $F_1 = \cl(y+F)$.
	We now claim that $\cl(x+F) \cap \cl(y+F)=F$. Let $z$ be a $1$-dimensional subspace of $\cl(x+F) \cap \cl(y+F)$ and suppose that $z \not\subseteq F$. Then by (Cl4) we have 
	$z \subseteq \cl(x+F)$ and $z \not\subseteq F$, which implies that $x \subseteq \cl(z+F)$.
	Similarly, $y \subseteq cl(z+F)$.
	But then, applying (Cl2) and (Cl3) , we have 
	   $ \cl(z+F) \subseteq \cl(x+F), \cl(y+F) \subseteq \cl(z+F),$
	and so $\cl(x+F)=\cl(y+F)=\cl(z+F)$.
	We deduce that $\cl(x+F)$ is the required unique flat of $\kF_{\cl}$ that contains $x$ and covers $F$. 
	Therefore (F3) holds. 
	\end{proof}
	
	\begin{Theorem}\label{th:flatscl}
  Let $(E,\kF)$ be a collection of flats
       and let ${\cl_{\kF}}: \kL(E) \to \kL(E)$ be the map defined by
       $\cl_\kF(A):=\bigcap \{F \in \kF : A \subseteq F \}$ for each subspace $A \in \kL(E)$.
       Then $(E,\cl_\kF)$ is a closure function.
\end{Theorem}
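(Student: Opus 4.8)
I need to verify that $\cl_\kF$ satisfies (Cl1)--(Cl4). The first three axioms should follow quickly from the definition of $\cl_\kF$ as an intersection of flats containing $A$, together with Lemma \ref{lem:clisaflat} (which tells us $\cl_\kF(A)$ is itself a flat). The real work is (Cl4).

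First I would verify (Cl1): by definition $A \subseteq F$ for every $F$ in the intersection defining $\cl_\kF(A)$, hence $A \subseteq \cl_\kF(A)$. For (Cl2): if $A \subseteq B$, then every flat containing $B$ also contains $A$, so the intersection defining $\cl_\kF(A)$ is over a larger family, giving $\cl_\kF(A) \subseteq \cl_\kF(B)$. For (Cl3): by Lemma \ref{lem:clisaflat}, $\cl_\kF(A) \in \kF$, and it is the smallest flat containing $A$ (it contains $A$ and is the intersection of all flats containing $A$); since $\cl_\kF(A)$ is a flat containing itself, $\cl_\kF(\cl_\kF(A)) = \cl_\kF(A)$ — more carefully, $\cl_\kF(\cl_\kF(A))$ is the intersection of all flats containing $\cl_\kF(A)$, and $\cl_\kF(A)$ is one such flat, so the intersection is contained in $\cl_\kF(A)$; the reverse inclusion is (Cl1).

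The main obstacle is (Cl4): given $1$-dimensional $x,y$ and a subspace $A$ with $y \subseteq \cl_\kF(A+x)$ and $y \nsubseteq \cl_\kF(A)$, I must show $x \subseteq \cl_\kF(A+y)$. The strategy is to pass to the flat $F := \cl_\kF(A)$, which is the smallest flat containing $A$. Since $y \nsubseteq F$, axiom (F3) gives a unique cover $F_y$ of $F$ in $\kF$ containing $y$; I claim $F_y = \cl_\kF(A+y)$, because $\cl_\kF(A+y)$ is a flat containing $A+y \supseteq F + y$, hence contains $F$ properly (as $y \notin F$) and contains $y$, so by the covering/uniqueness in (F3) and minimality it equals $F_y$ (one must check $\cl_\kF(A+y)$ is actually a \emph{cover} of $F$, not something strictly larger — but any flat strictly between $F$ and $\cl_\kF(A+y)$ that contains $y$ would, combined with (F3), force a contradiction; alternatively $\cl_\kF(A+y) \subseteq F_y$ since $F_y$ is a flat containing $A+y$, and $F_y \subseteq \cl_\kF(A+y)$ since $\cl_\kF(A+y)$ is a flat containing $y$ and properly containing $F$, hence not equal to $F$, so by (F3) uniqueness $\cl_\kF(A+y) = F_y$). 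Now suppose for contradiction $x \nsubseteq \cl_\kF(A+y) = F_y$. Then (F3) applied to $F_y$ and $x$ gives a unique cover $F_{xy}$ of $F_y$ containing $x$; I also have $\cl_\kF(A+x)$, a flat containing $A + x \supseteq F+x$, hence strictly containing $F$ and containing $x$, so by (F3) it is the unique cover of $F$ containing $x$. Since $y \subseteq \cl_\kF(A+x)$ by hypothesis and $y \subseteq F_y$, the flat $\cl_\kF(A+x)$ contains both $x$ (a generator over $F$) and $y$; I want to derive that $F_y \subseteq \cl_\kF(A+x)$, which would make $\cl_\kF(A+x)$ a flat containing $F_y + x$ properly containing $F_y$, contradicting... — the cleanest route is: $\cl_\kF(A+x)$ is a cover of $F$ containing $x$ and $y$; but $F_y$ is also a flat with $F \subsetneq F_y$ and $y \in F_y$, and $F_y \subseteq \cl_\kF(A+x)$ would force $F_y = \cl_\kF(A+x)$ by the cover property, whence $x \subseteq F_y = \cl_\kF(A+y)$, the desired conclusion. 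So the heart of the matter is establishing $F_y \subseteq \cl_\kF(A+x)$: since $\cl_\kF(A+x)$ is a flat containing $A + y \cdot$(trivially $y \subseteq \cl_\kF(A+x)$ by hypothesis, and $A \subseteq \cl_\kF(A+x)$), it contains $A+y$, hence contains $\cl_\kF(A+y) = F_y$ by definition of $\cl_\kF$ as the smallest flat containing $A+y$. This completes (Cl4), and the cryptomorphism follows by combining with Theorem \ref{th:clflats}.
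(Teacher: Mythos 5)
Your proposal is correct and follows essentially the same route as the paper: (Cl1)--(Cl3) are read off from the definition of $\cl_\kF$ as an intersection together with Lemma~\ref{lem:clisaflat}, and (Cl4) is handled by identifying $\cl_\kF(A+x)$ and $\cl_\kF(A+y)$ with the unique covers of $F=\cl_\kF(A)$ supplied by (F3) and then exploiting uniqueness/covering. The only cosmetic difference is in the last step: the paper observes that $\cl_\kF(A+x)$ and $\cl_\kF(A+y)$ are both covers of $F$ containing $y$ and invokes (F3)-uniqueness directly, whereas you show $F_y=\cl_\kF(A+y)\subseteq\cl_\kF(A+x)$ and then squeeze using the cover relation; be slightly careful in your parenthetical justifications that a flat strictly containing $F$ and containing $x$ (resp.\ $y$) is not a priori a \emph{cover} of $F$ --- the clean way, which you do also give, is to sandwich it between $F$ and the (F3)-cover and use that no flat lies strictly between.
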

	
	\begin{proof}
    We will prove that $\cl_\kF$ satisfies the axioms (Cl1)-(Cl4). For any subspace $A$ of $E$, we define
	$\kF(A):=\{ F \in \kF : A \subseteq F \}$.
	Clearly $A \subseteq  \bigcap \{F \in {\mathcal F} : A \subseteq F \}=\cl_{\kF}(A) $ for any subspace $A \in \kL(E)$, so (Cl1) holds. 
	If $A \subseteq B $ are subspaces of $E$ then any flat $F$ containing $B$ also contains $A$ so $\cl_{\mathcal F}(A)\subseteq \cl_{\mathcal F}(B)$ and thus (Cl2) holds. 
	Let $A$ be a subspace of $E$. We claim that $\cl_{\kF}(\cl_{\kF}(A))=\cl_{\kF}(A)$.
    By Lemma \ref{lem:clisaflat}, $\cl_{\kF}(A)$ is itself a flat. 
	In particular, $\cl_{\kF}(F)=F$ for any $F \in \kF$.
	Therefore $\cl_{\kF}(\cl_{\kF}(A))= \cl_{\kF}(A)$ and so (Cl3) holds.
	
	Let $x,y \in \kL(E)$ be subspaces of dimension 1. Suppose that $y \subseteq \cl_{\mathcal F}(A+x)$, $ y \nsubseteq \cl_{\kF}(A)$. 
	We claim that $x\subseteq \cl_{\mathcal F}(A+y)$. Suppose to the contrary that $x\not\subseteq \cl_{\mathcal F}(A+y)$. 
	Then $\cl_{\kF}(A) \subsetneq \cl_{\kF}(x+A)$ and $\cl_{\kF}(A) \subsetneq \cl(y+A)$.
	By (F3) there is a unique flat $F\in {\mathcal F}$ that covers $A$ and contains $x$. 
	By (Cl2) and (Cl3) we have $A+x \subset \cl_{\mathcal F}(A+x) \subset F$ and so
	$F=\cl_{\mathcal F}(A+x)$. Similarly $\cl_{\mathcal F}(A+y)$ is the unique cover of $A$ that contains $y$. 
	
	Now $y \subset \cl_{\mathcal F}(A+x)$ and $y \subseteq \cl_{\mathcal F}(A)$ by hypothesis and clearly $y \subseteq \cl_{\mathcal F}(A+y)$, so in particular $y$ is contained in two flats that cover $A$. Again by (F3), this means that $\cl_{\mathcal F}(A+x)=\cl_{\mathcal F}(A+y)$, contradicting $x\not\subseteq \cl_{\mathcal F}(A+y)$. We deduce that $x\subseteq \cl_{\mathcal F}(A+y)$. This establishes that (Cl4) holds.
\end{proof}

\begin{Lemma}\label{lem:clflatcomp}
    Let $(E,\cl)$ be a closure function and let $(E,\kF)$ be a collection of flats.
    \begin{enumerate}
        \item For each subspace $A \in \kL(E)$, it holds that 
        $\cl(A) =\bigcap \{B \in \kL(E) : A \subseteq B, B = \cl(B) \}.$ 
        
        \item For each subspace $F \in \kL(E)$, it holds that
        $F \in \kF \Leftrightarrow F = \bigcap \{ K \in \kF : F \subseteq K \}$.
    \end{enumerate} 
\end{Lemma}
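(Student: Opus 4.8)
The plan is to prove both statements by elementary double-inclusion arguments, invoking only the closure axioms (Cl1)--(Cl3) for part (1) and additionally Lemma \ref{lem:clisaflat} for part (2). Both intersections are well defined: an arbitrary intersection of subspaces of the finite-dimensional space $E$ is again a subspace, and each family over which we intersect is non-empty, since it contains $E$ by (Cl1), respectively by (F1).

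For part (1), I would first observe that $\cl(A)$ is itself a member of the family $\{B \in \kL(E) : A \subseteq B,\ B = \cl(B)\}$: indeed $A \subseteq \cl(A)$ by (Cl1), and $\cl(\cl(A)) = \cl(A)$ by (Cl3). Hence $\bigcap \{B \in \kL(E) : A \subseteq B,\ B = \cl(B)\} \subseteq \cl(A)$. For the reverse inclusion, take any $B$ in this family; since $A \subseteq B$, axiom (Cl2) gives $\cl(A) \subseteq \cl(B) = B$, so $\cl(A)$ is contained in every member of the family and therefore in the intersection. Combining the two inclusions gives the claimed identity, which says precisely that $\cl$ coincides with $\cl_{\kF_{\cl}}$ for the flat collection $\kF_{\cl}$ of Theorem \ref{th:clflats}.

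For part (2), I would work with $\cl_\kF(F) = \bigcap\{K \in \kF : F \subseteq K\}$. If $F \in \kF$, then $F$ itself lies in $\{K \in \kF : F \subseteq K\}$, giving $\cl_\kF(F) \subseteq F$, while $F$ is contained in every member of that family, giving $F \subseteq \cl_\kF(F)$; hence $F = \cl_\kF(F)$. Conversely, if $F = \cl_\kF(F)$, then by Lemma \ref{lem:clisaflat} (applicable since $(E,\kF)$ is a collection of flats, so (F1) and (F2) hold) we have $\cl_\kF(F) \in \kF$, whence $F \in \kF$. This establishes the equivalence, and together with part (1) it shows that the assignments $\cl \mapsto \kF_{\cl}$ and $\kF \mapsto \cl_\kF$ are mutually inverse.

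I do not expect a genuine obstacle here: the content is essentially the bookkeeping needed to close the loop of the flat--closure cryptomorphism of Theorems \ref{th:clflats} and \ref{th:flatscl}. The only points requiring care are that the intersections are taken over non-empty families of subspaces of the finite-dimensional $E$, and that Lemma \ref{lem:clisaflat} is indeed available in the setting of part (2).
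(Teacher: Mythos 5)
Your proof is correct and follows essentially the same double-inclusion route as the paper: in part (1) you observe that $\cl(A)$ is itself a member of the family (via (Cl1) and (Cl3)) and contained in every member (via (Cl2)), and in part (2) you argue symmetrically for $F \in \kF$ and invoke Lemma \ref{lem:clisaflat} for the converse. Incidentally, you correctly cite (Cl1) for $A \subseteq \cl(A)$, where the paper's proof misattributes this to (Cl2).
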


\begin{proof}
    Let $A$ be a subspace of $E$. Let $\A:=\{ B \in \kL(E) : A \subseteq B, B = \cl(B) \}$.
    Since $A \subseteq \cl(A)$ by (Cl2), and since $\cl(\cl(A))=\cl(A)$ by (Cl3),  
    we have $\cl(A) \in \A$ and hence $\bigcap \{B : B \in \A\} \subseteq \cl(A)$.
    Conversely, if $B \in \A$, then $A \subseteq B$ and $\cl(B)=B$. Therefore,
    by (Cl2) and (Cl3) we have $\cl(A) \subseteq \cl(B) = B$, so $\cl(A)$ is contained in the
    intersection of all members of $\A$ and we have $\cl(A) = \bigcap \{B : B \in \A\}$.
    This shows that (1) holds.
    
    For each subspace $A \in \kL(E)$, define $\kF(A):=\{K \in \kF : A \subseteq K \}$.
    If $F \in \kF$, then $F \in \kF(F)$ and so $\bigcap \{ K : K \in \kF(F)\}\subseteq F$.
    On the other hand, every member of $\kF(F)$ contains $F$, by definition, so
    $F \subseteq \bigcap \{ K : K \in \kF(F)\}$. Therefore, if $F \in \kF$ then
    $F = \bigcap \{ K : K \in \kF(F)\}$. Conversely, $F = \bigcap \{ K : K \in \kF(F)\}$ is a flat by Lemma \ref{lem:clisaflat}. This shows that (2) holds.
\end{proof}

Given a family of flats $(E,\kF)$ the function $r_{\kF}: \kL(E) \longrightarrow \ZZ$ is defined as follows. For any $A \in \kL(E)$, $r_{\kF}(A)$ is the length minus 1 of the longest chain between $\cl_\kF(\{0\})$ and $\cl_{\kF}(A)$. We recall the following result from \cite[Theorem 3]{WINEpaper1}.

\begin{Theorem}\label{th:bciflatsrk} 
Let $(E,\kF)$ be a family of flats and let $(E,r)$ be a $q$-matroid. Then
$(E,r_\kF)$ is a $q$-matroid whose family of flats is equal to $\kF$.  Conversely,
     \[\kF_r:=\{ A \in \kL(E) : r(A+x)>r(A) \: \forall x\in \kL(E), \dim(x)=1, x \nsubseteq A\},\]
     is a collection of flats for the matroid with rank function $r = r_{\kF_r}$.
\end{Theorem}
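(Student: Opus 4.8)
The plan is to route everything through the closure function, using the cryptomorphisms already in hand — flats $\leftrightarrow$ closure (Theorems \ref{th:clflats} and \ref{th:flatscl}) and closure $\leftrightarrow$ independence (Theorem \ref{th:cltoind}, Corollary \ref{IndChius}) — so that the only genuinely new ingredient is a dictionary between the two descriptions of rank: ``largest dimension of an independent subspace'' versus ``height of the closure inside the lattice of flats''. \textbf{Key lemma.} For any closure function $(E,\cl)$ and any $A\in\kL(E)$, put $\rho(A):=\max\{\dim I : I\in\I_\cl,\ I\subseteq A\}$ and $\kF_\cl:=\{F : \cl(F)=F\}$; I would show $\rho(A)$ equals the number of strict inclusions in a longest chain of $\kF_\cl$ from $\cl(\{0\})$ to $\cl(A)$. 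For ``$\geq$'': pick $I\in\max(A,\I_\cl)$; by the computation inside the proof of Corollary \ref{IndChius}, $\cl(I)=\cl(A)$, and refining $\{0\}=I_0\subsetneq\cdots\subsetneq I_d=I$ with $\dim I_j=j$, $d=\rho(A)$, gives by Lemma \ref{lem:xJ}(1) and (Cl3) a strict chain $\cl(I_0)\subsetneq\cdots\subsetneq\cl(I_d)=\cl(A)$ in $\kF_\cl$. For ``$\leq$'': first record the exchange fact that $I\in\I_\cl$ and $x\nsubseteq\cl(I)$ force $I+x\in\I_\cl$ (immediate from Lemma \ref{lem:cl-indep2}); then, given a chain $\cl(\{0\})=F_0\subsetneq\cdots\subsetneq F_k=\cl(A)$, build greedily $J_0=\{0\}$, $J_j=J_{j-1}+x_j$ with $x_j\subseteq F_j$ one-dimensional and $x_j\nsubseteq F_{j-1}\supseteq\cl(J_{j-1})$, obtaining $J_k\in\I_\cl$ with $\dim J_k=k$ and $J_k\subseteq\cl(A)$; finally $\rho(\cl(A))=\rho(A)$, since any $I\in\I_\cl$ with $I\subseteq\cl(A)=\cl(I')$ for $I'\in\max(A,\I_\cl)$ cannot have $\dim I>\dim I'$ (apply (I3) and Lemma \ref{lem:xJ}(1) once more).

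\textbf{Forward direction.} Given a collection of flats $(E,\kF)$: by Theorem \ref{th:flatscl} the map $\cl_\kF$ is a closure function, and $\kF_{\cl_\kF}=\kF$ by Lemma \ref{lem:clflatcomp}(2). Applying Corollary \ref{IndChius}(1) to $\cl_\kF$ produces a $q$-matroid $(E,r)$ with $\cl_r=\cl_\kF$, $\I_r=\I_{\cl_\kF}$, and $r(A)=\max\{\dim I : I\in\I_{\cl_\kF},\ I\subseteq A\}$; by the key lemma this is precisely $r_\kF(A)$ as defined. Its flats are the fixed points of $\cl_r=\cl_\kF$ — here I insert the observation, read straight off Definitions \ref{def-closure} and \ref{def:flat}, that $F$ is a flat of $(E,r)$ if and only if $\cl_r(F)=F$ — hence equal to $\kF_{\cl_\kF}=\kF$. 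This gives the first assertion.

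\textbf{Converse.} Given a $q$-matroid $(E,r)$: $\cl_r$ is a closure function (\cite[Thm.\ 68]{JP18}), and again $\kF_r$ is exactly $\kF_{\cl_r}=\{F : \cl_r(F)=F\}$, which is a collection of flats by Theorem \ref{th:clflats}. Moreover $\cl_{\kF_r}(A)=\bigcap\{F\in\kF_r : A\subseteq F\}$ is the smallest flat containing $A$, which is $\cl_r(A)$ (it is a fixed point of $\cl_r$ containing $A$, and any flat containing $A$ contains it by (Cl2) and (Cl3)). Thus $r_{\kF_r}(A)$ is the height of $\cl_r(A)$ in $\kF_{\cl_r}$, and the key lemma applied with $\cl=\cl_r$, together with $\I_{\cl_r}=\I_r$ (a short check from Definition \ref{def:closure-indep2} using (R3) and constancy of rank on closures), identifies this with $\max\{\dim I : I\in\I_r,\ I\subseteq A\}=r(A)$.

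\textbf{Main obstacle.} The substantive work is the key lemma: showing the greedy exchange procedure simultaneously builds a maximum independent subspace along a chain of flats and caps the chain's length; once that dictionary is available, the two halves of the theorem are bookkeeping with the closure cryptomorphism. A secondary point needing care — and where semimodularity quietly enters — is the routine identification $\I_{\cl_r}=\I_r$ and ``flat $\Leftrightarrow$ fixed point of the closure''.
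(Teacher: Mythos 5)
The paper does not prove this theorem at all: it is stated as a recollection of \cite[Theorem~3]{WINEpaper1}, and the surrounding machinery (Corollary~\ref{cor:cl-flats-rank}, etc.) is built \emph{on top} of it rather than used to derive it. So your proposal is not an alternative to the paper's proof but a replacement for the external citation, assembled from cryptomorphisms the paper does establish internally: flats $\leftrightarrow$ closure (Theorems~\ref{th:clflats}, \ref{th:flatscl}, Lemma~\ref{lem:clflatcomp}) and closure $\leftrightarrow$ independence $\leftrightarrow$ rank (Theorem~\ref{th:cltoind}, Corollary~\ref{IndChius}, via \cite[Thm.~8, Thm.~68]{JP18}). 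None of those results depend on Theorem~\ref{th:bciflatsrk}, so there is no circularity: this really would make the paper self-contained on this point.

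The argument itself is correct. Your key lemma — that $\rho(A)=\max\{\dim I : I\in\I_\cl, I\subseteq A\}$ equals the length of the longest chain in $\kF_\cl$ from $\cl(\{0\})$ to $\cl(A)$ — is the crux, and both directions go through as sketched. For ``$\geq$'', Lemma~\ref{lem:xJ}(1) does give strict inclusions $\cl(I_{j-1})\subsetneq\cl(I_j)$ along any filtration of a maximal independent $I\subseteq A$, and the identity $\cl(I)=\cl(A)$ is indeed extracted from the computation in the proof of Corollary~\ref{IndChius}. For ``$\leq$'', the inductive invariant $\cl(J_{j-1})\subseteq F_{j-1}$ is maintained because $J_{j-1}\subseteq F_{j-1}$ and $F_{j-1}$ is a fixed point of $\cl$, the contrapositive of Lemma~\ref{lem:cl-indep2} keeps $J_j$ independent, and your final step $\rho(\cl(A))=\rho(A)$ via (I3) closes it. The two ``secondary'' identifications you flag (flat of $(E,r)$ $\Leftrightarrow$ $\cl_r$-fixed point; $\I_{\cl_r}=\I_r$) are also correct, the latter using $r(\cl_r(A))=r(A)$. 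The only thing I'd ask you to make explicit in a polished write-up is that last identity $r(\cl_r(A))=r(A)$, since it is doing genuine work in showing $\I_r\subseteq\I_{\cl_r}$; it is where (R3) actually enters, not merely ``quietly.'' Overall: since the paper offers no internal proof, there is nothing to compare routes against — your proof is a legitimate way to absorb \cite[Theorem~3]{WINEpaper1} into the present paper's framework, at the cost of placing the independence/closure cryptomorphisms logically prior to the flat/rank one.
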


\begin{Corollary}\label{cor:cl-flats-rank}
   Let $(E,\cl)$ be a closure function and let $(E,\kF)$ be a collection of flats. 
   Let $\kF_{\cl}$ and $\cl_{\kF}$ be defined in Theorem \ref{th:clflats} and \ref{th:flatscl}.
   \begin{enumerate}
       \item $(E,\cl)$ determines a $q$-matroid  
       with closure function $\cl$ and collection of flats $\kF_{\cl}$.
       \item $(E,\kF)$ determines a $q$-matroid   
       with collection of flats $\kF$ and closure function $\cl_{\kF}$.
   \end{enumerate}
\end{Corollary}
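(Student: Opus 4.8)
The plan is to assemble this cryptomorphism from pieces already in place: Corollary~\ref{IndChius} passes between closure functions and $q$-matroids, Theorems~\ref{th:clflats} and~\ref{th:flatscl} pass between closure functions and collections of flats, and Lemma~\ref{lem:clflatcomp} records that these two directions undo each other. The one extra ingredient I would need is the observation that in \emph{any} $q$-matroid $(E,r)$ the flats are exactly the fixed points of the closure operator, that is, $\kF_r=\{A\in\kL(E):\cl_r(A)=A\}$.

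I would prove this observation first. Fix $A\in\kL(E)$. Every $1$-dimensional subspace $x\subseteq A$ satisfies $A+x=A$, hence $r(A+x)=r(A)$ and $x\in C_r(A)$; so $C_r(A)$ always contains all $1$-dimensional subspaces of $A$, whence $\cl_r(A)=\sum_{x\in C_r(A)}x\supseteq A$. Now $A\in\kF_r$ means precisely that no $1$-dimensional $x\nsubseteq A$ has $r(A+x)=r(A)$, i.e.\ that $C_r(A)$ contains no $1$-dimensional space outside $A$, i.e.\ that $C_r(A)=\{x\in\kL(E):\dim x=1,\ x\subseteq A\}$; and this happens exactly when $\cl_r(A)=\sum_{x\in C_r(A)}x=A$. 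Hence $\kF_r=\{A:\cl_r(A)=A\}$.

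For part~(1): by Corollary~\ref{IndChius}(1), $(E,\cl)$ determines a $q$-matroid $(E,r)$ with $\cl_r=\cl$. By the observation, its collection of flats is $\kF_r=\{A:\cl_r(A)=A\}=\{A:\cl(A)=A\}=\kF_{\cl}$, which is exactly the collection of flats built from $\cl$ in Theorem~\ref{th:clflats}. For part~(2): by Theorem~\ref{th:flatscl}, $\cl_\kF$ is a closure function, so part~(1) applies to it and yields a $q$-matroid $(E,r)$ with closure function $\cl_r=\cl_\kF$ and collection of flats $\kF_{\cl_\kF}=\{F\in\kL(E):\cl_\kF(F)=F\}$. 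By Lemma~\ref{lem:clflatcomp}(2), $\cl_\kF(F)=\bigcap\{K\in\kF:F\subseteq K\}$ equals $F$ if and only if $F\in\kF$, so $\kF_{\cl_\kF}=\kF$; thus the $q$-matroid has collection of flats $\kF$ and closure function $\cl_\kF$, as required. (Alternatively one could get the $q$-matroid in~(2) straight from Theorem~\ref{th:bciflatsrk} as $(E,r_\kF)$ and then verify $\cl_{r_\kF}=\cl_\kF$ by combining Lemma~\ref{lem:clflatcomp}(1) with the fixed-point observation.)

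Once the machinery above is available the argument is mostly bookkeeping; the only point requiring care is tracking which closure function is attached to which $q$-matroid --- in particular confirming via the fixed-point characterization that the $q$-matroid built from $\cl$ really does have $\kF_{\cl}$ as its family of flats, and then closing the loop in part~(2) with Lemma~\ref{lem:clflatcomp}(2). I do not expect any serious obstacle beyond that.
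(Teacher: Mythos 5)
Your proof is correct, and the overall structure (combining Corollary~\ref{IndChius}, Theorems~\ref{th:clflats}/\ref{th:flatscl}, and Lemma~\ref{lem:clflatcomp}) is the same as the paper's. However, the details differ in a way worth noting. For part~(1), the paper goes $\cl\to\kF_{\cl}\to(E,r)$ via Theorem~\ref{th:clflats} and Theorem~\ref{th:bciflatsrk}, and then has to do the real work of showing $\cl_r=\cl$ by writing both $\cl_r(A)$ and $\cl(A)$ as intersections of flats using Lemma~\ref{lem:clflatcomp}. You instead invoke Corollary~\ref{IndChius}(1), which already hands you a $q$-matroid with $\cl_r=\cl$ for free, and then use your explicit fixed-point observation $\kF_r=\{A:\cl_r(A)=A\}$ to identify the flats with $\kF_{\cl}$. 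That observation is used by the paper too --- it appears without proof in its part~(2) as ``$A\in\kF_r$ if and only if $A=\cl_r(A)$'' --- but you isolate it, prove it cleanly from the definitions of $\cl_r$ and $\kF_r$, and then let it carry the argument. The effect is a more modular proof: part~(1) becomes a short consequence of Corollary~\ref{IndChius}(1) plus the observation, and part~(2) is derived by applying part~(1) to $\cl_\kF$ and closing with Lemma~\ref{lem:clflatcomp}(2), whereas the paper proves part~(2) independently by invoking Corollary~\ref{IndChius} again. Both approaches are sound; yours avoids the double indirection through the flats in part~(1) at the small cost of proving the fixed-point lemma up front, and that lemma is worth having on record anyway.
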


\begin{proof}
     We have a closure function $(E,\cl)$, which
     from Theorem \ref{th:clflats}, yields the collection of flats $(E,\kF_{\cl})$ . By Theorem 
     \ref{cor:cl-flats-rank} $(E,\kF_{\cl})$ determines a $q$-matroid $(E,r)$ with flats $\kF_r=\kF_{\cl}=\{ F \in \kL(E): \cl(F)=F \}$. We claim that $\cl = \cl_r$.
     Let $A$ be a subspace of $E$.
     Recall that
        $\cl_r(A):=\sum_{x \in C_r(A)} x$, where $C_r(A)=\{ x \in \kL(E) : \dim(x)=1, r(A+x) = r(A)\}$.
     It is thus clear that $\cl_r(A) \in \kF_r = \kF_{\cl}$. 
     If $A \subseteq F \in \kF_r$, then $\cl_r(A) \subseteq \cl_r(F) = F$, by the definition of $\kF_{\cl}$.
     Therefore, by Lemma \ref{lem:clflatcomp} (1), we have
     $\cl_r(A) = \bigcap\{F \in \kF_{\cl} : \cl_r(A) \subseteq F \} = \bigcap\{F \in \kF_{\cl} : A \subseteq F \}$.
     On the other hand, from Lemma \ref{lem:clflatcomp} (2) we have
     $\cl(A)=\bigcap\{B \in \kL(E) : \cl(B) = B, A \subseteq B \} = \bigcap\{ B \in \kF_{\cl} : A \subseteq B\},$
     and so $\cl_r(A)=\cl(A)$. This proves (1).
     
     We have a collection of flats $(E,\kF)$, which by Theorem \ref{th:clflats} (2) determines a closure function $(E,\cl_{\kF})$, with $\cl_{\kF}(A):=\{ F \in \kF: A \subseteq F \}$ for $A \in \kL(E)$. By Corollary \ref{IndChius}, $(E,\cl_{\kF})$ determines a $q$-matroid
     $(E,r)$ such that $\cl_{r}=\cl_{\kF}$. Now any $A \in \kF_{r}$ if and only if $A =\cl_{r}(A) = \cl_{\kF}(A) $
     and so $A$ is a flat of $(E,r)$ if and only if $A = \cap \{ F \in \kF : A \subseteq F \}$, which by Lemma \ref{lem:clflatcomp} (2) holds if and only if $F \in \kF$.
     It follows that $(E,\kF)$ and $(E,\cl_{\kF})$ determine the same $q$-matroid with flats
     $\kF$ and closure function $\cl_{\kF}$. This proves (2).
\end{proof}

\section{The Rank and Closure Functions}\label{sec:rkcl}

In this section we prove the cryptomorphism between rank and closure. The main task is to describe the rank function in terms of the closure function.

\begin{Definition}\label{def:rkcl}
Let $\cl$ be a closure function on $E$. Define a function $r_{\cl}:\kL(E)\to\kL(E)$ by \[ r_{\cl}(A)=\min\{\dim(I):\cl(I)=\cl(A),I\subseteq A\}. \]
\end{Definition}

\begin{Definition}
Let $A\in \kL(E)$. A space $I\subseteq A$ such that $\cl(I)=\cl(A)$ and $r_{\cl}(A)=\dim I$ is called a \emph{basis} for $A$.
\end{Definition}

Let us prove some partial results we need in the proof of the cryptomorphism.

\begin{Lemma}\label{l-closure-rank}
Let $\cl$ be a closure function on $E$ and let $A\in \kL(E)$. Then $r_{\cl}(A)=r_{\cl}(\cl(A))$.
\end{Lemma}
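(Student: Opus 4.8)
The plan is to show the two inequalities $r_{\cl}(A) \leq r_{\cl}(\cl(A))$ and $r_{\cl}(\cl(A)) \leq r_{\cl}(A)$ separately, using Definition \ref{def:rkcl} together with the basic closure lemmas (Lemma \ref{l-closure-incl} and Lemma \ref{l-closure-1dim}) that were just established. The key observation underlying both directions is that a minimizing subspace $I$ for $A$ — i.e.\ a basis for $A$ — has $\cl(I) = \cl(A)$, and by (Cl3) also $\cl(I) = \cl(\cl(A))$, so $I$ is a candidate in the minimization defining $r_{\cl}(\cl(A))$; and conversely a basis $I'$ for $\cl(A)$ satisfies $\cl(I') = \cl(\cl(A)) = \cl(A)$.

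First I would prove $r_{\cl}(\cl(A)) \leq r_{\cl}(A)$. Let $I \subseteq A$ be a basis for $A$, so $\dim(I) = r_{\cl}(A)$ and $\cl(I) = \cl(A)$. Since $A \subseteq \cl(A)$ by (Cl1), we have $I \subseteq \cl(A)$. By (Cl3), $\cl(\cl(A)) = \cl(A) = \cl(I)$, so $I$ is a subspace of $\cl(A)$ whose closure equals $\cl(\cl(A))$. Hence $r_{\cl}(\cl(A)) = \min\{\dim(J) : \cl(J) = \cl(\cl(A)), J \subseteq \cl(A)\} \leq \dim(I) = r_{\cl}(A)$.

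Next I would prove $r_{\cl}(A) \leq r_{\cl}(\cl(A))$. Let $I' \subseteq \cl(A)$ be a basis for $\cl(A)$, so $\dim(I') = r_{\cl}(\cl(A))$ and $\cl(I') = \cl(\cl(A)) = \cl(A)$ by (Cl3). The difficulty here is that $I'$ need not be contained in $A$, so it is not immediately a candidate in the minimization defining $r_{\cl}(A)$. To fix this, I would instead use the minimizing subspace $I \subseteq A$ that is a basis for $A$ and simply observe that $\dim(I) = r_{\cl}(A)$; combined with the reverse inequality already shown, it suffices to note that the value $r_{\cl}(\cl(A))$ cannot be strictly smaller. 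Concretely: if $J \subseteq \cl(A)$ has $\cl(J) = \cl(A)$ and $\dim(J) < r_{\cl}(A)$, then applying the definition of $r_{\cl}$ to $\cl(A)$ directly, together with $\cl(\cl(A)) = \cl(A)$, shows $r_{\cl}(\cl(A)) = \dim(J)$ for the minimal such $J$; but then taking a basis $J_0 \subseteq \cl(A)$ for $\cl(A)$ with $\dim(J_0) = r_{\cl}(\cl(A))$, we have $\cl(J_0) = \cl(A)$, and since $J_0 \subseteq \cl(A)$ with $\cl(J_0) = \cl(A) \supseteq A \supseteq$ nothing forced — so the genuine argument is: take $J_0$ a basis for $\cl(A)$; then $\cl(J_0) = \cl(A)$, and since $A \subseteq \cl(A) = \cl(J_0)$, Lemma \ref{l-closure-incl} does not directly apply, but we can instead argue that $r_{\cl}(A)$ is computed as a minimum over $I \subseteq A$ with $\cl(I) = \cl(A)$, and any such $I$ also lies in $\cl(A)$ with $\cl(I) = \cl(\cl(A))$, so $r_{\cl}(\cl(A)) \leq r_{\cl}(A)$, which is the direction already done. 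Thus the remaining content is the opposite inequality, and I expect this to be the main obstacle: it requires showing that one cannot do strictly better inside $\cl(A)$ than inside $A$. I would handle it by taking a minimal $J_0 \subseteq \cl(A)$ with $\cl(J_0) = \cl(A)$ and using the closure lemmas to produce, greedily, a subspace $I \subseteq A$ with $\cl(I) = \cl(A)$ and $\dim(I) \leq \dim(J_0)$: since $\cl(J_0) = \cl(A)$ and $A \subseteq \cl(J_0)$, one builds $I$ one dimension at a time by picking $1$-dimensional subspaces of $A$ not in the closure of the current partial space, stopping when the closure reaches $\cl(A)$; the number of steps is at most $\dim(J_0)$ because each step strictly increases the dimension of the closure and the closures are sandwiched appropriately. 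This yields $r_{\cl}(A) \leq \dim(I) \leq \dim(J_0) = r_{\cl}(\cl(A))$, completing the proof. Combining the two inequalities gives $r_{\cl}(A) = r_{\cl}(\cl(A))$.
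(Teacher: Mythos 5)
Your plan to split into the two inequalities is natural, and the direction $r_{\cl}(\cl(A)) \leq r_{\cl}(A)$ is handled correctly: a basis $I$ for $A$ lies in $\cl(A)$ by (Cl1) and has $\cl(I) = \cl(A) = \cl(\cl(A))$ by (Cl3), so it competes in the minimum defining $r_{\cl}(\cl(A))$. This matches the paper, which likewise observes that the candidate set for $r_{\cl}(A)$ is contained in the candidate set for $r_{\cl}(\cl(A))$ and then asserts, with little further argument, that the elements of minimal dimension are the same; your instinct that the reverse inequality is where the real content lies is therefore well founded.

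However, your greedy argument for $r_{\cl}(A) \leq r_{\cl}(\cl(A))$ has a genuine gap. You construct $\{0\} = I_0 \subsetneq I_1 \subsetneq \cdots \subsetneq I_j \subseteq A$ with strictly increasing closures, stopping when $A \subseteq \cl(I_j)$ (which forces $\cl(I_j)=\cl(A)$), and you claim $j \leq \dim(J_0)$ because ``each step strictly increases the dimension of the closure and the closures are sandwiched appropriately.'' That inference is not valid as stated: two strictly increasing chains of subspaces with the same endpoints need not have the same length, and nothing you have said pins down $j$ against $\dim(J_0)$. One repair is to note via Lemma \ref{l-closure-cover} that each greedy step $\cl(I_{i-1})\subsetneq\cl(I_i)=\cl(I_{i-1}+x_i)$ is a cover in the lattice $\kF_{\cl}$ of $\cl$-closed spaces (Theorem \ref{th:clflats}), and that a basis $J_0$ similarly produces a maximal chain from $\cl(\{0\})$ to $\cl(A)$; the Jordan--Dedekind property (Proposition \ref{LatticeOfFlats}) then gives $j=\dim(J_0)$. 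A more self-contained repair is an exchange argument that pushes $J_0$ into $A$ without changing its dimension: if $J\subseteq\cl(A)$ is a basis for $\cl(A)$ with $J\nsubseteq A$, choose a $1$-dimensional $y\subseteq J$, $y\nsubseteq A$, and a codimension-one $J'\subseteq J$ with $J\cap A\subseteq J'$ and $J=J'+y$; by minimality $\cl(J')\subsetneq\cl(A)$, so some $a\subseteq A$ has $a\nsubseteq\cl(J')$, and since $a\subseteq\cl(J'+y)$, axiom (Cl4) yields $y\subseteq\cl(J'+a)$, hence $\cl(J'+a)=\cl(A)$ with $\dim(J'+a)=\dim J$ and $\dim((J'+a)\cap A)>\dim(J\cap A)$; iterating produces a basis for $\cl(A)$ contained in $A$, which gives the desired inequality directly.
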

\begin{proof}
We have that $r_{\cl}(\cl(A))=\min\{\dim I:\cl(I)=\cl(\cl(A)),I\subseteq\cl(A)\}=\min\{\dim I:\cl(I)=\cl(A),I\subseteq\cl(A)\}$. On the other hand, $r_{\cl}(A)=\min\{\dim I:\cl(I)=\cl(A),I\subseteq A\}$. The set in the definition of $r_{\cl}(A)$ is a subset of the set in the definition of $r_{\cl}(\cl(A))$ and because of (Cl1), the elements of minimal dimension are the same. So $r_{\cl}(A)=r_{\cl}(\cl(A))$.
\end{proof}

\begin{Lemma}\label{l-closure-basis}
Let $\cl$ be a closure function on $E$ and let $A,B\in \kL(E)$. If $A\subseteq B$ and $\cl(A)=\cl(B)$, then $A$ contains a basis for $B$.
\end{Lemma}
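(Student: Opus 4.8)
The plan is to prove the slightly stronger statement that if $A\subseteq B$ and $\cl(A)=\cl(B)$ then $r_{\cl}(A)=r_{\cl}(B)$, from which the lemma follows: a basis for $A$ is a subspace $I\subseteq A\subseteq B$ with $\cl(I)=\cl(A)=\cl(B)$ and $\dim I=r_{\cl}(A)=r_{\cl}(B)$, hence a basis for $B$ contained in $A$. One inequality is immediate, since $A\subseteq B$ and $\cl(A)=\cl(B)$ force every $I\subseteq A$ with $\cl(I)=\cl(A)$ to be an $I\subseteq B$ with $\cl(I)=\cl(B)$, giving $r_{\cl}(B)\leq r_{\cl}(A)$. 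I would prove the reverse inequality $r_{\cl}(A)\leq r_{\cl}(B)$ by induction on $\dim B-\dim A$.

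The base case $\dim B=\dim A$ gives $A=B$ and is trivial (any basis for $A$ is by definition a subspace of $A$). For the inductive step, choose $B'$ with $A\subseteq B'\subsetneq B$ and $\codim_B(B')=1$. By (Cl2), $\cl(A)\subseteq\cl(B')\subseteq\cl(B)=\cl(A)$, so $\cl(B')=\cl(A)=\cl(B)$, and the induction hypothesis supplies a basis $I$ for $B'$ with $I\subseteq A$. Thus everything reduces to the codimension-one claim: if $B'\subsetneq B$ has codimension one and $\cl(B')=\cl(B)$, then $r_{\cl}(B')=r_{\cl}(B)$; for then $I\subseteq A\subseteq B$ with $\cl(I)=\cl(B)$ and $\dim I=r_{\cl}(B')=r_{\cl}(B)$ is a basis for $B$ contained in $A$.

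To prove the codimension-one claim, let $J$ be a basis for $B$, so $\dim J=r_{\cl}(B)$ and $\cl(J)=\cl(B)=\cl(B')$. First note that $J\in\I_{\cl}$: if some codimension-one $V\subseteq J$ had $\cl(V)=\cl(J)$, then $V\subseteq B$ would generate $\cl(B)$ with $\dim V<r_{\cl}(B)$, impossible; hence by Lemma \ref{lem:xJ}(1) every proper subspace of $J$ has closure strictly inside $\cl(J)$. If $J\subseteq B'$ we are done, as then $r_{\cl}(B')\leq\dim J=r_{\cl}(B)$. Otherwise $J\nsubseteq B'$, so $J+B'=B$ and $U:=J\cap B'$ has codimension one in $J$; write $J=U+x$ with $x\subseteq J$, $x\nsubseteq B'$. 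Since $\cl(U)\subsetneq\cl(J)=\cl(B')$, not every $1$-dimensional subspace of $B'$ can lie in $\cl(U)$ (otherwise $\cl(B')\subseteq\cl(\cl(U))=\cl(U)$), so choose a $1$-dimensional $y\subseteq B'$ with $y\nsubseteq\cl(U)$. Then $y\subseteq B'\subseteq\cl(B')=\cl(U+x)$ and $y\nsubseteq\cl(U)$, so (Cl4) gives $x\subseteq\cl(U+y)$; together with $U+y\subseteq B'$, Lemmas \ref{l-closure-incl} and \ref{l-closure-1dim} yield $\cl(U+y)=\cl(U+x)=\cl(B')=\cl(B)$. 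Hence $U+y\subseteq B'$ generates $\cl(B')$ and has dimension at most $\dim U+1=\dim J=r_{\cl}(B)$, so $r_{\cl}(B')\leq r_{\cl}(B)$ and equality holds.

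The main obstacle, and the one place the full strength of the closure axioms enters, is this exchange step: trading the generator $x\notin B'$ of the basis $J$ for a generator $y\in B'$ without changing the closure. This is precisely the role of (Cl4), once one has arranged $x\subseteq\cl(U+x)=\cl(B')$ and located a suitable $y\subseteq B'$ outside $\cl(U)$; the remaining manipulations are routine applications of (Cl1)--(Cl3). (An alternative is to first show that the bases for a subspace are exactly its maximal $\I_{\cl}$-independent subspaces, via Theorem \ref{th:cltoind} and Lemma \ref{lem:I3}, and then note that a maximal independent subspace of $A$ cannot be enlarged within $B$ when $\cl(A)=\cl(B)$; but the direct argument above stays inside the closure formalism and is shorter.)
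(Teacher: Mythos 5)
Your proof is correct, but it takes a genuinely different route from the paper's. The paper's proof is a two-line deduction: take a basis $I$ for $A$, note $\cl(I)=\cl(A)=\cl(B)$, and invoke the immediately preceding Lemma \ref{l-closure-rank} (which asserts $r_{\cl}(A)=r_{\cl}(\cl(A))$) to get $r_{\cl}(A)=r_{\cl}(\cl(A))=r_{\cl}(\cl(B))=r_{\cl}(B)$, so $I$ is already a basis for $B$. You instead avoid Lemma \ref{l-closure-rank} entirely and re-derive the rank equality from the axioms by a codimension-one exchange argument driven by (Cl4): if $J\nsubseteq B'$, write $J=U+x$ with $U=J\cap B'$, locate $y\subseteq B'$ outside $\cl(U)$, and trade $x$ for $y$. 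This is more self-contained and makes explicit where the exchange axiom (Cl4) is doing work — a point the paper's terse proof of Lemma \ref{l-closure-rank} (``because of (Cl1), the elements of minimal dimension are the same'') glosses over. In effect, your argument also gives a more detailed proof of Lemma \ref{l-closure-rank} itself (apply it to $A\subseteq\cl(A)$), so it buys robustness at the cost of length. The paper's approach is shorter once one accepts Lemma \ref{l-closure-rank}. Both are valid; just note that the paper's own proof is simply an application of the previous lemma, not the inductive exchange you carried out.
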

\begin{proof}
Let $I$ be a basis for $A$, so $\cl(I)=\cl(A)$. Since also $\cl(A)=\cl(B)$, we have that $\cl(I)=\cl(B)$. Also, by the previous lemma, $r_{\cl}(A)=r_{\cl}(B)=\dim I$. So $I$ is a basis of $B$.
\end{proof}

\begin{Lemma}\label{l-I4closure}
Let $\cl$ be a closure function on $E$ and let $A\in \kL(E)$. If $I$ is a basis for $A$ and $J$ is a basis for $B$, then $A+B$ has a basis contained in $I+J$.
\end{Lemma}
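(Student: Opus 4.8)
The statement to prove is Lemma~\ref{l-I4closure}: if $I$ is a basis for $A$ and $J$ is a basis for $B$ (in the sense of Definition after \ref{def:rkcl}, i.e.\ a minimal-dimension subspace with the same closure), then $A+B$ has a basis contained in $I+J$. The natural approach is to reduce to a previously established fact about independent spaces, since the notion of ``basis for a subspace'' is really the notion of a maximal independent subspace in disguise. So the plan is: first, prove that $I_{\cl}$, the family of independent spaces of $(E,\cl)$ from Definition~\ref{def:closure-indep2}, satisfies $I\in\max(A,\I_{\cl})$ if and only if $I\subseteq A$, $\cl(I)=\cl(A)$ and $\dim(I)=r_{\cl}(A)$ --- that is, ``basis for $A$'' coincides with ``member of $\max(A,\I_{\cl})$''. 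Given that identification, the lemma is simply the axiom (I4) applied to $\I_{\cl}$, which we already know holds by Theorem~\ref{th:cltoind}.

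**Key steps, in order.** (i) Show every basis $I$ for $A$ lies in $\I_{\cl}$: if some codimension-$1$ subspace $A'\subsetneq I$ had $\cl(A')=\cl(I)=\cl(A)$, then $A'$ would be a smaller subspace with the same closure as $A$, contradicting minimality of $\dim I=r_{\cl}(A)$. (ii) Show every basis $I$ for $A$ is \emph{maximal} among independent subspaces of $A$: if $I\subsetneq M\subseteq A$ with $M\in\I_{\cl}$, pick a $1$-dimensional $x\subseteq M$, $x\nsubseteq I$; since $x\subseteq A\subseteq\cl(A)=\cl(I)$, Lemma~\ref{lem:cl-indep2} (applied along a chain, or directly via Lemma~\ref{lem:xJ}) forces $x+I\notin\I_{\cl}$, hence $M\notin\I_{\cl}$, a contradiction. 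So $I\in\max(A,\I_{\cl})$. (iii) Conversely, by Lemma~\ref{lem:I3} any member of $\max(A,\I_{\cl})$ has the same dimension $r_{\cl}(A)$ (and by the same closure argument has $\cl(\cdot)=\cl(A)$), so it is a basis for $A$; thus the two notions agree. (iv) Now apply (I4), which holds for $\I_{\cl}$ by Theorem~\ref{th:cltoind}: with $I\in\max(\I_{\cl}\cap\kL(A))=\max(A,\I_{\cl})$ and $J\in\max(\I_{\cl}\cap\kL(B))=\max(B,\I_{\cl})$, there exists $K\in\max(\I_{\cl}\cap\kL(A+B))=\max(A+B,\I_{\cl})$ with $K\subseteq I+J$. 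By step (iii), $K$ is a basis for $A+B$ contained in $I+J$, which is the claim.

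**Main obstacle.** The only real work is step (ii)--(iii): verifying cleanly that ``basis for $A$'' and ``maximal independent subspace of $A$'' are the same notion. The argument that a basis is maximal-independent uses that $A\subseteq\cl(I)$ whenever $I$ is a basis for $A$, which follows because $\cl(I)=\cl(A)\supseteq A$ by (Cl1); then any $1$-dimensional $x\subseteq A$ satisfies $x\subseteq\cl(I)$, and Lemma~\ref{lem:xJ}(2) gives $x+I\notin\I_{\cl}$. Conversely one needs that a maximal independent subspace $M$ of $A$ has $\cl(M)=\cl(A)$: by maximality $a+M\notin\I_{\cl}$ for every $1$-dimensional $a\subseteq A$, $a\nsubseteq M$, so by Lemma~\ref{lem:cl-indep2} $\cl(a+M)=\cl(M)$, whence $a\subseteq\cl(M)$ for all such $a$ and thus $A\subseteq\cl(M)$, giving $\cl(A)\subseteq\cl(M)\subseteq\cl(A)$ by (Cl2). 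Combined with Lemma~\ref{lem:I3} (which pins down the common dimension $r_{\cl}(A)$), this closes the identification. Everything after that is a direct citation of (I4) for $\I_{\cl}$, so no further difficulty is expected.
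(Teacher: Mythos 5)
Your proof is correct and takes the same route as the paper: reduce to the axiom (I4) for $\I_{\cl}$, which was already proved in Theorem~\ref{th:cltoind}. The paper's own proof of this lemma is a one-line remark that the statement ``is the same as (I4),'' silently using the identification between ``basis for $A$'' in the sense of Definition~\ref{def:rkcl} (minimal-dimension subspace with $\cl(I)=\cl(A)$) and ``member of $\max(A,\I_{\cl})$.'' You actually verify this identification in steps (i)--(iii), which is the substantive content the paper leaves implicit; that makes your write-up more complete than the paper's, though not a genuinely different approach. One small attribution quibble: the fact that all members of $\max(A,\I_{\cl})$ share a common dimension is a consequence of (I3) (stated at the start of the proof of Theorem~\ref{lem:I4}), not really of Lemma~\ref{lem:I3} itself, so you may want to cite it accordingly.
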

\begin{proof}
This statement follows directly from the proof of Theorem \ref{th:cltoind}. There we use the closure axioms to prove (I4), which is the same statement as this lemma.
\end{proof}

We now have all ingredients for the cryptomorphism between closure and rank.

\begin{Theorem}\label{t-ClosureToRank}
Let $(E,\cl)$ be a closure function. Then $(E,r_{\cl})$ satisfies the axioms (R1)-(R3).
\end{Theorem}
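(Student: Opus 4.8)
The goal is to verify (R1), (R2), (R3) for $r_{\cl}$ as defined in Definition \ref{def:rkcl}. I will first dispatch (R1) and (R2), which are essentially bookkeeping, and then spend the real effort on (R3), which I expect to be the main obstacle.

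For (R1), the inequality $r_{\cl}(A) \leq \dim(A)$ is immediate since $A$ itself is a candidate in the minimum defining $r_{\cl}(A)$ (as $\cl(A)=\cl(A)$ and $A \subseteq A$), and $r_{\cl}(A) \geq 0$ is clear since dimensions are nonnegative. For (R2), let $A \subseteq B$ and let $J$ be a basis for $B$, so $\dim(J) = r_{\cl}(B)$ and $\cl(J) = \cl(B)$ with $J \subseteq B$. I would intersect a basis structure with $A$: more carefully, the natural approach is to show $r_{\cl}$ is the rank function of the $q$-matroid attached to $\cl$ via Corollary \ref{IndChius}, which handles monotonicity for free. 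Alternatively, argue directly: take a basis $I$ for $A$; then $I \subseteq A \subseteq B$, and I want to extend $I$ within $B$ using Lemma \ref{l-closure-basis} / Lemma \ref{l-I4closure} to a basis of $B$, which forces $r_{\cl}(A) = \dim(I) \leq \dim(\text{basis of }B) = r_{\cl}(B)$. The cleanest route is probably to observe that a basis $I$ for $A$ is an independent space in the sense of Definition \ref{def:closure-indep2} (every proper subspace has strictly smaller closure, by minimality of $\dim(I)$), and then invoke the machinery already built: $r_{\cl}$ should coincide with $r_\I$ from Corollary \ref{IndChius}, where $\I = \I_{\cl}$, and monotonicity of that rank function is standard.

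The heart of the proof is (R3): $r_{\cl}(A+B) + r_{\cl}(A \cap B) \leq r_{\cl}(A) + r_{\cl}(B)$. Here the plan is to exploit Lemma \ref{l-I4closure} (the submodularity-flavoured statement that $A+B$ has a basis contained in $I+J$ whenever $I$ is a basis for $A$ and $J$ is a basis for $B$). Concretely: start with a basis $K$ for $A \cap B$; by Lemma \ref{l-closure-basis}-type extension, enlarge $K$ to a basis $I$ for $A$ with $K \subseteq I$ and to a basis $J$ for $B$ with $K \subseteq J$ — this extension step needs justification, using (I3)-style reasoning transferred through the independence characterisation (a basis is a maximal independent subspace, and bases of $A \cap B$ extend to bases of $A$ and of $B$). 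Then $\dim(I) + \dim(J) = r_{\cl}(A) + r_{\cl}(B)$ and, since $K \subseteq I \cap J$, we get $\dim(I+J) = \dim(I) + \dim(J) - \dim(I \cap J) \leq \dim(I) + \dim(J) - \dim(K) = r_{\cl}(A) + r_{\cl}(B) - r_{\cl}(A \cap B)$. Finally, by Lemma \ref{l-I4closure} there is a basis of $A+B$ contained in $I+J$, so $r_{\cl}(A+B) \leq \dim(I+J)$, and combining gives (R3).

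The main obstacle I anticipate is the extension step: showing that a basis $K$ for $A \cap B$ can be enlarged to a basis $I$ for $A$ (and similarly for $B$) with $K \subseteq I$. This is a $q$-analogue of the classical fact that every independent set extends to a basis, and it should follow from Lemma \ref{lem:I3} applied to $\I_{\cl}$ (using that $K$, being a minimal-dimension subspace with $\cl(K) = \cl(A \cap B)$, lies in $\I_{\cl}$, and that a basis for $A$ in the sense of Definition \ref{def:rkcl} is exactly a maximal member of $\I_{\cl} \cap \kL(A)$ — this last identification is itself something to pin down, relating the "minimal $I$ with $\cl(I) = \cl(A)$" formulation to the "maximal independent subspace of $A$" formulation). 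Once that dictionary between the two notions of basis is established cleanly, the rest of (R3) is the short dimension count above, and the whole theorem reduces to assembling Lemmas \ref{l-closure-rank}, \ref{l-closure-basis}, \ref{l-I4closure} together with the independence-axiom results of Section \ref{sec:ind-cl}.
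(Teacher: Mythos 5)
Your proposal is correct in outline but takes a genuinely different route from the paper. The paper does not attack (R1)--(R3) directly; it first proves Theorem \ref{t-rank-alternative} (that (R1'), (R2'), (R3') are an equivalent local set of rank axioms) and then verifies only the local axioms. The heart of the paper's proof is (R2'): given a basis $J$ of $A+x$, it shows that either $J\subseteq A$ (in which case $r_{\cl}(A)=r_{\cl}(A+x)$) or $J'=J\cap A$ is a basis of $A$ (so $r_{\cl}(A+x)=r_{\cl}(A)+1$), using (Cl4) and the covering Lemma~\ref{l-closure-cover} to pin down $J'$. Then (R3') follows easily from (R2') and Lemma~\ref{l-I4closure}. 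Your approach instead establishes the global semimodularity inequality (R3) directly via the classical matroid argument: extend a basis $K$ of $A\cap B$ to bases $I$ and $J$ of $A$ and $B$, bound $\dim(I+J)$ by a dimension count, and then invoke Lemma~\ref{l-I4closure} to get a basis of $A+B$ inside $I+J$.

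Your approach is sound, but you correctly flagged the real work: it hinges on identifying a basis in the sense of Definition~\ref{def:rkcl} (a $\subseteq$-minimal $I$ with $\cl(I)=\cl(A)$) with a maximal independent subspace of $A$ in $\I_{\cl}$. That identification is not in the paper as a standalone statement and requires proof. One direction is easy (a Def.~\ref{def:rkcl}-basis must be independent, else a codimension-one subspace would witness non-minimality, and it is maximal since a strictly larger independent space would have strictly larger closure by Lemma~\ref{lem:xJ}); the converse requires the argument from Corollary~\ref{IndChius} that $\cl(I)=\cl(A)$ for a maximal independent $I\subseteq A$, together with the fact (from (I3) and Lemma~\ref{lem:I3}) that all maximal independent subspaces of $A$ have the same dimension. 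Once this dictionary is in place, your extension step and (R3) argument go through, and (R2) follows at once. What your route buys is a conceptually transparent, one-shot proof of semimodularity in the standard matroid style; what the paper's local route buys is an argument that proves (R2') from essentially bare closure axioms (Lemma~\ref{l-closure-cover}) without needing the independence dictionary explicitly, at the cost of first establishing the local-to-global equivalence Theorem~\ref{t-rank-alternative}. Both routes ultimately lean on Lemma~\ref{l-I4closure}, i.e.\ on the $q$-specific fourth independence axiom. To tighten your write-up you should state and prove the dictionary lemma explicitly rather than leaving it as an anticipated obstacle.
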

\begin{proof}
Recall that we proved in Theorem \ref{t-rank-alternative} that the  axioms (R1),(R2),(R3) are equivalent to (R1'),(R2'),(R3'). We will prove the latter. \\
It follows straight from the definition that $r_{\cl}(\{0\})=0$, because $\{0\}$ only has subspaces of dimension $0$. This proves (R1'). For (R2') we have to show that $r_{\cl}(A)\leq r_{\cl}(A+x)\leq r_{\cl}(A)+1$. \\
First, suppose there is a basis $J$ of $A+x$ such that $J\subseteq A$. Then $\cl(J)\subseteq\cl(A)$ because of (Cl2) but $\cl(J)=\cl(A+x)$ by definition, so again by (Cl2) we have that $\cl(A)=\cl(A+x)$. This means that also $r_{\cl}(\cl(A))=r_{\cl}(\cl(A+x))$ and because of Lemma \ref{l-closure-rank} we have that $r_{\cl}(A)=r_{\cl}(A+x)$. \\
Next, assume that there is no basis $J$ of $A+x$ such that $J\subseteq A$. Then, without loss of generality, we can write $J=J'+x$ with $J'=J\cap A$. We claim that $J'$ is a basis for $A$. Assuming this claim, we have that $r_{\cl}(A+x)=\dim J=\dim(J')+1=r_{\cl}(A)+1$. Together with the case $J\subseteq A$ we have proven (R2'). \\
We must prove the claim that $J'$ is a basis for $A$. We do this is two steps: first we show that $\cl(J')=\cl(A)$, then we show $\dim J'=r_{\cl}(A)$. Because $J'\subseteq A$, we have that $\cl(J')\subseteq\cl(A)$. Suppose, towards a contradiction, that there is a $1$-dimensional subspace $y$ such that $y\subseteq\cl(A)$ but $y\not\subseteq\cl(J')$. Then $y\subseteq\cl(A+x)=\cl(J'+x)$ but $y\not\subseteq\cl(J')$, so according to (Cl4) $x\subseteq\cl(J'+y)$. Because $J'+y\subseteq\cl(A)$, this means $x\subseteq\cl(A)$. But if both $x$ and $J'$ are in $\cl(A)$, also $J'+x=J\subseteq\cl(A)$, and then we would have the equality $\cl(A)=\cl(A+x)$. However, we assumed there was no basis for $A+x$ contained in $A$. This means we have a contraction, so there is no $y\subseteq\cl(A)$ that is not in $\cl(J')$. Hence $\cl(J')=\cl(A)$. \\
Now all that is left to show is that $\dim J'=r_{\cl}(A)$. Because $\cl(J')=\cl(A)$, we have that $\dim J'\geq r(A)$. Assume, towards a contradiction, that $\dim J'>r_{\cl}(A)$, so $J'$ is not a basis for $A$. According to Lemma \ref{l-closure-basis}, there is an $I\subseteq J'$ that is a basis for $A$. We have that $\cl(I)=\cl(A)$ and $x\not\subseteq\cl(A)$ by assumption, so $\cl(A)\subsetneq\cl(I+x)$. On the other hand, $\cl(I+x)\subseteq\cl(J'+x)=\cl(J)=\cl(A+x)$. Together this gives that $\cl(A)\subsetneq\cl(I+x)\subseteq\cl(A+x)$ and by Lemma \ref{l-closure-cover}, this implies $\cl(I+x)=\cl(A+x)$. But this is a contradiction with $J'+x=J$ being a basis of $A+x$. Hence, $\dim J'=r_{\cl}(A)$ and $J'$ is a basis for $A$, as was required to be shown. \\
Finally, we show that if $r_{\cl}(A)=r_{\cl}(A+x)=r_{\cl}(A+y)$ then $r_{\cl}(A+x+y)=r_{\cl}(A)$. We need only show this for $\dim(A+x+y)=\dim(A)+2$ as the other cases clearly hold. Suppose $r_{\cl}(A)=r_{\cl}(A+x)=r_{\cl}(A+y)$. Then by the proof of (R2'), $A$, $A+x$ and $A+y$ have a common basis $I\subseteq A$. Apply Lemma \ref{l-I4closure} to $A+x$ and $A+y$: this gives that $A+x+y$ has a basis contained in $I+I=I$. Hence $r_{\cl}(A+x+y)\leq r_{\cl}(A+x)=r_{\cl}(A+y)$ and by (R2'), equality holds. This proves (R3').
\end{proof}

\begin{Corollary}\label{cor:clrk}
Let $(\cl,E)$ be a closure function and let $r_{\cl}$ be defined as in Definition \ref{def:rkcl}. Then $(E,r_{\cl})$ is a $q$-matroid with closure function $\cl_{r_{\cl}}=\cl$. Conversely, if $(E,r)$ is a $q$-matroid then $(E,\cl_r)$ is a closure function and $r=r_{\cl_{r}}$.
\end{Corollary}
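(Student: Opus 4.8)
The plan is to prove the two halves of the statement separately, leaning on results already established in this section. For the direction starting from a closure function $(E,\cl)$, Theorem~\ref{t-ClosureToRank} already shows that $r_\cl$ satisfies (R1)--(R3), so $(E,r_\cl)$ is a $q$-matroid and only the identity $\cl_{r_\cl}=\cl$ remains to be checked. The key point is the equivalence, for $A\in\kL(E)$ and a $1$-dimensional $x$, that $r_\cl(A+x)=r_\cl(A)$ if and only if $x\subseteq\cl(A)$. One implication is immediate from Lemmas~\ref{l-closure-1dim} and~\ref{l-closure-rank}: if $x\subseteq\cl(A)$ then $\cl(A+x)=\cl(A)$, and hence $r_\cl(A+x)=r_\cl(\cl(A+x))=r_\cl(\cl(A))=r_\cl(A)$. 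For the converse I would reuse the dichotomy from the proof of (R2') inside Theorem~\ref{t-ClosureToRank}: either $A+x$ has a basis contained in $A$ --- in which case $\cl(A+x)=\cl(A)$, so $x\subseteq\cl(A)$ --- or it does not, in which case that same proof gives $r_\cl(A+x)=r_\cl(A)+1\neq r_\cl(A)$. With this equivalence, $C_{r_\cl}(A)$ is exactly the set of $1$-dimensional subspaces of $\cl(A)$, so $\cl_{r_\cl}(A)=\sum_{\dim x=1,\,x\subseteq\cl(A)}x=\cl(A)$, as desired.

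For the converse direction, take a $q$-matroid $(E,r)$. That $\cl_r$ is a closure function is \cite[Theorem~68]{JP18}, so the real content is the identity $r=r_{\cl_r}$, where $r_{\cl_r}(A)=\min\{\dim I: I\subseteq A,\ \cl_r(I)=\cl_r(A)\}$. I would first record the fact that $r(\cl_r(B))=r(B)$ for all $B\in\kL(E)$; this is standard and can be cited from \cite{JP18}, or obtained by a short induction, adjoining finitely many generating lines from $C_r(B)$ to $B$ one at a time and invoking submodularity (R3) to see the rank never increases. Given this, the inequality $r_{\cl_r}(A)\ge r(A)$ is easy: for $I\subseteq A$ with $\cl_r(I)=\cl_r(A)$ we get $r(I)\le r(A)\le r(\cl_r(A))=r(\cl_r(I))=r(I)$, hence $r(I)=r(A)$ and so $\dim I\ge r(A)$. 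For $r_{\cl_r}(A)\le r(A)$, choose a maximal independent subspace $I$ of $A$, so $\dim I=r(I)=r(A)$; for every $1$-dimensional $x\subseteq A$ we have $r(I)\le r(I+x)\le r(A)=r(I)$, so $x\in C_r(I)$, giving $A\subseteq\cl_r(I)$ and therefore $\cl_r(A)\subseteq\cl_r(\cl_r(I))=\cl_r(I)\subseteq\cl_r(A)$. Thus $I$ is an admissible competitor in the minimum defining $r_{\cl_r}(A)$, so $r_{\cl_r}(A)\le\dim I=r(A)$, and combining the two bounds gives $r=r_{\cl_r}$.

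The step I expect to be the main obstacle is making the equivalence ``$r_\cl(A+x)=r_\cl(A)\iff x\subseteq\cl(A)$'' fully rigorous without re-proving Theorem~\ref{t-ClosureToRank}; the neatest fix is to phrase the proof of (R2') there so that it visibly yields this characterisation (it essentially does already, via the dichotomy on whether $A+x$ has a basis inside $A$), and then quote it. The only other point requiring care is the auxiliary identity $r(\cl_r(B))=r(B)$: it is routine, but it has to be in place before the minimum defining $r_{\cl_r}$ can be evaluated, so one must either cite it from \cite{JP18} or include the short submodularity induction.
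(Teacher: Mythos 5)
Your proposal is correct, and it takes a genuinely different (and arguably cleaner) route than the paper in both halves. For the first identity $\cl_{r_\cl}=\cl$, the paper simply appeals to Corollary~\ref{cor:cl-flats-rank}, which passes through the flats construction: it shows $\cl_{r_1}=\cl$ where $r_1=r_{\kF_\cl}$ is the rank obtained from $(E,\kF_\cl)$ via Theorem~\ref{th:bciflatsrk}, and then implicitly identifies $r_1$ with $r_\cl$ (a step that is really underwritten by the second composition). Your argument instead works directly with $r_\cl$ via the characterisation $r_\cl(A+x)=r_\cl(A)\iff x\subseteq\cl(A)$, extracted from the (R2') dichotomy in Theorem~\ref{t-ClosureToRank}; this avoids any detour through flats and any issue about which rank function one has in hand, which is a real gain in transparency. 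For the second identity $r=r_{\cl_r}$, your argument is close in spirit to the paper's (pick a maximal independent $J\subseteq A$, show $\cl_r(J)=\cl_r(A)$, conclude $r_{\cl_r}(A)\le\dim J=r(A)$), but the paper's version only sketches the upper bound and asserts the minimality of $\dim J$ without justification; you supply the missing lower bound $r_{\cl_r}(A)\ge r(A)$ explicitly, using the standard fact $r(\cl_r(B))=r(B)$. That auxiliary fact is indeed available: it follows from the Lemma at the start of Section~\ref{sec:rkcl} (repeated application of (R3') shows every $1$-dimensional $x\subseteq\cl_r(B)$ has $r(B+x)=r(B)$, hence $r(B+\cl_r(B))=r(B)$), so you could cite that lemma rather than~\cite{JP18}. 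In short, your version is both more self-contained and fills a small gap in the paper's proof of the $r\to\cl\to r$ composition.
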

\begin{proof}
It is proven in \cite[Theorem 68]{JP18} that if $(E,r)$ is a $q$-matroid, then $(E,\cl_r)$ is a closure function. From Theorem \ref{t-ClosureToRank} we know that if $(E,\cl)$ is a closure function, then $(E,r_{\cl})$ is a $q$-matroid with rank function as in Definition \ref{def:rkcl}.
The only thing that remains to be proved is that rank and closure compose correctly, namely $\cl\to r\to\cl'$ implies $\cl=\cl'$ and $r\to\cl\to r'$ implies $r=r'$. The first of these compositions was proven in Corollary \ref{cor:cl-flats-rank}. For the second composition, given a rank function $r$, define $\cl_r(A)$ as in Definition \ref{def-closure}. Then let $r'(A)=r_{\cl_r}(A)=\min\{\dim(I):\cl_r(I)=\cl_r(A),I\subseteq A\}$. Let $J\subseteq A$ be such that $r(A)=\dim(J)=r(J)$. We know that $J$ has minimal dimension with this property. Then $J\subseteq \cl_r(A)$ and $\cl_r(J)=\cl_r(A)$ by Lemma \ref{l-closure-incl}. So by minimality of $J$, $r'(A)=\dim(J)=r(A)$.
\end{proof}

\section{Flats and Hyperplanes}\label{FlatHypCryptomorph}
In this section, we prove the cryptomorphism relating flats and hyperplanes. We start with assuming a collection of flats and deriving a collection of hyperplanes.

\begin{Definition}\label{def-hyperpl-flats}
Let $(E,\kF)$ be a collection of flats. We define a collection of subspaces of $E$ by
       \[ \kH_\kF:=\{H\in \kF : \nexists H'\in \kF \textrm{ such that } H \subsetneq H' \subsetneq E\}. \]
\end{Definition}
Our first aim is to prove that $\kH_\kF$ is a collection of hyperplanes. We first recall the following result from \cite[Section 3]{WINEpaper1}.

\begin{Proposition}\label{LatticeOfFlats}
The members of a collection of flats form a semimodular lattice under inclusion, where for any two flats $F_1$ and $F_2$ the meet is defined to be $F_1\wedge F_2 :=F_1\cap F_2$ and the join $F_1\vee F_2$ is the smallest flat containing $F_1+F_2$. This implies that the lattice of flats satisfies the Jordan-Dedekind property, that is: all maximal chains between two fixed elements of the lattice have the same finite length.
\end{Proposition}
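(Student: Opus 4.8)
The plan is to reduce everything to the rank function, which is legitimate because Theorem~\ref{th:bciflatsrk} (equivalently Corollary~\ref{cor:cl-flats-rank}) already gives that $\kF=\kF_r$ for the $q$-matroid $(E,r)$ with $r=r_{\kF}$, so that $r$ satisfies (R1)--(R3). First I would dispose of the lattice structure: by (F2) the intersection of two flats is again a flat, so $F_1\wedge F_2:=F_1\cap F_2$ serves as meet; and $\cl_{\kF}(F_1+F_2)=\bigcap\{F\in\kF:F_1+F_2\subseteq F\}$ is a flat by Lemma~\ref{lem:clisaflat}, hence is the smallest flat containing $F_1+F_2$ and serves as the join $F_1\vee F_2$. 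The bottom element is $\cl_{\kF}(\{0\})$ and the top is $E$ by (F1); since $\kL(E)$ has finite length, so does the sublattice $\kF$.

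Next I would show that $r$ restricts to a rank function on this lattice. If $F_1\subsetneq F_2$ are flats, choose a $1$-dimensional $x\subseteq F_2$ with $x\nsubseteq F_1$; then, since $F_1\in\kF_r$, we get $r(F_2)\ge r(F_1+x)>r(F_1)$, so $r$ strictly increases along proper inclusions of flats. I then claim that $F_2$ covers $F_1$ in $\kF$ if and only if $r(F_2)=r(F_1)+1$. For the forward direction, note first that a flat is a fixed point of $\cl_r$; with $x$ as above, $\cl_r(F_1+x)$ is a flat lying between $F_1$ and $\cl_r(F_2)=F_2$ and properly containing $F_1$, so the covering hypothesis forces $\cl_r(F_1+x)=F_2$. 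Then $r(F_2)=r(\cl_r(F_1+x))=r(F_1+x)$, where I use the identity $r(\cl_r(A))=r(A)$ (valid because $r=r_{\cl_r}$ by Corollary~\ref{cor:clrk} and $r_{\cl_r}(A)=r_{\cl_r}(\cl_r(A))$ by Lemma~\ref{l-closure-rank}); this last quantity exceeds $r(F_1)$ and is at most $r(F_1)+1$ by the local axiom (R2') (valid by Theorem~\ref{t-rank-alternative}), hence equals $r(F_1)+1$. For the converse, strict monotonicity forces any flat $G$ with $F_1\subseteq G\subseteq F_2$ to satisfy $r(F_1)\le r(G)\le r(F_1)+1$, so $G=F_1$ or $G=F_2$. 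Consequently every maximal chain of flats from $F_1$ to $F_2$ consists of exactly $r(F_2)-r(F_1)$ covering steps, which is precisely the Jordan-Dedekind property.

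Finally, for (upper) semimodularity I would verify that if $F$ covers $F\cap G$ in $\kF$ then $F\vee G$ covers $G$. The previous step gives $r(F)=r(F\cap G)+1$, and $F\nsubseteq G$ (otherwise $F=F\cap G$). Since $F\vee G=\cl_r(F+G)$ and closure preserves rank, $r(F\vee G)=r(F+G)$, so submodularity (R3) yields
\[ r(F\vee G)=r(F+G)\le r(F)+r(G)-r(F\cap G)=r(G)+1 , \]
while $G\subsetneq F\vee G$ forces $r(F\vee G)>r(G)$; hence $r(F\vee G)=r(G)+1$ and $F\vee G$ covers $G$ by the cover characterisation, which is exactly (upper) semimodularity. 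The routine parts here are the lattice axioms and the arithmetic; the step demanding genuine care is the equivalence ``cover $\Leftrightarrow$ rank increases by one'', together with the identity $r(\cl_r(A))=r(A)$, since these are exactly what bridge the combinatorial covering relation inside $\kF$ and the numerical rank, and hence underpin both the Jordan-Dedekind property and semimodularity. A purely axiomatic alternative --- deriving an exchange property of covers directly from (F1)--(F3) --- is possible but noticeably messier, so routing through the already-available rank function is the cleaner route.
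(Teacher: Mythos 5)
Your proof is correct. Note that the paper itself does not include a proof of Proposition~\ref{LatticeOfFlats}: it simply recalls the result from \cite[Section~3]{WINEpaper1}, so there is no in-paper argument to compare against. Your route --- invoking Theorem~\ref{th:bciflatsrk} to produce a $q$-matroid $(E,r)$ with $\kF=\kF_r$, using (F2) and Lemma~\ref{lem:clisaflat} to establish meet and join, characterising covers in the flat lattice via ``rank jumps by exactly one'' (with $r(\cl_r(A))=r(A)$ supplied by Corollary~\ref{cor:clrk} and Lemma~\ref{l-closure-rank}), and then reading off both the Jordan--Dedekind property and upper semimodularity from (R2') and (R3) --- is sound and uses only machinery already set up in this paper. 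Two minor remarks. First, as stated, the paper's definition of ``cover'' allows $A$ to cover itself; when you write the cover characterisation and the semimodularity argument you are implicitly working with proper covers, so it is worth saying so explicitly (the degenerate case $F=F\cap G$ makes semimodularity trivial anyway). Second, you rely on the fact that a maximal chain in a finite lattice is a chain of covers; this is standard, but a sentence flagging it would make the Jordan--Dedekind step airtight.
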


We will show some partial results that we use in the proofs of Theorem \ref{thm:flat-hyper} and Corollary \ref{corr:flat-hyper}.

\begin{Lemma}\label{HyperOneCover}
Let $F\in\kF$. Then $F\notin\kH_\kF$ if and only if $F$ has at least two covers in $\kF$.
\end{Lemma}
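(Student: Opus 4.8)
The plan is to unwind the definition of $\kH_\kF$ and argue directly from axioms (F2) and (F3); Proposition~\ref{LatticeOfFlats} is not needed. Throughout, "cover" is understood in the strict sense (a cover of $F$ properly contains $F$), consistent with the usage in (F3).

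For the forward implication, suppose $F\in\kF$ with $F\notin\kH_\kF$. Since $F\in\kF$, the definition of $\kH_\kF$ forces the existence of a flat $H'$ with $F\subsetneq H'\subsetneq E$ (in particular $F\neq E$). I would then choose a $1$-dimensional space $z\subseteq H'$ with $z\nsubseteq F$ (possible as $F\subsetneq H'$) and a $1$-dimensional space $y\subseteq E$ with $y\nsubseteq H'$ (possible as $H'\subsetneq E$; note then $y\nsubseteq F$ as well). Applying (F3) twice gives the unique cover $F_1$ of $F$ in $\kF$ containing $z$ and the unique cover $F_2$ of $F$ in $\kF$ containing $y$. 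The key step is to show $F_1\neq F_2$: if both equalled a common flat $G$, then $G\cap H'$ is a flat by (F2) and satisfies $F\subseteq G\cap H'\subseteq G$; moreover $z\subseteq G\cap H'$ while $z\nsubseteq F$, so $G\cap H'\neq F$, and since $G$ covers $F$ this forces $G\cap H'=G$, i.e. $G\subseteq H'$. But then $y\subseteq G\subseteq H'$, contradicting $y\nsubseteq H'$. Hence $F_1\neq F_2$ and $F$ has at least two covers in $\kF$.

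For the converse, suppose $F$ has two distinct covers $F_1\neq F_2$ in $\kF$. Each properly contains $F$, so $F\neq E$; and since $F_1\neq F_2$, at least one of them — say $F_1$ — is a proper subspace of $E$. Then $F\subsetneq F_1\subsetneq E$ exhibits a flat strictly between $F$ and $E$, so $F\notin\kH_\kF$. The only genuine obstacle is the distinctness argument $F_1\neq F_2$ in the forward direction, and it is dispatched cleanly by intersecting the putative common cover with $H'$ and combining (F2) with the covering property; the rest is routine bookkeeping with proper containments.
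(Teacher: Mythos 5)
Your proof is correct but takes a genuinely different route from the paper's. The paper argues both directions by contraposition: a hyperplane has $E$ as its only cover, and a flat with a unique cover is a hyperplane because (F3) forces that cover to contain every one-dimensional space outside $F$ as well as $F$ itself, hence to equal $E$. (That last step --- passing from ``$E$ is the only cover of $F$'' to ``no flat is strictly between $F$ and $E$'' --- implicitly uses a finite-chain or lattice argument of the kind furnished by Proposition~\ref{LatticeOfFlats}.) Your forward direction is instead a direct construction of two distinct covers: from a witness flat $H'$ with $F\subsetneq H'\subsetneq E$ you apply (F3) once to a one-dimensional $z\subseteq H'$ and once to a one-dimensional $y$ outside $H'$, and then rule out a common cover $G$ by intersecting it with $H'$ via (F2) and invoking the covering property, since $G\cap H'$ would then have to equal $G$, forcing $y\subseteq G\subseteq H'$, a contradiction. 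This keeps the argument entirely local to (F2), (F3) and the definition of cover, with no appeal to the lattice structure of the flats. The converse is short in both treatments and essentially the same.
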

\begin{proof}
It follows directly from the definition that if $H\in\kH_\kF$ then is has only the cover $E$, since $E$ is the maximal element in the lattice of flats. For the other direction, suppose that $F\in\kF$ has only one cover. By axiom (F3) there is a unique cover of $F$ for every $x\not\subseteq F$ that contains $x$. If there is only one cover, this cover needs to contain all $x\subseteq E$, $x\not\subseteq F$, as well as $F$ itself. This means the one cover of $F$ is $E$, and hence $F\in\kH_\kF$.
\end{proof}

\begin{Proposition}\label{FlatIsHyperIntersect}
Let $F\in\mathcal{F}$. Then $F$ is the intersection of all $H\in\kH_\kF$ such that $F\subseteq H$.
\end{Proposition}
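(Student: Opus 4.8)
The inclusion $F\subseteq\bigcap\{H\in\kH_\kF: F\subseteq H\}$ is immediate from the definition of the intersection (and when $F=E$ the indexing set is empty, so the statement holds by the convention that an empty intersection equals $E$). For the reverse inclusion it suffices to show that every $1$-dimensional space $x\in\kL(E)$ with $x\nsubseteq F$ is excluded from some hyperplane containing $F$; that is, there exists $H\in\kH_\kF$ with $F\subseteq H$ and $x\nsubseteq H$. Granting this, if $G:=\bigcap\{H\in\kH_\kF: F\subseteq H\}$ satisfied $F\subsetneq G$, we could pick a $1$-dimensional $x\subseteq G$ with $x\nsubseteq F$, obtain $H\in\kH_\kF$ with $F\subseteq H$ and $x\nsubseteq H$, and derive the contradiction $x\subseteq G\subseteq H$. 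Hence $F=G$.

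So fix $x$ with $x\nsubseteq F$ and consider the family $\mathcal{G}:=\{G\in\kF: F\subseteq G,\ x\nsubseteq G\}$. It is nonempty since $F\in\mathcal{G}$, and every member is a proper subspace of $E$, so the dimensions of its members are bounded; thus $\mathcal{G}$ has an element $H$ of maximal dimension. I claim $H\in\kH_\kF$. If not, then by Lemma \ref{HyperOneCover} $H$ has two distinct covers $K_1,K_2$ in $\kF$. By (F2), $K_1\cap K_2\in\kF$ and $H\subseteq K_1\cap K_2\subseteq K_1$; since $K_1$ covers $H$, either $K_1\cap K_2=H$ or $K_1\cap K_2=K_1$. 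The latter gives $K_1\subseteq K_2$, and since $K_2$ covers $H$ with $H\subsetneq K_1\subseteq K_2$ this forces $K_1=K_2$, a contradiction. Hence $K_1\cap K_2=H$. If $x$ were contained in both $K_1$ and $K_2$ we would get $x\subseteq K_1\cap K_2=H$, contrary to $x\nsubseteq H$; so without loss of generality $x\nsubseteq K_1$. But then $K_1\in\kF$, $F\subseteq H\subsetneq K_1$ and $x\nsubseteq K_1$, so $K_1\in\mathcal{G}$ with $\dim K_1>\dim H$, contradicting maximality of $H$. Therefore $H\in\kH_\kF$, which completes the argument.

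The main obstacle is the middle step: showing that a flat of maximal dimension that contains $F$ and avoids $x$ is in fact a hyperplane. This is exactly where the flat axioms (F2) and (F3) — the latter via Lemma \ref{HyperOneCover}, and implicitly the semimodular lattice structure of Proposition \ref{LatticeOfFlats} — do the essential work; the rest is the routine $q$-analogue of the classical fact that every flat is an intersection of copoints.
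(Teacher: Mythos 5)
Your proof is correct, but it takes a genuinely different route from the paper's. The paper proves the proposition by induction on the corank $n(F)$, the length of a maximal chain from $F$ to $E$ in the lattice of flats; this requires the Jordan--Dedekind property (via Proposition~\ref{LatticeOfFlats}) to know that $n(F)$ is well defined, and the inductive step passes from $F$ to a cover $F'$ avoiding a given $x\nsubseteq F$ and applies the hypothesis to $F'$. You instead fix $x\nsubseteq F$, maximize the vector-space dimension over the (finite-dimensional, hence bounded) family of flats containing $F$ and avoiding $x$, and prove directly that such a maximal flat $H$ must be a hyperplane: if not, Lemma~\ref{HyperOneCover} gives two covers $K_1,K_2$, the meet $K_1\cap K_2=H$ forces one of them to avoid $x$, and that contradicts maximality of $\dim H$. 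Both arguments invoke Lemma~\ref{HyperOneCover} and axiom~(F2), but yours sidesteps the chain-length machinery entirely --- in particular your argument does not actually use semimodularity or Jordan--Dedekind, contrary to your closing remark; it only needs (F2), (F3) via the lemma, and finite-dimensionality of $E$. That makes your version somewhat more self-contained, at the small cost of the case analysis on $K_1\cap K_2$. One minor point: Lemma~\ref{HyperOneCover} as stated is not true for $F=E$ (which has zero covers), but this does not affect either proof, since both apply it only to a flat that is known to miss some $x$ and hence is proper.
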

\begin{proof}
We follow the proof for the classical case as given in \cite[Proposition 2.56]{gordonmcnulty}. For every $F\in\kF$, let $F=F_0\subsetneq F_1\subsetneq\cdots\subsetneq F_n=E$ be a maximal chain between $F$ and $E$. The length of a maximal chain is well defined by Proposition \ref{LatticeOfFlats} and we denote this length it by $n(F)$. (This is in fact the \textbf{corank} or \textbf{nullity} of $F$.) We proceed by induction on $n(F)$. \\
If $n(F)=0$, then $F=E$ and it is the intersection of an empty subset of $\kH_\kF$. If $n=1$ then $F\in\kH_\kF$. Now let $F\in\kF$ with $n(F)=k+1$, $k\geq 1$ and assume that every flat with $n(F)\leq k$ is the intersection of the members of $\kH_\kF$ containing it. Let $I\subseteq\kH_\kF$ be the collection of members of $\kH_\kF$ containing $F$. Clearly $F\subseteq\bigcap_{H\in I}H$. 
We will prove the other inclusion by contradiction. 

Suppose there is a $1$-dimensional $x\subseteq E$ such that $x\subseteq H$ for all $H\in I$ and $x\not\subseteq F$. Because $F$ is not in $\kH_\kF$, it has at least two covers by Lemma \ref{HyperOneCover}. Since $x$ is contained in a unique cover of $F$ by axiom (F3), there is a cover $F'$ of $F$ that does not contain $x$. Let $J\subseteq\kH_\kF$ be the members of $\kH_\kF$ that contain $F'$. Then $J\subseteq I$, because every member of $\kH_\kF$ that contains $F'$ contains $F\subseteq F'$ as well. Clearly, $n(F')=k$, so by the induction hypothesis, $F'=\bigcap_{H\in J}H$. Since $x\not\subseteq F'$, there is an $H\in J$ such that $x\not\subseteq H$. But this is a contradiction of the fact that $x$ is contained in all members of $I$ and $J\subseteq I$. We conclude that $F\subseteq\bigcap_{H\in I}H$ and hence equality holds.
\end{proof}

The above shows that the lattice of flats is co-atomic. We point out the next direct consequence of the proof of  Proposition \ref{FlatIsHyperIntersect}.

\begin{Lemma}\label{FlatInIperp}
Let $F\in\mathcal{F}$ and $F\neq E$. Then there exists an $H\in\mathcal{H}_\mathcal{F}$ containing $F$.
\end{Lemma}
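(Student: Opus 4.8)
The plan is to read this off directly from Proposition \ref{FlatIsHyperIntersect}, which states that $F=\bigcap\{H\in\kH_\kF : F\subseteq H\}$. Let $I:=\{H\in\kH_\kF : F\subseteq H\}$. The key observation is that in the lattice of flats (which has top element $E$ by (F1)), an intersection over the empty family is taken to be $E$; so if $I=\emptyset$ then $\bigcap_{H\in I}H=E$, and Proposition \ref{FlatIsHyperIntersect} would force $F=E$, contradicting the hypothesis $F\neq E$. Hence $I\neq\emptyset$, and any $H\in I$ is a hyperplane in $\kH_\kF$ with $F\subseteq H$, as required.

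Alternatively, if one prefers an argument that does not lean on the empty-intersection convention, I would argue via the Jordan--Dedekind property of Proposition \ref{LatticeOfFlats}: since $F\neq E$ and $F\in\kF$, there is a maximal chain $F=F_0\subsetneq F_1\subsetneq\cdots\subsetneq F_n=E$ in $\kF$ with $n\geq 1$. I claim $F_{n-1}\in\kH_\kF$. Indeed $F_{n-1}\in\kF$ and $F_{n-1}\subsetneq E$; if there were a flat $H'$ with $F_{n-1}\subsetneq H'\subsetneq E$, then the chain $F_{n-1}\subsetneq H'\subsetneq E$ could be refined to a maximal chain between $F_{n-1}$ and $E$ of length at least $2$, contradicting the fact that $F_{n-1}\subsetneq E$ is already a maximal chain of length $1$ between those two flats. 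Thus $F_{n-1}$ is a hyperplane, and $F=F_0\subseteq F_{n-1}$ gives the containment.

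There is no real obstacle here; the only point requiring a little care is making sure the empty-family case is handled correctly (equivalently, that the chain argument produces a chain of positive length), which is exactly where the hypothesis $F\neq E$ is used. I would present the first argument as the main proof, since it is an immediate corollary of the preceding proposition as the surrounding text indicates.
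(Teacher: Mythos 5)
Your main (first) argument is correct and mirrors the paper's intent: the paper gives no separate proof, flagging the lemma as a direct consequence of Proposition~\ref{FlatIsHyperIntersect}, and the point is precisely that the indexing family $\{H\in\kH_\kF : F\subseteq H\}$ must be non-empty once $F\neq E$, using the empty-intersection-equals-$E$ convention that the paper adopts explicitly (e.g.\ in the $n(F)=0$ base case of that proof and for (F1) in Definition~\ref{def-flats-hyperpl}). Your alternative via a maximal chain is also valid, though it does not actually need the Jordan--Dedekind equality of chain lengths from Proposition~\ref{LatticeOfFlats} --- the mere fact that the chain cannot be refined at its top link already shows $F_{n-1}$ is a maximal proper flat, hence a hyperplane.
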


Now we can conclude the first part of our goal, the cryptomorphism from flats to hyperplanes.

\begin{Theorem}\label{thm:flat-hyper}
Let $(E,\kF)$ be a collection of flats and define a collection $\kH_\kF$ as in Definition \ref{def-hyperpl-flats}. Then $(E,\kH_{\kF})$ is a collection of hyperplanes.
\end{Theorem}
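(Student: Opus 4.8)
The plan is to verify the three hyperplane axioms (H1), (H2), (H3) for $\kH_\kF$ directly, leaning on the structural results already established: the semimodular lattice of flats (Proposition \ref{LatticeOfFlats}), the covering criterion (Lemma \ref{HyperOneCover}), and the representation of a flat as an intersection of hyperplanes (Proposition \ref{FlatIsHyperIntersect} and Lemma \ref{FlatInIperp}).

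First, axioms (H1) and (H2) are essentially immediate. By construction $E$ is excluded from $\kH_\kF$ (the defining condition $H \subsetneq E$ fails for $H = E$), so (H1) holds. For (H2), suppose $H_1, H_2 \in \kH_\kF$ with $H_1 \subseteq H_2$; since both are proper flats and $H_1$ is a \emph{maximal} proper flat, either $H_1 = H_2$ or $H_2 = E$, and the latter is impossible since $H_2 \in \kH_\kF$; hence $H_1 = H_2$.

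The real content is (H3): given distinct $H_1, H_2 \in \kH_\kF$ and a $1$-dimensional $x \in \kL(E)$, I must produce $H_3 \in \kH_\kF$ with $(H_1 \cap H_2) + x \subseteq H_3$. The plan is to set $F := \cl_\kF\big((H_1 \cap H_2) + x\big)$ — equivalently the join in the lattice of flats of the flat $H_1 \cap H_2$ (which is a flat by (F2)) with the smallest flat containing $x$ — so that $F$ is a flat containing $(H_1\cap H_2)+x$. If $F \neq E$, then by Lemma \ref{FlatInIperp} there is a hyperplane $H_3 \in \kH_\kF$ with $F \subseteq H_3$, and we are done. So the crux is to rule out $F = E$, i.e. to show $(H_1 \cap H_2) + x$ does not have closure equal to $E$. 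Here I would use semimodularity of the lattice of flats: $H_1$ and $H_2$ are distinct coatoms (maximal proper flats), so $H_1 \cap H_2 = H_1 \wedge H_2$ has corank $2$ in the lattice (the interval $[H_1\wedge H_2, E]$ has height $2$, since $H_1 \vee H_2 = E$ and by semimodularity the meet drops by exactly the amount the join rises). Adjoining the single $1$-dimensional space $x$ can raise the rank of the closure by at most one level in the lattice of flats — this is the step I would need to justify carefully, presumably via axiom (F3): the flat $\cl_\kF(H_1\cap H_2 + x)$ either equals $H_1 \cap H_2$ (if $x \subseteq H_1\cap H_2$) or is the unique cover of $H_1 \cap H_2$ containing $x$. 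In either case its corank is at most $1$, hence it is a proper flat, so $F \neq E$.

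**The main obstacle** I anticipate is this last rank/corank bookkeeping: showing cleanly that adding one $1$-dimensional space to a flat of corank $2$ yields a flat that is still proper. The cleanest route is probably: $\cl_\kF(H_1 \cap H_2 + x)$ is either $H_1 \cap H_2$ itself or a cover of it by (F3); a cover of a corank-$2$ flat has corank $1$ by the Jordan–Dedekind property of Proposition \ref{LatticeOfFlats}; a corank-$1$ flat is a hyperplane and in particular is not $E$. So in all cases $F$ is a proper flat, and Lemma \ref{FlatInIperp} delivers the required $H_3 \supseteq F \supseteq (H_1\cap H_2)+x$. One should double-check the edge case $x \subseteq H_1 \cap H_2$: then $(H_1\cap H_2)+x = H_1 \cap H_2$, which is already a proper flat, and again Lemma \ref{FlatInIperp} (or simply taking $H_3 = H_1$) finishes the argument.
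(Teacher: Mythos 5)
Your proof is correct and follows essentially the same route as the paper: reduce (H3) to producing a proper flat containing $(H_1\cap H_2)+x$ via (F2) and (F3), then apply Lemma~\ref{FlatInIperp}. You are in fact slightly more careful than the paper in explicitly checking that the cover of $H_1\cap H_2$ is not $E$ --- the paper applies Lemma~\ref{FlatInIperp} to the cover $F'$ without spelling out why $F'\neq E$. Two small slips in your corank bookkeeping are worth flagging, although neither damages the argument. First, ``its corank is at most $1$'' should read ``at least $1$''; what you need is that the cover has \emph{positive} corank, and your final paragraph says this correctly. Second, the claim that $H_1\cap H_2$ has corank \emph{exactly} $2$ does not follow from semimodularity: semimodularity yields only an inequality, and the corank of the meet of two coatoms can strictly exceed $2$ (for instance, in a uniform $q$-matroid of rank $3$ on a $5$-dimensional ground space, two trivially-intersecting $2$-dimensional hyperplanes have meet $\{0\}$ of corank $3$). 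Fortunately you only need corank at least $2$, and this is immediate from the chain $H_1\cap H_2\subsetneq H_1\subsetneq E$ in the lattice of flats: it shows $E$ does not cover $H_1\cap H_2$, so the unique cover supplied by (F3) is a proper flat --- no semimodularity or rank computation is actually required.
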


\begin{proof}
We will show that $\kH_\kF$ satisfies the axioms (H1), (H2), (H3). Since $E$ cannot be a proper subspace of itself, it is not contained in $\kH_\kF$, which proves (H1). For (H2), let $H_1,H_2\in\mathcal{H}_\mathcal{F}$ satisfy $H_1\subseteq H_2$. Towards a contradiction, assume $H_1\subsetneq H_2$. Since $E\notin\mathcal{H}_\mathcal{F}$ by (H1) and $H_2\in\mathcal{H}_\mathcal{F}$ by assumption, we have that $H_2\neq E$. So for $H_1$, we have $H_1\subsetneq H_2\subsetneq E$ and therefore $H_1\notin\mathcal{H}_\mathcal{F}$. This is a contradiction, so $H_1=H_2$. \\
Now we will prove (H3). Consider two distinct members $H_1,H_2$ of $\mathcal{H}_\mathcal{F}$ and a $1$-dimensional subspace $x\in \kL(E)$. We need to find an $H_3\in\mathcal{H}_\mathcal{F}$ such that $(H_1\cap H_2)+x\subseteq H_3$. By construction of $\mathcal{H}_\mathcal{F}$ we have that $H_1,H_2\in\mathcal{F}$ and so by (F2), $F:=H_1\cap H_2\in\mathcal{F}$. If $x\subseteq F$ then $F+x=F$ and this is contained in some $H_3\in\mathcal{H}_\mathcal{F}$ by Lemma \ref{FlatInIperp}. If $x\not\subseteq F$, then by (F3) there is a unique $F'\in\mathcal{F}$ covering $F$ and containing $x$. Since $F'$ is a flat, again by Lemma \ref{FlatInIperp} it is contained in some $H_3\in\mathcal{H}_\mathcal{F}$. This proves that $H_3\in\mathcal{H}_\mathcal{F}$ satisfies (H3).
\end{proof}

Conversely, we will start with a collection of hyperplanes and show that this collection determines a collection of flats. 

\begin{Definition}\label{def-flats-hyperpl}
Let $(E,\kH)$ be a collection of hyperplanes. Define a collection of subspaces of $E$:
\[ \kF_\kH:=\left\{\bigcap_{H\in I\subseteq\kH} H : I\subseteq \kH \right\} .\]
\end{Definition}

We will prove that $\mathcal{F}_\mathcal{H}$ satisfies axioms (F1)-(F3), having proved some preliminary results. Until stated otherwise, we will assume that $\mathcal{H}$ is a collection of hyperplanes.

\begin{Lemma}\label{lemma-hyp-intersect-1}
Let $I,J\subseteq \kH$. Suppose $I\subseteq J$ and let $F_1:=\bigcap_{H \in I } H$ and $F_2:=\bigcap_{H \in J } H$. Then $F_2\subseteq F_1$.
\end{Lemma}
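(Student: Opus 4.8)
This is the elementary monotonicity (really antitonicity) of intersection with respect to the index set, so the proof will be very short. The plan is to argue directly from the definition of $F_1$ and $F_2$ as intersections: I would take an arbitrary vector $v \in F_2 = \bigcap_{H \in J} H$ and show $v \in F_1 = \bigcap_{H \in I} H$. By definition of the intersection, $v \in F_2$ means $v \in H$ for every $H \in J$. Since $I \subseteq J$, in particular $v \in H$ for every $H \in I$, and hence $v \in \bigcap_{H \in I} H = F_1$. As $v$ was arbitrary, $F_2 \subseteq F_1$.

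Since both $F_1$ and $F_2$ are subspaces of $E$ (each is an intersection of subspaces, or the whole space $E$ in the degenerate case $I = \emptyset$, using the convention that an empty intersection over subspaces of $E$ equals $E$), the set-theoretic containment $F_2 \subseteq F_1$ is the desired conclusion; no further check on linearity is needed. It may be worth a one-line remark that the edge case $I = \emptyset$ is covered, since then $F_1 = E$ and $F_2 \subseteq E$ holds trivially.

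\textbf{Main obstacle.} There is essentially no obstacle here: the statement is a purely lattice-theoretic triviality about arbitrary intersections, and the only thing to be mildly careful about is the convention for the empty intersection. I do not anticipate needing any of the hyperplane axioms (H1)--(H3) at all for this particular lemma; they will only enter in the subsequent results that build on it (e.g. in verifying (F1)--(F3) for $\kF_\kH$). So the write-up is a two or three line argument.
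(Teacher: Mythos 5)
Your proof is correct. It takes the element-wise route (pick $v\in F_2$, conclude $v\in F_1$), whereas the paper decomposes the index set and writes $F_2 = F_1 \cap \bigl(\bigcap_{H\in J\setminus I} H\bigr)$, but these are the same elementary observation; your extra note about the empty-intersection convention is a reasonable addition.
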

\begin{proof}
By construction, we have that
\[ F_2=\bigcap_{H \in J} H = \left( \bigcap_{H \in I} H \right) \cap \left( \bigcap_{H \in J\backslash I} H \right) = F_1 \cap \left( \bigcap_{H \in J\backslash I} H \right) \]
and thus $F_2\subseteq F_1$.
\end{proof}

\begin{Lemma}\label{lemma-hyp-intersect-2}
Let $F_1,F_2\in\mathcal{F}_\mathcal{H}$ with $F_2\subseteq F_1$. Let $I\subseteq \kH$ be such that $F_1=\bigcap_{H \in I } H$. Then there is a $J\subseteq \kH$ such that $F_2=\bigcap_{H \in J } H$ and $I\subseteq J$.
\end{Lemma}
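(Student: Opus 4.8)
The plan is to exhibit $J$ explicitly as the collection of \emph{all} hyperplanes of $\kH$ that contain $F_2$, and then check the two required properties directly. Since $F_2 \in \kF_\kH$, by Definition \ref{def-flats-hyperpl} there is some subcollection $K \subseteq \kH$ with $F_2 = \bigcap_{H \in K} H$. I set $J := \{ H \in \kH : F_2 \subseteq H \}$.

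First I would verify $I \subseteq J$. This is immediate from the hypothesis: for any $H \in I$ we have $F_2 \subseteq F_1 = \bigcap_{H' \in I} H' \subseteq H$, so $H \in J$ by the definition of $J$.

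Next I would verify $\bigcap_{H \in J} H = F_2$. The inclusion $F_2 \subseteq \bigcap_{H \in J} H$ is built into the definition of $J$, since every member of $J$ contains $F_2$. For the reverse inclusion, observe that each $H \in K$ contains $F_2$ (being one of the hyperplanes whose intersection equals $F_2$), so $K \subseteq J$; applying Lemma \ref{lemma-hyp-intersect-1} to $K \subseteq J$ gives $\bigcap_{H \in J} H \subseteq \bigcap_{H \in K} H = F_2$. Together with $I \subseteq J$, this shows that $J$ is the required subcollection of $\kH$.

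I do not expect a genuine obstacle here; the argument is purely set-theoretic. The only subtlety worth a sentence is the degenerate case $F_2 = E$, where $J = \emptyset$ and the empty intersection is read as $E$; but then $F_1 = E$ as well, which forces $I = \emptyset$ (no nonempty intersection of proper hyperplanes can equal $E$), so $I \subseteq J$ still holds trivially.
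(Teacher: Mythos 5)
Your proof is correct and takes essentially the same approach as the paper: both define $J$ to be the full set of hyperplanes in $\kH$ containing $F_2$ and then observe $I \subseteq J$. Your write-up is somewhat more careful than the paper's, which states only the inclusion $I \subseteq J$ and leaves the verification that $\bigcap_{H \in J} H = F_2$ (via $K \subseteq J$ and Lemma \ref{lemma-hyp-intersect-1}) implicit.
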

\begin{proof}
Let $J$ be the subset of $\kH$ such that $F_2\subseteq H$ for all $H\in J$. Since all the elements of $\mathcal{H}$ containing $F_1$ form a subset of all elements of $\mathcal{H}$ containing $F_2$, and $I$ is a subset of all elements of $\mathcal{H}$ containing $F_1$, we have that $I\subseteq J$.
\end{proof}

\begin{Proposition}\label{prop-cover-hyp-intersect}
Let $F_1,F_2\in\mathcal{F}_\mathcal{H}$ where $F_1$ is a cover of $F_2$ in $\mathcal{F}_\mathcal{H}$. Then there is an $H\in\mathcal{H}$ such that $F_2=F_1\cap H$.
\end{Proposition}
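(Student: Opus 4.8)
The plan is to use Lemmas \ref{lemma-hyp-intersect-1} and \ref{lemma-hyp-intersect-2} to set up the index sets, and then exhibit a single hyperplane $H$ realising the cover. Since $F_2 \subsetneq F_1$ are both in $\kF_\kH$, write $F_1 = \bigcap_{H \in I} H$ for some $I \subseteq \kH$; by Lemma \ref{lemma-hyp-intersect-2} there is $J \subseteq \kH$ with $I \subseteq J$ and $F_2 = \bigcap_{H \in J} H$. The inclusion $I \subseteq J$ must be proper, since otherwise $F_1 = F_2$. So pick any $H \in J \setminus I$ and set $F_3 := F_1 \cap H$. Then $F_3 \in \kF_\kH$ (it is an intersection of hyperplanes, namely those in $I \cup \{H\}$), and $F_2 \subseteq F_3 \subseteq F_1$ since $F_2 = \bigcap_{H' \in J}H' \subseteq F_1 \cap H = F_3 \subseteq F_1$. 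Moreover $F_3 \neq F_1$: if $F_1 \subseteq H$ then $H$ would already appear among the hyperplanes containing $F_1$, and replacing $I$ by the full set of hyperplanes containing $F_1$ (which still intersects to $F_1$) we could have assumed $H \in I$, contradicting $H \in J \setminus I$. Since $F_1$ covers $F_2$ and $F_2 \subseteq F_3 \subsetneq F_1$, we conclude $F_3 = F_2$, i.e. $F_2 = F_1 \cap H$, as required.

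The one point requiring a little care — and the main obstacle — is the argument that $F_3 = F_1 \cap H$ is strictly smaller than $F_1$, i.e. that $F_1 \nsubseteq H$. The clean way to handle this is to fix at the outset the \emph{canonical} index set: let $I$ be the set of \emph{all} $H' \in \kH$ with $F_1 \subseteq H'$, so that $F_1 = \bigcap_{H' \in I} H'$ (this equality is exactly the content of the definition of $\kF_\kH$ together with Lemma \ref{lemma-hyp-intersect-1}-style reasoning, since $F_1 \in \kF_\kH$ means $F_1$ is \emph{some} intersection of hyperplanes, and that intersection is squeezed between $F_1$ and $\bigcap\{H' : F_1 \subseteq H'\}$). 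With this choice, $H \in J \setminus I$ genuinely means $F_1 \nsubseteq H$, so $F_1 \cap H \subsetneq F_1$ is immediate, and the covering hypothesis finishes the proof.

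I would also note that the statement is the $q$-analogue of the standard matroid fact that, in the lattice of flats, a cover is cut out by a single hyperplane; the proof above mirrors the classical one, the only genuinely $q$-specific input being that everything is phrased in terms of subspace intersections rather than set intersections, which causes no trouble here since $\kF_\kH$ is by construction closed under arbitrary intersections.
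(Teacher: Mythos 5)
Your proof is correct and matches the paper's argument: you fix the canonical index set $I$ of all hyperplanes containing $F_1$, invoke Lemma \ref{lemma-hyp-intersect-2} to get $J \supsetneq I$ indexing $F_2$, pick $H \in J \setminus I$, and squeeze $F_2 \subseteq F_1 \cap H \subsetneq F_1$ against the covering hypothesis. Your second paragraph's fix is exactly what the paper does from the start, so the two proofs are essentially identical.
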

\begin{proof}
Let $I$ be the set of all elements of $\mathcal{H}$ containing $F_1$ so we can write $F_1=\bigcap_{H \in I } H$. By Lemma \ref{lemma-hyp-intersect-2} there is a $J$ such that $F_2=\bigcap_{H \in J } H$ and $I\subseteq J$. Because $I$ contains all the members of $\mathcal{H}$ containing $F_1$ and $F_2\subsetneq F_1$, $I$ is a proper subset of $J$. Let $H$ be a member of  $J\backslash I$ and let $F'=F_1\cap H$. Then $F'\subsetneq F_1$ because $H$ does not contain $F_1$. Write $I'=I\cup\{H\}$, so $F'=\bigcap_{H \in I' } H$. By Lemma \ref{lemma-hyp-intersect-1} we have that $F_2\subseteq F'$. Combining gives that $F_2\subseteq F'\subsetneq F_1$. But $F_2\subseteq F_1$ is a cover, so $F_2=F'$ and thus $F_2=F_1\cap H$.
\end{proof}

Note that the converse of this statement is not true: if $F_1\cap H=F_2$, then $F_1$ need not cover $F_2$. The following example illustrates this.

\begin{example}
Let us consider $E=(\FF_2)^5$ and denote by $e_i$, $1 \leq i \leq 5$ the element in the canonical basis of $E$ with $1$ in position $i$ and zeroes in all the other positions.
Consider the uniform $q$-matroid $U_{4,5}(\mathbb{F}_2)$ of rank $4$ on $E$ (see \cite[Example 4]{JP18}).
Clearly all $3$-subspaces are hyperplanes.
If we consider $F_1=\langle e_1,e_2,e_3\rangle$ this is then a hyperplane and so also a flat.
Take then $H_1=\langle e_2,e_3,e_4\rangle$
and  $H_2=\langle e_2,e_4,e_5\rangle$. Let
$F':=F_1 \cap H_1$ and $F_2:=F_1 \cap H_2$.
Then we have $F_2 \subsetneq F' \subsetneq F_1$ and the number of hyperplanes over $F_2$ is one more the number of hyperplanes over $F_1$. Therefore $F_1 \neq F_2$ and $F_1$ is not a cover of $F_2$.
\end{example}

In the next lemma we use the hyperplane axiom (H3'). Recall that in Theorem \ref{t-H3'} it is proven that the axioms (H1), (H2), (H3') are an equivalent set of axioms for a collection of hyperplanes. 

\begin{Lemma}\label{IntersIsF}
Let $F\in\mathcal{F}_\mathcal{H}$ and let $J\subseteq \kH$ be the set of elements of $\mathcal{H}$ containing $F$. Let $x$ be a $1$-dimensional space not contained in $F$.
Let $I \subseteq J$ be the set of all elements of $J$ containing $x$. For each $H' \in J \setminus I$, let $F'=(\bigcap_{H \in I} H)\cap H'$. Then $F'=F$.
\end{Lemma}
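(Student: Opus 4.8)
The plan is to prove the non-trivial inclusion $F'\subseteq F$; the reverse inclusion $F\subseteq F'$ is immediate, since $I\subseteq J$ and $H'\in J$, and every member of $J$ contains $F$, so $F\subseteq\bigcap_{H\in I}H$ and $F\subseteq H'$, whence $F\subseteq(\bigcap_{H\in I}H)\cap H'=F'$. First I would record the basic identity $F=\bigcap_{H\in J}H$: by definition of $\mathcal{F}_\mathcal{H}$, $F$ is the intersection of some subfamily $I_0\subseteq\kH$, and $I_0\subseteq J$ because every hyperplane in $I_0$ contains $F$; sandwiching gives $F\subseteq\bigcap_{H\in J}H\subseteq\bigcap_{H\in I_0}H=F$. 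In particular, since $x\nsubseteq F$ and $F$ is the intersection of the members of $J$, some member of $J$ fails to contain $x$, so $J\setminus I\neq\emptyset$ and the statement is not vacuous. I will write $G:=\bigcap_{H\in I}H$, so that $F+x\subseteq G$ and $F'=G\cap H'$ for the given $H'\in J\setminus I$.

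Next I would argue by contradiction: suppose $F'\nsubseteq F$, so there is a $1$-dimensional space $y\subseteq F'$ with $y\nsubseteq F$. Using $F=\bigcap_{H\in J}H$, pick $H_1\in J$ with $y\nsubseteq H_1$. Since $y\subseteq F'=G\cap H'\subseteq H'$ but $y\nsubseteq H_1$, the hyperplanes $H'$ and $H_1$ are distinct; moreover $y\subseteq H$ for every $H\in I$ (as $y\subseteq G$), so $H_1\notin I$, i.e. $H_1\in J\setminus I$. Consequently $x\nsubseteq H'$ and $x\nsubseteq H_1$, while $x\subseteq G$.

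Now I would apply the strengthened hyperplane axiom (H3'), available by Theorem \ref{t-H3'}, to the distinct pair $H',H_1$ with the $1$-dimensional spaces $x$ and $y$: here $y\subseteq H'$, $y\nsubseteq H_1$, and $x\nsubseteq H',H_1$. This yields a hyperplane $H_3\in\kH$ with $(H'\cap H_1)+x\subseteq H_3$ and $y\nsubseteq H_3$. Since $H',H_1\in J$ both contain $F$, we get $F+x\subseteq(H'\cap H_1)+x\subseteq H_3$, so $H_3\in J$ and $x\subseteq H_3$, i.e. $H_3\in I$. But then $y\subseteq G\subseteq H_3$, contradicting $y\nsubseteq H_3$. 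Hence $F'\subseteq F$, and therefore $F'=F$.

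The main obstacle — and the reason the plain axiom (H3) does not suffice here — is that we must retain control of the "witness'' $y$: a bare application of (H3) would produce a hyperplane through $(H'\cap H_1)+x$ but could swallow $y$, which is exactly the outcome we need to preclude. This is precisely why (H3') from Theorem \ref{t-H3'} is the right tool. The only other point needing care is the bookkeeping that the resulting $H_3$ lands back in $I$, which follows cleanly once the identity $F=\bigcap_{H\in J}H$ is in hand.
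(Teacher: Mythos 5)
Your proof is correct and follows the same strategy as the paper's: contradiction via a witness $y\subseteq F'$, $y\nsubseteq F$, producing a second hyperplane $H_1\in J\setminus I$ missing $y$, and then invoking (H3') on the pair $(H',H_1)$ to get a new hyperplane $H_3\in I$ that excludes $y$, contradicting $y\subseteq\bigcap_{H\in I}H$. The only difference is cosmetic: you explicitly verify the identity $F=\bigcap_{H\in J}H$ (which the paper uses implicitly when asserting some $H_u\in J\setminus I$ misses $y$), so your write-up is slightly more careful than, but substantively identical to, the paper's argument.
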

\begin{proof}
Suppose by contradiction that there is  $H' \in J \setminus I$,  with $F' \neq F$. In particular $F' \supsetneq F$ and there should be a $y$, $\dim(y)=1$, such that $y \nsubseteq F$ but $y \subseteq F'$. Since $F'=(\bigcap_{H \in I} H)\cap H'$, then $y \subseteq H$ for each $H \in I$ and $y \subseteq H'$.
For $H'$, then, we know that does not contain $x$ but it contains $y$. All the $H$ contain $y$ too. But $y$ is not in $F$ so there is some  $H_u \in J \setminus I$ such that $y \nsubseteq H_u$. Consider $H$ and $H_u$.
We know that $x \nsubseteq H',H_u$ and $y \subseteq H'$, $y \nsubseteq H_u$. By (H3') there is a hyperplane $\overline{H}$ containing  $(H_u \cap H') +x$, but $y \nsubseteq \overline{H}$. Therefore $\overline{H}$ contains $F$ and $x$ so it should be an element of $I$, contradicting that all elements of $I$ contain $y$.
\end{proof}

After all this ground work, we can now prove the cryptomorphism from hyperplanes to flats.

\begin{Theorem}
Let $(E,\kH)$ be a collection of hyperplanes and define a collection $\kF_\kH$ as in Definition \ref{def-flats-hyperpl}. Then $(E,\kF_{\kH})$ is a collection of flats.
\end{Theorem}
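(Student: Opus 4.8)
We must verify that $\kF_\kH = \{\bigcap_{H \in I} H : I \subseteq \kH\}$ satisfies the flat axioms (F1), (F2), (F3). The plan is to check each axiom in turn, leaning on the preliminary lemmas just established (Lemmas \ref{lemma-hyp-intersect-1}, \ref{lemma-hyp-intersect-2}, \ref{IntersIsF} and Proposition \ref{prop-cover-hyp-intersect}).

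**(F1) and (F2).** For (F1), note that $E$ is the intersection of the empty subfamily $I = \emptyset \subseteq \kH$ (the empty intersection of subspaces of $E$ being $E$ itself), so $E \in \kF_\kH$. For (F2), let $F_1 = \bigcap_{H \in I} H$ and $F_2 = \bigcap_{H \in J} H$ be two members of $\kF_\kH$; then $F_1 \cap F_2 = \bigcap_{H \in I \cup J} H \in \kF_\kH$ since $I \cup J \subseteq \kH$. So (F1) and (F2) are essentially immediate from the definition.

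**(F3).** This is the main work. Fix $F \in \kF_\kH$ and a $1$-dimensional space $x \nsubseteq F$; we must produce a \emph{unique} cover of $F$ in $\kF_\kH$ containing $x$. Let $J \subseteq \kH$ be the set of all hyperplanes containing $F$ (so $F = \bigcap_{H \in J} H$, using Lemma \ref{lemma-hyp-intersect-2}-type reasoning, or simply noting $F \in \kF_\kH$ means $F$ is such an intersection and one may enlarge $I$ to all hyperplanes over $F$ without changing the intersection), and let $I \subseteq J$ be those members of $J$ that contain $x$. Set $F_x := \bigcap_{H \in I} H$ (if $I = \emptyset$, take $F_x = E$). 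Then $F \subseteq F_x$ and $x \subseteq F_x$, and $F_x \in \kF_\kH$.

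To see $F_x$ \emph{covers} $F$: first, $F \subsetneq F_x$ since $x \nsubseteq F$ but $x \subseteq F_x$. Now suppose $F \subseteq G \subsetneq F_x$ with $G \in \kF_\kH$, and assume $G \neq F$. Writing $G = \bigcap_{H \in K} H$ and enlarging $K$ to the set of all hyperplanes containing $G$, we have $K \subseteq J$ (every hyperplane over $G$ is over $F$). Since $G \subsetneq F_x$, there is $H' \in K$ with $F_x \nsubseteq H'$, hence $H' \notin I$, i.e. $x \nsubseteq H'$. But by Lemma \ref{IntersIsF} applied with this $F$, this $x$, this $I$, and this $H' \in J \setminus I$, we get $(\bigcap_{H \in I} H) \cap H' = F$, i.e. $F_x \cap H' = F$. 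Since $G \subseteq F_x$ and $G \subseteq H'$ (as $H' \in K$), we conclude $G \subseteq F_x \cap H' = F$, hence $G = F$. Thus no flat lies strictly between $F$ and $F_x$, so $F_x$ is a cover of $F$.

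For uniqueness of the cover containing $x$: suppose $F'$ is any cover of $F$ in $\kF_\kH$ with $x \subseteq F'$. By Proposition \ref{prop-cover-hyp-intersect} there is $H_0 \in \kH$ with $F = F' \cap H_0$; since $x \subseteq F'$ but $x \nsubseteq F = F' \cap H_0$, we must have $x \nsubseteq H_0$. On the other hand $F \subseteq F'$ forces $F' = \bigcap_{H \in K'} H$ for some $K' \subseteq J$ (enlarging to all hyperplanes over $F'$, which all contain $F$); and since $x \subseteq F'$, every $H \in K'$ contains $x$, so $K' \subseteq I$. By Lemma \ref{lemma-hyp-intersect-1} this gives $F_x = \bigcap_{H \in I} H \subseteq \bigcap_{H \in K'} H = F'$. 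Since both $F \subsetneq F_x$ and $F \subsetneq F'$ with $F'$ a cover of $F$ and $F_x \in \kF_\kH$ with $F_x \subseteq F'$, the covering property of $F'$ forces $F_x = F'$. Hence $F_x$ is the unique cover of $F$ containing $x$, and (F3) holds. Therefore $(E, \kF_\kH)$ is a collection of flats.

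**Main obstacle.** The delicate point is uniqueness in (F3): one must carefully package the combinatorics of "enlarge the index set $I$ to all hyperplanes containing a given flat without changing the intersection" so that Lemmas \ref{lemma-hyp-intersect-1}, \ref{IntersIsF} and Proposition \ref{prop-cover-hyp-intersect} can be chained together cleanly. The role of the strengthened axiom (H3') is hidden inside Lemma \ref{IntersIsF}, which is exactly what prevents $F_x$ from being strictly larger than a cover — this is the step where the $q$-analogue genuinely differs from the classical argument and where I would be most careful.
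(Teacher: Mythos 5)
Your proof is correct and uses the same core machinery as the paper (define $F_x$ as the intersection of all hyperplanes over $F$ that contain $x$, then apply Lemma~\ref{IntersIsF} to show it is a cover). The argument that $F_x$ covers $F$ is in fact spelled out more carefully than in the paper, where this step is stated rather tersely; your version, enlarging $K$ to all hyperplanes over $G$ and extracting an $H' \in J \setminus I$ with $F_x \nsubseteq H'$, is a good clean account of what the paper leaves implicit.

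Where you genuinely diverge is in the uniqueness argument. The paper's route is shorter: given two covers $F'$ and $F''$ of $F$ both containing $x$, apply the just-established axiom (F2) to see that $F' \cap F''$ is again in $\kF_\kH$; since it contains $F$ and $x$ and is contained in both covers, the cover property forces $F' = F' \cap F'' = F''$. Your route instead shows that the particular flat $F_x$ is contained in any cover $F'$ of $F$ containing $x$ (via the index-set containment $K' \subseteq I$ and Lemma~\ref{lemma-hyp-intersect-1}) and then invokes the cover property of $F'$. Both work; the paper's is slightly slicker because it exploits (F2) directly, whereas yours characterizes $F_x$ as the minimum such cover. One minor point: your invocation of Proposition~\ref{prop-cover-hyp-intersect} in the uniqueness step is superfluous --- you establish that $x \nsubseteq H_0$ but never use $H_0$ afterwards, so that paragraph can begin at ``On the other hand $F \subseteq F'$ forces \dots'' without loss.
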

\begin{proof}
We will prove that $\kF_{\kH}$ satisfies the flat axioms (F1),(F2),(F3). $E$ is a flat since it is the empty intersection 
of hyperplanes, hence (F1) holds. Let $F_1:=\bigcap_{H \in I} H$ and  $F_2:=\bigcap_{j \in J} H$ two elements in $\kF_\kH$. Then $F_1 \cap F_2=\bigcap_{H\in I\cup J} H$ and so $F_1 \cap F_2 \in \kF$. This proves (F2). \\
Now we come to (F3). We have a flat $F\in\mathcal{F}_\mathcal{H}$ and $x \nsubseteq F$. We want to prove the existence of a unique $F'$ which contains $x$ and covers $F$.
We take $J$, the set of all hyperplanes containing $F$ and we consider the intersection $F'$ of all the hyperplanes in $J$ which also contain $x$.
Now we can use the Lemma \ref{IntersIsF} to see that such a flat is a cover: being $x \nsubseteq F$ there is also an element of $J$ not containing $x$ but for all of them the intersection is $F$.\\
The uniqueness of the flat covering $F$ and containing  $x$ can be easily proved by contradiction.\\
Suppose there is another flat $F''$ covering $F$ and containing  $x$. Then, by (F2), $F''':=F'\cap F''$ is a flat, which obviously contains $F$ and $x$ and it is contained in $F'$ and $F''$. This contradicts the fact that $F'$ covers $F$. We therefore conclude that $F'$ is unique. This completes the proof of (F3).
\end{proof}

\begin{Corollary}\label{corr:flat-hyper}
Let $(E,\kF)$ be a collection of flats and let $(E,\kH)$ be a collection of hyperplanes.
\begin{enumerate}
\item $(E,\kF)$ determines a $q$-matroid with collection of flats $\kF$ and collection of hyperplanes $\kH_\kF$.
\item $(E,\kH)$ determines a $q$-matroid with collection of hyperplanes $\kH$ and collection of flats $\kF_\kH$.
\end{enumerate}

\end{Corollary}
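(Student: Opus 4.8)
The plan is to bolt the corollary together from the two cryptomorphism theorems of this section and from the flat--closure--rank correspondence already established in Corollary \ref{cor:cl-flats-rank}(2), so that the only genuinely new content is verifying that the assignments $\kF\mapsto\kH_\kF$ and $\kH\mapsto\kF_\kH$ are mutually inverse and compatible with the underlying $q$-matroid. No fresh axiom-chasing is needed beyond a couple of short lattice-theoretic observations.

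For part (1) I would start from a collection of flats $(E,\kF)$. By Corollary \ref{cor:cl-flats-rank}(2) it determines a $q$-matroid $(E,r)$ whose collection of flats is exactly $\kF$. By Definition \ref{def:flat}, the hyperplanes of $(E,r)$ are the maximal proper flats, which by Definition \ref{def-hyperpl-flats} are precisely the members of $\kH_\kF$; the rank condition $r(H)=r(E)-1$ appearing in the formula for $\kH_r$ is automatic for a maximal proper flat, because by Proposition \ref{LatticeOfFlats} the lattice of flats is semimodular and satisfies Jordan--Dedekind, so $r(H)$ equals the height of $H$ and a flat covered only by $E$ has height $r(E)-1$. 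Since Theorem \ref{thm:flat-hyper} already guarantees that $\kH_\kF$ satisfies (H1)--(H3), this settles (1).

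For part (2) I would start from a collection of hyperplanes $(E,\kH)$. The theorem immediately preceding this corollary shows $\kF_\kH$ is a collection of flats, so by Corollary \ref{cor:cl-flats-rank}(2) it determines a $q$-matroid $(E,r)$ with flats $\kF_\kH$, and by part (1) its hyperplanes are $\kH_{\kF_\kH}$. It then remains to prove $\kH_{\kF_\kH}=\kH$. For ``$\supseteq$'': any $H\in\kH$ equals the intersection over the one-element set $\{H\}$, hence lies in $\kF_\kH$, and $H\neq E$ by (H1); if some $F\in\kF_\kH$ had $H\subsetneq F\subsetneq E$, writing $F=\bigcap_{H'\in I}H'$ with $I\neq\emptyset$ and using $H\subseteq H'$ together with (H2) forces $H=H'$ for every $H'\in I$, whence $F=H$, a contradiction; so $H$ is a maximal proper flat, i.e.\ $H\in\kH_{\kF_\kH}$. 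For ``$\subseteq$'': if $F\in\kH_{\kF_\kH}$ then $F$ is a maximal proper member of $\kF_\kH$, so $F=\bigcap_{H'\in I}H'$ with $I\neq\emptyset$ (as $F\neq E$); choosing any $H'\in I$ gives $F\subseteq H'\subsetneq E$ with $H'\in\kF_\kH$, and maximality of $F$ forces $F=H'\in\kH$. The step requiring the most care is the remark in part (1) that reconciles the ``maximal proper flat'' description of a hyperplane with the rank formula in Definition \ref{def:flat}; everything else is bookkeeping with the theorems already proved, so I would keep that argument explicit and treat the rest briskly.
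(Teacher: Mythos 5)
Your proof is correct, and it is close in spirit to the paper's but organized a little differently. The paper proves the mutual-inverse identities $\kF_{\kH_\kF}=\kF$ and $\kH_{\kF_\kH}=\kH$ and then invokes the flats--rank cryptomorphism from \cite{WINEpaper1} at the very end. You short-circuit part (1): once $(E,\kF)$ is known to give a $q$-matroid whose flats are $\kF$, its hyperplanes are by definition the maximal proper flats, which is exactly $\kH_\kF$ by Definition~\ref{def-hyperpl-flats}; the identity $\kF_{\kH_\kF}=\kF$ that the paper proves via Proposition~\ref{FlatIsHyperIntersect} and the chain argument is never needed for the literal statement of (1). That identity is worth having anyway (it records that the two assignments are mutually inverse, i.e.\ that the cryptomorphism is genuinely a bijection between axiom systems and not just a pair of one-sided constructions), but your leaner route is logically sufficient. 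Your side remark reconciling the ``maximal proper flat'' definition of hyperplane with the rank formula in Definition~\ref{def:flat} via Jordan--Dedekind is a sound sanity check, though strictly speaking the paper already asserts those two descriptions as equivalent in that definition, so you could omit it. For part (2), your argument for $\kH_{\kF_\kH}=\kH$ is essentially the paper's, but rendered more explicitly: in particular, the paper's justification that $H\in\kH$ has no cover in $\kF_\kH$ below $E$ is stated quite tersely (``This is impossible, since $H$ is the intersection of only one hyperplane''), whereas you spell out the needed step, namely that any $F=\bigcap_{H'\in I}H'\in\kF_\kH$ with $H\subsetneq F\subsetneq E$ forces $H\subseteq H'$ and hence $H=H'$ by (H2), collapsing $F$ to $H$. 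That is the cleaner way to phrase it. Both routes appeal at the end to the same background fact (Theorem~\ref{th:bciflatsrk} in the paper; you use Corollary~\ref{cor:cl-flats-rank}(2), which is equivalent for this purpose) to produce the underlying $q$-matroid, so the overall architecture agrees.
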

\begin{proof}
To prove (1), we show that $\kF_{\kH_\kF}=\kF$. Let $F\in\kF$. We have to prove that $F\in\kF_{\kH_\kF}$. From Proposition \ref{FlatIsHyperIntersect} we know that $F$ is the intersection of all members of $\kH_\kF$ containing $F$. This intersection is in $\kF_{\kH_\kF}$ by definition, so $F\in\kF_{\kH_\kF}$.

For the other inclusion, start with $F\in\kF_{\kH_\kF}$. We can find a finite chain
$F\subsetneq F_1\subsetneq F_2\subsetneq\cdots\subsetneq F_k\subsetneq E $
of flats of $\kF_{\kH_\kF}$ by using axiom (F3) multiple times. By Proposition \ref{prop-cover-hyp-intersect} we can find a hyperplane $H_1\in\kH_\kF$ such that $F=F_1\cap H_1$. In the same way we find $H_i\in\kH_\kF$ such that $F_{i-1}=F_i\cap H_i$ for all $1\leq i\leq k$. This gives that $F=\bigcap_{i\in\{1,\ldots,k\}}H_i$, a finite intersection. By applying the axiom (F2) multiple (but a finite number of) times, we get that $F\in\kF$. This shows $\kF_{\kH_\kF}\subseteq\kF$ and hence equality holds. \\
To show $\kH_{\kF_\kH}=\kH$ for part (2), let $H\in\kH$ and let $I=\{H\}\subseteq\kH$. Then $H=\bigcap_{H'\in I}H'$ hence $H\in\kF_\kH$. We need to show that $H\in\kH_{\kF_\kH}$. This is the case if there is no flat in $\kF_\kH$ that covers $H$ and is not equal to $E$. This is impossible, since $H$ is the intersection of only one hyperplane. Therefore, $\kH \subseteq \kH_{\kF_\kH}$. 
Now suppose that $H \in \kH_{\kF_\kH}$. Then $H$ is a maximal element of $\kF_\kH = \{ \bigcap_{H \in I} : I \subseteq \kH \}$. But then in particular, $H \in \kH$. 
It follows that $\kH_{\kF_\kH}=\kH$. \\
We know from Theorem \ref{th:bciflatsrk} that $(E,\kF)$ determines a $q$-matroid with $r=r_\kF$ and $\kF=\kF_{r_\kF}$.
It follows from the above that also $(E,\kH)$ defines a $q$-matroid with $\kH=\kH_{\kF_r}$. It follows directly from Definition \ref{def:flat} that $\kH_{\kF_r}=\kH_r$. Hence we have a $q$-matroid in both parts (1) and (2).
\end{proof}

\section{Dependence and Independence}\label{sec:ind-dep}

We now establish that the independence axioms and the dependence axioms are cryptomorphic.
It is worth noting at this point that while we require the four axioms (I1)-(I4) in order to define a $q$-matroid, the three dependence axioms (D1)-(D3) are sufficient.

\begin{Theorem}\label{th:ind2dep}
	Let $(E,\I)$ be a collection of independent spaces. Let $\kD=\opp(\I)$. Then $\kD$ is a collection of dependent spaces.  
\end{Theorem}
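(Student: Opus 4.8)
The plan is to verify the three dependence axioms (D1)--(D3) for $\kD = \opp(\I) = \{ X \in \kL(E) : X \notin \I \}$. The first two are immediate from the first two independence axioms. For (D1): by (I1) there is some $I \in \I$, and by (I2) every subspace of $I$ lies in $\I$, so in particular $\{0\} \in \I$, i.e. $\{0\} \notin \kD$. For (D2): if $D_1 \in \kD$ (so $D_1 \notin \I$) and $D_1 \subseteq D_2$, then $D_2 \notin \I$ as well, since $D_2 \in \I$ together with $D_1 \subseteq D_2$ would force $D_1 \in \I$ by (I2); hence $D_2 \in \kD$.

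The substantive step is (D3), and I would handle it by passing to the rank function. By Corollary \ref{IndChius}(2) (equivalently \cite[Theorem 8]{JP18}), the collection $(E,\I)$ determines a $q$-matroid $(E,r)$ with $r = r_{\I}$ whose set of independent spaces is exactly $\I$; thus for any $A \in \kL(E)$ we have $A \in \kD$ if and only if $r(A) < \dim(A)$. Now take $D_1, D_2 \in \kD$ with $D_1 \cap D_2 \notin \kD$, and let $D$ have codimension one in $D_1 + D_2$. Since $D_1 \cap D_2$ is independent, $r(D_1 \cap D_2) = \dim(D_1 \cap D_2)$, while $r(D_i) \leq \dim(D_i) - 1$ for $i = 1,2$ because $D_1, D_2$ are dependent. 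Applying submodularity (R3),
\[ r(D_1 + D_2) \;\leq\; r(D_1) + r(D_2) - r(D_1 \cap D_2) \;\leq\; \dim(D_1) + \dim(D_2) - \dim(D_1 \cap D_2) - 2 \;=\; \dim(D_1 + D_2) - 2. \]
Since $D \subseteq D_1 + D_2$, monotonicity (R2) yields $r(D) \leq r(D_1 + D_2) \leq \dim(D_1 + D_2) - 2 = \dim(D) - 1 < \dim(D)$, so $D$ is dependent, i.e. $D \in \kD$. This establishes (D3) and completes the proof.

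I do not expect a real obstacle here: the only point needing a word of care is that we are entitled to invoke the rank function of $(E,\I)$, which is precisely the content of Corollary \ref{IndChius}, and that the displayed inequality is never vacuous, since the hypotheses of (D3) can only hold when $\dim(D_1 + D_2) \geq 2$ (otherwise $D_1$ and $D_2$ lie inside a single line, forcing either some $D_i = \{0\} \notin \kD$ or $D_1 \cap D_2 = D_1 \in \kD$, contradictions). If one wished to avoid the detour through the rank function and argue purely from (I1)--(I4), I would instead compare the dimensions of maximal independent subspaces $I_0 \in \max(D_1 \cap D_2, \I)$, $I_1 \in \max(D_1,\I)$, $I_2 \in \max(D_2,\I)$ and $K \in \max(D_1+D_2,\I)$, using (I4) (or the equivalent (I4$'$) from Theorem \ref{lem:I4}) to bound $\dim K$ by $\dim I_1 + \dim I_2 - \dim I_0 \leq \dim(D_1+D_2) - 2$; this is self-contained but more laborious than the rank-function argument above.
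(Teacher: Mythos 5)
Your proof is correct, and the (D1)--(D2) verifications coincide with the paper's. For (D3) you take a genuinely different route: you invoke Corollary~\ref{IndChius}(2) (equivalently \cite[Theorem 8]{JP18}) to pass to the rank function $r = r_{\I}$ and then argue directly via (R2) and the submodularity (R3), obtaining $r(D) \leq r(D_1+D_2) \leq \dim(D_1+D_2) - 2 < \dim(D)$. The paper instead stays entirely within the independence axioms: it picks maximal independent subspaces $I_1 \in \max(D_1,\I)$ and $I_2 \in \max(D_2,\I)$ containing $D_1\cap D_2$ (via Lemma~\ref{lem:I3}), invokes (I4) to produce a maximal independent $V \subseteq I_1+I_2$ inside $D_1+D_2$, and derives the same dimension contradiction from $\dim(V) \leq \dim(I_1+I_2) \leq \dim(D_1+D_2)-2$. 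Your rank-function detour is shorter and cleaner, but it leans on the already-established cryptomorphism between rank and independence; the paper's argument is more self-contained at the axiom level, which is in keeping with its goal of exhibiting direct pairwise cryptomorphisms. The purely axiomatic alternative you sketch at the end (bounding $\dim K$ using (I4) and the maximal independents $I_0, I_1, I_2$) is essentially the paper's argument. One small remark: your closing observation that $\dim(D_1+D_2)\geq 2$ under the hypotheses of (D3) is correct and a nice sanity check, though neither proof actually needs to isolate that case since the inequality chain handles it uniformly.
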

\begin{proof}
	By (I1), $\I$ is non-empty. Let $I \in \I$. Then $0 \in \I$ by (I2), so $0 \notin \kD$ and (D1) holds.
	Let $D_1 \in \kD$ and let $D_1 \subseteq D_2 \in \kL(E)$. 
	Then $D_2 \notin \I$ by (I2), so (D2) holds. 
	
	Now let $D_1,D_2 \in \kD$ such that $D_1 \cap D_2 \in \I$.
	By Lemma \ref{lem:I3}, there exist $I_1 \in \max(D_1,\I)$ and $I_2\in \max(D_2,\I)$
	such that $D_1 \cap D_2 \subseteq I_1,I_2$ and clearly $I_i \subseteq D_i$ for $i =1,2$.  
	Then we have $D_1 \cap D_2 = I_1 \cap I_2$.
	Moreover, $\dim(I_1)\leq \dim(D_1)-1$ and $\dim(I_2)\leq \dim(D_2)-1$ since $I_i \notin \kD$ for $i=1,2$.
	
	Let $D\subseteq D_1+D_2$ have codimension one in $D_1+D_2$.
	Suppose now, towards a contradiction, that $D\in \I$.
	By (I4), $D_1+D_2$ has a maximal independent subspace $V$ contained in $I_1+I_2$, which, by maximality, satisfies
	$\dim(V) \geq \dim(D) = \dim(D_1+D_2)-1$. Therefore,
	$\dim(D_1+D_2)-1 \leq \dim(V) \leq \dim(D_1+D_2)-1,$
	and so $\dim(D_1+D_2)-1 =\dim(V) \leq\dim(I_1+I_2)$. 
	We have
	\begin{align*}\label{eq:int}
	\dim(D_1 + D_2)-1 & \leq \dim(I_1+I_2) ,\\
	                  & = \dim(I_1)+\dim(I_2) - \dim(I_1 \cap I_2),\\
	                  &\leq  \dim(D_1)+\dim(D_2)-\dim(D_1 \cap D_2) -2, \\
		              &= 	\dim(D_1+D_2) -2, 
	\end{align*}  
    yielding the required contradiction. It follows that (D3) holds.
    \end{proof}

\begin{Theorem}\label{th:dep2ind}
	Let $(E,\kD)$ be a collection of dependent spaces. Let $\I=\opp(\kD)$. Then $\I$ is a family of independent spaces.
\end{Theorem}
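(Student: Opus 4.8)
The cases (I1) and (I2) are immediate. By (D1) we have $\{0\}\notin\kD$, so $\{0\}\in\opp(\kD)=\I$ and (I1) holds; and (I2) is exactly the contrapositive of (D2). In particular every subspace of a member of $\I$ lies in $\I$, so $\max(A,\I)\neq\emptyset$ for every $A\in\kL(E)$ and Lemmas \ref{lem:I3} and \ref{lem:M=m+I} become available once (I3) is established. The substance of the theorem is in (I3) and (I4), both of which rely on (D3).

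For (I3) I would follow the pattern of the proof of (I3) inside Theorem \ref{th:cltoind}, with the role played there by the closure operation now played by the relation ``$A+x\in\kD$''. It suffices to rule out failure of (I3) for pairs with $\dim J=\dim I+1$ (a witness inside some $J'\subseteq J$ of that dimension also works for $J$, by (I2)), and the case $\dim I=0$ is trivial since then any $1$-dimensional $x\subseteq J$ satisfies $x\in\I$. So suppose (I3) fails for some $I,J\in\I$ with $\dim J=\dim I+1$, chosen with $\dim(I\cap J)$ maximal among such failing pairs; failure says $I+x\in\kD$ for every $1$-dimensional $x\subseteq J$ with $x\nsubseteq I$. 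If $I\subseteq J$ then writing $J=I+x$ for such an $x$ gives $J\in\kD$, a contradiction; so $I\nsubseteq J$ and there is a codimension-one subspace $A$ of $J$ with $I\cap A=I\cap J$ and $\dim A=\dim I$. The key step is to produce a $1$-dimensional $b\subseteq I$ with $b\nsubseteq A$ and $b+A\in\I$. Granting this, set $J'=b+A$: then $J'\in\I$, $\dim J'=\dim J>\dim I$, and $\dim(J'\cap I)\geq\dim((I\cap A)+b)>\dim(I\cap J)$, so by maximality (I3) holds for $(I,J')$, yielding a $1$-dimensional $x\subseteq J'$ with $x\nsubseteq I$ and $x+I\in\I$; writing $x=\langle\bar b+\bar a\rangle$ with $\bar b\in b\subseteq I$ and $\bar a\in A$ gives $x+I=\langle\bar a\rangle+I$ with $\langle\bar a\rangle\subseteq A\subseteq J$ and $\langle\bar a\rangle\nsubseteq I$, contradicting the failure of (I3) for $(I,J)$.

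Supplying the $1$-dimensional $b$ is where I expect the main obstacle. If no such $b$ existed, then $b+A\in\kD$ for every $1$-dimensional $b\subseteq I$ with $b\nsubseteq A$; together with $I+x\in\kD$ for every $1$-dimensional $x\subseteq J$, $x\nsubseteq I$, one must then deduce $A+w\in\kD$ for the $1$-dimensional $w$ with $J=A+\langle w\rangle$, i.e. $J\in\kD$, contradicting $J\in\I$. When $\dim(I\cap J)=\dim I-1$ this is a single application of (D3), taking $D_1=I+\langle w\rangle$ and $D_2=b+A$ for a suitable $b$: their intersection is contained in $I$, hence independent, and $J$ has codimension one in $D_1+D_2$, so (D3) forces $J\in\kD$. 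In the general case $J$ has codimension greater than one in $D_1+D_2$ and a single use of (D3) no longer suffices; here one argues by induction on $\dim(I+A)$, or equivalently first proves the idempotency of the closure-type operator $A\mapsto A+\sum\{x:A+x\in\kD\}$ (the kind of bookkeeping carried out in Section \ref{sec:rkcl}), and this is the technically delicate point of the theorem.

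For (I4), by Theorem \ref{lem:I4} it is enough to verify (I4''). Let $A\in\kL(E)$, $I\in\max(A,\I)$, and let $x$ be $1$-dimensional; we may assume $x\nsubseteq A$, since otherwise $\max(x+A,\I)=\max(A,\I)$. By Lemma \ref{lem:I3} choose $M\in\max(x+A,\I)$ with $I\subseteq M$, and by Lemma \ref{lem:M=m+I}, $\dim M\leq\dim I+1$. If $\dim M=\dim I$ then $M=I$ and we are done, so assume $\dim M=\dim I+1$. Since $x\nsubseteq I$ we have $\dim(x+I)=\dim I+1=\dim M$, so if $x+I\in\I$ it lies in $\max(x+A,\I)$ and is contained in $x+I$, and we are done. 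Otherwise $x+I\in\kD$; then $x+I\neq M$ (as $M\in\I$), so $x+I$ and $M$ are distinct $(\dim I+1)$-dimensional spaces both containing $I$, whence $\dim\big((x+I)+M\big)=\dim I+2$ and $(x+I)\cap M=I\in\I=\opp(\kD)$. Applying (D3) to $D_1=x+I$ and $D_2=M$ shows every codimension-one subspace of $(x+I)+M$ lies in $\kD$; in particular $M\in\kD$, contradicting $M\in\I$. Hence $x+I\in\I$, so (I4'') holds, and with (I1)--(I3) and Theorem \ref{lem:I4} this establishes (I4).
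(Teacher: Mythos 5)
There are two problems, one acknowledged and one not.

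For (I3) your reduction to $\dim J=\dim I+1$ is sound, but you model the remainder on the proof of Theorem \ref{th:cltoind}, where the closure operator does most of the work, and the analogous step here --- producing a $1$-dimensional $b\subseteq I$, $b\nsubseteq A$, with $b+A\in\I$ --- is, as you say, a genuine obstacle. You sketch it only in the case $\dim(I\cap J)=\dim I-1$ and hand-wave the rest via an operator whose idempotency you have not established (and which would itself need the very axioms you are trying to prove). The paper sidesteps this entirely with a different induction: it inducts on $\dim(I/(I\cap J))$, and at the inductive step produces \emph{two} augmentations $a,b\subseteq J$ by applying the induction hypothesis twice (to a codimension-one $I_1\subseteq I$, then to $I_2=a+I_1$); if both $a+I$ and $b+I$ were dependent, their intersection is $I\in\I$ and a single application of (D3) shows $I_3=b+I_2$ is dependent, a contradiction. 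No ``closure'' of $A$ is ever needed.

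For (I4) there is an actual error: you apply (D3) to $D_1=x+I$ and $D_2=M$, but $M\in\I=\opp(\kD)$, so $D_2\notin\kD$ and (D3) simply does not apply to this pair. The paper's proof uses the same setup (choose $M\in\max(x+A,\I)$ with $I\subseteq M$ and $\dim M=\dim I+1$, write $M=m+I$), but then, since $m\subseteq x+A$ and $m\nsubseteq A$, writes $m=\langle\bar x+\bar a\rangle$ with $a=\langle\bar a\rangle\subseteq A$, and applies (D3) to the \emph{dependent} pair $D_1=x+I$ (dependent by assumption) and $D_2=a+I$ (dependent because $a\subseteq A$, $a\nsubseteq I$, and $I$ is maximal in $A$); then $(x+I)\cap(a+I)=I\in\I$, so every codimension-one subspace of $x+a+I$ is dependent, and since $m+I=M$ has codimension one in $x+a+I$, this contradicts $M\in\I$. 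Your version replaces $a+I$ by $M$ and thereby loses the required dependence. The fix is exactly this re-choice of $D_2$.
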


\begin{proof}
	Since $\{0\} \notin \kD$, $\{0\} \in \I$ and so (I1) holds. If $J \in \I$ and $I \subseteq J$ then from (D2) it must be the case that $I \in \I$, so (I2) holds.
	
	Now let $I,J \in \I$ such that $\dim(I)<\dim(J)$. 
	We will apply induction on $\dim(I / (I \cap J))$. If $\dim(I / (I \cap J))=0$ then $I \subseteq J$ and so clearly (I3) holds for the pair $I,J$.
	Now let $k$ be a non-negative integer and suppose that (I3) holds for all subspaces $U,V \in \I$ satisfying $\dim(V)>\dim(U)$ and $\dim(U/(U \cap V)) \leq k$.
	Suppose that $\dim(I / (I \cap J))=k+1$.
	We claim that there exists $x \subseteq J, x \nsubseteq I$ such that $x+I \in \I$.  
	
	Let $I_1$ be a subspace of codimension one in $I$ that contains $I \cap J$.
	Then $I_1 \cap J=I \cap J$, $\dim(J)>\dim(I_1)$ and by (I2),  $I_1 \in \I$. Since
	\[
	\dim(I_1/(I_1 \cap J))=\dim(I_1)-\dim(I_1 \cap J)=\dim(I)-1 -\dim(I \cap J) =k,
	\]
	by the induction hypothesis (I3) holds for the pair $I_1,J$; that is, there exists a $1$-dimensional space $a \subseteq J, a \nsubseteq I_1$ such that 
	$I_2=a+I_1 \in \I$. 
	We have $a \subseteq J, a  \nsubseteq I_1$ and so $ a \nsubseteq I_1 \cap J = I \cap J$. 
	Clearly, $\dim(I_2) = \dim(I)$.
	We have 
	$I_2 \cap J = (a+I_1) \cap J = a + (I_1 \cap J) = a + (I \cap J),$
	and $a \nsubseteq I \cap J$, so $\dim(I_2 \cap J) = \dim(I \cap J)+1$.
	Therefore, 
	\[
	\dim(I_2/(I_2 \cap J)) = \dim(I_2)-\dim(I_2 \cap J) = \dim(I) - \dim(I \cap J) -1 = k,
	\] 
	hence again by hypothesis, there exists a $1$-dimensional space
	$b \subseteq J$, $b \nsubseteq I_2$ such that $I_3 = b+I_2 \in \I$.
	Clearly, $a \nsubseteq I$.	
	Also, $b \subseteq J, b  \nsubseteq I_2$ and  
	since $a+ (I \cap J) = I_2 \cap J$, it follows that $b \nsubseteq a+I$. 
	Therefore, $a+I \neq b+I$ and so
	we have $(a+I) \cap (b+I) = I \in \I$.
	If both $a+I, b+I \in \kD$, then by (D3) every subspace of codimension 1 in $a+b+I$ is dependent. 
    Now $\dim(a+b+I) = \dim(I)+2$ and $\dim(I_3) = \dim(I_1)+2$, so $\codim_{a+b+I}(I_3)=1$, so in particular
	this implies that $I_3 \in \kD$, which contradicts $I_3 \in \I$.
	We deduce that at least one of $a+I$ or $b+I$ is in $\I$. This establishes (I3).
	
	Let $A$ be a subspace of $E$ and let $x \in \kL(E)$ be a $1$-dimensional space.
	Let $I \in \max(A,\I)$.
	We claim that there exists a member of $\max(x+A,\I)$ contained in $x+I$. 
	This will prove that (I4) holds, by Lemma \ref{lem:I4}.
	If $A=I$ then any subspace of $x+A$ is a subspace of $x+I$, so the result holds.
	Suppose then that $I \subsetneq A$. If $x \subseteq A$ then $\max(x+A,\I) = \max(A,\I)$ and so $I$ is the required member of $\max(x+A,\I)$ contained $x+I$. Therefore, for the remainder we assume that $x \not{\subseteq} A$.
	
	Let $M \in \max(x+A,\I)$. By Lemma \ref{lem:I3}, we may choose $M$ such that $I \subseteq M$.
	If $M=I$ then $I$ gives the required subspace
	in $\max(x+A,\I)$, so assume that $I \subsetneq M$, i.e. that $\dim(M)>\dim(I)$.
	In particular, this means that $M \nsubseteq A$, by the maximality of $I$ in $A$. Furthermore, 
	$y + I \notin \I$ for any $1$-dimensional space $y \subseteq A, y \nsubseteq I$.

	By Lemma \ref{lem:M=m+I}, $\dim(M)= \dim(I)+1$ and so $M=m+I$ for some $1$-dimensional space
	$m \nsubseteq A$.
	
	If $x \subseteq M$, then as $x \nsubseteq I$ and since $\codim_M(I)=1$, we have $M=m+I=x+I$ and $M$ gives the subspace
	satisfying (I4), so suppose that $x \nsubseteq M$.
	If $x+I \in \I$ then $x+I \in \max(x+A,\I)$, so suppose otherwise, i.e. that $x+I \in \kD$.
	We have $m+I \in \max(x+A,\I)$ and $m \subseteq x+A$, $m \nsubseteq A$.
	Therefore, $m= \langle \bar{x} +\bar{a} \rangle$ for $x=\langle \bar{x} \rangle$ and a $1$-dimensional subspace $a=\langle \bar{a} \rangle \subseteq A$.
	By the maximality of $I$ in $A$, $a+I \in \kD$. Now $(x+I) \cap (a+I) = I \in \I$, since $x \nsubseteq A$ and $I$ has codimension
	1 in both $a+I$ and $x+I$. Then by (D3), every subspace of codimension 1 in $a+x+I$ is a member of $\kD$. 
	But as $m+I  \subseteq x+a+I$ is independent, by assumption, and as it has codimension 1 in $x+a+I$, we arrive at a contradiction.
	We deduce that $\dim(M) = \dim(I)$ and so (I4) holds.
\end{proof}

\begin{Corollary}\label{cor:ind-dep}
   Let $(E,\I)$ be a collection of independent spaces and let $(E,\kD)$ be a collection of
   dependent spaces. 
   Suppose that $\kD = \opp(\I)$.
   Then
   $(E,\I)$ and $(E,\kD)$ each determine the same $q$-matroid $(E,r)$ such that
   $\kD$ is the collection $\kD_r$ of dependent spaces of $(E,r)$ and $\I$ is the collection of
   independent spaces $\I_r$ of $(E,r)$.
\end{Corollary}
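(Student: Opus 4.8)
The plan is to combine the two preceding theorems with the known cryptomorphism between the independence axioms and the rank axioms, \cite[Theorem 8]{JP18}, the only extra ingredient being the elementary observation that $\opp$ is an involution on families of subspaces. First I would record that for any $\A \subseteq \kL(E)$ we have $\opp(\opp(\A)) = \A$: indeed $X \in \opp(\opp(\A))$ iff $X \notin \opp(\A)$ iff $X \in \A$. Consequently the hypothesis $\kD = \opp(\I)$ is equivalent to $\I = \opp(\kD)$, so the two given families are genuine ``mirror images'' of one another and the roles of $\I$ and $\kD$ in the statement are interchangeable.

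Next, starting from $(E,\I)$: by \cite[Theorem 8]{JP18} the collection $\I$ of independent spaces determines a $q$-matroid $(E,r)$ with rank function $r(A) = \max\{\dim I : I \in \I,\ I \subseteq A\}$, and the independent spaces of this $q$-matroid, namely $\I_r = \{A \in \kL(E) : r(A) = \dim A\}$, coincide with $\I$. By Definition \ref{independentspaces}, the dependent spaces of $(E,r)$ are precisely the subspaces that fail to be independent, i.e. $\kD_r = \opp(\I_r) = \opp(\I) = \kD$. Thus $(E,\I)$ determines the $q$-matroid $(E,r)$ with $\I_r = \I$ and $\kD_r = \kD$, as required.

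For the converse, start from $(E,\kD)$. By Theorem \ref{th:dep2ind} the family $\opp(\kD)$ satisfies the independence axioms; by the involutivity remark, $\opp(\kD) = \I$, so this is exactly the given collection $\I$. (Theorems \ref{th:ind2dep} and \ref{th:dep2ind} together also reassure us that each of the hypotheses ``$\I$ is a collection of independent spaces'' and ``$\kD$ is a collection of dependent spaces'' follows from the other under $\kD = \opp(\I)$.) Now the previous paragraph applies verbatim to this $\I$, producing the very same $q$-matroid $(E,r)$ built from $\I$ via \cite[Theorem 8]{JP18}, again with $\I_r = \I$ and $\kD_r = \kD$. Since both starting points land on this single $q$-matroid $(E,r)$, we conclude that $(E,\I)$ and $(E,\kD)$ determine the same $q$-matroid with the stated independent and dependent spaces.

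The argument is essentially bookkeeping once Theorems \ref{th:ind2dep} and \ref{th:dep2ind} are available: the only point that needs a word of care is that the phrase ``determine the same $q$-matroid'' is well-posed, i.e. that the round trips $\I \to \kD \to \I$ and $\kD \to \I \to \kD$ return the original family; this is exactly the involutivity of $\opp$, which I would state explicitly. I do not anticipate any real obstacle here — the substance of the cryptomorphism lives entirely in the two preceding theorems.
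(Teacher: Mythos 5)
Your proposal is correct and follows essentially the same route as the paper's proof: invoke \cite[Theorem 8]{JP18} to get the $q$-matroid $(E,r)$ from $\I$ with $\I_r=\I$, then observe that $\kD=\opp(\I)=\opp(\I_r)$ is precisely the set of dependent spaces of $(E,r)$. The paper's version is terser and only spells out the direction starting from $(E,\I)$, leaving the direction from $(E,\kD)$ implicit; your extra paragraph using the involutivity of $\opp$ and Theorem \ref{th:dep2ind} makes that implicit step explicit, which is harmless bookkeeping rather than a different argument.
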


\begin{proof}
   By \cite[Theorem 8]{JP18}, $(E,\I)$ determines a $q$-matroid $(E,r)$ such that 
   $\I=\I_r$. Since $\kD=\opp(\I)$, we have
   $D \in \kD$ if and only if $D \notin \I_r$ and in particular $\kD$ must be the set of dependent spaces
   of $(E,r)$.
\end{proof}

\section{Dependent Spaces and Circuits}\label{sec:dep-circ}

Recall that for a collection of subspaces $\kS$ of $E$ that $\min(\kS) = \{ A \in \kS : B \nsubseteq A, \text{ any }B \in \kS, A \neq B \}$.

\begin{Lemma}\label{DC1}
	Let $(E,\kD)$ be a collection of dependent spaces. Let $\kC=\min(\kD)$. Then $(E,\kC)$ is a collection of circuits of $E$.
\end{Lemma}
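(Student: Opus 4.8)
The plan is to verify the three circuit axioms (C1), (C2), (C3) for $\kC := \min(\kD)$, using that $\kD$ satisfies the dependence axioms (D1)--(D3). Axioms (C1) and (C2) are essentially immediate: by (D1), $\{0\} \notin \kD$, hence $\{0\} \notin \kC$ since $\kC \subseteq \kD$, giving (C1); and (C2) is just the definition of $\min$, since if $C_1, C_2 \in \kC = \min(\kD)$ with $C_1 \subseteq C_2$, then minimality of $C_2$ forces $C_1 = C_2$. One should also record the useful observation that every member of $\kD$ contains a member of $\kC$: given $D \in \kD$, since $E$ is finite-dimensional, a chain of dependent subspaces of $D$ descending by dimension must terminate, and the minimal element reached lies in $\min(\kD) = \kC$; here one uses (D2) only implicitly (we descend within $\kD$ directly). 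Actually the cleaner statement is: if $D \in \kD$ then there is $C \in \kC$ with $C \subseteq D$ — this follows since the set $\{D' \in \kD : D' \subseteq D\}$ is nonempty and finite, so has a member of minimal dimension, which is then in $\min(\kD)$.

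The real content is (C3): given distinct $C_1, C_2 \in \kC$ and any $X \in \kL(E)$ of codimension $1$, I must produce $C_3 \in \kC$ with $C_3 \subseteq (C_1 + C_2) \cap X$. First I would handle the degenerate case: if $C_1 \subseteq X$ or $C_2 \subseteq X$, say $C_1 \subseteq X$, then also $C_1 \subseteq C_1 + C_2$, so $C_1 \subseteq (C_1+C_2) \cap X$ and $C_1$ contains some $C_3 \in \kC$ (in fact $C_1 = C_3$); similarly for $C_2$. So assume $C_1 \nsubseteq X$ and $C_2 \nsubseteq X$. Next I claim $C_1 \cap C_2 \notin \kD$: since $C_1, C_2$ are distinct minimal dependent spaces, $C_1 \cap C_2$ is a proper subspace of $C_1$ (as $C_1 \neq C_2$ and $C_1 \subseteq C_2$ would contradict minimality, so $C_1 \cap C_2 \subsetneq C_1$); if $C_1 \cap C_2 \in \kD$ it would contain a member of $\kC$ properly inside $C_1$, contradicting $C_1 \in \min(\kD)$. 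Hence $C_1 \cap C_2 \notin \kD$, and now (D3) applies to the pair $C_1, C_2$: every subspace $D$ of codimension one in $C_1 + C_2$ lies in $\kD$.

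It remains to choose such a $D$ inside $X$. Since $C_1 \nsubseteq X$, we have $C_1 + C_2 \nsubseteq X$, so $(C_1+C_2) \cap X$ has codimension exactly one in $C_1 + C_2$. Taking $D := (C_1 + C_2) \cap X$, this is a codimension-one subspace of $C_1 + C_2$, hence $D \in \kD$ by (D3), and $D \subseteq X$ by construction. Finally, by the observation above, $D$ contains some $C_3 \in \kC$, and $C_3 \subseteq D \subseteq (C_1+C_2) \cap X$, which establishes (C3). I expect the main (minor) obstacle to be the bookkeeping in showing $C_1 \cap C_2 \notin \kD$ — one must use minimality of \emph{both} $C_1$ and $C_2$ in $\kD$ together with the contrapositive of "every dependent space contains a circuit" — but no single step here is deep; the whole proof is a careful unwinding of the axioms together with the finite-dimensionality of $E$ to guarantee circuits exist inside dependent spaces.
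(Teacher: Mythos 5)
Your proposal is correct and follows essentially the same line of reasoning as the paper: deduce (C1) and (C2) directly from (D1) and the definition of $\min$, then for (C3) show $C_1\cap C_2\notin\kD$ and apply (D3) to get a dependent space of codimension one in $C_1+C_2$ inside $X$, then descend to a minimal dependent (hence circuit) subspace. The only notable difference is that you explicitly treat the degenerate case $C_1\subseteq X$ or $C_2\subseteq X$ (equivalently, when $(C_1+C_2)\cap X$ is not a codimension-one subspace of $C_1+C_2$), which the paper's proof elides; also, your argument that $C_1\cap C_2\notin\kD$ is slightly more roundabout than necessary (if $C_1\cap C_2\in\kD$ it is already a dependent space properly contained in $C_1$, directly contradicting $C_1\in\min(\kD)$, so you need not pass to a circuit inside it), but this is harmless.
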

	
\begin{proof}
    Since $\kC=\min(\kD)$, we have $\kC \subseteq \kD$. Therefore, $\{0\} \notin \kC$ by (D1), which gives (C1). Let $C_1,C_2 \in \kC$ such that $C_1 \subseteq C_2$.
    By the definition of $\min(\kD)$, $C_1$ is not properly contained in any other member of $\kC$. It follows that $C_1=C_2$ so that (C2) holds.
    
    Now let $C_1,C_2 \in \kC$ with $C_1 \neq C_2$ we claim that every space of codimension 1 in
    $C_1+C_2$ contains a circuit.
    Since $C_1 \neq C_2$, we have $C_1 \cap C_2 \subsetneq C_1,C_2$, therefore, by (C2) we have $C_1 \cap C_2 \notin \kC$ and in particular is not a dependent space.
    Then by (D3), there is a space $D \in \kD$ of codimension one in $C_1+C_2$.
    Let $C_3 \in \kD$ be a subspace of $D$ such that no member of $\kD$ is contained in $C_3$. 
    Such a space clearly exists since $E$ has finite dimension. Then $C_3 \in \min(\kD) = \kC$
    and so (C3) holds.     
\end{proof}	

\begin{Lemma}\label{DC2}
    Let $(E,\kC)$ be a collection of circuits. Let $\kD=\upp(\kC)$. Then $(E,\kD)$ is a collection of dependent subspaces of $E$.	
\end{Lemma}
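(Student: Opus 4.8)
The plan is to verify the three dependence axioms (D1)--(D3) for $\kD = \upp(\kC)$; the first two are immediate and (D3) is where the work lies. For (D1): the only subspace of $\{0\}$ is $\{0\}$ itself, which is not in $\kC$ by (C1), so no circuit lies inside $\{0\}$ and $\{0\} \notin \upp(\kC) = \kD$. For (D2): if $D_1 \in \kD$ and $D_1 \subseteq D_2$, then some $C \in \kC$ satisfies $C \subseteq D_1 \subseteq D_2$, so $D_2 \in \kD$ by definition of $\upp$.

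For (D3), suppose $D_1, D_2 \in \kD$ with $D_1 \cap D_2 \notin \kD$, and let $D$ have codimension one in $D_1 + D_2$; I must exhibit a circuit inside $D$. First I would pick circuits $C_1 \subseteq D_1$ and $C_2 \subseteq D_2$ (available since $D_1, D_2 \in \upp(\kC)$) and note $C_1 \neq C_2$: if they were equal, then $C_1 \subseteq D_1 \cap D_2$ would force $D_1 \cap D_2 \in \kD$, contrary to hypothesis. If $C_1 \subseteq D$ or $C_2 \subseteq D$ we are done immediately, so I may assume neither is contained in $D$. In particular $C_1 + C_2 \not\subseteq D$, and since $C_1 + C_2 \subseteq D_1 + D_2$ while $D$ is a hyperplane of $D_1 + D_2$, the space $(C_1 + C_2) \cap D$ has codimension exactly one in $C_1 + C_2$.

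The key step is to replace the codimension-one condition relative to $D_1 + D_2$ by one relative to $E$, so that axiom (C3) can be applied. To do this I would choose $v \in (C_1 + C_2) \setminus D$; since $D \subsetneq D_1 + D_2 \subseteq E$, $D$ is a proper subspace of $E$, so there is a subspace $X$ of $E$ with $\codim_E(X) = 1$, $D \subseteq X$, and $v \notin X$ (lift a codimension-one subspace of $E/D$ avoiding the nonzero class of $v$). Then $C_1 + C_2 \not\subseteq X$, so $(C_1 + C_2) \cap X$ has codimension one in $C_1 + C_2$; as it contains $(C_1 + C_2) \cap D$, which has the same codimension, the two coincide. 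Applying (C3) to the distinct circuits $C_1, C_2$ and to the subspace $X$ yields a circuit $C_3 \subseteq (C_1 + C_2) \cap X = (C_1 + C_2) \cap D \subseteq D$. Hence $D \in \upp(\kC) = \kD$, establishing (D3).

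I expect the only real obstacle to be the dimension bookkeeping in the last step: one must check carefully that $(C_1 + C_2) \cap D$ genuinely has codimension one in $C_1 + C_2$ (which rests on $C_1 + C_2 \not\subseteq D$, itself coming from $C_1 \neq C_2$ and so from the hypothesis $D_1 \cap D_2 \notin \kD$), and that the hyperplane $X$ of $E$ can be chosen to contain $D$ while missing the chosen vector $v$. Everything else is routine, and no further axiom beyond (C1)--(C3) is needed.
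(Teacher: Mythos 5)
Your proof is correct and follows essentially the same strategy as the paper's: verify (D1) and (D2) directly from the definition of $\upp(\kC)$, then for (D3) pick circuits $C_1 \subseteq D_1$, $C_2 \subseteq D_2$, note they are distinct since $D_1 \cap D_2 \notin \kD$, and apply (C3) to produce a circuit inside the target space. The only difference is that you spell out the linear-algebra step of lifting the codimension-one subspace $D$ of $D_1+D_2$ to a hyperplane $X$ of $E$ with $(C_1+C_2)\cap X = (C_1+C_2)\cap D$; the paper simply reformulates (D3) as the claim that $(D_1+D_2)\cap H \in \kD$ for any hyperplane $H$ of $E$ and leaves that reduction implicit.
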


\begin{proof}
	By (C1), $\{0\} \notin \kC$ and so in particular $\{0\} \notin \kD$ and so (D1) holds. If $D_1 \subseteq D_2$ and $D_1 \in \kD$ then
	there exists $C \in \kC$ contained in $D_1$, by the definition of $\upp(\kD)$, and so $C \subseteq D_2$ which gives $D_2 \in \kD$.
	This shows that (D2) holds.
	Now let $D_1,D_2 \in \kD$ such that $D_1 \cap D_2 \notin \kD$.
	Let $H$ be a subspace of codimension 1 in $E$.
	We claim that $(D_1+D_2) \cap H \in \kD$.

	There is no circuit contained in $D_1 \cap D_2$ by definition of $\upp(\kC)$.
	Let $C_1$ and $C_2$ be circuits contained in $D_1$ and $D_2$, respectively. We have $C_1 \neq C_2$, since otherwise
	$D_1 \cap D_2$ contains a circuit.

	By (C3), there exitst a circuit $ C_3 \subseteq (C_1+C_2) \cap H$.
	Then clearly $(D_1+D_2) \cap H \in \kD$, since 
	$C_3 \subseteq (C_1+C_2)  \cap H \subseteq(D_1+D_2) \cap H$ implies $(D_1+D_2) \cap H \in \upp(\kC)=\kD$.
\end{proof}

\begin{Corollary}\label{DC}
   Let $(E,\kD)$ be a collection of dependent spaces and let $(E,\kC)$ be a collection of
   circuits such that $\kD=\upp(\kC)$.
   Then $(E,\kD)$ and $(E,\kC)$ both each determine a $q$-matroid $(E,r)$ whose
   collection of dependent spaces is $\kD$ and whose collection of circuits is $\kC$.
\end{Corollary}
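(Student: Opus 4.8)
The plan is to reduce the statement to the dependence--independence cryptomorphism of Corollary~\ref{cor:ind-dep} together with the two transfer lemmas \ref{DC1} and \ref{DC2} just established, so that the only genuinely new ingredient is the elementary fact that the operators $\min(\cdot)$ and $\upp(\cdot)$ are mutually inverse on the families under consideration.

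First I would start from a collection of dependent spaces $(E,\kD)$ with $\kD=\upp(\kC)$. Setting $\I:=\opp(\kD)$, Theorem~\ref{th:dep2ind} gives that $(E,\I)$ is a collection of independent spaces, and since $\opp$ is an involution on families of subspaces of $E$ we have $\kD=\opp(\I)$; hence Corollary~\ref{cor:ind-dep} produces a $q$-matroid $(E,r)$ with $\I_r=\I$ and $\kD_r=\kD$. By Definition~\ref{independentspaces} the circuits of $(E,r)$ are exactly its minimal dependent spaces, so the collection of circuits of $(E,r)$ is $\min(\kD_r)=\min(\kD)$.

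The one point that needs a short argument is the identity $\min(\upp(\kC))=\kC$, valid for any antichain $\kC$ of subspaces and hence in particular when $\kC$ satisfies (C2). For the inclusion $\kC\subseteq\min(\upp(\kC))$: any $C\in\kC$ lies in $\upp(\kC)$, and if $D\in\upp(\kC)$ with $D\subseteq C$ then $D$ contains some $C'\in\kC$, whence $C'\subseteq C$ and (C2) forces $C'=C=D$, so $C$ is minimal in $\upp(\kC)$. For the reverse inclusion, a minimal element of $\upp(\kC)$ contains a member of $\kC$, which must then coincide with it. Combining, the collection of circuits of $(E,r)$ is $\min(\upp(\kC))=\kC$, so $(E,\kD)$ determines a $q$-matroid with dependent spaces $\kD$ and circuits $\kC$. (The symmetric identity $\upp(\min(\kD))=\kD$, which follows from (D2) and the finite-dimensionality of $E$, is essentially Lemma~\ref{DC1} and records that $\kD$ and $\kC=\min(\kD)$ carry the same information.)

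For the converse direction I would begin with a collection of circuits $(E,\kC)$, use Lemma~\ref{DC2} to conclude that $\kD:=\upp(\kC)$ is a collection of dependent spaces, and then apply the first part verbatim: since $\kD$ and $\kC$ are related exactly as in the hypothesis, this yields the \emph{same} $q$-matroid $(E,r)$, now with $\kD_r=\kD$ and circuits $\kC$. I do not expect a genuine obstacle; the substantive content was already handled in Lemmas~\ref{DC1} and \ref{DC2} and in Corollary~\ref{cor:ind-dep}. The only thing requiring care is the bookkeeping flagged above --- verifying that $\min$ and $\upp$ are honest inverses on these families, so that the $q$-matroid obtained from $\kD$ really does coincide with the one obtained from $\kC$.
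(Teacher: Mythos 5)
Your proposal is correct and follows essentially the same route as the paper's proof: reduce to the dependence--independence cryptomorphism (Corollary~\ref{cor:ind-dep}) applied to $\I=\opp(\kD)$, then identify the circuits of the resulting $q$-matroid with $\min(\kD)=\min(\upp(\kC))=\kC$. The paper's version is just terser --- it asserts $\kC=\min(\kD)$ without spelling out the $\min/\upp$ inverse identity that you verify carefully.
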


\begin{proof}
   By Corollary \ref{cor:ind-dep}, $(E,\kD)$ determines a $q$-matroid whose dependent spaces comprise $\kD$. The result now follows since $\kC = \min(\kD)$.
\end{proof}

\section{Hyperplanes and (Co)circuits}\label{sec-HC}

We will prove a cryptomorphism between cocircuits and hyperplanes, implying a cryptomorphism between hyperplanes and circuits. We call $\kC^*$ the family of \textbf{cocircuits} of a $q$-matroid.

\begin{Theorem}
Let $\kC^*$ and $\kH$ be two families of subspaces of $E$ such that $\kC^*=\kH^\perp$. Then $(E,\kH)$ is a collection of hyperplanes if and only if $(E,\kC^*)$ is a collection of circuits.
\end{Theorem}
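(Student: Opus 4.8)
The plan is to transfer the axioms back and forth through orthogonal complementation, using the standard facts that $(X^\perp)^\perp = X$, that $X \subseteq Y \iff Y^\perp \subseteq X^\perp$, that $(X+Y)^\perp = X^\perp \cap Y^\perp$, that $(X \cap Y)^\perp = X^\perp + Y^\perp$, and that $\dim(X^\perp) = n - \dim(X)$, so that in particular $X$ has codimension $1$ in $E$ if and only if $X^\perp$ is $1$-dimensional. Since $\perp$ is an involution, it suffices to prove one direction: assume $(E,\kH)$ satisfies (H1)--(H3) and show that $\kC^* := \kH^\perp$ satisfies (C1)--(C3); the converse then follows by applying the same argument to $\kC^*$ and $(\kC^*)^\perp = \kH$.

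First I would handle (C1): $\{0\} \in \kC^*$ would mean $\{0\} = H^\perp$ for some $H \in \kH$, i.e. $H = E$, contradicting (H1); so (C1) holds. Next, (C2): if $C_1, C_2 \in \kC^*$ with $C_1 \subseteq C_2$, write $C_i = H_i^\perp$ with $H_i \in \kH$; then $H_2 \subseteq H_1$, so by (H2) $H_1 = H_2$, hence $C_1 = C_2$. The substantive step is (C3). Take distinct $C_1, C_2 \in \kC^*$ and $X \in \kL(E)$ of codimension $1$; write $C_i = H_i^\perp$ (the $H_i$ distinct by the above) and $X = x^\perp$ for a $1$-dimensional $x$. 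Apply (H3) to $H_1, H_2$ and $x$: there is $H_3 \in \kH$ with $(H_1 \cap H_2) + x \subseteq H_3$. Dualising, $H_3^\perp \subseteq \big((H_1 \cap H_2) + x\big)^\perp = (H_1 \cap H_2)^\perp \cap x^\perp = (H_1^\perp + H_2^\perp) \cap x^\perp = (C_1 + C_2) \cap X$. Setting $C_3 := H_3^\perp \in \kC^*$ gives exactly the containment required by (C3). This finishes the forward direction, and symmetry finishes the proof.

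I do not expect a genuine obstacle here — the only things to be careful about are bookkeeping with the order-reversing and sum/intersection-swapping behaviour of $\perp$, and noting at the start that it genuinely suffices to prove one implication because $(\cdot)^\perp$ is an involution on $\kL(E)$ (so $\kC^* = \kH^\perp$ is equivalent to $\kH = (\kC^*)^\perp$). One should also remark that in (H3) and (C3) the spaces $x$ and $X$ are allowed to be arbitrary (no non-containment hypothesis), which is why the dualisation goes through cleanly and no case distinction on whether $x \subseteq H_1 \cap H_2$, or $X \supseteq C_1 + C_2$, is needed.
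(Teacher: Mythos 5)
Your proof is correct and takes essentially the same route as the paper: the paper's proof simply states that the hyperplane axioms dualise to the circuit axioms under orthogonal complementation, while you have spelled out that dualisation axiom by axiom. The explicit verification of (C3) via $(H_1\cap H_2)+x \subseteq H_3 \implies H_3^\perp \subseteq (C_1+C_2)\cap X$ is exactly the intended computation.
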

\begin{proof}
Suppose $\kH$ is a collection of hyperplanes, so it satisfies the hyperplane axioms. Since $\kC^*=\kH^\perp$, we get that $\kC^*$ satisfies the circuit axioms by taking orthogonal complements in all the hyperplane axioms. Since $(\kH^\perp)^\perp=\kH$, we get the other implication by taking orthogonal complements again.
\end{proof}

\begin{Remark}
Recall that in Theorem \ref{t-H3'} we proved that the axioms (H1), (H2), (H3) are equivalent to the axioms (H1), (H2), (H3'). From the theorem above it follows that the axioms (C1), (C2), (C3) are equivalent to the axioms (C1), (C2), (C3'), with (C3') equal to the following:
 \begin{itemize}
     \item[(C3')] For distinct $C_1,C_2 \in \kC$ and any $X,Y\in \kL(E)$ of codimension $1$ with $X\nsupseteq C_1,C_2$, $Y \supseteq C_1$, $Y\nsupseteq C_2$, there is a circuit $C_3 \subseteq \kC$ such that $C_3 \subseteq (C_1+C_2)\cap X$ and $Y\nsupseteq C_3$.
 \end{itemize}
\end{Remark}

\begin{Corollary}\label{HypCircq-Matr}
Let $(E,\kH)$ be a collection of hyperplanes and let $(E,\kC^*)$ be a collection of circuits such that $\kC^*=\kH^\perp$. Then $(E,\mathcal{H})$ and $(E,\mathcal{C}^*)$ both each determine a $q$-matroid $(E,r)$ whose collection of hyperplanes is $\mathcal{H}$ and whose collection of cocircuits is $\mathcal{C}^*$.
\end{Corollary}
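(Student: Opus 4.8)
The plan is to regard the collection of hyperplanes as the primary object and to recover everything else through the dual $q$-matroid. First I would apply Corollary \ref{corr:flat-hyper}: the collection $(E,\kH)$ determines a $q$-matroid $M=(E,r)$ whose collection of hyperplanes is $\kH_r=\kH$. The heart of the argument is then to show that the cocircuits of $M$ --- that is, the circuits of the dual $q$-matroid $M^{*}=(E,r^{*})$ --- are precisely $\kH_r^{\perp}$; granting this, the cocircuits of $M$ are $\kH^{\perp}=\kC^{*}$, and the assertion for $\kH$ is proved.

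To prove that the circuits of $M^{*}$ are the orthogonal complements of the hyperplanes of $M$, I would compute directly with the dual rank function $r^{*}(A)=\dim A-r(E)+r(A^{\perp})$. A subspace $C$ is a circuit of $M^{*}$ exactly when $C$ is dependent in $M^{*}$ and every proper subspace of $C$ is independent in $M^{*}$; since independence is inherited by subspaces, it is enough to test the subspaces of codimension $1$ in $C$. Unwinding $r^{*}$, the space $C$ is dependent in $M^{*}$ iff $r(C^{\perp})<r(E)$, and a codimension-$1$ subspace $C'\subseteq C$ is independent in $M^{*}$ iff $r((C')^{\perp})=r(E)$. The order-reversing map $X\mapsto X^{\perp}$ sends the codimension-$1$ subspaces of $C$ bijectively onto the $1$-dimensional extensions $C^{\perp}+x$ of $C^{\perp}$ (lines $x\nsubseteq C^{\perp}$), so altogether $C$ is a circuit of $M^{*}$ iff $r(C^{\perp})<r(E)$ while $r(C^{\perp}+x)=r(E)$ for every line $x\nsubseteq C^{\perp}$. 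Since $r(A)\le r(A+x)\le r(A)+1$ (axiom (R2')), these two conditions are equivalent to $r(C^{\perp})=r(E)-1$ together with $r(C^{\perp}+x)>r(C^{\perp})$ for all such $x$, i.e.\ to $C^{\perp}$ being a hyperplane of $M$ (Definition \ref{def:flat}). Hence $C$ is a circuit of $M^{*}$ if and only if $C^{\perp}\in\kH_r$, which is the desired identity.

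It remains to close the loop from the circuit side. By the theorem proved just above, $\kC^{*}=\kH^{\perp}$ satisfies the circuit axioms (equivalently, $\kH=(\kC^{*})^{\perp}$ satisfies the hyperplane axioms), so Corollary \ref{DC} applies and $(E,\kC^{*})$, read as a collection of circuits, determines a $q$-matroid $N$ whose collection of circuits is $\kC^{*}$. Such a $q$-matroid is unique, since its dependent spaces are forced to be $\upp(\kC^{*})$, hence so are its independent spaces, its rank function, and the matroid itself; and by the identity just established, $M^{*}$ is a $q$-matroid whose circuits are $\kC^{*}$. Therefore $N=M^{*}$, so $(E,\kC^{*})$ determines $M^{*}$ and hence also $M=(M^{*})^{*}$. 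Thus $\kH$ and $\kC^{*}$ yield one and the same $q$-matroid $(E,r)$, whose hyperplanes are $\kH$ and whose cocircuits are $\kC^{*}$, as claimed.

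I expect the main obstacle to be organisational rather than computational: one has to keep careful track of which $q$-matroid each axiom family ``is'' (the hyperplane family $\kH$ builds $M$ directly, whereas the circuit family $\kC^{*}$ builds $M^{*}$ and only yields $M$ after dualising), and one must be scrupulous about the order-reversing nature of $X\mapsto X^{\perp}$ when converting ``circuit of $M^{*}$'', a statement about the subspaces of $C$, into ``hyperplane of $M$'', a statement about the superspaces of $C^{\perp}$. The rank bookkeeping via $r^{*}$ and the inequality $r(A)\le r(A+x)\le r(A)+1$ is routine. If one prefers to avoid the uniqueness argument, one could instead use Theorem \ref{thm-dualbases} to match the bases of $N$ with the bases of $M^{*}$, but the route above seems the shortest.
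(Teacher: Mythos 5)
Your proof is correct, and the overall strategy (build $M$ from $\kH$ via Corollary~\ref{corr:flat-hyper}, then pass to $\kC^{*}=\kH^{\perp}$) matches the paper's. Where you diverge is in the key duality fact. The paper's proof of the corollary is two sentences and implicitly defers the identification ``cocircuits of $M$ $=$ perps of hyperplanes of $M$'' to the surrounding material: the preceding theorem shows $\kH$ satisfies (H1)--(H3) iff $\kH^\perp$ satisfies (C1)--(C3), and the subsequent Proposition~\ref{CircuitsAreCocirc} (using Theorem~\ref{thm-dualbases} on dual bases together with Lemma~\ref{l-hyper-basis}) shows that circuits of $M$ are cocircuits of $M^{*}$. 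You instead re-derive this identification from scratch by a direct computation with the dual rank function $r^{*}(A)=\dim A - r(E) + r(A^{\perp})$ and (R2'), together with the Galois correspondence between codimension-$1$ subspaces of $C$ and $1$-dimensional extensions of $C^{\perp}$. This rank-arithmetic route is self-contained and somewhat more elementary (it avoids the basis machinery of Theorem~\ref{thm-dualbases}), at the cost of being longer than what the paper needs given the results it has already lined up. Your final uniqueness argument (circuits determine $\upp(\kC^{*})=\kD$, hence $\I$, hence $r$) is the right way to make precise the ``both determine the same $q$-matroid'' claim, and it correctly tracks that $\kC^{*}$ builds $M^{*}$ as circuits, from which $M$ is recovered by dualizing.
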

\begin{proof}
By Corollary \ref{corr:flat-hyper}, $(E,\kH)$ determines a $q$-matroid whose hyperplanes comprise $\mathcal{H}$. The result now follows since $\kC^* = \kH^\perp$.
\end{proof}

As the result above suggest, cocircuits are closely related to circuits. This is made precise by the results below. First we prove a small lemma.

\begin{Lemma}\label{l-hyper-basis}
A hyperplane is a maximal space with respect to not containing a basis.
\end{Lemma}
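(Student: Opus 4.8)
The plan is to work inside a $q$-matroid $(E,r)$ (recall from Corollary \ref{corr:flat-hyper} that a collection of hyperplanes determines such a matroid, so $r$ and a notion of basis are available) and to argue in two directions. First I would show that a hyperplane $H$ does not contain a basis: since $H$ is in particular a flat with $r(H) = r(E)-1 < r(E)$, and any basis $B$ has $r(B) = \dim B = r(E)$, a basis $B \subseteq H$ would force $r(H) \ge r(B) = r(E)$ by monotonicity (R2), a contradiction. Second I would show maximality: let $A \supsetneq H$ be any subspace properly containing $H$. Pick a $1$-dimensional $x \subseteq A$ with $x \nsubseteq H$. Because $H$ is a flat, $r(H+x) > r(H) = r(E)-1$, and since ranks are bounded above by $r(E)$ (R1) and increase by at most $1$ when adding a line (Lemma 3 of \cite{JP18}, i.e. axiom (R2')), we get $r(H+x) = r(E)$, hence $r(A) = r(E)$ as well by (R2). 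So $A$ is a spanning space, and a spanning space always contains a basis: taking $I \in \max(A,\I_r)$ gives $\dim I = r(I) = r(A) = r(E)$, so $I$ is a basis contained in $A$. Thus every proper oversubspace of $H$ contains a basis, while $H$ itself does not, which is exactly the claimed maximality.

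Conversely, to get the full "if and only if" flavour implicit in the statement (a maximal non-basis-containing space is a hyperplane), I would take $A$ maximal with respect to not containing a basis and show $A$ is a hyperplane. Since $A$ contains no basis, $r(A) < r(E)$; in fact $r(A) = r(E)-1$, for if $r(A) \le r(E)-2$ then adding any line $x \nsubseteq A$ gives $r(A+x) \le r(E)-1 < r(E)$ by (R2'), so $A+x$ still contains no basis (a spanning space would have rank $r(E)$), contradicting maximality of $A$. Moreover $A$ must be a flat: if some line $x \nsubseteq A$ had $r(A+x) = r(A)$, then $A+x$ would still have rank $r(E)-1 < r(E)$, hence contain no basis, again contradicting maximality. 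So $A$ is a flat of rank $r(E)-1$, i.e. a hyperplane by Definition \ref{def:flat}.

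The main obstacle I anticipate is purely bookkeeping about which ambient structure is in play: the lemma is stated for "a hyperplane" abstractly, so I must be careful to invoke Corollary \ref{corr:flat-hyper} (or Theorem \ref{th:bciflatsrk}) at the outset to pass to the associated $q$-matroid and its rank function, and to use the characterisation $\kH_r = \{A : r(A) = r(E)-1,\ A \text{ a flat}\}$ from Definition \ref{def:flat}. Once that translation is made, every step reduces to one-line applications of (R1), (R2), the local submodularity inequality (R2') / Lemma 3 of \cite{JP18}, and the fact that $I \in \max(A,\I_r)$ has dimension $r(A)$ — so there is no real computation, only the risk of conflating "does not contain a basis" with "has rank $< r(E)$", which is valid precisely because a space contains a basis if and only if its rank equals $r(E)$.
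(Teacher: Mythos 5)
Your proof is correct and uses the same core idea as the paper (the rank characterisation of hyperplanes), but you actually prove more than the paper does. The paper's proof is a one-liner: it observes that $H$ has rank $r(M)-1$ and that for every $1$-dimensional $x\nsubseteq H$ one has $r(H+x)=r(M)$, hence $H+x$ contains a basis; it does not address the converse direction at all. You, by contrast, also prove that every subspace which is maximal with respect to not containing a basis must be a hyperplane (rank must be exactly $r(E)-1$ and the subspace must be a flat). That converse is in fact used implicitly in Proposition \ref{CircuitsAreCocirc}, where the chain of equivalences requires the bi-conditional ``$C^\perp$ is maximal with respect to not containing a basis of $M^*$ $\Leftrightarrow$ $C^\perp$ is a hyperplane of $M^*$'', so your extra work closes a gap that the paper leaves to the reader (or that would otherwise need Lemma \ref{FlatInIperp} to be invoked to show that a maximal non-basis-containing space is dominated by, and hence equal to, some hyperplane). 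In short: same tools, but a more complete argument.
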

\begin{proof}
A hyperplane $H$ has rank $r(M)-1$ and is rank-maximal because it is a flat. This means that for all $1$-dimensional spaces $x\not\subseteq H$ we have that $r(H+x)=r(H)+1=r(M)$ and thus $H+x$ contains a basis.
\end{proof}

\begin{Proposition}\label{CircuitsAreCocirc}
The circuits of the matroid $M$ are the cocircuits of the dual matroid $M^*$.
\end{Proposition}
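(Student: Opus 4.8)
The plan is to prove the statement $\kC^* = (\kC)^\perp$ directly, identifying the cocircuits of $M$ (i.e. the circuits of $M^*$) with the orthogonal complements of the circuits of $M$. Recall that a circuit is a minimal dependent space, and by Definition \ref{independentspaces} together with Theorem \ref{thm-dualbases}, the independent spaces of $M^*$ are exactly the subspaces contained in some $B^\perp$ with $B$ a basis of $M$. The key reformulation is Lemma \ref{l-hyper-basis}: a hyperplane of $M$ is a maximal subspace not containing a basis of $M$. Dualizing via Theorem \ref{thm-dualbases}, $A$ contains a basis of $M$ if and only if $A^\perp$ is contained in a cobasis, i.e. $A^\perp$ is independent in $M^*$; hence $H$ is a maximal subspace not containing a basis of $M$ precisely when $H^\perp$ is a minimal subspace that is \emph{not} independent in $M^*$, which is to say $H^\perp$ is a circuit of $M^*$, i.e. a cocircuit.

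First I would make the duality dictionary explicit: for $A \in \kL(E)$, the map $A \mapsto A^\perp$ is an inclusion-reversing bijection on $\kL(E)$ with $(A^\perp)^\perp = A$ and $\dim A^\perp = n - \dim A$. Using the rank formula $r^*(A) = \dim A - r(E) + r(A^\perp)$, one checks that $A$ is spanning in $M$ (i.e. $r(A) = r(E)$) if and only if $A^\perp$ is independent in $M^*$; equivalently (this is essentially Theorem \ref{thm-dualbases} applied to maximal independent subspaces), $A$ contains a basis of $M$ iff $A^\perp$ is independent in $M^*$. Next, I would translate Lemma \ref{l-hyper-basis}: $\kH_r = \max(\{A : A \text{ contains no basis of } M\})$. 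Applying $\perp$, which reverses inclusions and hence swaps $\max$ and $\min$, gives $\kH_r^\perp = \min(\{A^\perp : A^\perp \text{ not independent in } M^*\}) = \min(\kD_{r^*})$, where $\kD_{r^*}$ is the collection of dependent spaces of $M^*$. By Definition \ref{independentspaces}, $\min(\kD_{r^*})$ is exactly the set of circuits of $M^*$, which by definition is $\kC^*$. Therefore $\kC^* = \kH_r^\perp$, and applying $\perp$ once more (and using the earlier theorem that $\kC^* = \kH^\perp \Leftrightarrow \kH = (\kC^*)^\perp$) yields $\kC_r = (\kC^*)^\perp$ — that is, the circuits of $M$ are the cocircuits of $M^*$, and symmetrically (replacing $M$ by $M^*$ and using $M^{**} = M$) the cocircuits of $M$ are the circuits of $M^*$, which is the asserted statement.

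The main obstacle I anticipate is pinning down the equivalence "$A$ contains a basis of $M$ $\iff$ $A^\perp$ is independent in $M^*$" cleanly from what is available. The cleanest route is: $A$ contains a basis of $M$ $\iff$ $r(A) = r(E)$ $\iff$ $A$ is spanning; then compute $r^*(A^\perp) = \dim(A^\perp) - r(E) + r(A) = (n - \dim A) - r(E) + r(E) = n - \dim A = \dim(A^\perp)$ when $A$ is spanning, so $A^\perp$ is independent in $M^*$; conversely if $A^\perp$ is independent then $r(A) = \dim A + r(E) - \dim A = r(E)$ (reading off $r^*$), wait — one must instead use $r^{*}(A^\perp)=\dim(A^\perp)-r(E)+r(A)$ and set it equal to $\dim(A^\perp)$ to get $r(A)=r(E)$. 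This is just careful bookkeeping with the dual rank formula and $r(M^*) = r^*(E) = n - r(E)$, so it should go through without difficulty; the only real care needed is to make sure "maximal non-spanning" dualizes to "minimal dependent" and not to some off-by-one variant, which the dimension count above confirms.
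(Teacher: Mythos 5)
Your proposal follows essentially the same route as the paper's proof: dualize the characterization of circuits as minimal dependent spaces and of hyperplanes as maximal spaces containing no basis (Lemma \ref{l-hyper-basis}), using Theorem \ref{thm-dualbases} and the inclusion-reversal of $\perp$. One bookkeeping slip worth flagging: after correctly deriving that $\kH_M^\perp$ equals the set of circuits of $M^*$, the asserted step ``$\kC_r = (\kC^*)^\perp$'' does not follow --- applying $\perp$ to $\kC^* = \kH_r^\perp$ gives $(\kC^*)^\perp = \kH_r$, the hyperplanes of $M$, not its circuits. That said, your symmetric reformulation (replace $M$ by $M^*$ in the derived identity and use $M^{**}=M$) independently yields exactly the proposition, so the argument as a whole stands; the paper avoids even that detour by running the chain of equivalences directly from ``$C$ is a circuit of $M$'' to ``$C^\perp$ is a hyperplane of $M^*$.''
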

\begin{proof}
We follow Proposition 3.18 of \cite{gordonmcnulty}. We use that for subspaces if $A\subseteq B$ then $B^\perp\subseteq A^\perp$. The following are equivalent (see Theorem \ref{thm-dualbases} and Lemma \ref{l-hyper-basis}): \\[6pt]
\begin{tabular}{l @{\ \ $\Leftrightarrow$\ \ } l}
$C$ is a circuit of $M$ & $C$ is a minimal dependent space in $M$ \\
 & $C$ is minimal with respect to not being contained in any basis $B$ of $M$ \\
 & $C^\perp$ is maximal with respect to not containing any $B^\perp$ \\
 & $C^\perp$ is maximal with respect to not containing a basis $B^\perp$ of $M^*$ \\
 & $C^\perp$ is a hyperplane of $M^*$ \\
 & $C$ is a cocircuit of $M^*$\qedhere
\end{tabular}
\end{proof}

From this proposition it follows directly that the circuits of a $q$-matroid are a collection of circuits.

\begin{Corollary}
Let $(E,\kC^*)$ be the collection of cocircuits of a $q$-matroid $M$. Then $(E,\kC)$ is the collection of circuits of $M^*$.
\end{Corollary}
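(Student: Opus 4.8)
The plan is to obtain this as an immediate consequence of Proposition~\ref{CircuitsAreCocirc}, applied not to $M$ but to its dual $M^*$. That proposition states that the circuits of any $q$-matroid $N$ are precisely the cocircuits of $N^*$; taking $N = M^*$ will show that the circuits of $M^*$ are the cocircuits of $(M^*)^*$, so the only gap to fill is the identity $(M^*)^* = M$.

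First I would verify that $(M^*)^* = M$, that is, $r^{**} = r$ for the rank function $r$ of $M$. From the defining formula $r^*(A) = \dim(A) - r(E) + r(A^\perp)$, together with $r(\{0\}) = 0$ and $E^\perp = \{0\}$, one gets $r^*(E) = n - r(E)$; and from $(A^\perp)^\perp = A$ and $\dim(A^\perp) = n - \dim(A)$ one gets $r^*(A^\perp) = (n - \dim A) - r(E) + r(A)$. Substituting these into the formula for the dual rank once more yields
\begin{align*}
r^{**}(A) &= \dim(A) - r^*(E) + r^*(A^\perp) \\
&= \dim(A) - \bigl(n - r(E)\bigr) + \bigl((n - \dim A) - r(E) + r(A)\bigr) \\
&= r(A),
\end{align*}
so indeed $M^{**} = M$.

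With this in hand, Proposition~\ref{CircuitsAreCocirc} applied to $M^*$ says that the circuits of $M^*$ are exactly the cocircuits of $(M^*)^* = M$, i.e.\ the family $\kC^*$ of the statement; hence $\kC^*$ is the set of circuits of $M^*$. That $\kC^*$ is genuinely a collection of circuits in the sense of Definition~\ref{circuit-axioms} is then either the remark made immediately after Proposition~\ref{CircuitsAreCocirc} (applied to $M^*$), or follows directly: $\kC^* = \kH^\perp$ where $\kH$ is the collection of hyperplanes of $M$, and $\kH^\perp$ is a collection of circuits by the first theorem of Section~\ref{sec-HC}. I do not anticipate any real obstacle: the whole content is carried by Proposition~\ref{CircuitsAreCocirc} and the duality results already established, and the only point needing any care is the routine rank bookkeeping behind $M^{**} = M$.
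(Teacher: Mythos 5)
Your proposal is correct and follows the same route the paper intends: the Corollary is left without a written proof precisely because it ``follows directly'' from Proposition~\ref{CircuitsAreCocirc} applied to $M^*$ once one knows $M^{**}=M$, which your rank computation verifies correctly. You've merely filled in the bookkeeping the paper omits.
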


\begin{Remark}
In \cite[Theorem 64]{JP18} the following statement, which is a variation on (C3), is proven for a $q$-matroid:
\begin{itemize}
\item[$\overline{\mbox{(C3)}}$]
For distinct $C_1,C_2 \in \kC$ and any $1$-dimensional subspace $x\subseteq C_1\cap C_2$, there is a circuit $C_3 \subseteq \kC$ such that $C_3 \subseteq C_1+C_2$ and $x\not\subseteq C_3$.
\end{itemize}
This is, at first sight, a more straightforward $q$-analogue of the axiom for classical matroids. For classical matroids, the two statements are equivalent, but we will see that for $q$-matroids they are not. We will see a similar issue with the axiom (O3) for open spaces in Remark \ref{Equivalenta}.

However, $\overline{\mbox{(C3)}}$ is a weaker version of the axiom (C3) we have proven above, as we will show. Let $C_1,C_2$ be distinct circuits and let $x$ be a $1$-dimensional space contained in $C_1\cap C_2$. Then there is a space $X\in \kL(E)$ of codimension $1$ that intersects trivially with $x$. Apply (C3) to $C_1,C_2$ and $X$: this gives a circuits $C_3\subseteq(C_1+C_2)\cap X$. This is clearly a circuit contained in $C_1+C_2$ that does not contain $x$. So (C3) implies $\overline{\mbox{(C3)}}$. The implication does not go the other way: it can be that $C_1,C_2\not\subseteq X$ but $C_1\cap C_2\subseteq X$. In that case, the statement above does not imply the existence of a circuit $C_3\subseteq(C_1+C_2)\cap X$. We illustrate this in the next example.
\end{Remark}

\begin{Example}[Example 10 of \cite{JP18}]
Let $E=\mathbb{F}_2^4$ and let $I\in \kL(E)$ be the subspace given by
\[ I=\left\langle\begin{array}{cccc} 1 & 0 & 0 & 1 \\ 0 & 1 & 1 & 0 \end{array}\right\rangle. \]
Let $\mathcal{I}$ be the family of subspaces of $E$ that contains $I$ and all subspaces of $I$. As is pointed out in \cite{JP18}, $\mathcal{I}$ satisfies the independence axioms (I1)-(I3) but not (I4). Let $\mathcal{C}_{\mathcal{I}}=\min(\opp(\mathcal{I}))$, that is, the family of `circuits' implied by $\mathcal{I}$. Let us examine $\mathcal{C}_{\mathcal{I}}$. It contains all $1$-dimensional subspaces of $E$ that are not in $I$; we call them \emph{loops}. Any $4$- and $3$-dimensional subspace of $E$ contains a loop, hence none of these is a member of $\mathcal{C}_{\mathcal{I}}$. Every $2$-dimensional subspaces of $E$ either contains a loop, or is equal to $I$, so none of these is a member of $\mathcal{C}_{\mathcal{I}}$. Hence $\mathcal{C}_{\mathcal{I}}$ only contains loops.

It is clear that $\mathcal{C}_{\mathcal{I}}$ satisfies the circuits axioms (C1) and (C2). Since all pairs of members of $\mathcal{C}_{\mathcal{I}}$ intersect trivially, $\mathcal{C}_{\mathcal{I}}$ satisfies $\overline{\mbox{(C3)}}$ as well. This shows that (C1), (C2) ,$\overline{\mbox{(C3)}}$ can not be a full axiom system for a $q$-matroid, as was also noted in the discussion after \cite[Theorem 64]{JP18}.

The family of subspaces $\mathcal{C}_{\mathcal{I}}$ does not satisfy the axiom (C3): for a counter example, take $C_1=\langle1100\rangle$, $C_2=\langle 0011\rangle$ and $X=\langle1001\rangle^\perp$. Then $(C_1+C_2)\cap X=\langle1111\rangle$ and this does not contain a member of $\mathcal{C}_{\mathcal{I}}$. This shows that (C3) is a stronger axiom than $\overline{\mbox{(C3)}}$.
\end{Example}

\section{Open Spaces and Flats}\label{OpensSect}
In this section, we discuss the axiomatic definition of 
open spaces and prove the cryptomorphism between open spaces and flats. We follow the same approach as in the previous section and call $\kO^*$ the family of \textbf{co-open spaces} of a $q$-matroid.

\begin{Theorem}
Let $\kO^*$ and $\kF$ be two families of subspaces of $E$ such that $\kO^*=\kF^\perp$. Then $(E,\mathcal{F})$ is a collection of flats if and only if $(E,\kO^*)$ is a collection of open spaces.
\end{Theorem}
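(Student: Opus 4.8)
The statement asserts that $\kF$ is a collection of flats if and only if $\kO^*=\kF^\perp$ is a collection of open spaces. The strategy is entirely formal: the open space axioms (O1)--(O3) should be the image of the flat axioms (F1)--(F3) under the order-reversing bijection $X\mapsto X^\perp$ on $\kL(E)$. Since $(\kF^\perp)^\perp=\kF$, it suffices to prove one direction, say that (F1)--(F3) for $\kF$ imply (O1)--(O3) for $\kO^*$; the converse follows by applying that implication to $\kO^*$ and using that taking double orthogonal complements returns the original family.

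First I would record the elementary facts about $\perp$ that make the translation work: it is an inclusion-reversing involution on $\kL(E)$, it sends $E$ to $\{0\}$ and $\{0\}$ to $E$, it converts intersections to sums ($ (A\cap B)^\perp = A^\perp + B^\perp$) and sums to intersections, it sends codimension-one subspaces to one-dimensional subspaces and vice versa, and it reverses the cover relation: $B$ covers $A$ in $\kF$ if and only if $A^\perp$ covers $B^\perp$ in $\kF^\perp$. With these in hand the verification is step by step. Axiom (F1), $E\in\kF$, becomes $\{0\}=E^\perp\in\kF^\perp$, which is (O1). Axiom (F2), closure under intersection, becomes: for $O_1,O_2\in\kO^*$, writing $O_i=F_i^\perp$ with $F_i\in\kF$, we have $O_1+O_2=(F_1\cap F_2)^\perp\in\kF^\perp=\kO^*$, which is (O2). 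For (O3): given $O=F^\perp\in\kO^*$ and $X\in\kL(E)$ of codimension one with $O\nsubseteq X$, set $x:=X^\perp$, a one-dimensional space; the condition $O\nsubseteq X$ translates to $x\nsubseteq F$. Axiom (F3) gives a unique cover $F'$ of $F$ in $\kF$ containing $x$; then $O':=(F')^\perp$ is, by the cover-reversal fact, the unique subspace of $\kO^*$ covered by $O$, and $x\subseteq F'$ translates to $O'=(F')^\perp\subseteq x^\perp = X$, while $F\subseteq F'$ and $x\subseteq F'$ give $O' \subseteq F^\perp\cap X = O\cap X$. Uniqueness of $O'$ follows from uniqueness of $F'$ because $\perp$ is a bijection. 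This establishes (O3).

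The one point requiring a little care — and the only real ``obstacle'' — is matching the cover condition in (F3) with the cover condition in (O3) precisely: (F3) speaks of a cover of $F$ \emph{containing} $x$, whereas (O3) speaks of a cover $O'$ of $O$ with $O'\subseteq X\cap O$, and one must check that ``$O$ covers $O'$'' under $\perp$ corresponds exactly to ``$F'$ covers $F$'' and that the containment $x\subseteq F'$ is equivalent to $O'\subseteq X$ \emph{and} that no extra constraint is lost or gained. This is handled by the inclusion-reversal and involution properties, but it is worth spelling out so that the translation of the uniqueness clause is airtight. After that, the converse direction is immediate: if $\kO^*$ satisfies (O1)--(O3), apply the argument just given with the roles of $\kF$ and $\kO^*$ interchanged (using $\kF=(\kO^*)^\perp$) to conclude $\kF$ satisfies (F1)--(F3). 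This completes the proof.
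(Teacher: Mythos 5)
Your proof is correct and takes essentially the same approach as the paper: the paper simply observes that taking orthogonal complements in the flat axioms yields the open space axioms and that $(\kF^\perp)^\perp=\kF$ handles the converse, while you spell out the axiom-by-axiom translation (including the minor point that $O'\subseteq X$ combined with the cover relation already forces $O'\subseteq X\cap O$) which the paper leaves implicit.
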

\begin{proof}
Suppose $(E,\kF)$ is a collection of flats, so that it satisfies the flat axioms. Since $\kO^*=\kF^\perp$, we get that $\kO^*$ satisfies the open space axioms by taking orthogonal complements in all the flat axioms. Since $(\kF^\perp)^\perp=\kF$, we get the other implication by taking orthogonal complements again.
\end{proof}

The fact that a collection of co-open spaces determines $q$-matroid is the content of the following corollary.

\begin{Corollary}\label{OpenAxioms}
Let $(E,\kO^*)$ be a collection of open spaces and let $(E,\kF)$ be a collection of flats.
Suppose that $\kO^* = \kF^\perp$. Then both $(E,\kO^*)$ and $(E,\kF)$ each determine the same $q$-matroid $(E,r)$
such that $\kO^*$ is the collection of co-open spaces of $(E,r)$ and $\kF$ is the collection of flats of $(E,r)$.
\end{Corollary}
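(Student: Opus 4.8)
The statement to prove is Corollary \ref{OpenAxioms}: given a collection of open spaces $(E,\kO^*)$ and a collection of flats $(E,\kF)$ with $\kO^* = \kF^\perp$, both determine the same $q$-matroid $(E,r)$ with $\kO^*$ the co-open spaces and $\kF$ the flats.

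The plan is to mirror exactly the structure of the proofs of Corollary \ref{corr:flat-hyper} and Corollary \ref{HypCircq-Matr}, which handled the analogous situations for hyperplanes/flats and hyperplanes/circuits. The key input is the immediately preceding Theorem, which establishes that $(E,\kF)$ is a collection of flats if and only if $(E,\kO^*) = (E,\kF^\perp)$ is a collection of open spaces. So the correspondence $\kF \leftrightarrow \kF^\perp$ is a bijection between collections of flats and collections of open spaces, and it is an involution since $(\kF^\perp)^\perp = \kF$.

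First I would invoke Theorem \ref{th:bciflatsrk} (equivalently Corollary \ref{corr:flat-hyper}): the collection of flats $\kF$ determines a $q$-matroid $(E,r)$ with $r = r_\kF$ whose collection of flats is exactly $\kF = \kF_r$. Next I would identify the co-open spaces of $(E,r)$. Recall the co-open spaces $\kO^*$ of $(E,r)$ are by definition $\kO_{r^*}^\perp$... actually cleaner: the co-open spaces are defined as the orthogonal complements of the open spaces, but the cleanest route given the machinery is: the open spaces of the dual matroid $M^*$ are sums of circuits of $M^*$, which by Proposition \ref{CircuitsAreCocirc} are the cocircuits of $M$; and the co-open spaces of $M$ are the orthogonal complements of the open spaces of $M$. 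Since the flats and open spaces are related by orthogonal complementation at the level of the matroid (the open spaces of $M^*$ are the orthogonal complements of the flats of $M$ — this is the content of the preceding Theorem applied to the rank-derived families), we get that the co-open space family of $(E,r)$ equals $\kF_r^\perp = \kF^\perp = \kO^*$. This proves part of the claim starting from $\kF$.

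For the converse direction, starting from $\kO^*$: since $\kO^*$ is a collection of open spaces, by the preceding Theorem $\kF := (\kO^*)^\perp$ is a collection of flats, and $(\kF)^\perp = \kO^*$. Then apply the first direction to this $\kF$: it determines a $q$-matroid $(E,r)$ with flats $\kF$ and co-open spaces $\kF^\perp = \kO^*$. Since the map $\kF \mapsto \kF^\perp$ is a bijection, $(E,\kO^*)$ and $(E,\kF)$ determine the same $q$-matroid. The main obstacle — really the only subtle point — is pinning down precisely the definition of "co-open space" of $(E,r)$ and checking it coincides with $\kO_r^\perp$ where $\kO_r$ is the open-space family of Definition \ref{independentspaces}, and that $\kO_r^\perp = \kF_r^\perp$; but this last equality is not true in general (open spaces are not complements of flats within a single matroid — rather, open spaces of $M$ are complements of flats of $M^*$). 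So care is needed: the correct statement is that the co-open spaces of $M$, meaning the open spaces of $M^*$, are $\{O^\perp : O \in \kO_{M^*}\}$... I must instead simply say the co-open spaces are defined as $\kF^\perp$ for the flats $\kF$ of $M$, consistent with how cocircuits were defined as $\kH^\perp$, and then everything is formal.

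\begin{proof}
By the preceding Theorem, a family $\kF \subseteq \kL(E)$ is a collection of flats if and only if $\kF^\perp$ is a collection of open spaces; moreover $(\kF^\perp)^\perp = \kF$, so $\kF \mapsto \kF^\perp$ is a bijection between collections of flats and collections of open spaces.

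Since $(E,\kF)$ is a collection of flats, Theorem \ref{th:bciflatsrk} (see also Corollary \ref{corr:flat-hyper}) shows that $(E,\kF)$ determines a $q$-matroid $(E,r)$ with $r = r_\kF$ and whose collection of flats is exactly $\kF = \kF_r$. The co-open spaces of this $q$-matroid are by definition the family $\kF_r^\perp$, which equals $\kF^\perp = \kO^*$ by hypothesis. Hence $(E,\kF)$ determines a $q$-matroid whose flats are $\kF$ and whose co-open spaces are $\kO^*$.

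Conversely, since $(E,\kO^*)$ is a collection of open spaces, the preceding Theorem gives that $\kF := (\kO^*)^\perp$ is a collection of flats, and $\kF^\perp = (\kO^*)^{\perp\perp} = \kO^*$. Applying the previous paragraph to this $\kF$, we obtain a $q$-matroid $(E,r)$ with flats $\kF$ and co-open spaces $\kF^\perp = \kO^*$. Because $\kF$ and $\kO^* = \kF^\perp$ determine the same rank function (the former via Theorem \ref{th:bciflatsrk}, and $\kO^*$ only through $\kF$), the pairs $(E,\kO^*)$ and $(E,\kF)$ determine the same $q$-matroid $(E,r)$, with $\kO^*$ its collection of co-open spaces and $\kF$ its collection of flats.
\end{proof}
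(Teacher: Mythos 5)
Your proof is correct and follows essentially the same route as the paper: invoke the immediately preceding Theorem to transfer between flats and open spaces via orthogonal complementation, then cite Theorem~\ref{th:bciflatsrk} to produce the $q$-matroid from $\kF$ and conclude from $\kO^*=\kF^\perp$. The paper's version is just terser; your extra paragraph checking the converse direction is harmless bookkeeping rather than a genuine difference in approach.
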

\begin{proof}
By Theorem \ref{th:bciflatsrk}$, (E,\kF)$ is a $q$-matroid whose family of flats is equal to $\kF$. The result now follows since $\kO^* = \kF^\perp$.
\end{proof}

As with cocircuits and circuits, co-open spaces are open spaces of the dual $q$-matroid.

\begin{Proposition}
The flats of a $q$-matroid $M=(E,r)$ are the orthogonal spaces of the open spaces of the dual $q$-matroid $M^*$. 
\end{Proposition}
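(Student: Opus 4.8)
The plan is to reduce the statement to the flat--hyperplane cryptomorphism of Corollary~\ref{corr:flat-hyper} and the circuit/cocircuit duality of Proposition~\ref{CircuitsAreCocirc}. First I would record the two structural descriptions that make the rest purely formal. On the one hand, by Proposition~\ref{FlatIsHyperIntersect} together with Corollary~\ref{corr:flat-hyper}, the flats of $M=(E,r)$ are exactly the subspaces of $E$ expressible as an intersection of hyperplanes of $M$; that is, $\kF_r=\{\bigcap_{H\in I}H:I\subseteq\kH_r\}$, where the empty intersection is understood to be $E$. On the other hand, the open spaces of $M^*$ are by definition the subspaces of the form $\sum_j C_j$ with each $C_j$ a circuit of $M^*$, the empty sum being $\{0\}$.

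Next I would connect hyperplanes of $M$ to circuits of $M^*$. Applying Proposition~\ref{CircuitsAreCocirc} to the $q$-matroid $M^*$ shows that the circuits of $M^*$ are the cocircuits of $M^{**}=M$, and the cocircuits of $M$ are, by definition, the subspaces $H^\perp$ with $H\in\kH_r$. Since $\perp$ is an inclusion-reversing involution on $\kL(E)$ (the standard bilinear form on $E$ is nondegenerate), the map $H\mapsto H^\perp$ is a bijection from $\kH_r$ onto the collection of circuits of $M^*$, with inverse $C\mapsto C^\perp$.

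The conclusion is then the De~Morgan law for orthogonal complements. If $F\in\kF_r$, write $F=\bigcap_{H\in I}H$ with $I\subseteq\kH_r$; then $F^\perp=\sum_{H\in I}H^\perp$ is a sum of circuits of $M^*$, hence an open space of $M^*$, and $F=(F^\perp)^\perp$. Conversely, if $O$ is an open space of $M^*$, write $O=\sum_j C_j$ with each $C_j$ a circuit of $M^*$; then $O^\perp=\bigcap_j C_j^\perp$ is an intersection of hyperplanes of $M$, hence a flat of $M$, and $O=(O^\perp)^\perp$. This shows $\kF_r=(\kO_{r^*})^\perp$, i.e.\ the flats of $M$ are precisely the orthogonal complements of the open spaces of $M^*$; equivalently, the open spaces of $M^*$ coincide with the co-open spaces of $M$, consistently with Corollary~\ref{OpenAxioms}.

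I do not anticipate a genuine difficulty: the only points requiring care are the bookkeeping at the extremes ($E$ as the empty intersection of hyperplanes pairing with $\{0\}$ as the empty sum of circuits, and vice versa) and making sure Proposition~\ref{CircuitsAreCocirc} is invoked for $M^*$ rather than $M$, so that it is the circuits of $M^*$, not of $M$, that correspond to the hyperplanes of $M$. Both are immediate once one notes that $\perp$ is an order-reversing involution turning arbitrary intersections into sums.
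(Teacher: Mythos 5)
Your proof is correct and rests on the same fundamental mechanism as the paper's: the flat--hyperplane cryptomorphism, the observation that $H\mapsto H^\perp$ carries hyperplanes of $M$ to circuits of $M^*$ (via Proposition~\ref{CircuitsAreCocirc} applied to $M^*$), and the De~Morgan law $\big(\bigcap_i H_i\big)^\perp=\sum_i H_i^\perp$. Where you differ from the paper is in explicitness: the paper phrases the argument as a lattice anti-isomorphism (citing the semimodular lattice of flats from \cite{WINEpaper1} and its dual lattice of co-open spaces) and then, in a single sentence, observes that co-open spaces are sums of cocircuits and hence sums of circuits of $M^*$. Strictly speaking the paper's write-up only spells out one containment, $\kF_r^\perp\subseteq\kO_{r^*}$; you verify both containments symmetrically and also handle the trivial endpoints ($E\leftrightarrow\{0\}$), which makes your version slightly more complete and avoids leaning on unstated lattice-theoretic facts. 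Both approaches use the same ingredients; yours is the more pedestrian but also the more airtight rendering of the argument.
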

\begin{proof}
In \cite{WINEpaper1}, it was proved that the lattice of flats is semimodular with the meet of two flats $F_1,F_2$ defined to be $F_1 \wedge F_2 := F_1 \cap F_2$ and the join $F_1 \vee F_2$ is defined to be the minimal flat containing $F_1 + F_2$, which is $\cl_r(F_1+F_2)$. The maximal flats of $M$ are the hyperplanes. \\
Dualizing to co-open sets, we have an {\em anti-isomorphism} and we have a semimodular lattice of open spaces, where, if $O_1,O_2 \in \kO$, $O_1 \wedge O_2= O_1+O_2$, while their meet is the maximal subspace contained in their intersection. Since the orthogonal complements of hyperplanes are cocircuits, it follows that every co-open space is the sum of cocircuits. By Proposition \ref{CircuitsAreCocirc}, cocircuits are circuits in $M^*$, hence sums of cocircuits are sums of circuits in $M^*$ and these are by definition open spaces.
\end{proof}

From this proposition it follows directly that the open spaces of a $q$-matroid are a collection of open spaces.

\begin{Corollary}
Let $(E,\kO^*)$ be a collection of co-open spaces of a $q$-matroid $M$. Then $(E,\kO)$ is the collection of open spaces of $M^*$.
\end{Corollary}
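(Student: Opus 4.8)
The plan is to deduce this at once from the Proposition just proved, exactly as the analogous statement for cocircuits follows from Proposition~\ref{CircuitsAreCocirc}. Recall that, by definition, the collection $\kO^*$ of co-open spaces of $M$ equals $\kF^\perp$, where $\kF$ is the collection of flats of $M$; and recall from Definition~\ref{def:families} that $\kF^\perp = \{F^\perp : F \in \kF\}$.

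First I would invoke the preceding Proposition, which states that the flats of $M$ are precisely the orthogonal complements of the open spaces of the dual $q$-matroid $M^*$. Writing $\kO_{M^*}$ for the collection of open spaces of $M^*$ (well defined, since $M^*$ is a $q$-matroid and its open spaces are the sums of its circuits), this says $\kF = (\kO_{M^*})^\perp$. Then, taking orthogonal complements of both sides and using $(X^\perp)^\perp = X$ for every subspace $X$ of the finite-dimensional space $E$, I would conclude $\kO^* = \kF^\perp = ((\kO_{M^*})^\perp)^\perp = \kO_{M^*}$; that is, the co-open spaces of $M$ (denoted $\kO$ in the statement) are exactly the open spaces of $M^*$, as claimed.

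I do not expect any genuine obstacle here: all of the mathematical content already resides in the preceding Proposition — which in turn rests on the semimodularity and anti-isomorphism of the lattices of flats and open spaces, together with the identification of cocircuits of $M$ with circuits of $M^*$ — so the corollary is essentially a one-line consequence. The only points requiring care are the routine bookkeeping with the map $X \mapsto X^\perp$ and the observation that $M^{**} = M$, so that $\kO_{M^*}$ is a legitimate collection on which dualization applies.
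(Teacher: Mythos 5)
Your proof is correct and follows essentially the paper's own (implicit) route: the paper likewise treats this corollary as an immediate consequence of the preceding Proposition, and the only step needed is taking orthogonal complements of both sides of $\kF = (\kO_{M^*})^\perp$ and using $(X^\perp)^\perp = X$, exactly as you do. You also rightly flag the minor notational slip in the statement between $\kO^*$ and $\kO$, which the paper leaves implicit.
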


\begin{Remark}\label{Equivalenta}
    Consider the following open {\em set} axioms for classical matroids, for a collection $\kO$ of
    subsets of some ground set $S$ of finite cardinality $n$.
   
	\begin{itemize}
	    \item[(O1)] The empty set is a member of $\kO$.
	    \item[(O2)] If $O_1,O_2 \in \kO$ then $O_1 \cup O_2 \in \kO$.
		\item[(O3)] For each $O \in \kO$ and each subset $X \subset S$ of cardinality $n-1$ such that $O\nsubseteq X$, there exists a unique set $O' \in \kO$, such that $O' \subseteq X \cap O$ and $O'$ is covered by $O$ in $\kO$.
	\end{itemize}

	\begin{itemize}
		\item[(\ol{O3})] For each $O \in \kO$, if $O_1,\ldots,O_k \in \kO$ are all the sets in $\kO$ covered by $O$ in $\kO$, then $\bigcap_{i=1}^k O_i= \emptyset$.
	\end{itemize}
    The direct $q$-analogue of the axioms (O1)-(O3) given above are given by the open spaces axioms of Definition \ref{OpenSpaces}, while the axioms (O1), (O2) and (\ol{O3}) are the usual classical open space axioms.
	In fact, as we now show, the open set axioms (O1), (O2), (O3), are equivalent to (O1), (O2), (\ol{O3}).
	Let $M$ be a matroid with ground set $S$ of size $n$ and let $\kO$ be a collection of subsets of $S$.

     \paragraph{(O3) $\Rightarrow$ (\ol{O3}):}
		Assume that (O3) holds. Let $O \in \kO$ and let $O_1,\ldots,O_k$ be all the open sets covered by $O$ in $\kO$.
		Suppose that $\bigcap_{i=1}^k O_i$ is non-empty and so contains some element $h$. Let $X'=S \setminus \{h\}$. By (O3), there exists a unique open set $O' \subseteq X'\cap O = O \setminus \{h\}$ that is
		covered by $O$ in $\kO$. By construction, this set $O'$ does not contain $h$, which contradicts the assumption that $h$ is contained in the intersection of all such sets.
    
        \paragraph{(\ol{O3}) $\Rightarrow$ (O3):}
		Now assume that (\ol{O3}) holds. Let $O \in \kO$ and let $X$ be a subset of $S$ of cardinality $n-1$ such that $O \nsubseteq X$. Then $S = X \cup \{h\}$ for some $h \in S$. 
		Now suppose, towards a contradiction, that there is no subset of $X \cap O$ that is covered by $O$ in $\kO$. Then in particular, there no such set contained in $X$, so all sets covered by $O$ in $\kO$ contain $h$. However, this contradicts (\ol{O3}), which we have assumed by hypothesis. We deduce that (O3) holds.
	\end{Remark}
	
	A direct $q$-analogue of (\ol{O3}) is given by the following for a collection $\kO$ of subspaces of $E$.
	\begin{itemize}
	    \item[(\ol{O3})]
	    For each $O \in \kO$, if $O_1,\ldots,O_k \in \kO$ are all the subspaces in $\kO$ covered by $O$ in $\kO$, then $\bigcap_{i=1}^k O_i= \{0\}$.
	\end{itemize}
	However, even though (O3) and (\ol{O3}) are equivalent in the classical case, this cannot be said of their $q$-analogues, as the following example shows.

	\begin{Example}\label{ExOpens}
	We give an easy counterexample, coming from the $q$-matroid $M_6^*$, namely the dual of $M_6$ from Example \ref{M2s}. By dualizing the flats in Table \ref{Tabellona}, we see that the open spaces of the $q$-matroid $M_6^*$ are $0, \FF_2^6$ and the orthogonal complements of $G_1,\ldots,G_9$, namely $G_1^\perp,\ldots,G_9^\perp$.

	It can be easily observed that the set $L_{O'}=\{\{0\},G_1^\perp,\ldots,G_8^\perp,\FF_2^6\}$, which is the set of open spaces of $M_6$ excluding $G_9^\perp$, satisfies (O1), (O2), and (\ol{O3}), as we now show.
	Clearly, $\{0\} \in L_{O'}$. Since the $G_1^\perp,\ldots,G_8^\perp$ all have trivial pairwise intersections, their pairwise vector-space sums are all equal to $\FF_2^6$ and clearly the sum of any member $L_{O'}$ with 
	$\{0\}$ or $\FF_2^6$ is contained in $L_{O'}$ so that (O2) holds.
	Also (\ol{O3}) holds; the only nontrivial case to consider is that involving the open spaces covered by $\FF_2^6$, which are $G_1^\perp,\ldots,G_8^\perp$ and have trivial intersection.
	We will now show that $L_{O'}$ does not satisfy (O3). 
	Let $O=\FF_2^6$ and let $X:=G_9^\perp + \langle (1, 0, 0, 1, 0, 0),(1, 0, 0, 0, 0,1) \rangle $. 
	Then $X$ has codimension 1 in  $\FF_2^6$ and clearly $X\cap O=X$. 
	The only space in $L_{O'}$ in $X$ that is not covered by $O$ is the zero space and in particular, 
	it is not true that there is a unique open space covered by $O$ in $X\cap O=X$. Therefore (O3) fails for the collection $L_{O'}$.
\end{Example}

\section{Spanning and Non-spanning Spaces}\label{ToSpanOrNotToSpan}
In this short section, we discuss spanning and non-spanning spaces. We follow the same approach as the previous two sections. Therefore we prove the duality between independent and spanning spaces between and dependent and non-spanning spaces.

\begin{Proposition}\label{OppIndSp}
The orthogonal complements of the independent spaces of $M$ are the spanning spaces of $M^*$.
\end{Proposition}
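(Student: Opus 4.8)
The plan is to argue directly from the definition of the dual rank function $r^*$, together with the rank characterisations of independent and spanning spaces. First I would record the two rank values that the proof needs. Since $E^\perp=\{0\}$ and $r(\{0\})=0$ by (R1), we have
\[ r^*(E)=\dim(E)-r(E)+r(E^\perp)=n-r(E). \]
Next, for an arbitrary subspace $I\in\kL(E)$, using that the standard dot product is a nondegenerate bilinear form on the finite-dimensional space $E$ (so that $(I^\perp)^\perp=I$ and $\dim(I^\perp)=n-\dim(I)$), I would compute
\[ r^*(I^\perp)=\dim(I^\perp)-r(E)+r\big((I^\perp)^\perp\big)=(n-\dim(I))-r(E)+r(I). \]

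Comparing these two expressions, $I^\perp$ is a spanning space of $M^*$, i.e. $r^*(I^\perp)=r^*(E)$, if and only if $(n-\dim I)-r(E)+r(I)=n-r(E)$, which simplifies to $r(I)=\dim(I)$, i.e. to $I$ being an independent space of $M$. Thus $I\in\I_r$ if and only if $I^\perp\in\kS_{r^*}$.

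Finally, since $A\mapsto A^\perp$ is an involution on $\kL(E)$, it is a bijection, so the family $\{I^\perp:I\in\I_r\}$ of orthogonal complements of independent spaces of $M$ is exactly the family $\kS_{r^*}$ of spanning spaces of $M^*$; equivalently, $A\in\kS_{r^*}$ precisely when $A^\perp\in\I_r$. There is no genuine obstacle in this argument: the whole statement reduces to a single-line rank computation once the definition of $r^*$ is unwound, and the only point that needs care is the (routine) appeal to nondegeneracy of the dot product to justify $(A^\perp)^\perp=A$.
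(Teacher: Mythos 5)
Your proof is correct and takes essentially the same approach as the paper: a direct unwinding of the definition of the dual rank function $r^*$ applied to $I^\perp$, together with $(I^\perp)^\perp = I$ and $\dim(I^\perp)=n-\dim(I)$, reducing the claim to $r(I)=\dim(I)$. The paper only writes out the forward implication (independent $\Rightarrow$ perp is spanning) and leaves the converse and the bijection implicit; your version makes the biconditional and the involution $A \mapsto A^\perp$ explicit, which is what the set-equality in the statement actually requires, but this is a matter of completeness of exposition rather than a different method.
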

\begin{proof}
By definition, an independent space $I$ has $r(I)=\dim(I)$. Applying the dual rank function to $I^\perp$ and $E$ gives that
\begin{align*}
r^*(I^\perp) &=\dim(I^\perp)-r(E)+r(I) \\
 &=\dim(E)-\dim(I)-r(E)+\dim(I) \\
 &=\dim(E)-r(E) \\
 &=r^*(E)
\end{align*}
and this is exactly saying that $I^\perp$ is a spanning space of $M^*$.
\end{proof}

In a similar fashion as the previous two sections, we can now prove that $\kS^*=\I^\perp$ is a collection of spanning spaces, and in combination with the proposition above we arrive at the following.

\begin{Corollary}\label{Spanningq-Matr}
Let $(E,\kS)$ be a collection of spanning spaces and let $(E,\I)$ be a collection of independent spaces.
Suppose that $\kS^\perp = \I$. Then both $(E,\kS)$ and $(E,\I)$ each determine the same $q$-matroid $(E,r)$
such that $\kS$ is the collection of spanning spaces of $(E,r)$ and $\I$ is the collection of independent spaces of $(E,r)$.
\end{Corollary}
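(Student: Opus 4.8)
The plan is to follow the same two-step template used for co-open spaces and cocircuits in the two preceding sections. First I would prove, purely at the level of axioms, that a family $\I\subseteq\kL(E)$ satisfies the independence axioms (I1)--(I4) if and only if $\kS:=\I^\perp$ satisfies the spanning space axioms (S1)--(S4); this is the equivalence the section preamble alludes to when it says that $\kS^*=\I^\perp$ is a collection of spanning spaces. Then I would combine this with \cite[Theorem 8]{JP18} and Proposition \ref{OppIndSp} to read off the $q$-matroid and identify its independent and spanning spaces, exactly as in the proofs of Corollaries \ref{OpenAxioms} and \ref{HypCircq-Matr}.

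For the first step the key tool is that $X\mapsto X^\perp$ is an inclusion-reversing involution of $\kL(E)$ with $(A+B)^\perp=A^\perp\cap B^\perp$, $(A\cap B)^\perp=A^\perp+B^\perp$ and $\dim X^\perp=n-\dim X$, so that it interchanges one-dimensional with codimension-one subspaces, sends the interval $\kL(A)=[\{0\},A]$ to $[A^\perp,E]$, and hence turns $\max$ of a subfamily into $\min$ of its $\perp$-image. Running the axioms through this dictionary: (I2) becomes (S2) verbatim; granted (I2)/(S2), nonemptiness of $\I$ is equivalent to $\{0\}\in\I$, i.e.\ to $E=\{0\}^\perp\in\kS$, which is (S1); (I3) for a pair $I,J\in\I$ with $\dim I<\dim J$ becomes, on setting $X=x^\perp$ and relabelling, precisely (S3), since $x\subseteq J$, $x\nsubseteq I$, $I+x\in\I$ translate to $J^\perp\subseteq X$, $I^\perp\nsubseteq X$, $I^\perp\cap X=(I+x)^\perp\in\kS$; and (I4), stated with $\max(\I\cap\kL(A))$, $\max(\I\cap\kL(B))$ and $K\subseteq I+J$, becomes (S4), stated with $\min(\kS\cap[A^\perp,E])$, $\min(\kS\cap[B^\perp,E])$ and $I^\perp\cap J^\perp\subseteq K^\perp\in\min(\kS\cap[A^\perp\cap B^\perp,E])$. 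Since $(\cdot)^\perp$ is an involution, the same computation read backwards gives the converse, and the two axiom systems correspond bijectively.

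With this equivalence in hand the Corollary is immediate. Given a collection of independent spaces $(E,\I)$, \cite[Theorem 8]{JP18} produces a $q$-matroid $(E,r)$ with $\I_r=\I$, and by Proposition \ref{OppIndSp} the spanning spaces of the associated $q$-matroid are exactly $\I^\perp=\kS$. Conversely, starting from a collection of spanning spaces $(E,\kS)$, the first step shows that $\kS^\perp=\I$ is a collection of independent spaces, and the same argument applies. Hence $(E,\kS)$ and $(E,\I)$ encode the same $q$-matroid $(E,r)$, with independent spaces $\I$ and spanning spaces $\kS$.

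I expect the only genuine work to be the first step, and within it the bookkeeping for (I4)$\Leftrightarrow$(S4): one must keep straight that $\kL(A)$ dualizes to $[A^\perp,E]$ rather than to $[\{0\},A^\perp]$, that a maximal-dimensional independent subspace of $A$ corresponds to a minimal-dimensional spanning superspace of $A^\perp$, and that the conclusion $K\subseteq I+J$ becomes $I^\perp\cap J^\perp\subseteq K^\perp$ with the input pair correctly transposed. The remaining correspondences --- (I2)$\Leftrightarrow$(S2), (I3)$\Leftrightarrow$(S3), and the extraction of (S1) from (I1)--(I2) --- are routine once the dictionary $X\mapsto X^\perp$ has been set up.
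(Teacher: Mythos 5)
Your proposal follows exactly the template the paper points to for this section (dualize the axioms via $X\mapsto X^\perp$ to obtain the cryptomorphism $\I\leftrightarrow\kS$, then invoke \cite[Theorem 8]{JP18} and Proposition~\ref{OppIndSp}), and the bookkeeping you carry out for (I1)--(I4)\,$\leftrightarrow$\,(S1)--(S4) is correct, including the slightly delicate point that $\kL(A)$ dualizes to the interval $[A^\perp,E]$ and that $\max$ over that interval becomes $\min$. This is precisely the content the paper elides by writing ``in a similar fashion as the previous two sections,'' so in that sense you have reproduced the paper's intended argument in full.

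There is, however, a subtlety in the last step that you (and, arguably, the corollary as stated in the paper) gloss over. Proposition~\ref{OppIndSp} says that $\I(M)^\perp$ is the collection of spanning spaces of the \emph{dual} $q$-matroid $M^*$, not of $M$. So if $\I$ yields, via \cite[Theorem 8]{JP18}, a $q$-matroid $M$ with $\I(M)=\I$, then $\kS=\I^\perp=\kS(M^*)$, which is in general different from $\kS(M)$ (take the uniform $q$-matroid $U_{k,n}$ with $n\neq 2k$: $\I^\perp$ consists of the spaces of dimension $\geq n-k$, whereas the spanning spaces of $U_{k,n}$ are those of dimension $\geq k$). The neighbouring Corollaries~\ref{HypCircq-Matr} and \ref{OpenAxioms} handle exactly this point by attaching a ``co-'' prefix (cocircuits, co-open spaces); following that pattern, $\kS$ here should be identified as the \emph{co-spanning} spaces of the $q$-matroid determined by $\I$, i.e.\ the spanning spaces of its dual. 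Your final sentence, asserting that the $q$-matroid with independent spaces $\I$ has spanning spaces $\I^\perp$, is therefore a misreading of Proposition~\ref{OppIndSp}; the dualization step needs to be made explicit.
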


We can repeat the very same reasoning for non-spanning spaces. In particular, spanning sets should be substituted by non-spanning spaces and independent spaces should be replaced by dependent spaces.
We get then the following.

\begin{Proposition}
The orthogonal complements of the dependent spaces of $M$ are the non-spanning spaces of $M^*$.
\end{Proposition}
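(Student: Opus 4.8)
The plan is to imitate the proof of Proposition \ref{OppIndSp}: a direct computation with the dual rank function $r^*(A) = \dim(A) - r(E) + r(A^\perp)$. Recall that, by Definition \ref{independentspaces} applied to $M^* = (E,r^*)$, a non-spanning space of $M^*$ is a subspace $S \in \kL(E)$ with $r^*(S) \neq r^*(E)$, which by (R2) is the same as $r^*(S) < r^*(E)$; recall also that $X \mapsto X^\perp$ is an involution on $\kL(E)$ and that $\dim(X^\perp) = \dim(E) - \dim(X)$.

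First I would record $r^*(E) = \dim(E) - r(E) + r(E^\perp) = \dim(E) - r(E)$, using $E^\perp = \{0\}$ together with $r(\{0\}) = 0$ from (R1). Then, for an arbitrary subspace $D \in \kL(E)$, I would compute
\[
r^*(D^\perp) = \dim(D^\perp) - r(E) + r\big((D^\perp)^\perp\big) = \big(\dim(E) - \dim(D)\big) - r(E) + r(D),
\]
so that $r^*(E) - r^*(D^\perp) = \dim(D) - r(D)$. Hence $D^\perp$ is a non-spanning space of $M^*$ if and only if $\dim(D) > r(D)$, that is, if and only if $D$ is a dependent space of $M$ (Definition \ref{independentspaces}).

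Finally, since $X \mapsto X^\perp$ is a bijection on $\kL(E)$, every subspace of $E$ is of the form $D^\perp$ for the unique $D = (D^\perp)^\perp$; combined with the equivalence just established, this yields that the collection of non-spanning spaces of $M^*$ coincides with $\{ D^\perp : D \text{ is a dependent space of } M \}$, which is the assertion. I do not anticipate any genuine obstacle: the computation is routine, and the only points needing a word of care are that $\perp$ is an involution on $\kL(E)$ and that it reverses dimensions, so that the rank inequality flips in the expected direction — both immediate from the definitions in Section \ref{sec:prelim}.
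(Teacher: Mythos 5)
Your proof is correct, but it takes a different route from the paper's. The paper's argument is shorter: it observes that $\opp(\kN^*)=\kS^*$, invokes the already-proven Proposition~\ref{OppIndSp} (spanning spaces of $M^*$ are the orthogonal complements of independent spaces of $M$), and then uses $\I=\opp(\kD)$ together with the fact that $\opp$ and $\perp$ commute as operations on families of subspaces. In effect, the paper treats this result as a formal corollary of the previous one by passing to complements on both sides. You instead redo the rank computation from scratch, following the template of the proof of Proposition~\ref{OppIndSp} itself: you compute $r^*(E)=\dim(E)-r(E)$ and $r^*(D^\perp)=\dim(E)-\dim(D)-r(E)+r(D)$, subtract to get $r^*(E)-r^*(D^\perp)=\dim(D)-r(D)$, and read off that $D^\perp$ is non-spanning in $M^*$ precisely when $D$ is dependent in $M$. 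Both approaches are valid and roughly comparable in length; yours has the advantage of being self-contained and making the rank arithmetic explicit, while the paper's makes the duality pattern of Figure~\ref{diagram2} (dep $\leftrightarrow$ non-spanning sitting directly below ind $\leftrightarrow$ spanning) more visible. One small point worth spelling out, which you only gesture at, is that $(D^\perp)^\perp=D$ and $\dim(D^\perp)=\dim(E)-\dim(D)$ rely on nondegeneracy of the standard dot product on $E$, not on $D\cap D^\perp=\{0\}$ (which can fail over $\FF_q$); the paper takes this for granted throughout, so it is fine to do so here too.
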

\begin{proof}
Let $\kN^*$ be the non-spanning spaces of $M^*$. Then $\opp(\kN^*)=\kS^*$ are the spanning spaces of $M^*$. By Proposition \ref{OppIndSp}, these are the orthogonal complements of the independent spaces of $M$. The result now follows because $\I=\opp(\kD)$. See also Figure \ref{diagram2}.
\end{proof}

\begin{Corollary}\label{NOSpanningq-Matr}
Let $(E,\mathcal{N})$ be a collection of non-spanning spaces and let $(E,\mathcal{D})$ be a collection of dependent spaces.
Suppose that $\mathcal{N}^\perp = \mathcal{D}$. Then both $(E,\mathcal{N})$ and $(E,\mathcal{D})$ each determine the same $q$-matroid $(E,r)$
such that $\mathcal{N}$ is the collection of non-spanning spaces of $(E,r)$ and $\mathcal{D}$ is the collection of dependent spaces of $(E,r)$.
\end{Corollary}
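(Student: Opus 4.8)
The plan is to deduce this corollary in exactly the way Corollaries~\ref{HypCircq-Matr} and~\ref{OpenAxioms} were deduced from their base cryptomorphisms: the real work has already been done, this time in the dependence--independence cryptomorphism (Corollary~\ref{cor:ind-dep}) together with the Proposition immediately preceding this statement, and all that is needed here is to thread those results through the two involutions $\opp$ (on collections of subspaces) and $(\,\cdot\,)^\perp$ (on subspaces). As a first step I would record the elementary identities: $X\mapsto X^\perp$ being an involution, $\mathcal{N}^\perp=\mathcal{D}$ is the same as $\mathcal{N}=\mathcal{D}^\perp$; and $\opp$ being an involution, $\opp(\opp(\mathcal{D}))=\mathcal{D}$ and $\opp(\mathcal{N})^\perp=\opp(\mathcal{D})$, the latter because $Y\in\mathcal{D}\Leftrightarrow Y^\perp\in\mathcal{N}$.

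Next, since $\mathcal{D}$ satisfies (D1)--(D3), Theorem~\ref{th:dep2ind} gives that $\I:=\opp(\mathcal{D})$ is a collection of independent spaces, and $\opp(\I)=\mathcal{D}$, so Corollary~\ref{cor:ind-dep} applies to the pair $(E,\I),(E,\mathcal{D})$ and produces a $q$-matroid $(E,r)$ with $\I_r=\I=\opp(\mathcal{D})$ and $\mathcal{D}_r=\mathcal{D}$; this is the unique $q$-matroid whose dependent spaces are $\mathcal{D}$. It then remains to see that $(E,\mathcal{N})$ determines this same $q$-matroid and that $\mathcal{N}$ is its collection of non-spanning spaces. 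For the first point, $(E,\mathcal{N})$ determines $(E,r)$ simply because $\mathcal{N}=\mathcal{D}^\perp$ and $(E,\mathcal{D})$ determines $(E,r)$; if one wishes to be explicit, $\opp(\mathcal{N})$ is a collection of spanning spaces --- by the $\opp$-duality between the spanning and non-spanning axioms, mirroring Theorems~\ref{th:ind2dep}--\ref{th:dep2ind} --- and $\opp(\mathcal{N})^\perp=\opp(\mathcal{D})=\I_r$, so by Proposition~\ref{OppIndSp} the collection $\opp(\mathcal{N})$ coincides with the spanning-space collection of $(E,r)$, whence $\mathcal{N}=\opp(\opp(\mathcal{N}))$ is its non-spanning collection. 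For the second point, the Proposition immediately preceding this corollary identifies the orthogonal complements of the dependent spaces of a $q$-matroid with its non-spanning spaces through duality, so applied to $(E,r)$ it yields $\mathcal{D}_r^\perp=\mathcal{N}$, i.e.\ $\mathcal{N}$ is the collection of non-spanning spaces of $(E,r)$. The proof then closes with the single line ``the result now follows since $\mathcal{N}=\mathcal{D}^\perp$'', exactly as for Corollaries~\ref{HypCircq-Matr} and~\ref{OpenAxioms}.

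I expect the only genuine (and mild) obstacle to be the bookkeeping in the previous paragraph: keeping the two involutions $\opp$ and $(\,\cdot\,)^\perp$ straight, and invoking Proposition~\ref{OppIndSp} to reconcile the spanning/non-spanning picture with the independence picture supplied by Corollary~\ref{cor:ind-dep}. No new axiom verification is required, everything being inherited from Section~\ref{sec:ind-dep}. As an alternative one could route the argument through Corollary~\ref{DC} instead, taking $\mathcal{C}:=\min(\mathcal{D})$ so that $\mathcal{C}$ is a collection of circuits with $\upp(\mathcal{C})=\mathcal{D}$; this produces the same $q$-matroid $(E,r)$ but is slightly less direct.
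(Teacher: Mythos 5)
Your proposal is correct and takes essentially the same approach the paper intends. The paper states Corollary~\ref{NOSpanningq-Matr} (like Corollary~\ref{Spanningq-Matr}) without an explicit proof, relying on the pattern established in Corollaries~\ref{HypCircq-Matr} and~\ref{OpenAxioms} together with the immediately preceding Proposition; the implied argument is precisely the one-liner you close with --- invoke Corollary~\ref{cor:ind-dep} to obtain the $q$-matroid from $(E,\mathcal{D})$ and then use $\mathcal{N}=\mathcal{D}^\perp$ --- and your more detailed bookkeeping through $\opp$, $(\,\cdot\,)^\perp$, Theorem~\ref{th:dep2ind} and Proposition~\ref{OppIndSp} simply fills in what the paper leaves implicit, including the alternative route through Corollary~\ref{DC}.
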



\section{Acknowledgements}

This paper stems from a collaboration that was initiated at the Women in Numbers Europe (WIN-E3) conference,
held in Rennes, August 26-30, 2019. The authors are very grateful to the organisers: Sorina Ionica, Holly Krieger, and Elisa Lorenzo Garc\'ia, for facilitating their participation at this workshop, which was supported by the Henri Lebesgue Center, the Association for Women in Mathematics (AWM) and the Clay Mathematics Institute (CMI).

\bibliographystyle{alpha}
\bibliography{References} 
 
\end{document}